\documentclass{amsart}
\usepackage{amssymb}
\usepackage{amsmath}
\usepackage{amsthm}

\usepackage{float}

\usepackage[english]{babel}
\usepackage{authblk}
\usepackage{xfrac}
\usepackage{enumitem}
\usepackage{graphicx}
\usepackage{dsfont}
\usepackage{xcolor}
\usepackage{pst-node}
\usepackage{tikz-cd}
\usepackage{titling}
\newtheorem{thm}{Theorem}[section]
\newtheorem{lem}[thm]{Lemma}
\newtheorem{prop}[thm]{Proposition}
\newtheorem{cor}[thm]{Corollary}
\newtheorem{defn}[thm]{Definition}

\newtheorem{exmp}[thm]{Example}

\newtheorem{rem}[thm]{Remark}

\newcommand{\R}{\mathbb{R}}
\DeclareMathOperator{\Ld}{\mathcal{L}^d}

\newcommand{\diag}{\mathop{\mathrm{diag}}}\newcommand{\esssup}{\mathop{\mathrm{ess\,sup}}}\newcommand{\bfh}{\mathbf h}\newcommand{\bfi}{\mathbf i}\newcommand{\bfj}{\mathbf j}\newcommand{\bfk}{\mathbf k}\newcommand{\bfm}{\mathbf m}\newcommand{\bfn}{\mathbf n}\newcommand{\bfp}{\mathbf p}\newcommand{\bfx}{\mathbf x}\newcommand{\bfy}{\mathbf y}\newcommand{\bfz}{\mathbf z}

\newcommand{\bftheta}{{\boldsymbol\theta}}

\DeclareMathSymbol{\shortminus}{\mathbin}{AMSa}{"39}

\providecommand{\keywords}[1]{\textit{GLT matrix-sequences, asymptotic spectral analysis} #1}

\begin{document}
\title{The $*$-algebra of unbounded GLT: construction and theoretical foundations}
\author{Andrea Adriani $^{(1)}$, Alec Jacopo Almo Schiavoni-Piazza $^{(2)}$}
\date{}

\maketitle

\section*{Abstract}

In the present paper, we are concerned with the study of matrix-sequences arising from the discretization of PDEs and FDEs on domains $\Omega \subset \R^d$ with finite measure. When $\Omega$ is either a hypercube or a bounded domain, the theory of Generalized Locally Toeplitz (GLT) sequences and of reduced GLT sequences cover the spectral analysis of the matrix-sequences derived from the approximation of the continuous problem. This work aims to extend the machinery and tools of the GLT apparatus to the case of unbounded domains with finite measure. For any unbounded domain $\Omega \subset \R^d$ with finite measure, we define a new class of sequences, which we call unbounded GLT, and study their spectral properties.

\ \\
\ \\
\medskip

\noindent
$(1)$\, Vanguard Center, Mohammed VI Polytechnic University, Morocco\\
(andrea.adriani@um6p.ma ORCID ID: 0000-0003-3390-7891);\\ 
$(2)$\, Scuola Internazionale Superiore di Studi Avanzati, Via Bonomea 265, 34136 Trieste, Italy\\ (aschiavo@sissa.it ORCID ID: 0000-0003-2652-7081)\\
\ \\
\ \\
\medskip
\noindent
\keywords{Unbounded domains, Spectral distribution of matrix-sequences, Approximating class of sequences, GLT theory, Discretization of PDEs and FDEs.}

\section{Introduction}





In recent years, there has been an increasing interest in the study of discretizations of PDEs on either unbounded or moving domains (see \cite{CCKR18,Coco20} for problems on moving domains, \cite{CS07,CCNR14} for problems on unbounded domains and \cite{ABKST26,dT14,dTEJ18,dTEJ19} for the study of various numerical aspects of the problem in \cite{CS07}).

A powerful tool for studying the discretization of differential problems is represented by the theory of Generalized Locally Toeplitz (GLT) sequences (see the two comprehensive books \cite{glt-book-1,glt-book-2} and the research papers \cite{GLT-block1D,GLT-blockdD} for a recent account of the theory). The main idea behind the GLT machinery is to associate to some suitable matrix-sequences a privileged symbol, which resonates with the hidden structure of the sequences and behaves well with respect to algebraic operations and limits.

Heuristically, a classical GLT matrix-sequence arises from the discretization of
$$\mathcal{L}u=f \quad \text{on} \quad [0,1]^d$$
for some differential operator $\mathcal{L}$. Its canonical symbol $f:[0,1]^d\times [-\pi,\pi]^d \to \mathbb{C}^{s \times t}$ encodes both the geometry of the space where we solve the PDE and the geometry of the operator $\mathcal{L}$ itself.

In the last few years, many contributions have been made to the original theory. In \cite{Barb} the author replaces the classical reference domain $[0,1]^d$ with any (regular enough) subset $\Omega \subseteq [0,1]^d$ constructing the class of reduced GLT over $\Omega$: the idea was already introduced in the first GLT proposal \cite[pp. 398-399]{glt-laa}, while the initial definition and name of reduced GLT were introduced in \cite[Section 3.1.4]{glt-Fourier}. Furthermore, in \cite{BGMS} the authors provide an extension of the original notion of approximating class of sequence (a.c.s.) to the case of rectangular sequences and start the study of sequences showing block structures; the latter have been extensively studied in \cite{AFGS26,AGS26,ASPSC25,BFFS}, where the authors proved distributional results for matrix-sequences with block structure having either rational or irrational ratios with respect to the global dimension; finally, in \cite{gacs} the authors generalizes the concept of a.c.s. to approximating sequences with different dimensions from the approximated one.

In particular, the latter work (\cite{gacs}) is the starting point of the present contribution, which aims to construct various algebras of matrix-sequences related with unbounded domains for which it is possible to infer the spectral properties. The main idea is to associate with unbounded domains of finite measure and regular boundary a suitable class of matrix-sequences. The construction is rather involved and requires many intermediate steps. First, we extend the definition of GLT algebras to general hypercubes and we define a class of natural mappings between them. Secondly, we associate an algebra of reduced GLT to any (regular enough) bounded domain, by applying a suitable restriction operator from the GLT algebra associated to an hypercube containing it. Then, we prove that these algebras still capture some geometry of the domain to which they are associated and that they are still connected by some natural mappings. Finally, we pass to the definition of the unbounded GLT class, exploiting the g.a.c.s. tool, by approximating both the domain (with a regular exhaustion) and the matrix-sequence (with sequences of reduced GLT associated to the exhaustion). 
At each step of the construction, we prove that the sets of matrix-sequences associated with each domain are indeed $*-$algebras over $\mathbb{C}$, closed under $a.c.s.$ convergence and that the symbol map is an isometry with the algebra of measurable functions over the corresponding domain.
In particular, we prove the following crucial result for sequences in the class of unbounded GLT.
\begin{thm}\label{main_thm_intro}
    Let $\Omega$ be an open domain, such that $\Ld(\Omega)<\infty$, $\Ld(\partial\Omega)=0$, and let $\mathcal{G}_{\Omega}$ denote the set of all unbounded GLT over $\Omega$. Consider $\{A_\bfn\}_\bfn,\{A'_\bfn\}_\bfn\in\mathcal{G}_{\Omega}$, with canonical symbols $f,g$, and let $\alpha,\beta\in\mathbb{C}$. Then
\begin{itemize}
    \item[(i)]\label{main_item_i} $\{A_\bfn^*\}_\bfn\in\mathcal{G}_\Omega$ and $ \{A^*_{\bfn}\}_{\bfn}\sim_{\mathrm{GLT}}^{\Omega}\,\bar{f};$
    \item[(ii)]\label{main_item_ii} $\{\alpha A_\bfn+\beta A'_\bfn\}_\bfn\in\mathcal{G}_\Omega$ and $\{\alpha A_{\bfn}+\beta A'_\bfn\}_{\bfn}\sim_{\mathrm{GLT}}^{\Omega}\,\alpha f+\beta g;$
    \item[(iii)]\label{main_item_iii} $\{A_\bfn A'_\bfn\}_\bfn\in\mathcal{G}_\Omega$ and $\{A_{\bfn} A'_\bfn\}_{\bfn}\sim_{\mathrm{GLT}}^{\Omega}\,fg$;
    \item[(iv)] Assuming that $f\neq 0$ almost everywhere, then $\{A_\bfn^{\dagger}\}_\bfn \sim_{\mathrm{GLT}}^{\Omega}\,f^{-1}$, where $A_\bfn^{\dagger}$ denotes the Moore-Penrose pseudo inverse of $A_{\bfn}$.
\end{itemize}
\end{thm}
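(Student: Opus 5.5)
We first fix the definition of the class, as it will be set up later in the paper. Let $\{\Omega_m\}_m$ be a regular exhaustion of $\Omega$ by bounded domains with $\Ld(\partial\Omega_m)=0$. A sequence $\{A_\bfn\}_\bfn$ lies in $\mathcal{G}_\Omega$ with symbol $f$ when there are reduced GLT sequences $\{B_{\bfn,m}\}_\bfn$ over $\Omega_m$, with symbols $f_m$, such that $\{B_{\bfn,m}\}_\bfn$ converges to $\{A_\bfn\}_\bfn$ in the g.a.c.s. sense of \cite{gacs}. Here the dimensions of $B_{\bfn,m}$ are smaller than those of $A_\bfn$, with ratio $\Ld(\Omega_m)/\Ld(\Omega)\to1$. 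In addition, $f_m$ (extended by zero) converges to $f$ in measure on $\Omega\times[-\pi,\pi]^d$.

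The strategy for (i)--(iii) is to transfer the algebraic properties already available for reduced GLT over each bounded $\Omega_m$ to the limit. We use two facts, both assumed proved earlier in the paper: the reduced GLT over $\Omega_m$ form a $*$-algebra closed under a.c.s., and the g.a.c.s. convergence is compatible with adjoints, linear combinations and products.

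\textbf{Step 1 (items (i) and (ii)).} For (i), take the approximants $\{B_{\bfn,m}^*\}_\bfn$. These are reduced GLT over $\Omega_m$ with symbols $\bar f_m$. The g.a.c.s. distance is invariant under conjugate transposition, since singular values are preserved, so $\{B_{\bfn,m}^*\}$ converges g.a.c.s. to $\{A_\bfn^*\}$. Moreover $\bar f_m\to\bar f$ in measure.

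For (ii), there is a preliminary issue: the two sequences may come with different exhaustions. We handle this with a common-exhaustion lemma. We pass to $\Omega_m\cap\Omega'_m$, or equivalently restrict both families to a common exhaustion, using the natural restriction maps between reduced GLT algebras constructed earlier. This rests on the fact that restricting a reduced GLT from $\Omega_m$ to a subdomain $D$ gives a reduced GLT over $D$ with symbol $f_m|_D$. The discarded rows and columns number $o(\dim)$ relative to the measure defect, so the change is absorbed into the g.a.c.s. error. Once both families share one exhaustion, linearity of the reduced GLT symbol and the triangle inequality for the g.a.c.s. pseudometric give (ii).

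\textbf{Step 2 (item (iii)).} Products are the main obstacle. A.c.s. convergence of products needs a splitting $A_\bfn = B_{\bfn,m} + R_{\bfn,m} + N_{\bfn,m}$, with $\operatorname{rank}R_{\bfn,m}\le c(m)N$ and $\|N_{\bfn,m}\|\le \omega(m)$. One then uses the standard estimate
\[
A_\bfn A'_\bfn - B_{\bfn,m}B'_{\bfn,m} = (\text{low rank}) + (\text{small norm})\cdot(\text{bounded}),
\]
which requires sparse unboundedness (s.u.) of the factors. My plan is to proceed as follows. First, show that every unbounded GLT is sparsely unbounded; this follows from the symbol being measurable, hence finite a.e., together with the g.a.c.s. approximation, exactly as for classical GLT. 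Second, embed $B_{\bfn,m}$ into the ambient size by zero padding, so that the dimension defect becomes a low-rank term of relative rank $1-\Ld(\Omega_m)/\Ld(\Omega)\to0$. Third, apply the classical product lemma for a.c.s. of s.u. sequences. The symbol statement then follows because $f_mg_m\to fg$ in measure.

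\textbf{Step 3 (item (iv)).} We mimic the classical proof. Since $f\neq0$ a.e., $\{A_\bfn\}$ is sparsely vanishing. We take approximants $B_{\bfn,m}$ whose symbols $f_m$ are nonzero a.e. on $\Omega_m$, adjusting by $\epsilon_m$-perturbations if needed. By the reduced GLT theory, $\{B_{\bfn,m}^\dagger\}\sim f_m^{-1}$. Using sparse vanishing, we show that $\{B_{\bfn,m}^\dagger\}$ is a g.a.c.s. for $\{A_\bfn^\dagger\}$, via the identity $A^\dagger-B^\dagger = A^\dagger(B-A)B^\dagger$ up to low-rank corrections. Finally, $f_m^{-1}\to f^{-1}$ in measure. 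Controlling $A^\dagger$ and $B^\dagger$ simultaneously is where sparse vanishing is used, and this is the delicate part after Step 2.
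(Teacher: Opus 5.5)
Your plan can be made to work, and it differs from the paper's proof in (ii)--(iv). However, one of your two ``assumed facts'' is false as stated and must be replaced. G.a.c.s. convergence in the general sense of Definition \ref{def_gacs}, with arbitrary unitary structure matrices, is not compatible with sums or products. Worse, with it the symbol is not even well defined. On $\Omega=(0,1)$, conjugating $\diag_i a(i/n)$ by the flip permutation $i\mapsto n-i$ gives $\diag_i a(1-i/n)$. If the flip is allowed inside the structure matrices, this sequence is approximated by restrictions of $\diag_i a(i/n)$, so it receives both $a(x)$ and $a(1-x)$ as symbols. Its product with $\diag_i a(i/n)$ is distributed as $a(x)a(1-x)$, not $a(x)^2$. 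What actually makes your Steps 1--3 work is Definition \ref{def_UGLT}(ii): the structure matrices are the grid permutations $\tilde\Pi_{\bfn,t}$ of Remark \ref{rem_obs_ext_operator}, so every approximation reads $A_\bfn=E_{\bfn,\Omega_t,\Omega}(B_{\bfn,t})+S_{\bfn,t}+N_{\bfn,t}$. Your $\Omega_m\cap\Omega'_m$ is indeed a regular exhaustion, and Lemma \ref{dimensions}(ii) controls the discarded indices. After passing to it, the same map $E_{\bfn,\Omega_t,\Omega}$ serves both sequences. This map satisfies $E(B)E(B')=E(BB')$ and, being conjugation by a permutation, $E(B)^\dagger=E(B^\dagger)$. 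Your ``zero padding'' must be exactly this map; say so and drop the general claim.

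With that fixed, here is how the two routes compare.

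\textbf{Items (ii)--(iii).} The paper first shows that restrictions of a uGLT to bounded subdomains are reduced GLT (Theorem \ref{perm_spectr_info}). Hence, by Corollary \ref{indep_on_approx}, along any exhaustion $B_{\bfn,t}:=\mathcal{R}_{\Omega,\Omega_t}(\{A_\bfn\}_\bfn)$ approximates with a pure rank correction of rank at most $2(d_\bfn^\Omega-d_\bfn^{\Omega_t})$ and no norm term. Items (ii)--(iii) then reduce to exact algebra: $A_\bfn A'_\bfn=E(B_{\bfn,t}B'_{\bfn,t})$ plus a term of rank at most $6(d_\bfn^\Omega-d_\bfn^{\Omega_t})$. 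No reconciliation of exhaustions, no sparse unboundedness and no product lemma are needed. Your route works straight from the definition and keeps the norm corrections. It pays with the intersection trick, sparse unboundedness of uGLT (Theorem \ref{gacs_sing_val} plus Proposition \ref{sv_neq_zero_a_e}) and the classical a.c.s. product lemma. It is correct but heavier.

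\textbf{Item (iv).} The paper uses no perturbation identity. It takes $\{B_\bfn\}_\bfn\sim^{\Omega}_{\mathrm{GLT}}f^{-1}$ from surjectivity (Corollary \ref{cor_UGLT_surj}) and gets $\{B_\bfn A_\bfn-I_\bfn\}_\bfn\sim_\sigma 0$ from (iii). It then applies Lemma \ref{preserving_zero_distribution} twice to obtain $d_{a.c.s.}(\{B_\bfn\}_\bfn,\{A_\bfn^\dagger\}_\bfn)=0$, and concludes by a.c.s. closure (Theorem \ref{thm_acs_closure}). Your direct route uses $X^\dagger-Y^\dagger=X^\dagger(Y-X)Y^\dagger+X^\dagger(I-YY^\dagger)-(I-X^\dagger X)Y^\dagger$ with $X=A_\bfn$, $Y=E(B_{\bfn,m})$. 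It avoids both surjectivity and closure.

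The point you leave vague is uniformity in $m$: you need $\{E(B_{\bfn,m})^\dagger\}_\bfn$ to be sparsely unbounded with bounds independent of $m$. This follows from interlacing, $\#\{i:\sigma_i(E(B_{\bfn,m}))<\varepsilon\}\le\#\{i:\sigma_i(A_\bfn)<\varepsilon+\omega(m)\}+c(m)d_\bfn^\Omega$. Combine this with sparse vanishing of $\{A_\bfn\}_\bfn$ and choose $\varepsilon=\varepsilon(m)\to0$ with $\omega(m)/\varepsilon(m)\to0$. Also, no $\epsilon_m$-perturbation is needed, since Definition \ref{def_UGLT}(iii) gives $f_m=f$ a.e. on $\Omega_m\times[-\pi,\pi]^d$.
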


This, in turn, leads us to our second fundamental contribution, namely, the equivalence between unbounded GLT sequences related to a domain $\Omega$ and the set of measurable functions over it.

\begin{thm}\label{thm_isom_intro}
    Let $\Omega$ be an open domain, such that $\Ld(\Omega)<+\infty$ and $\Ld(\partial\Omega)=0$. Consider the map $\Phi_\Omega:(\sfrac{\mathcal{G}_\Omega}{\sim_d},d_{a.c.s.})\to(L^0(\Omega\times[-\pi,\pi]^d),d_{m,\Omega})$ that associates to any reduced GLT its canonical symbol. Then, $\Phi_\Omega$ is a surjective isometry.
\end{thm}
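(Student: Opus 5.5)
The plan is to reduce the statement to the corresponding isometry results for reduced GLT over bounded domains, and then pass to the limit along the exhaustion using the g.a.c.s. machinery of \cite{gacs}. First, the map $\Phi_\Omega$ must be well defined on the quotient $\mathcal{G}_\Omega/\sim_d$, where $\{A_\bfn\}\sim_d\{B_\bfn\}$ means $d_{a.c.s.}(\{A_\bfn\},\{B_\bfn\})=0$. By Theorem \ref{main_thm_intro}(ii), $\{A_\bfn-B_\bfn\}_\bfn\in\mathcal{G}_\Omega$ with symbol $f-g$. So it suffices to prove the isometric identity
$$d_{a.c.s.}(\{A_\bfn\}_\bfn,\{B_\bfn\}_\bfn)=d_{m,\Omega}(f,g).$$
Equivalently, using linearity, it suffices to show that every $\{Z_\bfn\}_\bfn\in\mathcal{G}_\Omega$ with symbol $h$ satisfies $d_{a.c.s.}(\{Z_\bfn\},\{O_\bfn\})=d_{m,\Omega}(h,0)$. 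This single identity gives well-definedness, injectivity and isometry at once.

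To prove the identity, take a regular exhaustion $\Omega_1\subset\Omega_2\subset\cdots$ of $\Omega$ by bounded domains with negligible boundary. By the definition of unbounded GLT, there are reduced GLT sequences $\{Z^{(m)}_\bfn\}$ over $\Omega_m$, with symbols $h_m$, that g.a.c.s.-converge to $\{Z_\bfn\}$, and $h_m\to h$ in measure on $\Omega\times[-\pi,\pi]^d$ (extending $h_m$ by zero). For each bounded $\Omega_m$, I invoke the already established isometry for reduced GLT: $d_{a.c.s.}(\{Z^{(m)}_\bfn\},0)=d_{m,\Omega_m}(h_m,0)$. Two comparisons then remain.

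\begin{enumerate}
\item The normalisations of $d_{a.c.s.}$ on the reduced dimensions and on the full dimensions must be compared. This produces a factor $\Ld(\Omega_m)/\Ld(\Omega)\to1$, which matches the passage from $d_{m,\Omega_m}$ to $d_{m,\Omega}$.
\item One needs continuity: $d_{a.c.s.}(\{Z^{(m)}\},\{Z\})\to0$ in the g.a.c.s. sense and $d_{m,\Omega}(h_m,h)\to0$. Together with the triangle inequality, these give the identity in the limit.
\end{enumerate}

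The main obstacle is the first comparison. The embedded reduced sequence has size $d_\bfn^{(m)}<d_\bfn$, and the missing block of size $d_\bfn-d_\bfn^{(m)}$ contributes a fraction $\approx\Ld(\Omega\setminus\Omega_m)/\Ld(\Omega)$ to $d_{a.c.s.}$. This fraction must be controlled uniformly; I would use $\Ld(\partial\Omega)=0$ and the counting of grid points in $\Omega_m$ versus $\Omega$.

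For surjectivity, take any measurable $h$ on $\Omega\times[-\pi,\pi]^d$ and set $h_m=h\chi_{\Omega_m}$. By surjectivity of the reduced GLT symbol map on each bounded $\Omega_m$, there is a reduced GLT $\{Z^{(m)}_\bfn\}$ with symbol $h_m$. Since $h_m\to h$ in measure, the isometry just proved shows that the embedded sequences are Cauchy in $d_{a.c.s.}$. By completeness, i.e.\ closedness of $\mathcal{G}_\Omega$ under a.c.s.\ limits (established along with Theorem \ref{main_thm_intro}), they converge to some $\{Z_\bfn\}\in\mathcal{G}_\Omega$, and its symbol is $h$ by the definition of the unbounded GLT symbol.
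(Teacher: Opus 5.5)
Your proposal is correct and follows essentially the paper's route. Both arguments approximate by extensions of reduced GLT along an exhaustion, apply the bounded-domain isometry (Theorem \ref{isometry_red_GLT}) on each piece, and pass to the limit using continuity of both pseudo-metrics; surjectivity comes from reduced surjectivity plus a.c.s.\ completeness and closure, as in Corollary \ref{cor_UGLT_surj} and Theorem \ref{thm_acs_closure}.

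The only real difference is your ``comparison 1''. It is not literally a multiplicative factor, but the two-sided bounds $p(E_\bfn(B))\le p(B)\le \frac{d_\bfn^{\Omega}}{d_\bfn^{\Omega_m}}\,p(E_\bfn(B))$ and $\frac{\Ld(\Omega_m)}{\Ld(\Omega)}\,d_{m,\Omega_m}(h_m,0)\le d_{m,\Omega}(h_m^E,0)\le d_{m,\Omega_m}(h_m,0)$ suffice, since all these quantities are at most $1$ and the ratios tend to $1$. The paper instead gets an exact identity on such extensions by rescaling $B_\bfn$ by $d_\bfn^{\Omega}/d_\bfn^{\Omega_0}$, and then concludes by density.
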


With this contribution, the authors aim to look at the GLT theory from a different, more geometric, viewpoint. In particular, we expect that considering all unbounded GLT algebras as a whole object will be advantageous in the spectral study of PDEs. For example, the water waves equation (see \cite{L13} for an introduction on the topic) can be formalized as a Hamiltonian system over a moving domain (the region between the wave profile and the bottom of the sea). Using the unbounded GLT world as a background environment, we aim to build a nice framework for studying such kind of moving domain PDEs, with the spectral information associated to the discretized version of the problem that is carried over at each time step.

The paper is organized as follows: in Section \ref{sec-prel} we introduce some well known tools from spectral linear algebra, such as the concept of spectral distribution, approximating class of sequences (a.c.s.), Toeplitz and diagonal sampling matrices, ending with the class of GLT sequences and their isometric equivalence with the space of measurable functions. Section \ref{sec-hypercube} is devoted to the study of GLT algebras over different hypercubes. In Section \ref{sec-reduced-glt}, we introduce various algebras of Reduced GLT sequences. In Section \ref{sec-gacs} we recall the definition of generalized approximating class of sequences (g.a.c.s.), together with the main results from \cite{gacs}. Section \ref{sec-UGLT} represents the core of the present work. Here, we define the class of unbounded GLT sequences, we establish the uniqueness of the canonical symbol and we prove that many algebraic and topological properties hold just as for the usual GLT sequences. In Section \ref{sec:appl}, we introduce the classes of unbounded Toeplitz matrices and unbounded diagonal sampling matrices and perform a numerical test on a model problem together with some visualizations to show the validity of our construction and derivations. Finally, Section \ref{sec-final} is devoted to drawing conclusions and proposing open problems and possible directions for future research.

\section{Notation and preliminaries in asymptotic analysis}\label{sec-prel}

In this section we introduce the main notation that we use throughout the paper, focusing in particular on the concepts of spectral distribution, sequences of multi-level Toeplitz and diagonal sampling matrices and GLT theory (for a comprehensive account of all these concepts we refer the reader to \cite{glt-book-1,glt-book-2}).

\subsection{Spectral distribution of matrix-sequences}

\begin{defn}\label{def-distribution}
Let $\{A_n\}_n$ be a matrix-sequence, with $A_n$ of size $d_n$ increasing, and let $f:\Omega\subset\mathbb R^d\to\mathbb{C}^{r\times r}$ be
a measurable function defined on a set $\Omega$ with $0<\mathcal{L}^d(\Omega)<\infty$.
\begin{itemize}
    \item We say that $\{A_n\}_n$ has a (asymptotic) singular value distribution described by $f$ on $\Omega$, and we write $\{A_n\}_n\sim_\sigma (f,\Omega)$, if
    \begin{equation}\label{distribution:sv-sv}
     \lim_{n\to\infty}\frac1{d_n}\sum_{i=1}^{d_n}F(\sigma_i(A_n))=\frac{1}{\mathcal{L}^d(\Omega)}\int_{\Omega}\frac{\sum_{i=1}^{r}F(\sigma_i(f(\mathbf x)))}{r}{\rm d}\mathbf x,\qquad\forall\,F\in C_c(\mathbb R).
    \end{equation}
    \item We say that $\{A_n\}_n$ has a (asymptotic) spectral (or eigenvalue) distribution described by $f$ on $\Omega$, and we write $\{A_n\}_n\sim_\lambda (f,\Omega)$, if
    \begin{equation}\label{distribution:sv-eig}
     \lim_{n\to\infty}\frac1{d_n}\sum_{i=1}^{d_n}F(\lambda_i(A_n))=\frac{1}{\mathcal{L}^d(\Omega)}\int_{\Omega}\frac{\sum_{i=1}^{r}F(\lambda_i(f(\mathbf x)))}{r}{\rm d}\mathbf x,\qquad\forall\,F\in C_c(\mathbb C).
    \end{equation}
\end{itemize}
If $\{A_n\}_n$ has both a singular value and an eigenvalue distribution described by $f$, we write $\{A_n\}_n\sim_{\sigma,\lambda}\left(f, \Omega\right)$.
\end{defn}

\subsection{Approximating class of sequences}

\begin{defn}\label{def_acs}
Let $\{A_{n}\}_n$ be a square matrix-sequence of size $\{d_n\}_n$, such that $ d_n \nearrow \infty $, and let $\left\{\{B_{n,t}\}_n\right\}_{t}$ be a sequence of matrix-sequences of the same size $\{d_n\}_n$. We say that $\left\{\{B_{n,t}\}_n\right\}_{t}$ is an approximating class of sequences (a.c.s.) for $\{A_{n}\}_n$ 
if the following condition is met: for every $t$ there exists $n_t$ such that, for $n>n_t$,
\begin{equation*} \nonumber
A_n =  B_{n,t} + S_{n,t} + N_{n,t},
\end{equation*}
and
\begin{equation*}
    \textnormal{rank}\left(S_{n,t}\right) \leq c(t) d_n,
\end{equation*}
$$
\left\|N_{n,t}\right\|\leq \omega(t),
$$
where $n_t, c(t), \omega(t)$ depend only on $t$, and
$$
\lim_{t \to \infty} c(t) = \lim_{t \to \infty} \omega(t) =0.
$$
\end{defn}

The a.c.s. approximation techniques are a key ingredient for the computation of spectral symbols, as shown in the next theorem.

 \begin{thm}
 \label{acs}\rm(\cite{acs-laa,Tilliloc})
 Let $\{A_n\}_n$ be a square matrix-sequence of size $d_n$, with $ d_n \nearrow \infty $, and let $\left\{ \{B_{n,t} \}_n \right\}_t$ be an a.c.s. for $ \{ A_n \}_n$. Suppose that $ \{ B_{n,t}\}_n \sim_{\sigma} (f_t, \Omega) $ and $ f_t \to f $ in measure, then $ \{ A_n \}_n \sim_{\sigma} (f, \Omega) $. Furthermore, if all the matrices involved are Hermitian, $ \{B_{n,t} \}_n \sim_{\lambda} (f_t, \Omega) $ and $ f_t \to f $ in measure, then $ \{ A_n \}_n \sim_{\lambda} (f,\Omega). $
\end{thm}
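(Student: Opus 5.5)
We prove the singular value statement first and then reduce the eigenvalue statement to the same scheme. Fix $F\in C_c(\mathbb R)$. By density it suffices to take $F$ Lipschitz and compactly supported, since such functions are uniformly dense in $C_c(\mathbb R)$ and both sides of \eqref{distribution:sv-sv} depend continuously on $F$ in the sup norm: both are averages against probability measures. Write
\[
\Sigma_F(X)=\frac1{d_n}\sum_i F(\sigma_i(X)), \qquad \phi_F(g)=\frac1{\Ld(\Omega)}\int_\Omega\frac1r\sum_i F(\sigma_i(g)).
\]
We bound
\[
|\Sigma_F(A_n)-\phi_F(f)|\le|\Sigma_F(A_n)-\Sigma_F(B_{n,t})|+|\Sigma_F(B_{n,t})-\phi_F(f_t)|+|\phi_F(f_t)-\phi_F(f)|
\]
and show that each term becomes small when we first take $\limsup_n$ and then $t\to\infty$.

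\textbf{Third term.} Since $f_t\to f$ in measure on a set of finite measure, and singular values are $1$-Lipschitz in the spectral norm (Weyl), we have $\sigma_i(f_t)\to\sigma_i(f)$ in measure. With $F$ bounded and uniformly continuous, dominated convergence in measure gives $\phi_F(f_t)\to\phi_F(f)$.

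\textbf{Second term.} This term tends to $0$ as $n\to\infty$ for each fixed $t$, by hypothesis.

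\textbf{First term.} This is the key step. For $n>n_t$ we write $A_n=B_{n,t}+S_{n,t}+N_{n,t}$.
\begin{itemize}
\item For the norm-small part, Weyl's perturbation inequality gives $|\sigma_i(B_{n,t}+N_{n,t})-\sigma_i(B_{n,t})|\le\omega(t)$. Hence this part changes $\Sigma_F$ by at most $\mathrm{Lip}(F)\,\omega(t)$.
\item For the low-rank part, the interlacing inequalities $\sigma_{i+k}(X)\le\sigma_i(X+S)\le\sigma_{i-k}(X)$ with $k=\mathrm{rank}\,S$ show that the two ordered singular value lists differ, after a shift, by at most $k$ indices. Consequently
\[
\Bigl|\sum_i F(\sigma_i(X+S))-\sum_iF(\sigma_i(X))\Bigr|\le 2k\|F\|_\infty,
\]
which is justified by writing $F$ as a difference of monotone functions, or by a standard counting-function argument with step functions. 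This yields a change of at most $2c(t)\|F\|_\infty$ after dividing by $d_n$.
\end{itemize}
Putting these together,
\[
\limsup_n|\Sigma_F(A_n)-\phi_F(f)|\le \mathrm{Lip}(F)\,\omega(t)+2\|F\|_\infty c(t)+|\phi_F(f_t)-\phi_F(f)|,
\]
and letting $t\to\infty$ proves the claim.

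\textbf{Hermitian eigenvalue case.} The same scheme applies, with eigenvalues of Hermitian matrices replacing singular values and $F$ real valued on $\mathbb R$. To make the perturbations Hermitian, we symmetrize: replace $S,N$ by their Hermitian parts. Since $A_n$ and $B_{n,t}$ are Hermitian, $S+N$ is Hermitian, and taking Hermitian parts preserves the splitting with $\mathrm{rank}\le 2c(t)d_n$ and norm $\le\omega(t)$. Then Weyl's inequality for eigenvalues handles $N$, and Cauchy interlacing for rank-$k$ Hermitian perturbations gives the same $2k\|F\|_\infty$ bound for $S$.

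The main obstacle is the low-rank estimate, which has to be uniform in $F$. I would prove it by reducing to indicator-type counting functions $\#\{i:\sigma_i\le s\}$, which change by at most $k$ under a rank-$k$ perturbation, and then integrate by parts against $dF$ when $F$ has bounded variation.
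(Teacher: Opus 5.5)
The paper does not prove this statement; it quotes it from the literature. So your argument has to stand on its own. Its architecture is the standard one and most steps are sound: the three-term splitting, the density reduction to Lipschitz $F$, Weyl's inequality for the norm part, convergence of the symbol term via convergence in measure, and the Hermitian symmetrization with rank doubling.

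\textbf{The gap.} The low-rank estimate you state,
\[
\Bigl|\sum_i F(\sigma_i(X+S))-\sum_i F(\sigma_i(X))\Bigr|\le 2k\|F\|_\infty ,
\]
is false, not merely unproved. Take $X=\mathrm{diag}(b_1,\dots,b_d)$ with $b_1>\dots>b_d>0$, and $S=vv^*$ with every $v_i\neq 0$. Then $X+S$ is positive semidefinite and its eigenvalues (equal to its singular values) strictly interlace: $a_1>b_1>a_2>b_2>\dots>a_d>b_d$. A piecewise linear compactly supported $F$ with $0\le F\le 1$, $F(b_i)=0$ and $F(a_i)=1$ makes the left-hand side equal to $d$, while $k=1$. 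Interlacing only controls the counting functions: $\bigl|\#\{i:\sigma_i(X+S)\le s\}-\#\{i:\sigma_i(X)\le s\}\bigr|\le k$ for every $s$. What this yields, after integrating against $dF$, is
\[
\Bigl|\sum_i F(\sigma_i(X+S))-\sum_i F(\sigma_i(X))\Bigr|\le k\,\mathrm{TV}(F).
\]
Both justifications you sketch actually give this bound, not $2k\|F\|_\infty$. For monotone $F$ the index shift costs $k(\sup F-\inf F)$, and summing over a Jordan decomposition gives $k\,\mathrm{TV}(F)$.

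\textbf{The fix.} The repair is immediate, because you have already reduced to Lipschitz $F$ supported in some $[-R,R]$, for which $\mathrm{TV}(F)\le 2R\,\mathrm{Lip}(F)<\infty$. Your final bound becomes
\[
\limsup_n|\Sigma_F(A_n)-\phi_F(f)|\le \mathrm{Lip}(F)\,\omega(t)+\mathrm{TV}(F)\,c(t)+|\phi_F(f_t)-\phi_F(f)|.
\]
In the Hermitian case the middle term is $2\,\mathrm{TV}(F)\,c(t)$, because symmetrization doubles the rank. For fixed $F$ this still tends to $0$ as $t\to\infty$, and the rest of your argument goes through unchanged. Note that this is exactly where the reduction to Lipschitz test functions does real work: no variation-free bound of this kind exists, as the example shows. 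So the density step is essential, not a convenience.
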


If $\mathcal{M}$ is the space of matrix-sequences $\{A_n\}_n$ of increasing (fixed) size $\{d_n\}_n$, then it can be given the structure of a pseudo-metric space, which induces the a.c.s. convergence. More precisely, given $A_n\in\mathbb{C}^{d_n\times d_n}$, we define
\begin{equation*}
    p(A_n):=\min\limits_{i=1,\ldots,d_n+1}\left\{\frac{i-1}{d_n}+\sigma_i(A_n)\right\},
\end{equation*}
where $\sigma_i(A_n)$, $i=1,\dots,d_n$, are the singular values of $A_n$ ordered non-increasingly and, by convention, $\sigma_{d_n+1}(A_n)=0$. In addition, given $\{A_n\}_n,\{B_n\}_n\in\mathcal{M}$, we can define
\begin{equation*}
    \rho(\{A_n\}_n):=\limsup_{n\to+\infty}p(A_n),
\end{equation*}
and 
\begin{equation*}
    d_{a.c.s.}(\{A_n\}_n,\{B_n\}_n):=\rho(\{A_n-B_n\}_n).
\end{equation*}
In \cite{Barb17}, the author proved the following result.
\begin{thm}\label{completeness_acs_conv}
    The space $(\mathcal{M},d_{a.c.s.})$ is a complete pseudo-metric space and, given $\{A_n\}_n,\{B_n\}_n\in\mathcal{M}$, it holds
    \begin{equation*}
        d_{a.c.s.}(\{A_n\}_n,\{B_n\}_n)=0\quad\iff\quad\{A_n-B_n\}_n\sim_{\sigma} 0.
    \end{equation*}
    Moreover, given $\{A_n\}_n,\{\{B_{n,t}\}_n\}_t\in\mathcal{M}$, the following are equivalent:
    \begin{itemize}
        \item $\lim\limits_{t\to+\infty} d_{a.c.s.}(\{B_{n,t}\}_n,\{A_n\}_n)=0;$
        \item $\{\{B_{n,t}\}_n\}_t$ is an a.c.s. for $\{A_n\}_n$.
    \end{itemize}
\end{thm}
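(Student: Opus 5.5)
The plan is to reduce everything to one elementary identity for $p$. For $A\in\mathbb C^{d\times d}$ I would show
\begin{equation*}
p(A)=\min\Bigl\{\tfrac{\mathrm{rank}(S)}{d}+\|N\| \,:\, A=S+N\Bigr\}.
\end{equation*}
\textbf{Proof of the identity.}
\begin{itemize}
\item[$(\geq)$] Take the truncated SVD. Let $S$ keep the $i-1$ largest singular values and set $N=A-S$. Then $\|N\|=\sigma_i(A)$ and $\mathrm{rank}(S)\leq i-1$.
\item[$(\leq)$] Suppose $A=S+N$ with $\mathrm{rank}(S)=r$. Weyl's interlacing for singular values gives $\sigma_{r+1}(A)\leq \sigma_1(N)+\sigma_{r+1}(S)=\|N\|$. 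Hence $p(A)\leq r/d+\sigma_{r+1}(A)\leq r/d+\|N\|$.
\end{itemize}
Two consequences follow at once. First, $p(A+B)\leq p(A)+p(B)$, by subadditivity of rank and of the norm. Second, $p(\lambda A)$ is controlled for $|\lambda|\le 1$. Passing to $\limsup$, $\rho$ is subadditive, so $d_{a.c.s.}$ is symmetric, nonnegative and satisfies the triangle inequality. Hence $(\mathcal M,d_{a.c.s.})$ is a pseudo-metric space.

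\textbf{Characterisation of distance zero.} I would use the standard characterisation of zero-distributed sequences: $\{Z_n\}_n\sim_\sigma 0$ if and only if, for every $\varepsilon>0$ and every large $n$, $Z_n=S_n+N_n$ with $\mathrm{rank}(S_n)\leq\varepsilon d_n$ and $\|N_n\|\leq\varepsilon$. Equivalently, $\#\{i:\sigma_i(Z_n)>\varepsilon\}/d_n\to0$ for every $\varepsilon$, which is a direct consequence of Definition~\ref{def-distribution} by testing with $F\in C_c(\mathbb R)$. By the identity above, this decomposition property says exactly that $p(Z_n)\to 0$, i.e.\ $\rho(\{Z_n\}_n)=0$. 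Applying this to $Z_n=A_n-B_n$ gives the first equivalence.

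\textbf{Equivalence between a.c.s.\ and $d_{a.c.s.}$-convergence.}
\begin{itemize}
\item[$(\Leftarrow)$] If $\{\{B_{n,t}\}_n\}_t$ is an a.c.s.\ as in Definition~\ref{def_acs}, then for $n>n_t$ the identity gives $p(A_n-B_{n,t})\leq c(t)+\omega(t)$. Hence $d_{a.c.s.}\leq c(t)+\omega(t)\to0$.
\item[$(\Rightarrow)$] Set $\delta_t=d_{a.c.s.}(\{B_{n,t}\}_n,\{A_n\}_n)\to 0$. By definition of $\limsup$ there is $n_t$ with $p(A_n-B_{n,t})<\delta_t+1/t$ for $n>n_t$. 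The minimising splitting from the identity yields $S_{n,t},N_{n,t}$ with $c(t)=\omega(t)=\delta_t+1/t\to0$.
\end{itemize}

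\textbf{Completeness.} This is the step I expect to be the main obstacle, since one must produce a limit by a diagonal construction. Let $\{\{A^{(k)}_n\}_n\}_k$ be Cauchy. Passing to a subsequence, I may assume $d_{a.c.s.}(A^{(k)},A^{(k+1)})<2^{-k}$. Then choose strictly increasing $n_k$ such that
\begin{equation*}
p\bigl(A^{(k)}_n-A^{(k+1)}_n\bigr)\leq 2^{-k+1}\quad\text{for all } n\geq n_k .
\end{equation*}
Define $A_n=A^{(j)}_n$ for $n_j\leq n<n_{j+1}$ (and arbitrarily for $n<n_1$). Fix $k$ and take $n\geq n_k$, say with $n_j\leq n<n_{j+1}$ and $j\geq k$. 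Telescoping and subadditivity of $p$ give
\begin{equation*}
p\bigl(A_n-A^{(k)}_n\bigr)\leq\sum_{i=k}^{j-1}2^{-i+1}\leq 2^{-k+2}.
\end{equation*}
The condition $n\geq n_i$ for every $i\le j-1$ holds because the $n_i$ are increasing. Therefore $d_{a.c.s.}(A^{(k)},A)\leq 2^{-k+2}\to0$. Since a Cauchy sequence with a convergent subsequence converges, the whole sequence converges to $\{A_n\}_n$.
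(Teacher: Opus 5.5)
Your proof is correct: the identity $p(A)=\min\{\mathrm{rank}(S)/d+\|N\|\}$ (truncated SVD for one inequality, Weyl's inequality for the other), the resulting subadditivity, the reduction of both zero distribution and a.c.s.\ convergence to $p(\cdot)\to 0$, and the diagonal construction along a $2^{-k}$-fast subsequence for completeness all check out. The paper gives no proof of its own and simply imports the statement from \cite{Barb17}; your argument is essentially the standard proof found there and in \cite{glt-book-1}.
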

\subsection{Sparsely vanishing and sparsely unbounded matrix-sequences}
Given a matrix-sequence $\{A_n\}_n$, with $A_n$ of increasing size $d_n$, we consider its singular values and study how many of them are asymptotically small or large in modulus. The next definition encodes precisely this behavior.
\begin{defn}[s.v. and s.u. matrix sequences]
    A matrix-sequence $\{A_n\}_n$, of increasing size $\{d_n\}_n$ is said to be sparsely vanishing $\mathrm{(s.v.)}$ if, for every $\varepsilon>0$, there exists $n_\varepsilon$, such that
    \begin{equation*}
        \frac{\#\{i\in\{1,\ldots,d_n\}\,\vert\,\sigma_i(A_n)<\varepsilon\}}{d_n}\leq r(\varepsilon),
    \end{equation*}
    for every $n>n_\varepsilon$, and with $\lim\limits_{\varepsilon\to 0}r(\varepsilon)=0$.
    
    Similarly, a matrix-sequence $\{A_n\}_n$, of size $\{d_n\}_n$ is said to be sparsely unbounded $\mathrm{(s.u.)}$
    if, for every $M>0$, there exists $n_M$, such that
    \begin{equation*}
        \frac{\#\{i\in\{1,\ldots,d_n\}\,\vert\,\sigma_i(A_n)>M\}}{d_n}\leq r'(M),
    \end{equation*}
    for every $n>n_M$, and with $\lim\limits_{M\to +\infty}r'(M)=0$.
\end{defn}
\begin{rem}\label{rem_sv_su}
    Note that if a matrix-sequence $\{A_n\}_n$ is sparsely vanishing, then its pseudo-inverse $\{A_n^\dagger\}_n$ is sparsely unbounded. This is a consequence of the fact that, given a matrix $A\in\mathbb{C}^{d\times d}$, with $\mathrm{rank}(A)=r$, then the singular values of its pseudo-inverse $A^\dagger$ are the reciprocal of the non-zero singular values of $A$, together with $d-r$ additional zero singular values. Note that the converse is not true, the sequence of pseudo-inverses of a sparsely unbounded matrix-sequence may not be sparsely vanishing (e.g., consider a matrix-sequence with constant rank). 
\end{rem}
The next results (see \cite[Proposition 5.4 - Proposition 8.4]{glt-book-1}) show the relation between possessing a symbol function and being sparsely vanishing/unbounded.
\begin{prop}\label{sv_neq_zero_a_e}
    Let $\{A_n\}_n\sim_\sigma(f,\Omega)$. Then, $\{A_n\}_n$ is $\mathrm{s.u.}$ 
    
    In addition, $\{A_n\}_n$ is $\mathrm{s.v.}$ if and only if $f\neq 0$ almost everywhere. 
\end{prop}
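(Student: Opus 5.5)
We argue directly from Definition \ref{def-distribution}, fixing the notation $\{A_n\}_n\sim_\sigma(f,\Omega)$ with $f:\Omega\to\mathbb{C}^{r\times r}$.

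\textbf{Sparse unboundedness.} Fix $M>0$ and pick a continuous cutoff $F_M\in C_c(\mathbb{R})$ with $0\le F_M\le 1$, $F_M\equiv 1$ on $[-M/2,M/2]$ and $F_M\equiv 0$ outside $[-M,M]$. Then $1-F_M\ge \mathds{1}_{\{|x|>M\}}$, so
\begin{equation*}
\frac{\#\{i:\sigma_i(A_n)>M\}}{d_n}\le 1-\frac1{d_n}\sum_{i=1}^{d_n}F_M(\sigma_i(A_n)).
\end{equation*}
By the distribution hypothesis, the right-hand side tends to
\begin{equation*}
1-\frac{1}{\mathcal{L}^d(\Omega)}\int_\Omega\frac1r\sum_{j=1}^r F_M(\sigma_j(f(\mathbf x)))\,{\rm d}\mathbf x=:\tilde r(M).
\end{equation*}
Hence for $n>n_M$ the ratio is at most $r'(M):=\tilde r(M)+1/M$. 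Since the $\sigma_j(f(\mathbf x))$ are finite a.e. and $\Omega$ has finite measure, $F_M(\sigma_j(f(\mathbf x)))\to1$ pointwise a.e. as $M\to\infty$. Dominated convergence then gives $\tilde r(M)\to0$, so $r'(M)\to0$.

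\textbf{The "if" direction of the s.v. statement.} The same argument works with a bump near zero. Take $G_\varepsilon\in C_c(\mathbb{R})$ with $0\le G_\varepsilon\le1$, $G_\varepsilon\equiv1$ on $[-\varepsilon,\varepsilon]$ and support in $[-2\varepsilon,2\varepsilon]$. Then
\begin{equation*}
\frac{\#\{i:\sigma_i(A_n)<\varepsilon\}}{d_n}\le\frac1{d_n}\sum_{i=1}^{d_n} G_\varepsilon(\sigma_i(A_n))\longrightarrow \frac{1}{\mathcal{L}^d(\Omega)}\int_\Omega\frac1r\sum_{j=1}^r G_\varepsilon(\sigma_j(f(\mathbf x)))\,{\rm d}\mathbf x.
\end{equation*}
As $\varepsilon\to0$, the integrand tends pointwise to $\frac1r\#\{j:\sigma_j(f(\mathbf x))=0\}$. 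By dominated convergence the limit is
\begin{equation*}
\frac{1}{r\,\mathcal{L}^d(\Omega)}\int_\Omega \#\{j:\sigma_j(f(\mathbf x))=0\}\,{\rm d}\mathbf x .
\end{equation*}
In the matrix-valued case we read "$f\neq0$ a.e." as $\det f\ne0$ a.e., i.e. $\sigma_r(f(\mathbf x))>0$ a.e., which is the natural convention; in the scalar case $r=1$ this is literally $f\neq0$ a.e. Under this hypothesis the limit is zero, so $r(\varepsilon)\to0$ and $\{A_n\}_n$ is s.v.

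\textbf{The "only if" direction.} Suppose $E=\{\mathbf x:\sigma_r(f(\mathbf x))=0\}$ has positive measure. Take $H_\varepsilon\in C_c(\mathbb{R})$ with $0\le H_\varepsilon\le 1$, $H_\varepsilon(0)=1$ and support in $(-\varepsilon,\varepsilon)$. Then $H_\varepsilon\le\mathds{1}_{\{|x|<\varepsilon\}}$, so
\begin{equation*}
\liminf_n\frac{\#\{i:\sigma_i(A_n)<\varepsilon\}}{d_n}\ge \frac{1}{r\,\mathcal{L}^d(\Omega)}\int_\Omega \#\{j:\sigma_j(f(\mathbf x))=0\}\,{\rm d}\mathbf x\ge \frac{\mathcal{L}^d(E)}{r\,\mathcal{L}^d(\Omega)}=:c>0.
\end{equation*}
This lower bound holds uniformly in $\varepsilon$. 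It contradicts the existence of any $r(\varepsilon)\to0$ bounding the ratio for large $n$, so $\{A_n\}_n$ is not s.v.

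The only delicate point is this last direction. The lower bound must hold for every $\varepsilon$ simultaneously, and that is exactly why a test function $H_\varepsilon$ lying below the indicator of $\{|x|<\varepsilon\}$, with value $1$ at $0$, is used. Everything else is routine dominated convergence, justified by the finite measure of $\Omega$ and the boundedness of the test functions.
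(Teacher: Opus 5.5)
Your proof is correct. The paper gives no argument of its own and simply quotes the result from the first GLT book, where it is proved in the same way you do: a cutoff test function bounds the counting ratio from above or below, and dominated convergence on the finite-measure set $\Omega$ finishes the argument. You were also right to read ``$f\neq 0$ a.e.'' as $\sigma_r(f(\mathbf x))>0$ (equivalently $\det f\neq 0$) a.e.\ in the matrix-valued case. Under the literal reading the equivalence fails for $r\ge 2$; for example, take $f\equiv\diag(1,0)$ with a sequence whose singular values are half ones and half zeros.
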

Finally, sparsely unbounded matrix-sequences do not alter the property of being zero-distributed, when acting by multiplication. The next lemma is well-known in the literature (see \cite{glt-book-1,glt-book-2}) even though not stated explicitly. For this reason, we include also the proof in the present work.
\begin{lem}\label{preserving_zero_distribution}
    Let $\{A_n\}_n$ and $\{B_n\}_n$ be two matrix-sequences of increasing size $\{d_n\}_n$. Assume that $\{A_n\}_n\sim_\sigma 0$ and that $\{B_n\}_n$ is sparsely unbounded. Then, $\{A_nB_n\}\sim_\sigma 0$.
\end{lem}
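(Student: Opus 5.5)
The idea is to use the characterization of zero-distributed sequences through the a.c.s. framework. By Theorem \ref{completeness_acs_conv}, $\{A_n\}_n\sim_\sigma 0$ is equivalent to $\rho(\{A_n\}_n)=0$. The elementary consequence we need is this: for every $\varepsilon>0$ there is $n_\varepsilon$ such that, for $n>n_\varepsilon$,
\[
A_n=R_n+N_n,\qquad \mathrm{rank}(R_n)\le \varepsilon d_n,\qquad \|N_n\|\le\varepsilon .
\]
This decomposition comes from the truncated singular value decomposition. Take the index $i$ attaining $p(A_n)<\varepsilon$. Then let $R_n$ collect the first $i-1$ singular triplets of $A_n$ and let $N_n$ collect the remaining ones. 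Conversely, $\{X_n\}_n\sim_\sigma 0$ holds as soon as such splittings exist with both the rank fraction and the norm bound going to zero. This follows from the definition of $p$, since $p(X_n)\le \frac{\mathrm{rank}(R_n)}{d_n}+\|N_n\|$ by Weyl-type interlacing for singular values. I will use the same facts applied to $B_n$.

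Using the sparsely unbounded hypothesis, I split $B_n$ similarly. Given $M>0$, take the SVD of $B_n$ and let $B_n=B_n^{(1)}+B_n^{(2)}$. Here $B_n^{(1)}$ keeps the singular values $>M$, so $\mathrm{rank}(B_n^{(1)})\le r'(M)d_n$ for $n>n_M$. The other part $B_n^{(2)}$ keeps the singular values $\le M$, so $\|B_n^{(2)}\|\le M$. Then
\[
A_nB_n = A_nB_n^{(1)} + R_nB_n^{(2)} + N_nB_n^{(2)}.
\]
The first two terms have rank at most $(r'(M)+\varepsilon)d_n$. The last term has spectral norm at most $\varepsilon M$.

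Finally I choose parameters. Given $\delta>0$, first fix $M$ large so that $r'(M)<\delta/2$. Then choose $\varepsilon=\min\{\delta/2,\delta/M\}$. For $n>\max(n_M,n_\varepsilon)$ we obtain $p(A_nB_n)\le 2\delta$, hence $\rho(\{A_nB_n\}_n)=0$. By Theorem \ref{completeness_acs_conv} this gives $\{A_nB_n\}_n\sim_\sigma 0$.

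The only delicate point is the order of the quantifiers: $M$ must be fixed before $\varepsilon$, because the norm bound $\varepsilon M$ requires $\varepsilon$ small relative to $M$. Apart from that, the step to check carefully is the inequality $p(R+N)\le \mathrm{rank}(R)/d_n+\|N\|$. It follows from $\sigma_{k+1}(R+N)\le\sigma_1(N)$ when $\mathrm{rank}(R)\le k$ (Weyl's inequality $\sigma_{i+j-1}(X+Y)\le\sigma_i(X)+\sigma_j(Y)$). One then evaluates $p$ at the index $i=k+1$.
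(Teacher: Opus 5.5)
Your proof is correct and follows essentially the same route as the paper. Both arguments split $A_n$ into a low-rank part plus a small-norm part, split $B_n$ by thresholding its SVD at level $M$, expand the product, and balance $M$ against the norm bound. The paper couples the two via $M_t=\omega(t)^{-1/2}$, whereas you fix $M$ first and then take $\varepsilon\le\delta/M$, which also avoids the degenerate case $\omega(t)=0$; keeping $A_nB_n^{(1)}$ unsplit gives you the slightly sharper rank bound $(r'(M)+\varepsilon)d_n$.
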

\begin{proof}
    Since $\{A_n\}_n\sim_\sigma 0$, for every $t>0$, there exist functions $c(t)$ and $\omega(t)$ and $n_t$, such that, for every $n>n_t$, we can write
   \begin{equation*} 
A_n = S_{n,t} + N_{n,t},
    \end{equation*}
    with 
    \begin{align*}
        \mathrm{rank}(S_{n,t})&\leq c(t)d_n,\\
        \|N_{n,t}\|&\leq \omega(t),\\
        \lim\limits_{t\to+\infty}c(t)&=\lim\limits_{t\to+\infty}\omega(t)=0.
    \end{align*}
    Now, for every $t>0$, we can choose $M_t=(\omega(t))^{-1/2}$ in the definition of sparsely unbounded, and there exists $n_{M_t}$, such that, for every $n>n_{M_t}$, we can write
    \begin{equation*}
        B_n=S'_{n,t}+N'_{n,t},
    \end{equation*}
    with
    \begin{align*}
        &\mathrm{rank}(S'_{n,t})\leq r((\omega(t))^{-1/2})d_n,\\
        &\|N'_{n,t}\|\leq \omega(t)^{-1/2},\\
        &\lim\limits_{t\to+\infty}r((\omega(t))^{-1/2})=0.
    \end{align*}
    More precisely, $S'_{n,t}$ can be computed from $B_n$ by considering the s.v.d. of $B_n$ and isolating the portion of singular values of modulus larger than $\omega(t)^{-1/2}$, while the remaining part $N'_{n,t}$ has norm less than $(\omega(t))^{-1/2}$. 
    As a consequence of previous choices, for every $n>\max(n_t,n_{M_t})$, we have
    \begin{equation*}
        A_nB_n=S_{n,t}S'_{n,t}+S_{n,t}N'_{n,t} + N_{n,t}S'_{n,t}+N_{n,t}N'_{n,t},
    \end{equation*}
    with
    \begin{equation*}
        \mathrm{rank}(S_{n,t}S'_{n,t}+S_{n,t}N'_{n,t} + N_{n,t}S'_{n,t})\leq(2c(t)+r((\omega(t))^{-1/2}))d_n,
    \end{equation*}
    and 
    \begin{equation*}
        \|N_{n,t}N'_{n,t}\|\leq\|N_{n,t}\|\|N'_{n,t}\|\leq \omega(t)(\omega(t))^{-1/2}=(\omega(t))^{1/2}.
    \end{equation*}
    Noticing that
    \begin{equation*}
        \lim\limits_{t\to+\infty} \left(2c(t)+r((\omega(t))^{-1/2})\right)=\lim\limits_{t\to+\infty}(\omega(t))^{1/2}=0,
    \end{equation*}
    we conclude that $\{A_nB_n\}_n\sim_\sigma 0$.
\end{proof}

\subsection{Multi-index notation}

A multi-index $\bfn$ of size $d$, also called a $d$-index, is a row vector in $\mathbb{Z}^d$, whose components are denoted by $n_1,\dots,n_d$. The set of $d$-indices can be ordered with the standard lexicographic ordering, i.e., we write $\bfn \preceq \bfm $ if either $\bfn=\bfm$, or the first index $j$ for which $n_j \neq m_j$ satisfies $n_j < m_j$. Moreover, given two $d$-indices $\bfn$ and $\bfm$, we say that $\bfn \leq \bfm$ if $n_j \leq m_j$ for every $j=1,\dots,d$. We denote by $\bf0,\bf1,\bf2,\dots$ the multi-indices of all zeros, all ones, all twos, etc. We write $\bf m \to \infty$ whenever $\min({\bfm}) \to \infty$. All operations involving $d$-indices that have no meaning in the vector space $\R$ have to be intended in the component-wise sense. For example, $\bfn \bfm = \left( n_1 m_1, \dots, n_d m_d \right)$, $\bfn / \bfm =(n_1/m_1, \dots, n_d/m_d)$, etc.


\subsection{Diagonal sampling and Toeplitz matrix-sequences}
Let $f:[-\pi,\pi]^d \to \mathbb{C}^{r \times r}$ be a function in $L^1([-\pi,\pi]^d)$ and let $\{f_{\bfk}\}_{\bfk \in \mathbb{Z}^d}$ be the Fourier coefficients of $f$, defined as
\begin{equation*}
    f_{\bfk} = \frac{1}{(2 \pi)^d} \int_{[-\pi,\pi]^d} f(\bftheta) \rm{e}^{-i \bfk \cdot \bftheta}  \rm{d}\bftheta \in \mathbb{C}^{r \times r}, \qquad \bfk \in \mathbb{Z}^d,
\end{equation*}
where $\bfk \cdot \bftheta = k_1 \theta_1 + \dots + k_d \theta_d $ and the integrals are computed component-wise. For every $\bfn \in \mathbb{N}^d$, we define the $\bfn$-th Toeplitz matrix generated by $f$ as
\begin{equation*}
    T_{\bfn}(f) = [f_{\bfi-\bfj}]_{\bfi,\bfj=\bf1}^{\bfn}.
\end{equation*}
Now, let $a: [0,1]^d \to \mathbb{C}^{r \times r}$. For every $\bfn \in \mathbb{N}^d$, we define the $\bfn$-th diagonal sampling matrix generated by $a$ as
\begin{equation*}
    D_{\bfn}(a) =\diag_{\bfi=\bf1,\dots,\bfn} a \left( \frac{\bfi}{\bfn} \right).
\end{equation*}
Note that we have the following distributional results for the sequences $\{T_\bfn(f)\}_{\bfn}$ and $\{D_\bfn(a)\}_\bfn$.
\begin{thm}\rm{(\cite{Tillinota})}
    Let $f \in L^1[-\pi,\pi]^d$. Then
    \begin{equation*}
        \{T_\bfn(f)\}_\bfn \sim_{\sigma} (f,[-\pi,\pi]^d).
    \end{equation*}
If, additionally, $f$ is real a.e., then
\begin{equation*}
    \{T_{\bfn}(f)\}_{\bfn} \sim_{\lambda} (f,[-\pi,\pi]^d).
\end{equation*}
\end{thm}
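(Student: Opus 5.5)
The statement to prove is Tilli's theorem: for $f\in L^1([-\pi,\pi]^d)$, $\{T_\bfn(f)\}_\bfn\sim_\sigma(f,[-\pi,\pi]^d)$, and if $f$ is real a.e. also $\sim_\lambda$. The plan is the standard approximation argument: prove the result first for trigonometric polynomials, then pass to general $L^1$ symbols through Theorem \ref{acs}.

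\emph{Step 1 (polynomial case).} Let $p(\bftheta)=\sum_{|\bfk|_\infty\le m}c_\bfk e^{i\bfk\cdot\bftheta}$ be a trigonometric polynomial. Then $T_\bfn(p)$ is a banded multilevel Toeplitz matrix. It differs from the multilevel circulant $C_\bfn(p)$ only in a set of rows whose number is at most $\sum_j 2m\, N(\bfn)/n_j=o(N(\bfn))$, where $N(\bfn)=n_1\cdots n_d$.

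The circulant is diagonalized by the multilevel Fourier matrix. Its eigenvalues are the samples $p(2\pi \bfj/\bfn)$, and since the Fourier matrix is unitary its singular values are $|p(2\pi\bfj/\bfn)|$. For $F\in C_c$, the averages of $F(|p(2\pi\bfj/\bfn)|)$ are Riemann sums of the continuous function $F(|p|)$, so they converge to the normalized integral. Hence $\{C_\bfn(p)\}\sim_\sigma p$.

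The low-rank perturbation does not change the distribution, by interlacing for singular values (or directly by Theorem \ref{acs} with the constant a.c.s. $B_{\bfn,t}=C_\bfn(p)$). This gives $\{T_\bfn(p)\}\sim_\sigma p$.

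For real $p$, I would argue with Hermitian matrices: $T_\bfn(p)$ is Hermitian, and the corresponding circulant (built from the symmetrized coefficients) is Hermitian too. Cauchy interlacing under a rank-$o(N)$ Hermitian perturbation then yields $\sim_\lambda$.

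\emph{Step 2 (density and a.c.s.).} Take trigonometric polynomials $p_t$ with $\|f-p_t\|_{L^1}\to0$, for example Cesàro (Fejér) means. When $f$ is real, the $p_t$ are real as well. We have $T_\bfn(f)=T_\bfn(p_t)+T_\bfn(f-p_t)$, and the key estimate is
\begin{equation*}
\|T_\bfn(g)\|_{S_1}\le \frac{N(\bfn)}{(2\pi)^d}\|g\|_{L^1}
\end{equation*}
for the trace (Schatten $1$) norm. This follows because $T_\bfn(g)$ is the compression of the multiplication operator by $g$, and its trace norm is bounded by the integral of $|g|$ against the Fejér-type kernel. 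Concretely, one writes $T_\bfn(g)=\frac1{(2\pi)^d}\int g(\bftheta)\,v_\bfn(\bftheta)v_\bfn(\bftheta)^*\,d\bftheta$ with $v_\bfn(\bftheta)=(e^{-i\bfj\cdot\bftheta})_\bfj$, and uses $\|vv^*\|_{S_1}=N(\bfn)$.

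A trace-norm bound $\le \varepsilon N(\bfn)$ gives, by a Markov-type count, at most $\sqrt\varepsilon N(\bfn)$ singular values exceeding $\sqrt\varepsilon$. Splitting the SVD accordingly produces the decomposition "low rank + small norm" required in Definition \ref{def_acs}. So $\{\{T_\bfn(p_t)\}_\bfn\}_t$ is an a.c.s. for $\{T_\bfn(f)\}_\bfn$. Since $L^1$ convergence implies $p_t\to f$ in measure, Theorem \ref{acs} gives $\sim_\sigma$. In the real case all the matrices are Hermitian, so the second part of Theorem \ref{acs} gives $\sim_\lambda$.

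\emph{Main obstacle.} The main point is the trace-norm estimate in Step 2, which is what allows mere $L^1$ regularity rather than $L^\infty$. I would prove it through the integral representation above; all remaining steps are routine.
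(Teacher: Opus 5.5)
The paper gives no proof of this statement; it is only quoted from Tilli's note. Your argument is a correct version of the standard proof found in Tilli's work and the GLT monographs: compare $T_\bfn(p)$ with the multilevel circulant up to an $o(N(\bfn))$-rank correction for trigonometric polynomials, then turn the bound $\|T_\bfn(g)\|_{S_1}\le (2\pi)^{-d}N(\bfn)\|g\|_{L^1}$ into an a.c.s.\ and apply Theorem~\ref{acs}.

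Two points are cosmetic only. In your representation the weight in the trace-norm bound is the constant $\|v_\bfn v_\bfn^*\|_{S_1}=N(\bfn)$, not a Fej\'er-type kernel. In the real case, split the Hermitian matrix $T_\bfn(f-p_t)$ via its eigendecomposition, so the rank and norm corrections are Hermitian too; this covers the stricter reading of the Hermitian part of Theorem~\ref{acs}.
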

\begin{prop}
Let $a:[0,1]^d \to \mathbb{C}^{r \times r}$ be a continuous almost everywhere function. Then
\begin{equation*}
    \{D_{\bfn}(a)\}_{\bfn} \sim_{\lambda,\sigma} (a,[0,1]^d).
\end{equation*}
\end{prop}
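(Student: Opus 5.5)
The plan is to prove the eigenvalue and the singular value distributions separately, since $D_{\bfn}(a)$ is block diagonal with diagonal blocks $a(\bfi/\bfn)$. Its spectrum is the union of the spectra of these blocks, and the same holds for its singular values. Set $N(\bfn)=n_1\cdots n_d$. For $F\in C_c(\mathbb{C})$, the first step is to write
\begin{equation*}
\frac{1}{rN(\bfn)}\sum_{i=1}^{rN(\bfn)}F(\lambda_i(D_\bfn(a)))=\frac{1}{N(\bfn)}\sum_{\bfi=\bf1}^{\bfn}G\!\left(\frac{\bfi}{\bfn}\right),\qquad G(\bfx):=\frac1r\sum_{j=1}^rF(\lambda_j(a(\bfx))).
\end{equation*}
The same identity holds for singular values, with $F\in C_c(\R)$ and $\sigma_j$ in place of $\lambda_j$. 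The claim therefore reduces to showing that the right-hand side is a Riemann sum converging to $\int_{[0,1]^d}G\,{\rm d}\bfx$. This matches Definition \ref{def-distribution} with $\Omega=[0,1]^d$, since $\mathcal{L}^d([0,1]^d)=1$.

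The second step is to check that $G$ is Riemann integrable. Boundedness is immediate, since $|G|\le\|F\|_\infty$. For continuity, the eigenvalues of a matrix depend continuously on its entries as an unordered $r$-tuple (for instance in the optimal-matching distance), and the symmetric function $\sum_j F(\lambda_j(\cdot))$ is continuous in that topology. Hence $\bfx\mapsto\sum_jF(\lambda_j(a(\bfx)))$ is continuous wherever $a$ is. Singular values are even Lipschitz in the entries by Weyl's inequalities. So $G$ is continuous at every continuity point of $a$, and is therefore continuous almost everywhere.

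The third step applies Lebesgue's criterion: a bounded function on $[0,1]^d$ that is continuous a.e.\ is Riemann integrable. The points $\bfi/\bfn$ are the upper-right corners of a partition of $[0,1]^d$ into boxes of sides $1/n_j$, each of volume $1/N(\bfn)$. As $\bfn\to\infty$ (that is, $\min\bfn\to\infty$), the mesh of this partition tends to zero, so the Riemann sums converge to $\int_{[0,1]^d} G$. This gives both distributions at once.

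The main obstacle is the continuity of $\bfx\mapsto\sum_jF(\lambda_j(a(\bfx)))$. It requires the continuity of the eigenvalue multiset, because eigenvalues cannot be labelled continuously individually. I would cite the standard result (for instance via Rouché's theorem applied to the characteristic polynomial, or the Ostrowski--Elsner bound) rather than reprove it. Two further points need care. First, "continuous a.e." must be read as: the set of discontinuity points of $a$ has measure zero; this is exactly what Lebesgue's criterion needs. Second, if $a$ is not assumed bounded, the argument still goes through, because only $G$, which is bounded by $\|F\|_\infty$, has to be Riemann integrable.
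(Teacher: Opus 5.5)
Your proof is correct. The paper gives no proof of this proposition: it is recalled as a standard preliminary from the GLT books. Your argument is essentially the standard proof found there. Via the block-diagonal structure you reduce to Riemann sums of the bounded, a.e.-continuous function $G$, and Lebesgue's criterion finishes.

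Your insistence on reading ``continuous a.e.'' as ``the set of discontinuity points is null'' is essential, not pedantic. Take $a=\mathds{1}_{\mathbb{Q}^d\cap[0,1]^d}$, which is measurable and vanishes a.e. Every $D_{\bfn}(a)$ is then the identity, so the statement fails under the gloss ``continuous almost everywhere (i.e.\ measurable)'' that the paper uses later, in the proof of Theorem~\ref{restriction_of_standard_GLT}.
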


\subsection{The Generalized Locally Toeplitz (GLT) algebra}

A Generalized Locally Toeplitz (GLT) sequence $\{A_{\bfn}\}_\bfn$ is a matrix-sequence equipped with a measurable function $f:[0,1]^{d} \times [-\pi,\pi]^d \to \mathbb{C}$ called GLT symbol. We use the notation $\{A_{\bfn}\}_{\bfn} \sim_{\text{GLT}} f$ to indicate that $\{A_{\bfn}\}_{\bfn}$ is a GLT sequence with symbol $f$. In this subsection we report the main properties of the $*$-algebra of GLT sequences. The definition of GLT sequences can be rather cumbersome and requires the introduction of many accessory tools. For these reasons, we just state the main algebra properties of GLT sequences, which will be of interest in the rest of this paper. In particular, we state those properties that will be preserved by the class of sequences that we construct in the following sections.

For a complete and recent account of the theory of GLT theory we refer to the two comprehensive books \cite{glt-book-1, glt-book-2} and to \cite{GLT-block1D,GLT-blockdD}. For more recent developments of the theory and possible future directions of research we also refer the reader to \cite{AFGS26,AGS26,gacs,ASPSC25,BFFS,Barb,BGMS}.

\begin{enumerate}[label=\textbf{GLT \arabic*}]
\setcounter{enumi}{-1}
\item \label{GLT0} If $\{A_\bfn\}_{\bfn} \sim_{\text{GLT}} f$ then $\{A_\bfn\}_\bfn \sim_{\text{GLT}} g$ if and only if $f=g$ a.e.\\
If $f:[0,1]^d \times[-\pi,\pi]^d \to \mathbb{C}^{r \times r}$ is measurable and $\{\bfn =\bfn(n)\}_n$ is a sequence of $d$-indices such that $\bfn \to \infty$ as $n \to \infty$ then there exists $\{A_\bfn\}_\bfn$ such that $\{A_\bfn\}_\bfn \sim_{\text{GLT}} f$;
\item \label{GLT1} if $\{A_{\bfn}\}_{\bfn} \sim_{\text{GLT}} f$ then $\{A_{\bfn}\}_{\bfn}\sim_{\sigma} \left(f,[0,1]^{d}\times[-\pi,\pi]^d\right)$. If $\{A_{\bfn}\}_{\bfn} \sim_{\text{GLT}} f$ and each $A_{\bfn}$ is Hermitian then $\{A_{\bfn}\}_{\bfn}\sim_{\lambda} \left(f,[0,1]^{d}\times[-\pi,\pi]^d\right)$;
\item \label{GLT2} Suppose $\bfn=\bfn(n) \in \mathbb{N}^d$ with $\bfn \to \infty$ as $n\to \infty$. We have
\begin{itemize}
    \item $\{ T_{\bfn}(g)\}_{\bfn} \sim_{\text{GLT}} f(\bfx,\bftheta)=g(\bftheta)$ if $g \in L^{1}\left([-\pi,\pi]^d\right)$;
    \item $\{ D_{\bfn}(a)\}_{\bfn} \sim_{\text{GLT}} f(\bfx,\bftheta)=a(\bfx)$ if $a$ is a continuous almost everywhere function;
    \item $\{ Z_{\bfn}\}_{\bfn} \sim_{\text{GLT}} f(\bfx,\bftheta)=0$ if and only if $\{ Z_{\bfn}\}_{\bfn} \sim_{\sigma} 0$;
    \end{itemize}
    \item \label{GLT3}  if $\{A_{\bfn}\}_{\bfn}\sim_{\text{GLT}} f$, then
    \begin{itemize}
        \item $\{A^{*}_{\bfn}\}_{\bfn} \sim_{\text{GLT}} \bar{f}$;
        \item if, additionally, $f\neq 0$ a.e., then $\{A^{\dagger}_{\bfn}\}_{\bfn}\sim_{\text{GLT}} f^{-1}$;
    \end{itemize}
    \item \label{GLT4} if $\{A_{\bfn}\}_{\bfn}\sim_{\text{GLT}} f$ and $\{B_{\bfn}\}_{\bfn}\sim_{\text{GLT}} h$, then
    \begin{itemize}
        \item $\{\alpha A_{\bfn}+\beta B_{\bfn}\}_{\bfn}  \sim_{\text{GLT}} \alpha f+\beta h$ for every $\alpha, \beta \in \mathbb{C}$;
        \item $\{A_{\bfn} B_{\bfn}\}_{\bfn}  \sim_{\text{GLT}}  f h$;
    \end{itemize}
    
    \item \label{GLT5} $\{A_{\bfn}\}_{\bfn} \sim_{\text{GLT}} f$ if and only if there exist GLT sequences $\{B_{\bfn,t}\}_{\bfn} \sim_{\text{GLT}} f_t$ such that $\{\{B_{\bfn,t}\}_{\bfn}\}_t$ is an a.c.s. for $\{A_{\bfn}\}_{\bfn}$ and $f_t \to f$ in measure.
\end{enumerate}

\begin{rem}
    Note that the function in Definition \ref{def-distribution} describing the distribution of a matrix-sequence $\{A_\bfn\}_\bfn$ and the relative domain are highly non-unique. However, in GLT theory, the crucial point is to select matrix-sequences $\{A_\bfn\}_\bfn$, which have privileged symbols over a canonical domain. Such symbols are chosen taking into account the hidden asymptotic structure of the sequences and behave well with algebra operations. This, in turn, gives the uniqueness of the GLT symbol (up to a.e. equivalence).
\end{rem}
    Denote by $\mathcal{G}$ the algebra of all GLT sequences, which is naturally equipped with the following equivalence relation:
    \begin{equation*}
        \{A_n\}_n\sim_d\{B_n\}_n\quad\iff\quad d_{a.c.s}(\{A_n\}_n,\{B_n\}_n)=0.
    \end{equation*} 
    Moreover, denote by $\mathcal{L}^0(=\mathcal{L}^0([0,1]^d\times[-\pi,\pi]^d))$ the set of all measurable functions $f:[0,1]^d\times[-\pi,\pi]^d\to\mathbb{C}$ and define, for every $f \in \mathcal{L}^0$,
    \begin{equation*}
        p_m(f):=\inf\limits_{E\subseteq [0,1]^d\times[-\pi,\pi]^d}\left\{\frac{\mathcal{L}^{2d}(E)}{(2\pi)^d}+\esssup_{E^c}|f|\right\},
    \end{equation*}
    where $E$ is measurable, $E^c$ is the complement of $E$ in $[0,1]^d \times [-\pi,\pi]^d$, and
    \begin{equation*}
    d_m(f,g):=p_m(f-g).
    \end{equation*}
    Then, $d_m(f,g)=0$ if and only if $f=g$ almost everywhere (which we denote briefly by $f\sim_d g$), and $d_m$ is a complete pseudo-metric on $\mathcal{L}^0$, inducing the convergence in measure. We denote by $L^0$ the quotient metric space $\sfrac{\mathcal{L}^0}{\sim_d}$.
    The natural map $\Phi:(\mathcal{G},d_{a.c.s})\to(\mathcal{L}^0,d_m)$, that associates to any GLT sequence its canonical symbol, factors through the equivalence relations given by $d_{a.c.s.}$ and $d_m$. Its properties are fully studied in \cite{Barb17}, where the author proved the following crucial result.
    \begin{thm}[\cite{Barb17}]\label{equiv_GLT_meas}
        The map $\Phi:(\sfrac{\mathcal{G}}{\sim_d},d_{a.c.s.})\to(L^0,d_m)$ is a surjective isometry of metric spaces.
    \end{thm}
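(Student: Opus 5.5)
The plan is to follow Barbarino's strategy from \cite{Barb17}: first show that the canonical-symbol map $\Phi$ is well defined and distance-preserving on the quotients, then obtain surjectivity from \ref{GLT0}.

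\emph{Well-definedness and injectivity.} Take $\{A_\bfn\}_\bfn\sim_{\mathrm{GLT}} f$ and $\{B_\bfn\}_\bfn\sim_{\mathrm{GLT}} g$. By \ref{GLT4}, $\{A_\bfn-B_\bfn\}_\bfn\sim_{\mathrm{GLT}} f-g$. Theorem \ref{completeness_acs_conv} says $d_{a.c.s.}=0$ exactly when $\{A_\bfn-B_\bfn\}_\bfn\sim_\sigma 0$. By the third item of \ref{GLT2}, this holds exactly when $\{A_\bfn-B_\bfn\}_\bfn\sim_{\mathrm{GLT}} 0$. By \ref{GLT0}, that in turn holds exactly when $f=g$ almost everywhere. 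Hence $\Phi$ descends to a well-defined injective map $\sfrac{\mathcal{G}}{\sim_d}\to L^0$.

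\emph{Isometry.} By linearity again, it suffices to prove
\begin{equation*}
\rho(\{A_\bfn\}_\bfn)=p_m(f)\qquad\text{whenever } \{A_\bfn\}_\bfn\sim_{\mathrm{GLT}} f .
\end{equation*}
By \ref{GLT1}, $\{A_\bfn\}_\bfn\sim_\sigma (f,[0,1]^d\times[-\pi,\pi]^d)$. So it is enough to show that $\rho$ of a sequence with singular value symbol $f$ equals $p_m(f)$.

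Let $\mu$ be the distribution of $|f|$ under the normalized measure $\mathcal{L}^{2d}/(2\pi)^d$ on $[0,1]^d\times[-\pi,\pi]^d$. The optimal $E$ in $p_m$ can be taken to be a superlevel set $\{|f|>s\}$, so
\begin{equation*}
p_m(f)=\inf_{s\ge 0}\bigl(\mu(s,\infty)+s\bigr).
\end{equation*}
Similarly, $p(A_\bfn)=\min_i\bigl(\tfrac{i-1}{d_\bfn}+\sigma_i\bigr)$ is the discrete analogue of the same quantity for the empirical singular value measure $\mu_\bfn$.

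The singular value distribution means $\mu_\bfn\to\mu$ weakly, with no mass escaping to infinity. One then proves the two inequalities separately, using approximation of indicator functions of half-lines by continuous compactly supported functions:
\begin{itemize}
\item[] for $\limsup_\bfn p(A_\bfn)\le p_m(f)$, fix $s$ and use that $\limsup_\bfn\mu_\bfn[s',\infty)\le\mu[s',\infty)$ for $s'>s$ slightly larger;
\item[] for $\limsup_\bfn p(A_\bfn)\ge p_m(f)$, use that $\liminf_\bfn\mu_\bfn(s,\infty)\ge\mu(s,\infty)$.
\end{itemize}
Continuity points of the distribution function handle the boundary terms. I expect this step to be the main technical obstacle, because of the $\limsup$ and the possibility of atoms of $\mu$. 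The way around the atoms is to choose $s$ among continuity points and then let $s$ vary.

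\emph{Surjectivity.} Given any measurable $f$, \ref{GLT0} provides a sequence $\{A_\bfn\}_\bfn\sim_{\mathrm{GLT}} f$, so every class in $L^0$ is attained. As a consistency check, completeness of both spaces (Theorem \ref{completeness_acs_conv} and the completeness of $d_m$) is compatible with \ref{GLT5}, which identifies a.c.s. limits with limits in measure.
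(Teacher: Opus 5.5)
The paper does not prove this statement; it imports it from \cite{Barb17}, so there is no internal argument to compare with. Your proof is correct and self-contained, and it rests on the right key lemma: for \emph{any} matrix-sequence with $\{A_\bfn\}_\bfn\sim_\sigma(f,[0,1]^d\times[-\pi,\pi]^d)$ one has $\lim_{\bfn}p(A_\bfn)=p_m(f)$. Combined with \ref{GLT1} and linearity (\ref{GLT4}), this gives $d_{a.c.s.}(\{A_\bfn\}_\bfn,\{B_\bfn\}_\bfn)=\rho(\{A_\bfn-B_\bfn\}_\bfn)=p_m(f-g)=d_m(f,g)$, and \ref{GLT0} gives surjectivity. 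In particular, neither \ref{GLT5} nor completeness is needed. Your separate well-definedness and injectivity paragraph is redundant, since preserving distances into the metric space $L^0$ already forces both. Your identities $p(A_\bfn)=\inf_{s\ge 0}\bigl(\mu_\bfn((s,\infty))+s\bigr)$ and $p_m(f)=\inf_{s\ge 0}\bigl(\mu((s,\infty))+s\bigr)$ are right. The upper bound does work pointwise in $s$, as you describe.

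The one step you should write out is the lower bound. The pointwise inequality $\liminf_\bfn\mu_\bfn((s,\infty))\ge\mu((s,\infty))$ does not by itself control an infimum over $s$ taken separately for each $\bfn$; monotonicity fixes this.
\begin{itemize}
\item[] Since $\mu_\bfn((0,\infty))\le 1$ while the objective exceeds $1$ for $s>1$, the infimum defining $p(A_\bfn)$ can be restricted to $s\in[0,1]$.
\item[] Take a grid $0=s_0<s_1<\dots<s_K$ with $s_K\ge 1$ and mesh at most $\delta$. For $s\in[s_{j-1},s_j]$ you have $\mu_\bfn((s,\infty))+s\ge\mu_\bfn((s_j,\infty))+s_j-\delta$.
\item[] Hence $p(A_\bfn)\ge\min_{1\le j\le K}\bigl(\mu_\bfn((s_j,\infty))+s_j\bigr)-\delta$. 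Only finitely many $\liminf$'s are now involved, which yields $\liminf_\bfn p(A_\bfn)\ge p_m(f)-\delta$.
\end{itemize}
Atoms of $\mu$ are not actually an obstacle. You only use the portmanteau bounds for open half-lines $(s,\infty)$ in the lower bound and closed half-lines $[s',\infty)$ in the upper bound, and these hold at every $s$. So restricting to continuity points is unnecessary.
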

    Note that Theorem \ref{equiv_GLT_meas} is a substantial improvement of \eqref{GLT5}, which states that the graph of $\Phi$ is closed in the product pseudo-metric space $\mathcal{G}\times\mathcal{L}^0$.
    
    Coupled with all the aforementioned results, Theorem \ref{equiv_GLT_meas} completes the picture of all properties of interest of $\mathcal{G}$, gathered in the following corollary.
    \begin{cor}
        The canonical symbol map $\Phi:(\sfrac{\mathcal{G}}{\sim_d},d_{a.c.s.})\to(L^0,d_m)$ is an isometry of metric spaces and a morphism of $*$-algebras. 
    \end{cor}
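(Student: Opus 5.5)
The corollary follows by combining Theorem \ref{equiv_GLT_meas} with the algebraic axioms \ref{GLT3} and \ref{GLT4}. It has two halves: the isometry part, and the claim that $\Phi$ respects the $*$-algebra structure.

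The isometry part is exactly Theorem \ref{equiv_GLT_meas}, so nothing new is needed there. I only have to check that the $*$-algebra operations on $\mathcal{G}$ descend to the quotient $\sfrac{\mathcal{G}}{\sim_d}$. Closure of $\mathcal{G}$ under these operations is given by \ref{GLT3} and \ref{GLT4}. Compatibility with $\sim_d$ goes as follows.
\begin{itemize}
\item \emph{Adjoint.} If $d_{a.c.s.}(\{A_\bfn\}_\bfn,\{B_\bfn\}_\bfn)=0$, then by Theorem \ref{completeness_acs_conv} we have $\{A_\bfn-B_\bfn\}_\bfn\sim_\sigma 0$. Since $A_\bfn-B_\bfn$ and its adjoint have the same singular values, also $\{A_\bfn^*-B_\bfn^*\}_\bfn\sim_\sigma 0$.
\item \emph{Linear combinations.} Compatibility follows from the subadditivity and homogeneity properties of $\rho$, or equivalently from the fact that zero-distributed sequences form a vector space.
\item \emph{Products.} Write $A_\bfn A'_\bfn - B_\bfn B'_\bfn = (A_\bfn-B_\bfn)A'_\bfn + B_\bfn(A'_\bfn-B'_\bfn)$. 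Each factor $\{A'_\bfn\}_\bfn$ and $\{B_\bfn\}_\bfn$ is a GLT sequence, hence has a singular value distribution by \ref{GLT1}. By Proposition \ref{sv_neq_zero_a_e} it is therefore sparsely unbounded.
\end{itemize}

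For the product case I then apply Lemma \ref{preserving_zero_distribution}. The first term $(A_\bfn-B_\bfn)A'_\bfn$ has the zero-distributed factor on the left, matching the lemma directly. The second term $B_\bfn(A'_\bfn-B'_\bfn)$ has it on the right, so I take adjoints, apply the lemma, and use the adjoint invariance of singular values established above. Both terms are thus zero-distributed, and so is their sum.

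The homomorphism identities are then read off directly:
\begin{itemize}
\item $\Phi(\{A^*_\bfn\}_\bfn)=\bar f$ by \ref{GLT3};
\item $\Phi(\{\alpha A_\bfn+\beta B_\bfn\}_\bfn)=\alpha f+\beta h$ and $\Phi(\{A_\bfn B_\bfn\}_\bfn)=fh$ by \ref{GLT4};
\item $\Phi$ maps $\{I_\bfn\}_\bfn=\{T_\bfn(1)\}_\bfn$ to $1$ by \ref{GLT2}, if unitality is wanted.
\end{itemize}
Well-definedness of $\Phi$ on classes is guaranteed by \ref{GLT0} together with the isometry.

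The only mildly delicate step is the compatibility of the product with $\sim_d$, because it needs the sparse unboundedness of GLT sequences. This is handled by chaining \ref{GLT1}, Proposition \ref{sv_neq_zero_a_e}, and Lemma \ref{preserving_zero_distribution}. Alternatively, one can bypass it: $\Phi$ is an isometry and is multiplicative on representatives, so the symbols of $\{A_\bfn A'_\bfn\}_\bfn$ and $\{B_\bfn B'_\bfn\}_\bfn$ coincide a.e. Hence their a.c.s. distance is zero.
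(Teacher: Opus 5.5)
Your proposal is correct and follows the paper's approach: the paper states this corollary without a separate proof, as the combination of Theorem \ref{equiv_GLT_meas} (isometry) with \ref{GLT3}--\ref{GLT4} (the symbol respects adjoints, linear combinations and products), and your checks that these operations descend to $\sfrac{\mathcal{G}}{\sim_d}$ fill in a detail the paper leaves implicit. One small imprecision: $\rho$ is not literally homogeneous, since one only has $\rho(\{\alpha A_\bfn\}_\bfn)\le\max\{1,|\alpha|\}\,\rho(\{A_\bfn\}_\bfn)$, but your alternative argument via the vector space of zero-distributed sequences is exactly right.
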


\section{GLT algebras over different hypercubes}\label{sec-hypercube}

For a fixed multi-index $\bfn>\bf0$, we define 
\begin{equation*}
	\Theta_{\bfn}:=\biggl\{\frac{\bfx}{\bfn}\,\bigg|\,\bfx\in\mathbb{Z}^d\biggr\}
\end{equation*}

    to be the d-dimensional lattice whose elements have integer multiples of $\frac{\bf1}{\bfn}$ as coordinates. $\Theta_{\bfn}$ is naturally equipped with the lexicographic order $\preceq$. 
    
    Given the reference hypercube $Q_{{\bf0},1}:=\left(0,1\right]^d$, for every couple $(\bfy,l)\in\mathbb{Z}^{d}\times\mathbb{N}$, we define the $d$-dimensional hypercube $Q_{{\bfy},l}:=\bfy+l\cdot Q_{{\bf0},1}$, and
\begin{equation*}
	\Theta_{\bfn,\bfy,l}:=Q_{{\bfy},l}\cap\Theta_{\bfn}=\biggl\{\bfy+\frac{\bfi}{\bfn}\,\bigg|\,{\bf1\preceq\bfi\preceq}l\cdot\bfn\biggr\},
\end{equation*}
the restriction of the grid $\Theta_{\bfn}$ to $Q_{{\bfy},l}$.
Note that $\#\Theta_{\bfn,\bfy,l}=l^d\cdot N(\bfn)$.
    \begin{defn}[GLT algebras on hypercubes]\label{hypercube}
    Given a diverging sequence of $d$-dimensional multi-indices $\{\bfn\}_{\bfn\in\mathbb{N}^d}$, consider the associated classical GLT algebra on $Q_{{\bf0},1}=\left(0,1\right]^d$, denoted by $\mathcal{G}_{{\bf0},1}$. The elements of \,$\mathcal{G}_{{\bf0},1}$ are matrix-sequences $\{A_{\bfn}\}_{\bfn}$, such that $A_{\bfn}\in\mathbb{C}^{N(\bfn)\times N(\bfn)}$, with a canonical GLT symbol $f:Q_{\bf0,1}\times[-\pi,\pi]^d\to\mathbb{C}$.\\
    For $(\bfy,l)\in\mathbb{Z}^{d}\times\mathbb{N}$, define the affine map $\phi_{{\bfy},l}:Q_{\bf0,1}\to Q_{\bfy,l}$ as
	\begin{equation*}
		\phi_{{\bfy},l}(\bfx):=\bfy+l\cdot \bfx,
	\end{equation*}
	\begin{equation*}
		\phi_{{\bfy},l}^{-1}(\bfz):=\frac{1}{l}\left(\bfz-\bfy\right).
	\end{equation*}
	A GLT on $Q_{{\bfy},l}$ with symbol $f:Q_{{\bfy},l}\times\left[-\pi,\pi\right]^d\to\mathbb{C}$ is a matrix-sequence $\{A_{\bfn}\}_{\bfn}$ satisfying $\{A_{\bfn}\}_{\bfn}\sim_{\mathrm{GLT}}\tilde{f}$ in the classical sense, where $\tilde{f}(\bfx,\bftheta)=f(\phi_{{\bfy},l}(\bfx),\bftheta)$. In formulas, we write 
	\begin{equation*}
		\{A_{\bfn}\}_{\bfn}\sim_{\mathrm{GLT}}^{Q_{\bfy,l}}\,f.
	\end{equation*} 
	The set of all the GLT matrix-sequences on $Q_{{\bfy},l}$ is denoted by $\mathcal{G}_{{\bfy},l}$.
\end{defn}
\begin{rem}
	GLT axioms of the standard GLT algebra $\mathcal{G}_{{\bf0},1}$ associated with the sequence $\{l\cdot\bfn\}_{\bfn\in\mathbb{N}^d}$ induce analogous axioms for $\mathcal{G}_{{\bfy},l}$, which is therefore a matrix-sequence algebra on $Q_{{\bfy},l}$.\\  In particular, we have
	\begin{equation*}
		\{A_{\bfn}\}_{\bfn}\sim_{\mathrm{GLT}}^{Q_{{\bfy},l}}\, f \implies \{A_{\bfn}\}_{\bfn}\sim_{\sigma}\left(f,Q_{{\bfy},l}\times\left[-\pi,\pi\right]^d\right),
	\end{equation*} 
	\begin{equation*}
		\{A_{\bfn}\}_{\bfn}\sim_{\mathrm{GLT}}^{Q_{\bfy,l}}\,f\wedge A_{\bfn}A_{\bfn}^*=A_{\bfn}^*A_{\bfn} \text{ } \forall\bfn \implies
		 \{A_{\bfn}\}_{\bfn}\sim_{\lambda}\left(f,Q_{{\bfy},l}\times\left[-\pi,\pi\right]^d\right).
	\end{equation*}
\end{rem}
 In order to build maps between different GLT algebras, we want to extract minors from a matrix-sequence (associated to a bigger hypercube) in such a way that we end up with a new GLT matrix-sequence (associated to a smaller hypercube).
 
 Let us consider two couples $(\bfy_1,l_1)$ and $(\bfy_2,l_2)$ such that $Q_1=Q_{{\bfy_1},l_1}\subset Q_2=Q_{{\bfy_2},l_2}$. Since, in particular, $\bfy_1-\bfy_2\geq\bf0$, we can define the following strictly increasing function\\
\begin{equation*}
	\psi_{\bfn}(=\psi_{{\bfn, \bfy_1},l_1,{\bfy_2},l_2}):\{{\bf1},\ldots\,l_1\cdot\bfn\}\to\{{\bf1},\ldots\,l_2\cdot\bfn\},
\end{equation*}
\begin{equation*}
	\psi_{\bfn}(\bfi):=\bfi+\bfn\cdot(\bfy_1-\bfy_2).
\end{equation*}\\
Identifying the points of $\Theta_{\bfn,\bfy_1,l_1}$ and $\Theta_{\bfn,\bfy_2,l_2}$ with $\{{\bf1},\ldots\,l_1\cdot\bfn\}$ and $\{{\bf1},\ldots\,l_2\cdot\bfn\}$ respectively, the map $\psi_{\bfn}(\bfi)$ selects exactly the points of the mesh of $Q_2$ belonging to $Q_1$ and we can use it to cut rows and colums of a matrix-sequence in $\mathcal{G}_{{\bfy_2},l_2}$ to get a matrix-sequence in $\mathcal{G}_{{\bfy_1},l_1}$. Hence, let us define the family of rectangular matrices $\Pi_{\bfn}(=\Pi_{{\bfn, \bfy_1},l_1,{\bfy_2},l_2})\in\mathbb{C}^{N(l_1\cdot\bfn)\times N(l_2\cdot\bfn)}$ given by\\
\begin{equation*}
	\left(\Pi_{\bfn}\right)_{\bfi,\bfj}=\delta_{\psi_{\bfn}(\bfi),\bfj},\qquad\bfi\in\{{\bf1},\ldots\,l_1\cdot\bfn\},\quad\bfj\in\{{\bf1},\ldots\,l_2\cdot\bfn\},
\end{equation*}\\
where $\delta_{\bfi,\bfj}$ is the multivariate Kronecker delta. 

Using the matrices above, we can define the restriction and extension maps as follows\\
\begin{equation*}
	R_{\bfn}(=R_{\bfn,\bfy_2,l_2,\bfy_1,l_1}):\mathbb{C}^{N(l_2\cdot \bfn)\times N(l_2\cdot\bfn)}\to\mathbb{C}^{N(l_1\cdot \bfn)\times N(l_1\cdot\bfn)},
\end{equation*}
\begin{equation*}
	E_{\bfn}(=E_{{\bfn, \bfy_1},l_1,{\bfy_2},l_2}):\mathbb{C}^{N(l_1\cdot \bfn)\times N(l_1\cdot\bfn)}\to\mathbb{C}^{N(l_2\cdot \bfn)\times N(l_2\cdot\bfn)},
\end{equation*}
as
\begin{equation*}
	R_{\bfn}(A_{\bfn}):=\Pi_{\bfn}A_{\bfn}\Pi_{\bfn}^T,
\end{equation*}
\begin{equation*}
	E_{\bfn}(B_{\bfn}):=\Pi_{\bfn}^T B_{\bfn}\Pi_{\bfn}.
\end{equation*}
It follows directly from their definitions that $E_{\bfn}$ and $R_{\bfn}$ send normal matrices to normal matrices and Hermitian matrices to Hermitian matrices. 

In the following lemma, we prove some easy, but crucial, properties of the maps defined above.
\begin{lem}\label{lemma_1}
	 Let $Q_1=Q_{{\bfy_1},l_1}\subset Q_2=Q_{{\bfy_2},l_2}$ be two hypercubes. For any multi-index $\bfn$, we have 
	\begin{equation}\label{lemma_1_eq_1}
	 	\Pi_{\bfn}\Pi_{\bfn}^T = I_{\bfn}\in\mathbb{C}^{N(l_1\cdot \bfn)\times N(l_1\cdot\bfn)},
	\end{equation}
	\begin{equation}\label{lemma_1_eq_2}
		\Pi_{\bfn}^T \Pi_{\bfn} = D_{l_2\bfn}(\mathds{1}_{Q_1})\in\mathbb{C}^{N(l_2\cdot \bfn)\times N(l_2\cdot\bfn)},
	\end{equation}
	where $I_{\bfn}$ is the identity matrix of size $N(l_1\bfn)$ and $D_{l_2\bfn}(\mathds{1}_{Q_1})$ is the $l_2\bfn$-th matrix of the diagonal matrix-sequence $\{D_{l_2\bfn}(\mathds{1}_{Q_1})\}_\bfn\in\mathcal{G}_{{\bfy_2},l_2}$, generated by the characteristic function of $Q_1$, namely
	\begin{equation*}
		D_{l_2\bfn}(\mathds{1}_{Q_1})=\diag\biggl(\mathds{1}_{Q_1}\biggl(\bfy_2+\frac{\bfi}{\bfn}\biggr)\biggr)_{{\bfi}={\bf1},\ldots,l_2\cdot\bfn}.
	\end{equation*}
	As a consequence, for square matrices $A_{\bfn}$ of size $N(l_2\bfn)$ and $B_{\bfn}$ of size $N(l_1\bfn)$, we have:
	\begin{equation*}
		R_{\bfn}(E_{\bfn}(B_{\bfn}))=B_{\bfn},
	\end{equation*}
	\begin{equation*}
		E_{\bfn}(R_{\bfn}(A_{\bfn}))=D_{l_2\bfn}(\mathds{1}_{Q_1})A_{\bfn}D_{l_2\bfn}(\mathds{1}_{Q_1}).
	\end{equation*}

\end{lem}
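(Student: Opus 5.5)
The plan is a direct entrywise computation with the Kronecker deltas defining $\Pi_{\bfn}$, followed by associativity for the two consequences.

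\textbf{First identity \eqref{lemma_1_eq_1}.} For $\bfi,\bfk\in\{{\bf1},\ldots,l_1\bfn\}$ we compute
\begin{equation*}
(\Pi_{\bfn}\Pi_{\bfn}^T)_{\bfi,\bfk}=\sum_{\bfj}\delta_{\psi_{\bfn}(\bfi),\bfj}\,\delta_{\psi_{\bfn}(\bfk),\bfj}=\delta_{\psi_{\bfn}(\bfi),\psi_{\bfn}(\bfk)}.
\end{equation*}
This equals $\delta_{\bfi,\bfk}$ because $\psi_{\bfn}$ is a translation, hence injective. We also have to check that $\psi_{\bfn}$ really takes values in $\{{\bf1},\ldots,l_2\bfn\}$. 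This follows from $Q_1\subset Q_2$: it gives $\bfy_1-\bfy_2\ge{\bf0}$ and $\bfy_1+l_1{\bf1}\le \bfy_2+l_2{\bf1}$. Consequently ${\bf1}\le \bfi+\bfn(\bfy_1-\bfy_2)\le l_1\bfn+\bfn(\bfy_1-\bfy_2)\le l_2\bfn$, so each row of $\Pi_{\bfn}$ contains exactly one $1$.

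\textbf{Second identity \eqref{lemma_1_eq_2}.} For $\bfj,\bfm\in\{{\bf1},\ldots,l_2\bfn\}$ we compute
\begin{equation*}
(\Pi_{\bfn}^T\Pi_{\bfn})_{\bfj,\bfm}=\sum_{\bfi}\delta_{\psi_{\bfn}(\bfi),\bfj}\,\delta_{\psi_{\bfn}(\bfi),\bfm}.
\end{equation*}
This is $0$ unless $\bfj=\bfm$. When $\bfj=\bfm$ it equals $1$ if $\bfj$ lies in the image of $\psi_{\bfn}$ and $0$ otherwise, again using injectivity. So the product is diagonal, and the remaining point is to identify the image of $\psi_{\bfn}$ with the indices $\bfj$ satisfying $\bfy_2+\bfj/\bfn\in Q_1$.

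\textbf{Identifying the image of $\psi_{\bfn}$.} This is the only step needing care, and it is where the half-open convention $Q_{\bf0,1}=(0,1]^d$ matters. Write $\bfj=\bfi+\bfn(\bfy_1-\bfy_2)$. Then $\bfy_2+\bfj/\bfn=\bfy_1+\bfi/\bfn$. Componentwise, this point lies in $Q_1=\bfy_1+(0,l_1]^d$ if and only if $0<i_k/n_k\le l_1$ for every $k$, that is ${\bf1}\le\bfi\le l_1\bfn$. Hence the image of $\psi_{\bfn}$ is exactly the set of $\bfj$ with $\mathds{1}_{Q_1}(\bfy_2+\bfj/\bfn)=1$, which proves \eqref{lemma_1_eq_2}.

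\textbf{Consequences.} Both follow by associativity. First,
\begin{equation*}
R_{\bfn}(E_{\bfn}(B_{\bfn}))=(\Pi_{\bfn}\Pi_{\bfn}^T)B_{\bfn}(\Pi_{\bfn}\Pi_{\bfn}^T)=B_{\bfn}
\end{equation*}
by \eqref{lemma_1_eq_1}. Second,
\begin{equation*}
E_{\bfn}(R_{\bfn}(A_{\bfn}))=(\Pi_{\bfn}^T\Pi_{\bfn})A_{\bfn}(\Pi_{\bfn}^T\Pi_{\bfn})=D_{l_2\bfn}(\mathds{1}_{Q_1})A_{\bfn}D_{l_2\bfn}(\mathds{1}_{Q_1})
\end{equation*}
by \eqref{lemma_1_eq_2}.

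The main obstacle is only bookkeeping: keeping the multi-index ranges consistent with the ordering $\preceq$ versus the componentwise order $\le$. I would work componentwise throughout, since the grid $\{{\bf1},\ldots,l\bfn\}$ is a product set.
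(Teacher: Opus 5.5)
Your proof is correct and follows essentially the same route as the paper: an entrywise Kronecker-delta computation of $\Pi_{\bfn}\Pi_{\bfn}^T$ and $\Pi_{\bfn}^T\Pi_{\bfn}$ using the injectivity of $\psi_{\bfn}$, then associativity for the two consequences. You also verify explicitly that $\psi_{\bfn}$ maps into $\{{\bf1},\ldots,l_2\bfn\}$ and that its image is exactly the set of indices $\bfj$ with $\bfy_2+\bfj/\bfn\in Q_1$ (using the half-open convention and reading the index range componentwise); the paper only asserts these details.
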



\begin{proof}
	Fix $Q_1$, $Q_2$ and the index $\bfn$. By direct computation, we have
	\begin{equation*}
		(\Pi_{\bfn}\Pi_{\bfn}^T)_{\bfi,\bfj}=\sum_{\bfk=\bf1}^{l_2\cdot\bfn}(\Pi_{\bfn})_{\bfi,\bfk}(\Pi_{\bfn}^T)_{\bfk,\bfj}=\sum_{\bfk=\bf1}^{l_2\cdot\bfn}\delta_{\psi_{\bfn}(\bfi),\bfk}\delta_{\bfk,\psi_{\bfn}(\bfj)}=\delta_{\psi_{\bfn}(\bfi),\psi_{\bfn}(\bfj)}=\delta_{\bfi,\bfj},
	\end{equation*}
	where the last equality holds since $\psi_{\bfn}$ is injective. Similarly,
	\begin{equation*}
		(\Pi_{\bfn}^T \Pi_{\bfn})_{\bfi,\bfj}=\sum_{\bfk=\bf1}^{l_2\cdot\bfn}(\Pi_{\bfn}^T)_{\bfi,\bfk}(\Pi_{\bfn})_{\bfk,\bfj}=\sum_{\bfk=\bf1}^{l_2\cdot\bfn}\delta_{\bfi,\psi_{\bfn}(\bfk)}\delta_{\psi_{\bfn}(\bfk),\bfj}=\delta_{\bfi,\bfj}\mathds{1}_{Q_1}(\bfy_2+\frac{\bfi}{\bfn}),
	\end{equation*}
	where, as above, last equality holds since the last sum gives $\delta_{\bfi,\bfj}$, but only if $\bfi,\bfj\in \mathrm{Im}(\psi_{\bfn})$. This corresponds precisely to the fact that the points $\bfy_2+\frac{\bfi}{\bfn}$ and $\bfy_2+\frac{\bfj}{\bfn}$ belong to $Q_1$. 
    
    Finally, for any matrices $A_\bfn$ and $B_\bfn$ of suitable size, we get
	\begin{equation*}
		R_{\bfn}(E_{\bfn}(B_{\bfn}))=R_{\bfn}(\Pi_{\bfn}^T B_{\bfn}\Pi_{\bfn})=\Pi_{\bfn}\Pi_{\bfn}^T B_{\bfn}\Pi_{\bfn}\Pi_{\bfn}^T=B_{\bfn},
	\end{equation*}
	\begin{equation*}
		E_{\bfn}(R_{\bfn}(A_{\bfn}))=E_{\bfn}(\Pi_{\bfn}A_{\bfn}\Pi_{\bfn}^T)=\Pi_{\bfn}^T \Pi_{\bfn}A_{\bfn}\Pi_{\bfn}^T\Pi_{\bfn} =D_{l_2\bfn}(\mathds{1}_{Q_1})A_{\bfn}D_{l_2\bfn}(\mathds{1}_{Q_1}).
	\end{equation*}
	
\end{proof}
\begin{cor}\label{product_hypercubes}
     Let $Q_1=Q_{{\bfy_1},l_1}\subset Q_2=Q_{{\bfy_2},l_2}$ be two hypercubes and let $\bfn$ be a multi-index. Then, if $B_\bfn,B'_\bfn$ are square matrices of size $N(l_1\bfn)$, it holds
     \begin{equation*}
         E_\bfn(B_\bfn B'_\bfn)=E_\bfn(B_\bfn)E_\bfn(B'_\bfn).
     \end{equation*}
     Conversely, if $A_\bfn,A'_\bfn\in\mathcal{C}^{N(l_2\cdot\bfn)\times N(l_2\cdot\bfn)}$ are in the image of the extension operator $E_\bfn$, then
     \begin{equation*}
         R_\bfn(A_\bfn A'_\bfn)=R_\bfn(A_\bfn)R_\bfn(A'_\bfn).
     \end{equation*}
     As a consequence, the extension operator $E_\bfn$ is a $*$-algebra morphism, whilst the restriction operator $R_\bfn$ is a $*$-algebra morphism, when restricted to the image of the extension operator. 
\end{cor}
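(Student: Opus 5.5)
The plan is to use the two identities of Lemma \ref{lemma_1}, namely $\Pi_\bfn\Pi_\bfn^T=I_\bfn$ and $\Pi_\bfn^T\Pi_\bfn=D_{l_2\bfn}(\mathds{1}_{Q_1})$, and to handle the two claims separately.

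\emph{Multiplicativity of $E_\bfn$.} We expand the definition:
\begin{equation*}
E_\bfn(B_\bfn)E_\bfn(B'_\bfn)=\Pi_\bfn^T B_\bfn(\Pi_\bfn\Pi_\bfn^T)B'_\bfn\Pi_\bfn .
\end{equation*}
By \eqref{lemma_1_eq_1} the middle factor $\Pi_\bfn\Pi_\bfn^T$ is the identity. Hence the right-hand side equals $\Pi_\bfn^T B_\bfn B'_\bfn\Pi_\bfn=E_\bfn(B_\bfn B'_\bfn)$.

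\emph{Multiplicativity of $R_\bfn$ on the image of $E_\bfn$.} Write $A_\bfn=E_\bfn(B_\bfn)$ and $A'_\bfn=E_\bfn(B'_\bfn)$. By the identity $R_\bfn\circ E_\bfn=\mathrm{id}$ from Lemma \ref{lemma_1}, we have $R_\bfn(A_\bfn)=B_\bfn$ and $R_\bfn(A'_\bfn)=B'_\bfn$. Using the multiplicativity of $E_\bfn$ just proved,
\begin{equation*}
R_\bfn(A_\bfn A'_\bfn)=R_\bfn\big(E_\bfn(B_\bfn B'_\bfn)\big)=B_\bfn B'_\bfn=R_\bfn(A_\bfn)R_\bfn(A'_\bfn).
\end{equation*}

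\emph{The $*$-algebra structure.} Both $E_\bfn$ and $R_\bfn$ are clearly linear. Since $\Pi_\bfn$ is a real matrix, $\Pi_\bfn^*=\Pi_\bfn^T$, and therefore
\begin{equation*}
E_\bfn(B_\bfn)^*=\Pi_\bfn^T B_\bfn^*\Pi_\bfn=E_\bfn(B_\bfn^*),\qquad R_\bfn(A_\bfn)^*=R_\bfn(A_\bfn^*).
\end{equation*}
Two points remain.
\begin{itemize}
\item The image of $E_\bfn$ is a $*$-subalgebra: it is a subspace, it is closed under products by the first claim, and it is closed under adjoints by the computation above.
\item Units are not preserved in general: $E_\bfn(I)=D_{l_2\bfn}(\mathds{1}_{Q_1})$ and not $I$. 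So $E_\bfn$ is a morphism of non-unital $*$-algebras. On its image, $E_\bfn$ is a $*$-isomorphism whose inverse is $R_\bfn$, and $D_{l_2\bfn}(\mathds{1}_{Q_1})$ serves as the unit there.
\end{itemize}

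There is no real obstacle. The only subtlety is this unit issue and noting that the "in the image" hypothesis is exactly what lets us avoid the projection $D_{l_2\bfn}(\mathds{1}_{Q_1})$ inserted between $A_\bfn$ and $A'_\bfn$. Alternatively, we can check directly that for $A_\bfn,A'_\bfn$ in the image this projection acts trivially, since $D_{l_2\bfn}(\mathds{1}_{Q_1})A_\bfn=A_\bfn$ because $\Pi_\bfn\Pi_\bfn^T\Pi_\bfn=\Pi_\bfn$.
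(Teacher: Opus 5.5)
Your proof is correct and follows essentially the same route as the paper: you get multiplicativity of $E_\bfn$ from $\Pi_\bfn\Pi_\bfn^T=I_\bfn$, and then multiplicativity of $R_\bfn$ on the image of $E_\bfn$ from $R_\bfn\circ E_\bfn=\mathrm{Id}$. Your additional checks fill in details the paper leaves implicit: compatibility with adjoints (since $\Pi_\bfn$ is real), and the observation that $E_\bfn(I)=D_{l_2\bfn}(\mathds{1}_{Q_1})$, so the morphism is non-unital.
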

\begin{proof}
    The proof is a direct consequence of Lemma \ref{lemma_1}. Indeed, first consider $B_\bfn,B'_\bfn\in\mathcal{C}^{N(l_1\cdot\bfn)\times N(l_1\cdot\bfn)}$. Then, we have
    \begin{equation*}
        E_\bfn(B_\bfn B'_\bfn)=\Pi_{\bfn}^T B_{\bfn}B'_{\bfn}\Pi_{\bfn}=\Pi_{\bfn}^T B_{\bfn}\Pi_{\bfn}\Pi_{\bfn}^TB'_{\bfn}\Pi_{\bfn}=E_\bfn(B_\bfn )E_\bfn(B'_\bfn).
    \end{equation*}
    Conversely, consider $A_\bfn,A'_\bfn\in\mathcal{C}^{N(l_2\cdot\bfn)\times N(l_2\cdot\bfn)}$ such that 
    \begin{equation*}
        A_\bfn=E_\bfn(B_\bfn),\qquad A'_\bfn=E_\bfn(B'_\bfn),
    \end{equation*}
    for some $B_\bfn,B'_\bfn\in\mathcal{C}^{N(l_1\cdot\bfn)\times N(l_1\cdot\bfn)}$. Then, by what we just proved, it follows
    \begin{align*}
        R_\bfn(A_\bfn A'_\bfn)&=R_\bfn(E_\bfn(B_\bfn)E_\bfn(B'_\bfn))\\
            &=R_\bfn(E_\bfn(B_\bfn B'_\bfn))\\
            &=B_\bfn B'_\bfn = R_\bfn(A_\bfn)R_\bfn(A_\bfn),
    \end{align*}
    as desired.
\end{proof}
\begin{rem}
Once that we have fixed $Q_1=Q_{{\bfy_1},l_1}\subset Q_2=Q_{{\bfy_2},l_2}$, we can consider the sequences $\{R_{\bfn}\}_{\bfn}$ and $\{E_{\bfn}\}_{\bfn}$ as operators acting on matrix-sequences of size $\{l_2^d\cdot N(\bfn)\}_{\bfn}$ and $\{l_1^d\cdot N(\bfn)\}_{\bfn}$, respectively. 

As in the classic reduced GLT approach (see \cite{Barb}), we would like to define an algebra of matrix-sequences equipped with a symbol on any bounded regular domain $\Omega\subset\mathbb{R}^d$, as the image of a restriction-type operator acting on GLT matrix-sequences over a hypercube containing $\Omega$. Of course, for a given bounded domain, we can find infinitely many hypercubes $Q_{\bfy,l}$, such that $\Omega\subset Q_{\bfy,l}$. Hence, we start by proving that $\mathcal{R}:=\{R_{\bf n}\}_\bfn$ and $\mathcal{E}:=\{E_{\bf n}\}_\bfn$ send GLT sequences to GLT sequences, so that our definition will be independent of the choice of $Q_{\bfy,l}$.
\end{rem}
\begin{lem}\label{acs_cts_operators}
    Let $Q_1(=Q_{{\bfy_1},l_1})\subset Q_2(=Q_{{\bfy_2},l_2})$ be two hypercubes. Denote by $\mathcal{G}_1=\mathcal{G}_{{\bfy_1},l_1}$ and $\mathcal{G}_2=\mathcal{G}_{{\bfy_2},l_2}$ the corresponding GLT algebras and by $\mathcal{M}_1$ and $\mathcal{M}_2$ the algebras of all matrix-sequences of size $\{l_1^d\cdot N(\bfn)\}_\bfn$ and $\{l_2^d\cdot N(\bfn)\}_\bfn$, respectively. Then, the operators $\mathcal{R}=\{R_{\bfn}\}_\bfn:\mathcal{M}_2\to\mathcal{M}_1$ and $\mathcal{E}=\{E_{\bfn}\}_\bfn:\mathcal{M}_1\to\mathcal{M}_2$ are continuous with respect to a.c.s. convergence.
\end{lem}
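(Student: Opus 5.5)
Since $\mathcal{R}$ and $\mathcal{E}$ are linear (entrywise $R_\bfn(A_\bfn)=\Pi_\bfn A_\bfn\Pi_\bfn^T$ and $E_\bfn(B_\bfn)=\Pi_\bfn^T B_\bfn\Pi_\bfn$), it suffices to show that each of them is Lipschitz with respect to the pseudo-metric $d_{a.c.s.}$. That is, we aim for bounds of the form $d_{a.c.s.}(\mathcal{R}\{A_\bfn\},\mathcal{R}\{A'_\bfn\})\le C\,d_{a.c.s.}(\{A_\bfn\},\{A'_\bfn\})$, and the analogous bound for $\mathcal{E}$. By linearity this reduces to controlling $\rho(\{R_\bfn(X_\bfn)\}_\bfn)$ in terms of $\rho(\{X_\bfn\}_\bfn)$, and likewise for $E_\bfn$. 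Equivalently, by Theorem \ref{completeness_acs_conv}, we may work with the a.c.s. decomposition of Definition \ref{def_acs}: if $\{\{B_{\bfn,t}\}_\bfn\}_t$ is an a.c.s. for $\{A_\bfn\}_\bfn$, then $\{\{R_\bfn(B_{\bfn,t})\}_\bfn\}_t$ should be an a.c.s. for $\{R_\bfn(A_\bfn)\}_\bfn$, and similarly for $E_\bfn$.

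\textbf{Restriction.} Let $X_\bfn=S_\bfn+N_\bfn$ with $\mathrm{rank}(S_\bfn)\le c\,N(l_2\bfn)$ and $\|N_\bfn\|\le\omega$. Applying $R_\bfn$ gives $R_\bfn(X_\bfn)=\Pi_\bfn S_\bfn\Pi_\bfn^T+\Pi_\bfn N_\bfn\Pi_\bfn^T$. By \eqref{lemma_1_eq_1}, $\Pi_\bfn$ has orthonormal rows, so $\|\Pi_\bfn\|=\|\Pi_\bfn^T\|=1$. Hence $\mathrm{rank}(\Pi_\bfn S_\bfn\Pi_\bfn^T)\le c\,N(l_2\bfn)=c\,(l_2/l_1)^d N(l_1\bfn)$, and $\|\Pi_\bfn N_\bfn\Pi_\bfn^T\|\le\omega$. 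Relative to the new size $N(l_1\bfn)$, the rank fraction is therefore inflated by the fixed constant $(l_2/l_1)^d$, while the norm bound is unchanged. Since $c(t)\to0$ implies $(l_2/l_1)^d c(t)\to0$, the a.c.s. property transfers. In terms of $p$, the same argument yields $p(R_\bfn(X_\bfn))\le (l_2/l_1)^d\,p(X_\bfn)$ by choosing the optimal index $i$ in the definition of $p$ and using the singular value interlacing $\sigma_i(\Pi X\Pi^T)\le\sigma_i(X)$.

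\textbf{Extension.} The argument is symmetric. We have $E_\bfn(S)=\Pi_\bfn^TS\Pi_\bfn$, whose rank is at most $\mathrm{rank}(S)\le c\,N(l_1\bfn)\le c\,N(l_2\bfn)$, and $\|E_\bfn(N)\|\le\|N\|$. So the constants only improve, by the factor $(l_1/l_2)^d\le1$. In fact the nonzero singular values of $E_\bfn(B)$ coincide with those of $B$, padded with zeros.

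The only point requiring care is the bookkeeping of the differing matrix sizes $N(l_1\bfn)$ versus $N(l_2\bfn)$ in the rank fraction. Because $l_1,l_2$ are fixed, this ratio is a constant independent of $\bfn$, so it is harmless and causes no genuine obstacle. Combining the two cases, continuity (indeed Lipschitz continuity) of $\mathcal{R}$ and $\mathcal{E}$ follows from Theorem \ref{completeness_acs_conv}.
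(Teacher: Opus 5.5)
Your proposal is correct and follows essentially the same route as the paper. The paper also argues that $R_\bfn(A_\bfn)$ is a submatrix, so rank and norm corrections do not grow, and that $E_\bfn(B_\bfn)$ is, up to a permutation, $B_\bfn\oplus 0$, so it preserves both. It then notes that the fixed size ratio $(l_2/l_1)^d$ takes care of the rank fraction. Your explicit Lipschitz bound $p(R_\bfn(X_\bfn))\le (l_2/l_1)^d\,p(X_\bfn)$ is a slightly sharper, quantitative version of the same argument.
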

\begin{rem}\label{acs_same_structure}
    In the following sections, we make use of many other restriction and extension operators, built with the same structure as those between two hypercubes. Therefore, all those operators will be continuous with respect to a.c.s. convergence, with the same proof as the one of Lemma \ref{acs_cts_operators}, as long as the sequences of dimensions have the same rate of divergence (i.e. their ratio has a positive finite limit). 
\end{rem}
\begin{proof}[Proof of Lemma \ref{acs_cts_operators}]
    Fix $\bfn$ and fix $A_\bfn\in\mathbb{C}^{N(l_2\bfn)\times N(l_2\bfn)}$. Then, the matrix $R_\bfn(A_\bfn)$ is a minor of the larger matrix $A_\bfn$. For this reason,
    \begin{equation*}
        \mathrm{rank}(R_\bfn(A_\bfn))\leq\mathrm{rank}(A_\bfn),\qquad \|R_\bfn(A_\bfn)\|\leq\|A_\bfn\|.
    \end{equation*}
    Thus, $\mathcal{R}$ is linear and it preserves rank corrections and norm corrections, without increasing their magnitude. Moreover, the dimensions of the matrix-sequences in $\mathcal{M}_1$ and $\mathcal{M}_2$ have the same rate of divergence, namely
    \begin{equation*}
        \lim\limits_{\bfn\to\infty}\frac{N(l_1\bfn)}{N(l_2\bfn)}=\lim\limits_{\bfn\to\infty}\frac{l_1^d}{l_2^d}=\frac{l_1^d}{l_2^d}.
    \end{equation*}
    Therefore, $\mathcal{R}$ is continuous with respect to the a.c.s. convergence (Note that the pointwise information for $\bfn$ fixed was enough to prove that $\mathcal{R}$ preserved the norm correction of the whole sequence; on the other hand, for the rank correction one has to be careful that the estimate is in terms of the dimension of the matrices of the sequence, and this changes under the action of the operator).
    Similarly, fix $\bfn$ and fix $B_\bfn\in\mathbb{C}^{N(l_1\bfn)\times N(l_1\bfn)}$.  Then (see \cite[Lemma 4.3]{Barb} for further details), there exists a permutation matrix $P_{\bfn}\in\mathbb{C}^{N(l_2\bfn)\times N(l_2\bfn)}$, depending only on $\bfn$ and on the sets $Q_1,Q_2$, such that
    \begin{equation*}
        P_{\bfn}^TE_\bfn(B_\bfn) P_{\bfn}=\left( 
    \begin{array}{c|c} 
      B_\bfn & 0 \\ 
      \hline 
      0 & 0 
    \end{array} 
    \right).
    \end{equation*}
    As a consequence, each extension operator $E_\bfn$ is norm-preserving and rank-preserving, and the continuity of $\mathcal{E}$ follows as for $\mathcal{R}$.
\end{proof}
\begin{thm}\label{restriction_of_standard_GLT}
	Let $Q_1(=Q_{{\bfy_1},l_1})\subset Q_2(=Q_{{\bfy_2},l_2})$ be two hypercubes.
    If $\mathcal{R}=\{R_{\bfn}\}_\bfn:\mathcal{M}_2\to\mathcal{M}_1$ and $\mathcal{E}=\{E_{\bfn}\}_\bfn:\mathcal{M}_1\to\mathcal{M}_2$ are the restriction and extension operators, then we have
	\begin{equation}\label{GLT_op}
		\mathcal{R}(\mathcal{G}_2)\subset\mathcal{G}_1, \qquad \mathcal{E}(\mathcal{G}_1)\subset\mathcal{G}_2.
	\end{equation}
\end{thm}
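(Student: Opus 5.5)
We prove both inclusions with the same scheme. First we check them on the generators of the GLT algebra: Toeplitz sequences, diagonal sampling sequences and zero-distributed sequences. Then we extend to all of $\mathcal{G}_2$ (resp. $\mathcal{G}_1$) using the algebra structure and the a.c.s. closure property \eqref{GLT5}, together with the a.c.s. continuity of $\mathcal{R}$ and $\mathcal{E}$ from Lemma \ref{acs_cts_operators}. The expected symbols are the restriction $f|_{Q_1\times[-\pi,\pi]^d}$ for $\mathcal{R}$ and the extension by zero $\mathds{1}_{Q_1}g$ for $\mathcal{E}$. We aim to prove the inclusions in this stronger form, since tracking the symbol is what makes the limiting argument work.

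\textbf{Generators under $\mathcal{R}$.} Every classical GLT sequence with symbol $\tilde f$ is an a.c.s. limit of finite sums of products $D_\bfn(a)T_\bfn(g)$ (plus zero-distributed terms), with $a$ continuous and $g$ a trigonometric polynomial. So it suffices to treat sums of such products. For a Toeplitz matrix of size $l_2\bfn$, the principal minor selected by $\Pi_\bfn$ corresponds to a contiguous block of multi-indices, shifted by $\bfn(\bfy_1-\bfy_2)$. Toeplitz structure is translation invariant, so $R_\bfn(T_{l_2\bfn}(g))=T_{l_1\bfn}(g)$ exactly. For a diagonal sampling matrix, $R_\bfn(D_{l_2\bfn}(a))$ is diagonal with entries $a(\bfy_1+\bfi/\bfn)$, $\bfi=\bf1,\ldots,l_1\bfn$. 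This is the sampling of $a|_{Q_1}$ on the grid $\Theta_{\bfn,\bfy_1,l_1}$, hence an element of $\mathcal{G}_1$ with symbol $a|_{Q_1}$. Since $D$ is diagonal, $R_\bfn(DT)=R_\bfn(D)R_\bfn(T)$, so products of these generators map correctly. A zero-distributed sequence is sent to a zero-distributed one, because minors do not increase rank or norm and the dimension ratio $l_1^d/l_2^d$ is positive. Linearity then gives $\mathcal{R}(\{A_\bfn\})\sim^{Q_1}_{\mathrm{GLT}} f|_{Q_1\times[-\pi,\pi]^d}$ on the dense class.

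\textbf{Closure step for $\mathcal{R}$.} For a general $\{A_\bfn\}_\bfn\in\mathcal{G}_2$, choose an a.c.s. $\{B_{\bfn,t}\}$ of such finite sums with symbols $f_t\to f$ in measure. By Lemma \ref{acs_cts_operators}, $\{\mathcal{R}(B_{\bfn,t})\}$ is an a.c.s. for $\mathcal{R}(A_\bfn)$. The restricted symbols still converge in measure on $Q_1\times[-\pi,\pi]^d$, so \eqref{GLT5} gives the claim.

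\textbf{Inclusion for $\mathcal{E}$.} Here we avoid a separate density argument. Given $\{B_\bfn\}\in\mathcal{G}_1$ with symbol $g$, the plan is to produce some $\{A_\bfn\}\in\mathcal{G}_2$ with $R_\bfn(A_\bfn)=B_\bfn$ up to a zero-distributed error. Then Lemma \ref{lemma_1} gives
\[
E_\bfn(B_\bfn)=D_{l_2\bfn}(\mathds{1}_{Q_1})A_\bfn D_{l_2\bfn}(\mathds{1}_{Q_1})+(\text{zero-distributed}).
\]
The function $\mathds{1}_{Q_1}$ is continuous a.e., so this is a product of GLT sequences in $\mathcal{G}_2$, and the algebra property \eqref{GLT4} gives symbol $\mathds{1}_{Q_1}\tilde g$. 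To produce $A_\bfn$, take the generators of $B_\bfn$ and lift them: Toeplitz matrices lift to $T_{l_2\bfn}(g)$, and diagonal samplings of a continuous $a$ on $Q_1$ lift to samplings of any continuous extension of $a$ to $Q_2$. Passing to a.c.s. limits again relies on the continuity of $\mathcal{E}$ and on \eqref{GLT5}.

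\textbf{Main obstacle.} The delicate point is the reduction to generators. We need the fact that finite sums of $D_\bfn(a)T_\bfn(g)$ are a.c.s. dense in the classical GLT class, with symbols converging in measure. This is standard in \cite{glt-book-1,glt-book-2} but is not stated among \ref{GLT0}--\ref{GLT5}. We also have to check carefully that the index shift $\psi_\bfn$ makes the Toeplitz minor exact in the multilevel (lexicographic) setting.
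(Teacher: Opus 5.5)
Your proposal follows the same overall scheme as the paper. You verify the inclusions on Toeplitz and diagonal sampling sequences, then reach all of $\mathcal{G}_2$ (resp.\ $\mathcal{G}_1$) through a density characterization of GLT sequences and the a.c.s.\ continuity of Lemma \ref{acs_cts_operators}. The symbol tracking you build in is what the paper states separately as Corollary \ref{cor_restr_ext_standard_GLT_symbol}.

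The differences lie in how products and the extension are treated. The paper checks only the individual generators and then appeals to the a.c.s.\ closure of the sub-algebra they generate. For $\mathcal{E}$ this is harmless, because $\mathcal{E}$ is a genuine $*$-algebra morphism (Corollary \ref{product_hypercubes}). The paper simply computes $E_\bfn(T_{l_1\bfn}(f))=D_{l_2\bfn}(\mathds{1}_{Q_1})T_{l_2\bfn}(f)D_{l_2\bfn}(\mathds{1}_{Q_1})$ and $E_\bfn(D_{l_1\bfn}(a_1))=D_{l_2\bfn}(a_1^E)$. For $\mathcal{R}$, however, multiplicativity only holds on $\mathrm{Im}(\mathcal{E})$, so the paper leaves tacit the passage from generators to the generated sub-algebra. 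Your reduction to sums $\sum_i D(a_i)T(g_i)$, combined with $R_\bfn(DT)=R_\bfn(D)R_\bfn(T)$, makes that step explicit. This identity holds because a diagonal $D$ commutes with $\Pi_\bfn^T\Pi_\bfn$ and $\Pi_\bfn\Pi_\bfn^T=I$. The cost is that you need the sharper ``sums of $D\cdot T$'' density theorem from the GLT books rather than the sub-algebra characterization.

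One step of your $\mathcal{E}$ argument needs tightening. Lifting the approximants of $\{B_\bfn\}_\bfn$ through arbitrary continuous extensions of the diagonal coefficients does not give an a.c.s.-convergent family in $\mathcal{G}_2$. The lifted symbols are uncontrolled on $(Q_2\setminus Q_1)\times[-\pi,\pi]^d$, so this does not produce a single lift $\{A_\bfn\}_\bfn$ of the limit. There are two ways to fix it.
\begin{itemize}
\item Apply the identity $E_\bfn(R_\bfn(X))=D_{l_2\bfn}(\mathds{1}_{Q_1})XD_{l_2\bfn}(\mathds{1}_{Q_1})$ of Lemma \ref{lemma_1} to each lifted approximant. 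The indicator factors remove the uncontrolled part, leaving the extensions by zero of the approximating symbols, which converge in measure to $g^E$. Then pass to the limit on the $\mathcal{E}$ side, using the continuity of $\mathcal{E}$ and \ref{GLT5}.
\item Alternatively, and genuinely without any density argument, take $\{A_\bfn\}_\bfn\sim^{Q_2}_{\mathrm{GLT}}g^E$ from \ref{GLT0}. The $\mathcal{R}$ part you already proved then gives $\{R_\bfn(A_\bfn)-B_\bfn\}_\bfn\sim_\sigma 0$, and you finish with Lemma \ref{lemma_1} exactly as you planned.
\end{itemize}
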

\begin{proof}
	First, we prove the thesis for Toeplitz sequences $\{T_{l_i\bfn}(f)\}_\bfn$, where $f\in L^1([-\pi,\pi]^d)$, and for diagonal sequences $\{D_{l_i\bfn}(a_i)\}_\bfn$,where $a_i:Q_i\to\mathbb{C}, \,i=1,2,$ is continuous almost everywhere (i.e. measurable). 
    
    Fix an index $\bfn$ and consider $T_{l_2\bfn}(f)\in\mathbb{C}^{N(l_2\cdot \bfn)\times N(l_2\cdot\bfn)}$. Then, we have 
	\begin{align*}
		(R_{\bfn}(T_{l_2\bfn}(f)))_{\bfi,\bfj}&=\sum_{\bfk,\bfh=\bf1}^{l_2\cdot\bfn}(\Pi_{\bfn})_{\bfi,\bfk}(T_{l_2\bfn}(f))_{\bfk,\bfh}(\Pi_{\bfn}^T)_{\bfh,\bfj}\\
        &=\sum_{\bfk,\bfh=\bf1}^{l_2\cdot\bfn}\delta_{\psi_{\bfn}(\bfi),\bfk}\cdot f_{\bfk-\bfh}\cdot\delta_{\bfh,\psi_{\bfn}(\bfj)}\\
        &=f_{\psi_{\bfn}(\bfi)-\psi_{\bfn}(\bfj)}=f_{\bfi-\bfj}=(T_{l_1\bfn}(f))_{\bfi,\bfj},
	\end{align*}
	since 
	\begin{equation*}
		\psi_{\bfn}(\bfi)-\psi_{\bfn}(\bfj)=\bfi+\bfn\cdot(\bfy_1-\bfy_2)-(\bfj+\bfn\cdot(\bfy_1-\bfy_2))=\bfi-\bfj.
	\end{equation*}
	Hence, the restriction operator maps Toeplitz matrix-sequences to Toeplitz matrix-sequences with the same generating function:
	\begin{equation*}
		\mathcal{R}\left(\{T_{l_2\bfn}(f)\}_{\bfn}\right)=\{T_{l_1\bfn}(f)\}_{\bfn}.
	\end{equation*} 
	Exploiting this property, given $T_{l_1\bfn}(f)\in\mathbb{C}^{N(l_1\cdot \bfn)\times N(l_1\cdot\bfn)}$, we have
	\begin{equation*}
		E_{\bfn}(T_{l_1\bfn}(f))=E_{\bfn}(R_{\bfn}(T_{l_2\bfn}(f)))=D_{l_2\bfn}(\mathds{1}_{Q_1})T_{l_2\bfn}(f)D_{l_2\bfn}(\mathds{1}_{Q_1}),
	\end{equation*}
	so that
	\begin{equation*}
		\mathcal{E}\left(\{T_{l_1\bfn}(f)\}_{\bfn}\right)=\{D_{l_2\bfn}(\mathds{1}_{Q_1})T_{l_2\bfn}(f)D_{l_2\bfn}(\mathds{1}_{Q_1})\}_{\bfn}.
	\end{equation*}
	Thus, the extension of a Toeplitz matrix-sequence is a GLT matrix-sequence.
    
	Similarly, given an almost everywhere continuous function $a_2:Q_2\to\mathbb{C}$ and an index $\bfn$, we have 
	\begin{align*}
		(R_{\bfn}(D_{l_2\bfn}(a_2)))_{\bfi,\bfj}&=\sum_{\bfk,\bfh=\bf1}^{l_2\cdot\bfn}(\Pi_{\bfn})_{\bfi,\bfk}(D_{l_2\bfn}(a_2))_{\bfk,\bfh}(\Pi_{\bfn}^T)_{\bfh,\bfj}\\
        &=\sum_{\bfk,\bfh=\bf1}^{l_2\cdot\bfn}\delta_{\psi_{\bfn}(\bfi),\bfk}\cdot \delta_{\bfk,\bfh}a_2\bigg(\bfy_2+\frac{\bfk}{\bfn}\bigg)\cdot\delta_{\bfh,\psi_{\bfn}(\bfj)}\\
		&=\delta_{\psi_{\bfn}(\bfi),\psi_{\bfn}(\bfj)}a_2\left(\bfy_2+\frac{\psi_{\bfn}(\bfi)}{\bfn}\right)\\
        &=\delta_{\bfi,\bfj}a_2\left(\bfy_1+\frac{\bfi}{\bfn}\right)=(D_{l_1\bfn}(a_2|_{Q_1}))_{\bfi,\bfj},
	\end{align*}
	so that
	\begin{equation*}
		\mathcal{R}\left(\{D_{l_2\bfn}(a_2)\}_{\bfn}\right)=\{D_{l_1\bfn}(a_2|_{Q_1})\}_{\bfn}.
	\end{equation*}
	Conversely, given $a_1:Q_1\to\mathbb{C}$, we set 
	\begin{equation*}
			a_1^E= \begin{cases}
				a_1(s) & \text{if } s \in Q_1, \\
				0 & \text{if } s \in Q_2\setminus Q_1,
			\end{cases}
	\end{equation*}
	It follows that 
	\begin{equation*}
		E_{\bfn}(D_{l_1\bfn}(a_1))=E_{\bfn}(R_{\bfn}(D_{l_2\bfn}(a_1^E)))=D_{l_2\bfn}(\mathds{1}_{Q_1})D_{l_2\bfn}(a_1^E)D_{\bfn}(\mathds{1}_{Q_1})=D_{l_2\bfn}(a_1^E\cdot\mathds{1}_{Q_1})=D_{l_2\bfn}(a_1^E),
	\end{equation*}
	\begin{equation*}
		\mathcal{E}\left(\{D_{l_1\bfn}(a_1)\}_{\bfn}\right)=\{D_{l_2\bfn}(a_1^E)\}_{\bfn}.
	\end{equation*}
	As a consequence, the operators $\mathcal{R}$ and $\mathcal{E}$ map Toeplitz sequences and diagonal sequences to GLT sequences. By a classical characterization (see \cite[Theorem 8.6]{glt-book-1}), the GLT algebra is the a.c.s. closure of the sub-algebra generated by Toeplitz and diagonal sequences. Since both the operators respect a.c.s. convergence by Lemma \ref{acs_cts_operators}, we get
	\begin{equation*}
		\mathcal{R}(\mathcal{G}_2)\subset\mathcal{G}_1, \qquad \mathcal{E}(\mathcal{G}_1)\subset\mathcal{G}_2.
	\end{equation*}
	This ends the proof.
\end{proof}
 \begin{cor}[Compatibility of canonical GLT symbols]\label{cor_restr_ext_standard_GLT_symbol}
        Consider two hypercubes $Q_1\subset Q_2$, and consider $\{A_\bfn\}_\bfn\sim_{\mathrm{GLT}}^{Q_2}\, f$. 
        Then, 
        \begin{equation*}
            \mathcal{R}(\{A_\bfn\}_\bfn)\sim_{\mathrm{GLT}}^{Q_1}\,f|_{Q_1\times[-\pi,\pi]^d}.
        \end{equation*}
        Conversely, if $\{B_\bfn\}_\bfn\sim_{\mathrm{GLT}}^{Q_1}$g, then
        \begin{equation*}
            \mathcal{E}(\{B_\bfn\}_\bfn)\sim_{\mathrm{GLT}}^{Q_2}\,g^E,
        \end{equation*}
        where
        \begin{equation*}
            f^E(\bfx,\bftheta):=\begin{cases}
	g(\bfx,\bftheta) & \text{if } (\bfx,\bftheta)\in Q_1\times[-\pi,\pi]^d, \\
	0 & \text{otherwise.}
	\end{cases}
        \end{equation*}
    \end{cor}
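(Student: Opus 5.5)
The strategy is the same as in the proof of Theorem \ref{restriction_of_standard_GLT}: first verify the symbol identities on the generators of the GLT algebras, then extend them to all of $\mathcal{G}_2$ (resp. $\mathcal{G}_1$) by the $*$-algebra structure and a.c.s. closure. Write $\bfx\in Q_2$ for the space variable in the symbol of $\mathcal{G}_2$.

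\emph{Generators.} In the proof of Theorem \ref{restriction_of_standard_GLT} we computed
\begin{equation*}
\mathcal{R}(\{T_{l_2\bfn}(f)\}_\bfn)=\{T_{l_1\bfn}(f)\}_\bfn,\qquad \mathcal{R}(\{D_{l_2\bfn}(a_2)\}_\bfn)=\{D_{l_1\bfn}(a_2|_{Q_1})\}_\bfn .
\end{equation*}
By \ref{GLT2}, read through Definition \ref{hypercube}, the symbols on the two sides are, respectively,
\begin{equation*}
f(\bftheta)\text{ and } f(\bftheta),\qquad a_2(\bfx)\text{ and } a_2|_{Q_1}(\bfx).
\end{equation*}
So the restriction rule holds on generators.

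\emph{Extension to all of $\mathcal{G}_2$.} Restriction is not multiplicative on all of $\mathcal{M}_2$, so we cannot simply multiply these identities. Instead we use the product structure on the $\bfx$-side. For $\{A_\bfn\}_\bfn\in\mathcal{G}_2$ with symbol $f$, set
\begin{equation*}
\{A'_\bfn\}_\bfn:=\{D_{l_2\bfn}(\mathds{1}_{Q_1})A_\bfn D_{l_2\bfn}(\mathds{1}_{Q_1})\}_\bfn .
\end{equation*}
By Lemma \ref{lemma_1} this equals $\mathcal{E}(\mathcal{R}(\{A_\bfn\}_\bfn))$. By \ref{GLT4}, $\{A'_\bfn\}_\bfn\sim_{\mathrm{GLT}}^{Q_2}\mathds{1}_{Q_1}f$.

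\emph{Key identity, checked on generators.} The central claim is that, whenever $\{B_\bfn\}_\bfn\in\mathcal{G}_1$ has symbol $g$, the sequence $\mathcal{E}(\{B_\bfn\}_\bfn)$ has symbol $g^E$. For Toeplitz generators, the proof of Theorem \ref{restriction_of_standard_GLT} gives
\begin{equation*}
\mathcal{E}(\{T_{l_1\bfn}(f)\}_\bfn)=\{D_{l_2\bfn}(\mathds{1}_{Q_1})T_{l_2\bfn}(f)D_{l_2\bfn}(\mathds{1}_{Q_1})\}_\bfn,
\end{equation*}
whose symbol is $\mathds{1}_{Q_1}(\bfx)f(\bftheta)=(f)^E$. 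For diagonal generators, $\mathcal{E}(\{D_{l_1\bfn}(a_1)\}_\bfn)=\{D_{l_2\bfn}(a_1^E)\}_\bfn$, whose symbol is $a_1^E$. Corollary \ref{product_hypercubes} shows that $E_\bfn$ is a $*$-morphism and $\mathcal{E}$ is linear, while $g\mapsto g^E$ is also a $*$-morphism. Hence the identity holds on the whole $*$-algebra generated by Toeplitz and diagonal sequences.

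\emph{Passage to the limit.} This is the step requiring care. By \cite[Theorem 8.6]{glt-book-1}, every $\{B_\bfn\}_\bfn\in\mathcal{G}_1$ is an a.c.s. limit of sequences $\{B_{\bfn,t}\}_\bfn$ in that generated algebra. Theorem \ref{equiv_GLT_meas} is an isometry, so their symbols $g_t$ converge to $g$ in measure. Lemma \ref{acs_cts_operators} gives $\mathcal{E}(\{B_{\bfn,t}\}_\bfn)\to\mathcal{E}(\{B_\bfn\}_\bfn)$ a.c.s. Also $g_t^E\to g^E$ in measure, since $\|g_t^E-g^E\|$ vanishes off $Q_1\times[-\pi,\pi]^d$. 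Then \ref{GLT5}, applied in $\mathcal{G}_2$ through the affine reparametrization of Definition \ref{hypercube}, yields $\mathcal{E}(\{B_\bfn\}_\bfn)\sim_{\mathrm{GLT}}^{Q_2}g^E$. This proves the extension statement.

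\emph{Restriction statement.} Let $h$ be the $\mathcal{G}_1$-symbol of $\mathcal{R}(\{A_\bfn\}_\bfn)$; it exists by Theorem \ref{restriction_of_standard_GLT}. By the extension statement, $\mathcal{E}(\mathcal{R}(\{A_\bfn\}_\bfn))\sim_{\mathrm{GLT}}^{Q_2}h^E$. We showed above that this same sequence has symbol $\mathds{1}_{Q_1}f$. Uniqueness \ref{GLT0} gives $h^E=\mathds{1}_{Q_1}f$ a.e. Restricting both sides to $Q_1\times[-\pi,\pi]^d$ yields $h=f|_{Q_1\times[-\pi,\pi]^d}$ a.e., which is the restriction statement.

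The main obstacle is therefore the a.c.s. limiting argument for $\mathcal{E}$, and specifically the fact that $\mathcal{E}$ is a $*$-morphism while $\mathcal{R}$ is not. This is why the order of the argument matters: prove the extension statement first, then deduce the restriction statement from it through the identity $\mathcal{E}\circ\mathcal{R}=D\,\cdot\,D$ and uniqueness of the symbol.
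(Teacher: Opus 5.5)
Your proof is correct, but it reaches the restriction half by a different route from the paper. The paper treats $\mathcal{R}$ and $\mathcal{E}$ in parallel. It checks both symbol rules on Toeplitz and diagonal sequences, using the computations in Theorem \ref{restriction_of_standard_GLT}. It then propagates them to all of $\mathcal{G}_2$, resp.\ $\mathcal{G}_1$, by compatibility of canonical symbols with algebra operations and a.c.s.\ limits. You run this generator/closure argument only for $\mathcal{E}$, which is an exact $*$-morphism by Corollary \ref{product_hypercubes}. You then deduce the restriction rule from $E_\bfn(R_\bfn(A_\bfn))=D_{l_2\bfn}(\mathds{1}_{Q_1})A_\bfn D_{l_2\bfn}(\mathds{1}_{Q_1})$ (Lemma \ref{lemma_1}) together with uniqueness of the symbol.

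What your route buys is that you never need $\mathcal{R}$ to respect products, a point the paper's one-line argument leaves implicit. Diagonal matrices commute with $\Pi_\bfn^T\Pi_\bfn$, so $R_\bfn(DA)=R_\bfn(D)R_\bfn(A)$ whenever $D$ is diagonal. The paper's argument is therefore fine if the generating set is taken to be sums of products $D(a)T(\varphi)$. For products such as $T(\varphi_1)T(\varphi_2)$, however, restriction is multiplicative only up to a zero-distributed error, and that is most easily seen through exactly your $\mathcal{E}\circ\mathcal{R}$ identity. The paper's version, in turn, is shorter and symmetric.

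Your argument can also be made independent of the $\mathcal{R}$-half of Theorem \ref{restriction_of_standard_GLT}, which you invoke only to know that $h$ exists:
\begin{enumerate}
\item Take $\{C_\bfn\}_\bfn\sim_{\mathrm{GLT}}^{Q_1}f|_{Q_1\times[-\pi,\pi]^d}$ from \ref{GLT0}.
\item $\mathcal{E}(\{C_\bfn\}_\bfn)$ and $\mathcal{E}(\mathcal{R}(\{A_\bfn\}_\bfn))$ both have symbol $\mathds{1}_{Q_1}f$: the first by your extension statement, the second by your computation for $\{A'_\bfn\}_\bfn$.
\item Hence $\mathcal{E}(\{C_\bfn-R_\bfn(A_\bfn)\}_\bfn)\sim_\sigma 0$.
\item $E_\bfn$ only pads singular values with zeros, and $N(l_1\bfn)/N(l_2\bfn)=l_1^d/l_2^d$, so $\{C_\bfn-R_\bfn(A_\bfn)\}_\bfn\sim_\sigma 0$.
\end{enumerate}
This gives $\mathcal{R}(\{A_\bfn\}_\bfn)\in\mathcal{G}_1$ with the correct symbol in one stroke.
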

    \begin{proof}
        Following the proof of Theorem \ref{restriction_of_standard_GLT}, the thesis holds for Toeplitz and diagonal matrix-sequences. In addition, taking canonical symbols is compatible with the algebra operations and with passing to $\mathrm{a.c.s.}$ limits. Thus, the thesis follows from the fact that both $\mathcal{G}_1$ and $\mathcal{G}_2$ are the $\mathrm{a.c.s.}$ closure of the sub-algebra generated by Toeplitz and diagonal matrix-sequences (see \cite[Theorem 8.6]{glt-book-1}).
    \end{proof}
\begin{rem}\label{rest_surjective}
    Note that, given two hypercubes $Q_1\subset Q_2$, by Lemma \ref{lemma_1}, we have
    \begin{equation*}
        \mathcal{R}\circ\mathcal{E}=\mathrm{Id}_{\mathcal{M}_1},
    \end{equation*}
    where $\mathrm{Id}_{\mathcal{M}_1}:\mathcal{M}_1\to\mathcal{M}_1$ is the identity operator. In particular, this implies that
    \begin{equation*}
        (\mathcal{R}\circ\mathcal{E})(\mathcal{G}_1)=\mathcal{G}_1.
    \end{equation*}
    As a consequence, we can update Theorem \ref{restriction_of_standard_GLT} obtaining that
    \begin{equation*}
        \mathcal{R}(\mathcal{G}_2)=\mathcal{G}_1,
    \end{equation*}
    i.e., the restriction operator is always surjective. 
    
    Similarly, one can easily prove that the image of the extension operator is the sub-algebra of $\mathcal{G}_2$ of sequences whose canonical symbol $f$ satisfies
    \begin{equation*}
        f(\bfx,\bftheta)=0,\qquad \text{for a.e. }\bfx\in Q_2\setminus Q_1.
    \end{equation*}
    \end{rem}

\section{Reduced GLT on regular bounded domains}\label{sec-reduced-glt}

 In this section, we consider a bounded domain with sufficiently regular boundary and define on it an algebra of reduced GLT, in the spirit of \cite{Barb}, with a slightly different convention for the grid points (and thus for the dimension of the matrix-sequences). This different viewpoint will not affect the spectral information, which will still be carried over to the reduced GLT algebras. On the other hand, our convention is the suitable one when dealing with unbounded domains, as shown in the following sections. 

 Given a divergent sequence of $d$-dimensional multi-indices $\{\bfn\}$, for $\bfx\in\mathbb{R}^d$, we define
 \begin{equation*}
     B_{\bfn}(\bfy):=\left\{\bfx\in\mathbb{R}^d \, \big|\,\max_{i=1,\ldots,d}n_i|x_i-y_i|<1\right\},
 \end{equation*}
and note that $\Ld(B_{\bfn}(\bfy))=2^d(N(\bfn))^{-1}$.

Now, consider an open bounded set $\Omega\subset\mathbb{R}^d$ and, for every $\bfn$, define
\begin{equation}\label{def_grid}    
\Theta_{\bfn,\Omega}:=\left\{\bfp\in\Theta_{\bfn}\,\big|\, B_{\bfn}(\bfp)\subset\Omega\right\}
\end{equation}
and set 
\begin{equation*}
    d_{\bfn}^{\Omega}:=\#\Theta_{\bfn,\Omega}.
\end{equation*}
The points of $\Theta_{\bfn,\Omega}$ are those of $\Theta_{\bfn}$ that are reasonably far from the boundary of the domain $\Omega$, compared to the shrinking factor depending on $\bfn$. In addition, $\Theta_{\bfn,\Omega}$ is a totally ordered set, equipped with the restriction of the lexicographic order $\preceq$.

Similarly as before, we want to cut rows and columns from GLT matrix-sequences associated with hypercubes containing $\Omega$, in such a way that we retrieve the portion of singular values associated with $\Omega$. 

Fix a hypercube $Q_{\bfy,l}\supset\Omega$ and consider the inclusion of ordered sets $\Theta_{\bfn,\Omega}\to\Theta_{\bfn,\bfy,l}$. With this notation, define the family of rectangular matrices $\Pi_{\bfn}(=\Pi_{\bfn,\bfy,l,\Omega})\in\mathbb{C}^{d_{\bfn}^{\Omega}\times N(l\cdot\bfn)}$ by
\begin{equation*}
    (\Pi_{\bfn})_{\bfi,\bfj}=\delta_{\bfi,\bfj},\qquad\bfi\in\Theta_{\bfn,\Omega},\quad \bfj\in\Theta_{\bfn,\bfy,l}.
\end{equation*}
Using the matrices above we can define the restriction and extension maps\\
\begin{equation*}
	R_{\bfn}(=R_{{\bfn, \bfy},l,\Omega}):\mathbb{C}^{N(l\cdot \bfn)\times N(l\cdot\bfn)}\to\mathbb{C}^{d_{\bfn}^{\Omega}\times d_{\bfn}^{\Omega}},
\end{equation*}
\begin{equation*}
	E_{\bfn}(=E_{\bfn,\Omega,\bfy,l}):\mathbb{C}^{d_{\bfn}^{\Omega}\times d_{\bfn}^{\Omega}}\to\mathbb{C}^{N(l\cdot \bfn)\times N(l\cdot\bfn)},
\end{equation*}
as
\begin{equation*}
	R_{\bfn}(A_{\bfn}):=\Pi_{\bfn}A_{\bfn}\Pi_{\bfn}^T,
\end{equation*}
\begin{equation*}
	E_{\bfn}(B_{\bfn}):=\Pi_{\bfn}^T B_{\bfn}\Pi_{\bfn}. 
\end{equation*}
Again, $E_{\bfn}$, $R_{\bfn}$ send normal matrices to normal matrices and Hermitian matrices to Hermitian matrices. 
Now, we restrict to open domains $\Omega$ such that $\mathcal{L}^d(\partial\Omega)=0$, where $\mathcal{L}^d$ denotes the $d$-dimensional Lebesgue measure. In principle, we could define the algebra of reduced GLT for any open bounded domain, but this assumption is necessary to retrieve spectral information, so we include it in the definition.

\begin{lem}\label{restr_ext_reduced_GLT}
    Let $\Omega\subset Q_{\bfy,l}$, with $\Omega$ open and $\mathcal{L}^d(\partial\Omega)=0$. For any multi-index $\bfn$, we have 
	\begin{equation}\label{la_stessa_dell_altro}
    \Pi_{\bfn}\Pi_{\bfn}^T = I_{\bfn}\in\mathbb{C}^{d_{\bfn}^{\Omega}\times d_{\bfn}^{\Omega}},
	\end{equation}
	\begin{equation}\label{piccola_rank_correction}
    \Pi_{\bfn}^T \Pi_{\bfn} = D_{l\bfn}(\mathds{1}_{\Omega})+S_{\bfn,\Omega}\in\mathbb{C}^{N(l\cdot \bfn)\times N(l\cdot\bfn)},
	\end{equation}
	where $I_{\bfn}$ is the identity matrix of size $d_\bfn^{\Omega}$, $D_{l\bfn}(\mathds{1}_{\Omega})$ is the $l\bfn$-th matrix of the diagonal matrix-sequence $\{D_{l\bfn}(\mathds{1}_{\Omega})\}_\bfn\in\mathcal{G}_{{\bfy},l}$, and $S_{\bfn,\Omega}$ is a rank correction satisfying
    \begin{equation*}
        \mathrm{rank}(S_{\bfn,\Omega})=o(N(\bfn)).
    \end{equation*}
	As a consequence, for square matrices $A_{\bfn}$ of size $N(l\bfn)$ and $B_{\bfn}$ of size $d_{\bfn}^{\Omega}$, we have:
	\begin{equation*}
		R_{\bfn}(E_{\bfn}(B_{\bfn}))=B_{\bfn},
	\end{equation*}
	\begin{equation*}
		E_{\bfn}(R_{\bfn}(A_{\bfn}))=D_{l\bfn}(\mathds{1}_{\Omega})A_{\bfn}D_{l\bfn}(\mathds{1}_{\Omega})+\tilde{S}_{\bfn,\Omega},
	\end{equation*}
    where $\tilde{S}_{\bfn,\Omega}$ is again a rank correction satisfying
    \begin{equation*}
        \mathrm{rank}(\tilde{S}_{\bfn,\Omega})=o(N(\bfn)).
    \end{equation*}
\end{lem}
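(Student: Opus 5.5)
The plan is to copy the proof of Lemma \ref{lemma_1} for the two algebraic identities. The only genuinely new ingredient is a counting estimate near $\partial\Omega$.

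First, \eqref{la_stessa_dell_altro}. The entries are $(\Pi_\bfn\Pi_\bfn^T)_{\bfi,\bfj}=\sum_{\bfk\in\Theta_{\bfn,\bfy,l}}\delta_{\bfi,\bfk}\delta_{\bfk,\bfj}=\delta_{\bfi,\bfj}$ for $\bfi,\bfj\in\Theta_{\bfn,\Omega}$. This uses that $\Theta_{\bfn,\Omega}\subset\Theta_{\bfn,\bfy,l}$, which holds because $B_\bfn(\bfp)\subset\Omega\subset Q_{\bfy,l}$ forces $\bfp\in Q_{\bfy,l}$. The same computation gives $\Pi_\bfn^T\Pi_\bfn=\diag_{\bfj\in\Theta_{\bfn,\bfy,l}}\mathds{1}_{\Theta_{\bfn,\Omega}}(\bfj)$. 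Set
\[
S_{\bfn,\Omega}:=\Pi_\bfn^T\Pi_\bfn-D_{l\bfn}(\mathds{1}_\Omega).
\]
This is a diagonal matrix. Its nonzero entries sit exactly at the grid points $\bfp\in\Theta_{\bfn,\bfy,l}$ with $\bfp\in\Omega$ but $B_\bfn(\bfp)\not\subset\Omega$. Hence its rank equals the number of such points.

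The main step is to show that this number is $o(N(\bfn))$. Any such $\bfp$ lies in $\Omega$ within $\ell^\infty$-type distance $\max_i n_i|x_i-p_i|<1$ of $\Omega^c$, so $B_\bfn(\bfp)$ meets $\partial\Omega$. Define the neighbourhood
\[
U_\bfn:=\{\bfx \,|\, \exists\, \bfz\in\partial\Omega,\ \max_i n_i|x_i-z_i|<2\}.
\]
Then $B_\bfn(\bfp)\subset U_\bfn$ for each such $\bfp$. The half-size cells $\bfp+\prod_i(-\tfrac{1}{2n_i},\tfrac{1}{2n_i}]$ are pairwise disjoint and have measure $N(\bfn)^{-1}$. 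Therefore the count is at most $N(\bfn)\,\Ld(U_\bfn)$. Since $\partial\Omega$ is compact (as $\Omega$ is bounded), the sets $U_\bfn$ decrease to $\partial\Omega$ as $\bfn\to\infty$, because $\min\bfn\to\infty$. They all lie in a bounded set, so continuity of measure gives $\Ld(U_\bfn)\to\Ld(\partial\Omega)=0$. This proves $\mathrm{rank}(S_{\bfn,\Omega})=o(N(\bfn))$, and the hypothesis $\Ld(\partial\Omega)=0$ is used exactly here. I expect this to be the only delicate point. One must pass from grid counts to Lebesgue measure via the disjoint cells, and one must be careful that $\bfn\to\infty$ means $\min\bfn\to\infty$, so that the anisotropic boxes genuinely shrink.

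Finally, the consequences follow as in Lemma \ref{lemma_1}. First, $R_\bfn E_\bfn(B_\bfn)=\Pi_\bfn\Pi_\bfn^TB_\bfn\Pi_\bfn\Pi_\bfn^T=B_\bfn$ by \eqref{la_stessa_dell_altro}. Second, with $D:=D_{l\bfn}(\mathds{1}_\Omega)$,
\[
E_\bfn R_\bfn(A_\bfn)=(D+S_{\bfn,\Omega})A_\bfn(D+S_{\bfn,\Omega})=DA_\bfn D+\tilde S_{\bfn,\Omega},
\]
where $\tilde S_{\bfn,\Omega}:=S_{\bfn,\Omega}A_\bfn D+DA_\bfn S_{\bfn,\Omega}+S_{\bfn,\Omega}A_\bfn S_{\bfn,\Omega}$. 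Each of these three terms has a factor $S_{\bfn,\Omega}$, so $\mathrm{rank}(\tilde S_{\bfn,\Omega})\le 3\,\mathrm{rank}(S_{\bfn,\Omega})=o(N(\bfn))$.
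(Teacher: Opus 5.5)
Your proof is correct and follows essentially the paper's route. Like the paper, you identify $\Pi_\bfn^T\Pi_\bfn-D_{l\bfn}(\mathds{1}_\Omega)$ as a diagonal matrix supported on the grid points of $\Omega$ whose box $B_\bfn(\bfp)$ leaves $\Omega$. You then bound their number by $N(\bfn)$ times the measure of a shrinking neighbourhood of $\partial\Omega$ and conclude with $\Ld(\partial\Omega)=0$ and continuity of measure.

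The differences are only in bookkeeping: you use disjoint half-cells where the paper counts with $2^d$-fold overlap, you take a direct neighbourhood of $\partial\Omega$ where the paper uses $V_\bfn=\bigcup_{\bfm\ge\bfn}U_\bfm$, and you give an explicit rank bound for the cross terms in $E_\bfn(R_\bfn(A_\bfn))$, which the paper does not. One small point to tighten: your $U_\bfn$ need not be literally nested when $\bfn$ is not componentwise monotone. It is, however, contained in the $\ell^\infty$-neighbourhood of $\partial\Omega$ of radius $2/\min_i n_i$, and those neighbourhoods are nested, so your limit argument goes through unchanged.
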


\begin{proof}
    First, the proof of \eqref{la_stessa_dell_altro} is exactly the same as that of equation \eqref{lemma_1_eq_1} in Lemma \ref{lemma_1}.

    Now, we prove \eqref{piccola_rank_correction}. Reasoning in the same way as in the proof of Equation \eqref{lemma_1_eq_2} in Lemma \ref{lemma_1}, we find that $\Pi_{\bfn}^T \Pi_{\bfn}$ is a diagonal matrix, whose diagonal entry $(\bfi,\bfi)$ is non-zero if and only if $\bfi\in\Theta_{\bfn,\Omega}$.\\
    On the other hand, the standard diagonal matrix $D_\bfn(\mathds{1}_{\Omega})$ is a diagonal matrix whose entry $(\bfi,\bfi)$ is non-zero if and only if $\bfi\in\Omega$.
    
     Therefore, they differ for a diagonal rank correction $R_{\bfn,\Omega}$ such that
    \begin{equation*}
        \mathrm{rank}(R_{\bfn,\Omega})=\#\left\{\bfp\in\Theta_\bfn\,\vert\,\bfp\in\Omega,B_\bfn(\bfp)\not\subset\Omega\right\}
    \end{equation*}
    Note that a given point $\bfx\in\mathbb{R}^d$ is at most in $2^d$ different sets of the open cover $\{B_\bfn(p)\,\vert\,\bfp\in\Theta_\bfn\}$.

    In addition, if we define 
    \begin{align*}
        U_\bfn&:=\bigcup\limits_{\{\bfp\in\Theta_\bfn\,\vert\,\bfp\in\overline{\Omega},B_\bfn(\bfp)\not\subset\Omega\}} B_\bfn(\bfp),\\
        V_\bfn&:=\bigcup\limits_{\bfm\geq\bfn}U_\bfm,
    \end{align*}
    then the open sets $\{V_\bfn\}$ are shrinking as $\bfn\to\infty$ and 
    \begin{equation*}
        \partial\Omega=\bigcap_\bfn V_\bfn.
    \end{equation*}
    Therefore, by exterior regularity, it holds
    \begin{equation*}
        \limsup\limits_{\bfn\to\infty}\mathcal{L}^d(U_\bfn)\leq\lim\limits_{\bfn\to\infty}\mathcal{L}^d(V_\bfn)=\mathcal{L}^d(\partial\Omega)=0,
    \end{equation*}
    that is 
    \begin{equation*}
        \exists\lim\limits_{\bfn\to\infty}\mathcal{L}^d(U_\bfn)=0.
    \end{equation*}
    With all this information, we can estimate the rank of $R_{\bfn,\Omega}$ in the following way:
    \begin{align*}
        \mathrm{rank}(R_{\bfn,\Omega})&=\#\left\{\bfp\in\Theta_\bfn\,\vert\,\bfp\in\Omega,B_\bfn(\bfp)\not\subset\Omega\right\}\\
        &\leq\sum\limits_{\{\bfp\in\Theta_\bfn\,\vert\,\bfp\in\overline{\Omega},B_\bfn(\bfp)\not\subset\Omega\}}1\\
        &=\sum\limits_{\{\bfp\in\Theta_\bfn\,\vert\,\bfp\in\overline{\Omega},B_\bfn(\bfp)\not\subset\Omega\}}2^{-d}N(\bfn)\mathcal{L}^d(B_\bfn(\bfp))\\
        &\leq 2^d2^{-d}N(\bfn)\mathcal{L}^d\left(U_\bfn\right)\\
        &=N(\bfn)\mathcal{L}^d\left(U_\bfn\right),
    \end{align*}
    and the last term is $o(N(\bfn))$, since $\mathcal{L}^d(U_\bfn)\to0$, as $\bfn\to\infty$.\\ The rest of the proof is again the same as in Lemma \ref{lemma_1}.
\end{proof}

\begin{rem}
    As a consequence of Lemma \ref{restr_ext_reduced_GLT}, precisely as in Corollary \ref{product_hypercubes}, the extension operator $E_{\bfn,\Omega,\bfy,l}$ is an algebra morphism, whilst the restriction operator $R_{\bfn,\bfy,l,\Omega}$ is an algebra morphism, when restricted to the image of the corresponding extension operator.
\end{rem}

\begin{defn}[Reduced GLT algebra]\label{reduced_GLT}
    Consider an open bounded domain $\Omega$ such that $\mathcal{L}^d(\partial\Omega)=0$ and fix a hypercube $Q_{\bfy,l}$ such that $\Omega\subset Q_{\bfy,l}$. 
    
    Let $\mathcal{M}_{\bfy,l}$ and $\mathcal{M}_{\Omega}$ be the sets of all square matrix-sequences of size $\{l^d\cdot N(\bfn)\}_{\bfn}$ and $\{d_{\bfn}^{\Omega}\}_{\bfn}$, respectively. 
    
    If $\mathcal{R}_{\bfy,l,\Omega}=\{R_\bfn\}_\bfn:\mathcal{M}_{\bfy,l}\to\mathcal{M}_{\Omega}$ is the restriction operator, we define the reduced GLT algebra on $\Omega$, and we denote it by $\mathcal{G}_{\Omega}$, as
    \begin{equation*}
        \mathcal{G}_{\Omega}:=\mathcal{R}_{\bfy,l,\Omega}(\mathcal{G}_{\bfy,l}).
    \end{equation*}
\end{defn}

\begin{rem}\label{well-posedness}
    Note that, thanks to Theorem \ref{restriction_of_standard_GLT}, the definition of reduced GLT algebra over a regular bounded domain $\Omega$ does not depend on the choice of the hypercube $Q_{\bfy,l}$ containing $\Omega$. Indeed, thanks to the compatibility between the two definitions of restriction operators (between two hypercubes and from a hypercube to any bounded open domain), they factor through GLT algebras built upon intermediate hypercubes. 
    
    More precisely, if $\Omega\subset Q_{\bfy_1,l_1}\subset Q_{\bfy_2,l_2}$ and 
    \begin{align*}
        \mathcal{R}_{\bfy_2,l_2,\bfy_1,l_1}&:\mathcal{M}_{\bfy_2,l_2}\to\mathcal{M}_{\bfy_1,l_1},\\
        \mathcal{R}_{\bfy_2,l_2,\Omega}&:\mathcal{M}_{\bfy_2,l_1}\to\mathcal{M}_{\Omega},\\
        \mathcal{R}_{\bfy_1,l_1,\Omega}&:\mathcal{M}_{\bfy_1,l_1}\to\mathcal{M}_{\Omega}
    \end{align*}
    are the restriction operators as defined above, then 
    \begin{equation*}
         \mathcal{R}_{\bfy_1,l_1,\Omega}\circ\mathcal{R}_{\bfy_2,l_2,\bfy_1,l_1}=\mathcal{R}_{\bfy_2,l_2,\Omega}
    \end{equation*}
    and 
    \begin{equation*}
        \mathcal{R}_{\bfy_2,l_2,\bfy_1,l_1}(\mathcal{G}_{\bfy_2,l_2})=\mathcal{G}_{\bfy_1,l_1},
    \end{equation*}
    so that the image of the two operators is the same. This suggests that Definition \ref{reduced_GLT} selects matrix-sequences whose spectral properties are related to the domain $\Omega$, which is the case, as proven in Theorem \ref{spectra_reduced_GLT}. 
\end{rem}
\begin{rem}\label{algebra_morphisms_red}
    Thanks to Lemma \ref{restr_ext_reduced_GLT}, the map $\mathcal{E}_{\Omega,\bfy,l}\circ\mathcal{R}_{\bfy,l,\Omega}:\mathcal{M}_{\bfy,l}\to\mathcal{M}_{\bfy,l}$ restricts to an endomorphism of $\mathcal{G}_{\bfy,l}$. Moreover, since $\mathcal{R}_{\bfy,l,\Omega}\circ\mathcal{E}_{\Omega,\bfy,l}=\mathrm{Id}_{\mathcal{M}_{\Omega}}$, it follows that for every reduced GLT $\{B_\bfn\}_\bfn\in\mathcal{G}_{\Omega}$, there exists $\{A_\bfn\}_\bfn\in\mathcal{G}_{\bfy,l}$, such that
    \begin{equation*}
        \mathcal{R}_{\bfy,l,\Omega}(\{A_\bfn\}_\bfn)=\{B_\bfn\}_\bfn,\qquad \{A_\bfn\}_\bfn\in\mathrm{Im}(\mathcal{E}_{\Omega,\bfy,l}),
    \end{equation*}
    that is any reduced GLT sequence is the restriction of a GLT sequence in the image of the corresponding extension operator. 
    
    As a consequence, since the restriction operators are $*$-algebra morphisms (when restricted to the image of the extension operator), any reduced GLT algebra inherits the algebraic properties of a standard GLT algebra (see \ref{GLT3}-\ref{GLT4}).
\end{rem}

The next result is essentially a rephrasing, using our notation and conventions, of \cite[Lemma 4.4]{Barb}.

\begin{thm}\label{spectra_reduced_GLT}
    Consider an open bounded domain $\Omega$, with $\mathcal{L}^d(\partial\Omega)=0$ and a hypercube $Q_{\bfy,l}$ such that $\Omega\subset Q_{\bfy,l}$. 
    
    For any $\{B_{\bfn}\}_\bfn\in\mathcal{G}_{\Omega}$, consider a sequence $\{A_{\bfn}\}_\bfn$ in its preimage through the restriction map $\mathcal{R}:\mathcal{G}_{\bfy,l}\to\mathcal{G}_{\Omega}$, with canonical GLT symbol $f:Q_{\bfy,l}\times[-\pi,\pi]^d\to\mathbb{C}$.\\
    Then
    \begin{equation*}
        \{B_{\bfn}\}_{\bfn}\sim_{\sigma}\left(g,\Omega\times\left[-\pi,\pi\right]^d\right),
    \end{equation*}
    with $g=f|_{\Omega\times[-\pi,\pi]^d}$.
    
    If, additionally, $\{B_\bfn\}_\bfn$ is a sequence of Hermitian matrices, then it also holds 
    \begin{equation*}
        \{B_{\bfn}\}_{\bfn}\sim_{\lambda}\left(g,\Omega\times\left[-\pi,\pi\right]^d\right).
    \end{equation*}
\end{thm}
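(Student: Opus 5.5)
The plan is to reduce to the standard GLT distribution result \ref{GLT1} on the hypercube, using the extension operator together with the rank estimate of Lemma \ref{restr_ext_reduced_GLT}. By Remark \ref{algebra_morphisms_red} we may replace $\{A_\bfn\}_\bfn$ by $\{\tilde A_\bfn\}_\bfn:=\mathcal{E}(\mathcal{R}(\{A_\bfn\}_\bfn))=\mathcal{E}(\{B_\bfn\}_\bfn)$, which is still in $\mathcal{G}_{\bfy,l}$. By Lemma \ref{restr_ext_reduced_GLT},
\begin{equation*}
\tilde A_\bfn=D_{l\bfn}(\mathds{1}_\Omega)A_\bfn D_{l\bfn}(\mathds{1}_\Omega)+\tilde S_{\bfn,\Omega},\qquad \mathrm{rank}(\tilde S_{\bfn,\Omega})=o(N(\bfn)).
\end{equation*}
Hence $\{\tilde A_\bfn\}_\bfn\sim_{\mathrm{GLT}}^{Q_{\bfy,l}} \mathds{1}_\Omega f$, using \ref{GLT2} for the diagonal and zero-distributed parts and \ref{GLT4} for the products. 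By \ref{GLT1},
\begin{equation*}
\{\tilde A_\bfn\}_\bfn\sim_\sigma\left(\mathds{1}_\Omega f,\,Q_{\bfy,l}\times[-\pi,\pi]^d\right).
\end{equation*}

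Next we relate the singular values of $\tilde A_\bfn$ to those of $B_\bfn$. By the permutation argument in the proof of Lemma \ref{acs_cts_operators}, $\tilde A_\bfn$ is permutation-similar to $B_\bfn\oplus 0$. Its singular values are therefore those of $B_\bfn$ together with $N(l\bfn)-d_\bfn^\Omega$ zeros. For $F\in C_c(\mathbb{R})$ this gives
\begin{equation*}
\frac{1}{N(l\bfn)}\sum_{i=1}^{N(l\bfn)} F(\sigma_i(\tilde A_\bfn))=\frac{d_\bfn^\Omega}{N(l\bfn)}\cdot\frac{1}{d_\bfn^\Omega}\sum_{i=1}^{d_\bfn^\Omega}F(\sigma_i(B_\bfn))+\Bigl(1-\frac{d_\bfn^\Omega}{N(l\bfn)}\Bigr)F(0).
\end{equation*}

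The key quantitative step is the count $d_\bfn^\Omega/N(\bfn)\to\Ld(\Omega)$, so that $d_\bfn^\Omega/N(l\bfn)\to\Ld(\Omega)/l^d$. This is where $\Ld(\partial\Omega)=0$ enters. The number of lattice points of $\Theta_\bfn$ in $\Omega$ is $N(\bfn)\Ld(\Omega)+o(N(\bfn))$, because it equals $N(\bfn)\Ld(\Omega)$ up to the count of the cells meeting $\partial\Omega$, which is Riemann integrability of $\mathds{1}_\Omega$. These points differ from $\Theta_{\bfn,\Omega}$ by at most $N(\bfn)\Ld(U_\bfn)=o(N(\bfn))$ points, as in the proof of Lemma \ref{restr_ext_reduced_GLT}. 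I expect this counting to be the main technical point.

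We may assume $\Ld(\Omega)>0$. Taking limits, the left side tends to
\begin{equation*}
\frac{1}{l^d(2\pi)^d}\int_{Q_{\bfy,l}\times[-\pi,\pi]^d}F(|\mathds{1}_\Omega f|).
\end{equation*}
We split this integral over $\Omega$ and its complement, whose measure is $l^d-\Ld(\Omega)$ and where the integrand is $F(0)$. The $F(0)$ terms on the two sides cancel. Dividing by $\Ld(\Omega)/l^d$ gives
\begin{equation*}
\lim \frac{1}{d_\bfn^\Omega}\sum_i F(\sigma_i(B_\bfn))=\frac{1}{(2\pi)^d\Ld(\Omega)}\int_{\Omega\times[-\pi,\pi]^d}F(|g|),
\end{equation*}
which is the claim.

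For the Hermitian case, $\tilde A_\bfn=E_\bfn(B_\bfn)$ is Hermitian, so \ref{GLT1} yields a $\lambda$-distribution. The eigenvalues of $\tilde A_\bfn$ are those of $B_\bfn$ plus zeros, and the same computation with $F\in C_c(\mathbb{C})$ concludes.
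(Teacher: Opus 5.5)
Your proposal is correct and follows the paper's proof essentially step by step. Both extend the restriction and identify $\mathcal{E}(\mathcal{R}(\{A_\bfn\}_\bfn))\sim_{\mathrm{GLT}}^{Q_{\bfy,l}}\mathds{1}_\Omega f$ via Lemma \ref{restr_ext_reduced_GLT}, then use the permutation similarity with $B_\bfn\oplus 0$ and cancel the $F(0)$ terms. The only deviation is how you get $d_\bfn^\Omega/N(\bfn)\to\Ld(\Omega)$: the paper reads it off from \ref{GLT1} applied to $\mathcal{E}(\mathcal{R}(\{I_\bfn\}_\bfn))\sim_{\mathrm{GLT}}\mathds{1}_\Omega$, using a test function with $F(0)=0$ and $F(1)=1$, whereas you count lattice points directly by a Riemann sum, which is the same fact underlying $\{D_{l\bfn}(\mathds{1}_\Omega)\}_\bfn\sim_\sigma\mathds{1}_\Omega$, proved by hand.
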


\begin{rem}
    Thanks to Remark \ref{well-posedness} and Corollary \ref{cor_restr_ext_standard_GLT_symbol}, the symbol $g$ associated with a reduced GLT sequence $\{B_\bfn\}_\bfn$ in Theorem \ref{spectra_reduced_GLT} is unique, up to equality almost everywhere. Therefore, we write that
    \begin{equation*}
        \{B_{\bfn}\}_{\bfn}\sim_{\mathrm{GLT}}^{\Omega}\,g,
    \end{equation*}
    and we call $g$ the canonical (reduced) GLT symbol of $\{B_\bfn\}_\bfn$.
\end{rem}

\begin{proof}[Proof of Theorem \ref{spectra_reduced_GLT}]
    First of all, consider the identity matrix-sequence $\{I_\bfn\}_\bfn\in\mathcal{M}_{\bfy,l}$, and consider the matrix-sequence $\{\tilde{I}_\bfn\}_\bfn:=(\mathcal{E}_{\Omega,\bfy,l}\circ\mathcal{R}_{\bfy,l,\Omega})(\{I_\bfn\}_\bfn)$. Thanks to Lemma \ref{restr_ext_reduced_GLT} and noticing that all the matrices are diagonal, for every $\bfn$, we have 
    \begin{align*}
        \tilde{I}_\bfn&=(D_{l\bfn}(\mathds{1}_{\Omega})+S_{
        \bfn,\Omega})I_\bfn(D_{l\bfn}(\mathds{1}_{\Omega})+S_{\bfn,\Omega})\\
        &=D_{l\bfn}(\mathds{1}_{\Omega})+2D_{l\bfn}(\mathds{1}_{\Omega})S_{\bfn,\Omega}+S_{\bfn,\Omega}^2\\
        &=D_{l\bfn}(\mathds{1}_{\Omega})+(2D_{l\bfn}(\mathds{1}_{\Omega})+S_{\bfn,\Omega})S_{\bfn,\Omega},
    \end{align*}
    with
    \begin{equation*}
        \mathrm{rank}\left((2D_{l\bfn}(\mathds{1}_{\Omega})+S_{\bfn,\Omega})S_{\bfn,\Omega}\right)\leq\mathrm{rank}(S_{\bfn,\Omega})=o(N(\bfn)).
    \end{equation*}
    Therefore, the matrix-sequence $\{\tilde{I}_\bfn\}_\bfn$ is a GLT over $Q_{\bfy,l}$, with canonical symbol $\mathds{1}_{\Omega\times[-\pi,\pi]^d}:Q_{\bfy,l}\times[-\pi,\pi]^d\to\mathbb{C}$. In particular, let $F\in C_c(\mathbb R)$ be a test function such that $F(0)=0$ and $F(1)=1$, so that
   \begin{align*}
     \lim_{\bfn\to\infty}\frac1{l^dN(\bfn)}\sum_{\bfi\in\Theta_{\bfn,\bfy,l}}F(\sigma_\bfi(\tilde{I}_\bfn))&=\frac{1}{\mathcal{L}^{2d}(Q_{\bfy,l}\times[-\pi,\pi]^d)}\int_{Q_{\bfy,l}\times[-\pi,\pi]^d}F(\mathds{1}_{\Omega\times[-\pi,\pi]^d}(\bfx,\bftheta)){\rm d} (\bfx,\bftheta)\\
     &=\frac1{l^d}\int_{\Omega}{\rm d} \bfx\cdot\frac1{(2\pi)^d}\int_{[-\pi,\pi]^d}\mathrm{d}\bftheta\\
     &=\frac{\Ld(\Omega)}{l^d}.
    \end{align*}
    On the other hand, $\{\tilde{I}_\bfn\}_\bfn$ is a diagonal matrix-sequence with entries $0$ or $1$, which selects the points of $\Theta_{\bfn,\bfy,l}$ that are also in $\Theta_{\bfn,\Omega}$. Thus, for every $\bfn$, it holds
    \begin{equation*}
        \sum_{\bfi\in\Theta_{\bfn,\bfy,l}}F(\sigma_\bfi(\tilde{I}_\bfn))=\mathrm{rank}(\tilde{I}_\bfn)=d_\bfn^{\Omega},
    \end{equation*}
    which, coupled with the previous info, gives
    \begin{equation*}
        \lim\limits_{\bfn\to\infty}\frac{d_\bfn^{\Omega}}{N(\bfn)}=\lim\limits_{\bfn\to\infty}\frac{\mathrm{rank}(\tilde{I}_\bfn)}{N(\bfn)}=\Ld(\Omega).
    \end{equation*}
    By Lemma \ref{acs_cts_operators} and Remark \ref{acs_same_structure}, since the last formula proved that the dimensions $\{N(l\bfn)\}_\bfn$ and $\{d_\bfn^\Omega\}_\bfn$ have the same rate of divergence, the restriction operators are continuous with respect to the a.c.s. convergence.
    Now, we consider a reduced GLT $\{B_{\bfn}\}_\bfn\in\mathcal{G}_{\Omega}$ and pick $\{A_\bfn\}_\bfn\in\mathcal{G}_{\bfy,l}$, such that 
    \begin{equation*}
        \mathcal{R}_{\bfy,l,\Omega}(\{A_\bfn\}_\bfn)=\{B_\bfn\}_\bfn,
    \end{equation*}
    \begin{equation*}
         \{A_{\bfn}\}_{\bfn}\sim_{\mathrm{GLT}}^{Q_{\bfy,l}}\,f(\bfx,\bftheta).
    \end{equation*}
    Define the matrix-sequence $\{\tilde{A}_\bfn\}_\bfn:=(\mathcal{E}_{\Omega,\bfy,l}\circ\mathcal{R}_{\bfy,l,\Omega})(\{A_\bfn\}_\bfn)$. Then, again by Lemma \ref{restr_ext_reduced_GLT}$, \{\tilde{A}_\bfn-D_{l\bfn}(\mathds{1}_\Omega)A_\bfn D_{l\bfn}(\mathds{1}_{\Omega})\}_\bfn\sim_{\sigma}0$, so that 
    \begin{equation*}
        \{\tilde{A}_\bfn\}_\bfn\sim_{\mathrm{GLT}}^{Q_{\bfy,l}}\,\mathds{1}_{\Omega}(\bfx)\cdot f(\bfx,\bftheta).
    \end{equation*}
    
    Moreover, (see \cite[Lemma 4.3]{Barb}) for every $\bfn$, there exists a permutation matrix $P_{\bfn,\bfy,l,\Omega}$, depending only on $\bfn$ and on the sets $Q_{\bfy,l}$ and $\Omega$, such that
    \begin{equation*}
        (P_{\bfn,\bfy,l,\Omega})^T\tilde{A}_\bfn (P_{\bfn,\bfy,l,\Omega})=\left( 
    \begin{array}{c|c} 
      B_\bfn & 0 \\ 
      \hline 
      0 & 0 
    \end{array} 
    \right).
    \end{equation*}
    As a consequence, for every $\bfn$, the singular values of $\tilde{A}_\bfn$ are the $d_\bfn^{\Omega}$ singular values of $B_\bfn$ with $l^dN(\bfn)-d_\bfn^{\Omega}$ additional zero singular values. 

    Using the info about the asymptotic distribution of $\{\tilde{A}_\bfn\}_\bfn$, we can finally compute the distribution of $\{B_\bfn\}_\bfn$. Considering any $F\in C_c(\mathbb{R})$, it follows that
    \begin{align*}
        &\lim\limits_{\bfn\to\infty}\frac1{d_\bfn^{\Omega}}\sum_{\bfi\in\Theta_{\bfn,\Omega}}F(\sigma_{\bfi}(B_\bfn))\\
        &=\lim\limits_{\bfn\to\infty}\frac{l^dN(\bfn)}{d_{\bfn}^{\Omega}}\left(\frac1{l^dN(\bfn)}\sum_{\bfi\in\Theta_{\bfn,\bfy,l}}F(\sigma_{\bfi}(\tilde{A}_\bfn))-\frac{l^dN(\bfn)-d_{\bfn}^{\Omega}}{l^dN(\bfn)}F(0)\right)\\
        &=\frac{l^d}{\Ld(\Omega)}\left(\frac{1}{\mathcal{L}^{2d}(Q_{\bfy,l}\times[-\pi,\pi]^d)}\int_{Q_{\bfy,l}\times[-\pi,\pi]^d}F(\mathds{1}_{\Omega}(\bfx)f(\bfx,\bftheta))\mathrm{d}(\bfx,\bftheta)-\frac{l^d-\Ld(\Omega)}{l^d}F(0)\right)\\
        &=\frac1{\mathcal{L}^{2d}(\Omega\times[-\pi,\pi]^d)}\int_{\Omega\times[-\pi,\pi]^d}F(\mathds{1}_{\Omega}(\bfx)f(\bfx,\bftheta))\mathrm{d}(\bfx,\bftheta),\\
    \end{align*}
    where we used that 
    \begin{align*}
        \frac1{\mathcal{L}^{2d}(\Omega\times[-\pi,\pi]^d)}\int_{(Q_{\bfy,l}\setminus\Omega)\times[-\pi,\pi]^d}F(\mathds{1}_{\Omega}(\bfx)f(\bfx,\bftheta))\mathrm{d}(\bfx,\bftheta)&=\frac{\mathcal{L}^{2d}(Q_{\bfy,l}\setminus\Omega)\times[-\pi,\pi]^d)}{\mathcal{L}^{2d}(\Omega\times[-\pi,\pi]^d)}F(0)\\
        &=\frac{l^d-\Ld(\Omega)}{\Ld(\Omega)}F(0).
    \end{align*}
    Since $\mathds{1}_{\Omega}\equiv 1$ almost everywhere in $\Omega$, the previous computation reads precisely as 
    \begin{equation*}
        \lim\limits_{\bfn\to\infty}\frac1{d_\bfn^{\Omega}}\sum_{\bfi\in\Theta_{\bfn,\Omega}}F(\sigma_{\bfi}(B_\bfn))=\frac1{\mathcal{L}^{2d}(\Omega\times[-\pi,\pi]^d)}\int_{\Omega\times[-\pi,\pi]^d}F(f(\bfx,\bftheta))\mathrm{d}(\bfx,\bftheta),
    \end{equation*}
    for every $F\in C_c(\mathbb{R})$, which is the thesis.\\
The proof follows exactly the same steps for the eigenvalue distribution.
\end{proof}

As the classical GLT algebras $\mathcal{G}_{\bfy,l}$ are isometrically equivalent to the spaces of measurable functions $\mathcal{L}^0(Q_{\bfy,l}\times[-\pi,\pi]^d)$, also the reduced GLT ones satisfy this pleasant property.
Indeed, for any open domain $\Omega$ such that $\Ld(\Omega)<+\infty$ and any $f:\Omega\times[-\pi,\pi]^d\to\mathbb{C}$, define
\begin{equation*}
    p_{m,\Omega}(f):=\inf\limits_{F\subset \Omega\times[-\pi,\pi]^d}\left\{\frac{\mathcal{L}^{2d}(F)}{(2\pi)^d\Ld(\Omega)}+\esssup_{F^c}|f|\right\},
\end{equation*}
where $F$ is measurable and $F^c$ is the complement of $F$ in $\Omega\times[-\pi,\pi]^d$. Then the function
\begin{equation*}
    d_{m,\Omega}(f,g):=p_{m,\Omega}(f-g)
\end{equation*}
is a complete pseudo-metric on $\mathcal{L}^0(\Omega\times[-\pi,\pi]^d)$, which induces the convergence in measure and for which two functions are at distance $0$ if and only if they are equal almost everywhere in $\Omega\times[-\pi,\pi]^d$. The next result shows that the symbol map from a reduced GLT algebra still satisfies the same properties as the one for classical GLT algebras. 

\begin{thm}\label{isometry_red_GLT}
 Let $\Omega$ be an open bounded domain such that $\Ld(\partial\Omega)=0$, and consider the map $\Phi_\Omega:(\sfrac{\mathcal{G}_\Omega}{\sim_d},d_{a.c.s.})\to(L^0(\Omega\times[-\pi,\pi]^d),d_{m,\Omega})$ that associates to any reduced GLT its canonical symbol. Then, $\Phi_\Omega$ is a surjective isometry.   
\end{thm}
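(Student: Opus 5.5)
The plan is to transfer Theorem \ref{equiv_GLT_meas} from the hypercube algebra $\mathcal{G}_{\bfy,l}$, with $Q_{\bfy,l}\supset\Omega$, to $\mathcal{G}_\Omega$. The transfer goes through the restriction and extension operators $\mathcal{R}=\mathcal{R}_{\bfy,l,\Omega}$ and $\mathcal{E}=\mathcal{E}_{\Omega,\bfy,l}$.

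\textbf{Well-definedness and surjectivity.} By the remark after Theorem \ref{spectra_reduced_GLT}, the symbol of $\{B_\bfn\}_\bfn\in\mathcal{G}_\Omega$ is $f|_{\Omega\times[-\pi,\pi]^d}$ for any preimage $\{A_\bfn\}_\bfn$, and it is unique a.e. To see that $\Phi_\Omega$ factors through $\sim_d$, apply Theorem \ref{spectra_reduced_GLT} to the difference of two sequences. If $d_{a.c.s.}=0$, then the difference is zero-distributed, so its symbol vanishes a.e. This follows from the distribution identity with a test function $F$ satisfying $F(0)=1$ and $0\le F\le 1$ supported near $0$. Surjectivity is straightforward: given $g\in L^0(\Omega\times[-\pi,\pi]^d)$, extend it by zero to $g^E$ on $Q_{\bfy,l}$. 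By \ref{GLT0} pick $\{A_\bfn\}_\bfn\sim_{\mathrm{GLT}}^{Q_{\bfy,l}} g^E$, and then $\mathcal{R}(\{A_\bfn\}_\bfn)\sim_{\mathrm{GLT}}^\Omega g$.

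\textbf{Isometry.} By linearity (Remark \ref{algebra_morphisms_red}) it suffices to prove $\rho(\{B_\bfn\}_\bfn)=p_{m,\Omega}(g)$ for $\{B_\bfn\}_\bfn\sim^\Omega_{\mathrm{GLT}} g$. Set $\{\tilde A_\bfn\}_\bfn=\mathcal{E}(\{B_\bfn\}_\bfn)$. In the proof of Theorem \ref{spectra_reduced_GLT} it was shown that $\tilde A_\bfn$ is permutation-similar to $B_\bfn\oplus 0$ and that $\{\tilde A_\bfn\}_\bfn\sim_{\mathrm{GLT}}^{Q_{\bfy,l}} g^E$ (up to a zero-distributed correction, which does not affect $\rho$). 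The singular values of $\tilde A_\bfn$ are therefore those of $B_\bfn$, padded with zeros. Writing $D=l^dN(\bfn)$ and $d=d_\bfn^\Omega$, we get
\begin{equation*}
p(\tilde A_\bfn)=\min_{i\le d+1}\Bigl\{\tfrac{i-1}{D}+\sigma_i(B_\bfn)\Bigr\}.
\end{equation*}
The index range is the full one because for $i>d+1$ the term exceeds the value at $i=d+1$. Thus $p(\tilde A_\bfn)$ is $p(B_\bfn)$ with the rank weight rescaled by $d/D\to\Ld(\Omega)/l^d$. On the symbol side, $p_m(g^E)$ is computed on $Q_{\bfy,l}\times[-\pi,\pi]^d$. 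The optimal sets $E$ can be taken inside $\Omega\times[-\pi,\pi]^d$, since $g^E=0$ outside it. Hence $p_m(g^E)$ is $p_{m,\Omega}(g)$ with the measure weight rescaled by $\Ld(\Omega)$ relative to $l^d$; here I must check carefully the normalization of $p_m$ on a hypercube of side $l$. Applying Theorem \ref{equiv_GLT_meas} to $\mathcal{G}_{\bfy,l}$ gives $\rho(\{\tilde A_\bfn\}_\bfn)=p_m(g^E)$.

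\textbf{Main obstacle.} The rescaling factor multiplies only the rank or measure term and not the norm or esssup term. Consequently the equality $\rho=p_m$ for the extended objects does not immediately yield the equality with unscaled weights. I expect this to be the hard step. My first attempt would be a direct argument that avoids the scaling issue by working with the characterization of $\rho$ via a.c.s.: by Theorem \ref{completeness_acs_conv}, $\rho$ is the infimum of $c+\omega$ over splittings $B_\bfn=S_\bfn+N_\bfn$ with $\mathrm{rank}\,S_\bfn\le c\,d_\bfn^\Omega$ and $\|N_\bfn\|\le\omega$. Splittings then transfer bijectively between $B_\bfn$ and $\tilde A_\bfn$ via $\mathcal{E}$ and $\mathcal{R}$, using the norm and rank preservation from Lemma \ref{acs_cts_operators}, with $c$ rescaled by $d_\bfn^\Omega/D$. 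On the symbol side, sets $F$ transfer with the same rescaling. The equality should then follow by running Barbarino's proof of Theorem \ref{equiv_GLT_meas} with weighted functionals $\tfrac{\lambda(i-1)}{D}+\sigma_i$ and $\lambda\mathcal{L}^{2d}(E)/(2\pi)^d+\esssup$. That proof relies only on singular value distributions, which hold here by Theorem \ref{spectra_reduced_GLT}. Concretely, I would prove both inequalities $\rho(\{B_\bfn\}_\bfn)\le p_{m,\Omega}(g)$ and $\ge$ directly from $\{B_\bfn\}_\bfn\sim_\sigma(g,\Omega\times[-\pi,\pi]^d)$, by counting the fraction of singular values above each threshold via the distribution.
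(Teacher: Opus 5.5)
Your proposal is correct, but the route you end up committing to differs from the paper's. The ``main obstacle'' you flag, that only the rank term gets rescaled, is removed in the paper by a one-line homogeneity trick: rescale the \emph{matrix} rather than the weight. Concretely,
\[
p(E_\bfn(B))=\min_{i\le d_\bfn^\Omega+1}\Bigl\{\tfrac{i-1}{N(l\bfn)}+\sigma_i(B)\Bigr\}=\tfrac{d_\bfn^\Omega}{N(l\bfn)}\,p\Bigl(\tfrac{N(l\bfn)}{d_\bfn^\Omega}B\Bigr),
\]
so both terms scale together. Symmetrically, for $\lambda=\Ld(\Omega)/l^d$ and $g^E$ vanishing outside $\Omega\times[-\pi,\pi]^d$, one has $d_{m,Q_{\bfy,l}}(\lambda g^E,0)=\lambda\,p_{m,\Omega}(g)$, with the hypercube functional normalized by $(2\pi l)^d$. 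The paper then applies Theorem \ref{equiv_GLT_meas} as a black box to $\{\tfrac{d_\bfn^\Omega}{N(l\bfn)}\tilde A_\bfn\}_\bfn\sim^{Q_{\bfy,l}}_{\mathrm{GLT}}\lambda g^E$. Undoing the factor $N(l\bfn)/d_\bfn^\Omega\to\lambda^{-1}$ gives $\rho(\{B_\bfn\}_\bfn)=p_{m,\Omega}(g)$. No weighted version of Barbarino's argument is ever needed.

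Your concluding plan is different and also works. You prove directly that $\{X_\bfn\}_\bfn\sim_\sigma(h,\Omega\times[-\pi,\pi]^d)$ implies $\rho(\{X_\bfn\}_\bfn)=p_{m,\Omega}(h)$. You then apply this to differences of reduced GLT, via linearity and Theorem \ref{spectra_reduced_GLT}. In this route no weight appears at all: the normalizations $\frac{i-1}{d_\bfn^\Omega}$ and $\frac{\mathcal{L}^{2d}(F)}{(2\pi)^d\Ld(\Omega)}$ already match. So the detour through $\tilde A_\bfn$ and ``weighted functionals'' can be dropped entirely.

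To carry out the lemma, proceed as follows.
\begin{itemize}
\item Rewrite both functionals as infima over thresholds: $p(X)=\inf_{t\ge0}\bigl\{\#\{i:\sigma_i(X)>t\}/d_\bfn^\Omega+t\bigr\}$ and $p_{m,\Omega}(h)=\inf_{t\ge0}\{\mu(|h|>t)+t\}$, where $\mu$ is the normalized measure.
\item The distribution gives convergence of the tail fractions at every continuity point of $t\mapsto\mu(|h|>t)$.
\item The upper bound then follows by right-continuity.
\item For the lower bound, pick near-optimal thresholds $t_\bfn\in[0,1]$ and extract a subsequence converging to some $t^*$. Eventually $\#\{\sigma_i>t_\bfn\}/d_\bfn^\Omega+t_\bfn$ is bounded below by the tail at $t^*+\varepsilon$ (chosen a continuity point) plus $t^*-\varepsilon$.
\end{itemize}

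What each route buys: the paper's reuses Theorem \ref{equiv_GLT_meas} without reopening it. The cost is extension bookkeeping: bijectivity of $\mathcal{R}$ on $\mathcal{E}(\mathcal{G}_\Omega)$, vanishing of the symbol outside $\Omega$, and replacing the $\bfn$-dependent factor by its limit. Yours is self-contained and shows the isometry is a formal consequence of the singular value distribution plus linearity, at the price of proving that lemma.
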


\begin{proof}
    
    Consider any hypercube $Q_{\bfy,l}\supset\Omega$ and set $\mathcal{G}_\Omega^E:=\mathcal{E}_{\Omega,\bfy,l}(G_\Omega)\subset\mathcal{G}_{\bfy,l}$. Throughout the proof, we drop the indices for the extension (resp. restriction) operator $\mathcal{E}$ (resp. $\mathcal{R}$), since $\Omega$ and $Q_{\bfy,l}$ are fixed. Given any measurable function $f:\Omega\times[\pi,\pi]^d\to\mathbb{C}$, we can consider its extension by $0$ to $Q_{\bfy,l} \times [-\pi,\pi]^d$, $f^E$. By \ref{GLT0}, there exists $\{A_\bfn\}_\bfn\in\mathcal{G}_{\bfy,l}$, such that $\{A_\bfn\}_\bfn\sim_{\mathrm{GLT}}^{Q_{\bfy,l}}f^E$. Therefore, by Theorem \ref{spectra_reduced_GLT}, $\mathcal{R}(\{A_\bfn\}_\bfn)\sim_{\mathrm{GLT}}^{\Omega}f$. This proves that $\Phi_\Omega$ is surjective.

    Now, we prove that $\Phi_\Omega$ is an isometry, at the level of pseudo-metric spaces (which, in turn, implies that also the factorized map at the level of the quotient metric spaces is an isometry).
    By Remark \ref{algebra_morphisms_red}, the map $\mathcal{R}:\mathcal{G}_\Omega^E\to\mathcal{G}_\Omega$ is still surjective. On the other hand, for any $\{A_\bfn\}_\bfn\in\mathcal{G}_\Omega^E$, there exists $\{B_\bfn\}_\bfn\in\mathcal{G}_\Omega$, such that $\{A_\bfn\}_\bfn=\mathcal{E}(\{B_\bfn\}_\bfn)$. Therefore,
    \begin{equation*}
        (\mathcal{E}\circ\mathcal{R})(\{A_\bfn\}_\bfn)=\mathcal{E}(\mathcal{R}\circ\mathcal{E}(\{B_\bfn\}_\bfn))=\mathcal{E}(\{B_\bfn\}_\bfn)=\{A_\bfn\}_\bfn.
    \end{equation*}
    This proves that $(\mathcal{E}\circ\mathcal{R})|_{\mathcal{G}_\Omega^E}=\mathrm{Id}_{\mathcal{G}_\Omega^E}$.
    As a consequence, the restriction operator $\mathcal{R}$ is also injective on $\mathcal{G}_\Omega$, and thus bijective.
    
    In addition, thanks to the proof of Theorem \ref{spectra_reduced_GLT}, any element of $\mathcal{G}_\Omega^E$ has GLT symbol which is invariant under the multiplication by $\mathds{1}_\Omega(\bfx)$, and thus is equal to 0 almost everywhere outside $\Omega\times[-\pi,\pi]^d$.
    
    Moreover, for any $\bfn$ and any $B\in\mathbb{C}^{d_\bfn^\Omega\times d_\bfn^\Omega}$, the singular values of $E_\bfn(B)$ are the singular values of $B$, and $N(l\bfn)-d_\bfn^{\Omega}$ additional zero singular values. Therefore,
    \begin{align}\label{iso_red_eq_4}
        p(E_\bfn(B))&=\min\limits_{i=1,\ldots,N(l\bfn)+1}\left\{\frac{i-1}{N(l\bfn)}+\sigma_i(E_\bfn(B))\right\}\\
                \nonumber    &=\min\limits_{i=1,\ldots,d_\bfn^{\Omega}+1}\left\{\frac{i-1}{N(l\bfn)}+\sigma_i(B)\right\}\\
                \nonumber    &=\frac{d_\bfn^{\Omega}}{N(l\bfn)}\min\limits_{i=1,\ldots,d_\bfn^{\Omega}+1}\left\{\frac{i-1}{d_\bfn^{\Omega}}+\frac{N(l\bfn)}{d_\bfn^{\Omega}}\sigma_i(B)\right\}\\
                \nonumber    &=\frac{d_\bfn^{\Omega}}{N(l\bfn)}\min\limits_{i=1,\ldots,d_\bfn^{\Omega}+1}\left\{\frac{i-1}{d_\bfn^{\Omega}}+\sigma_i\left(\frac{N(l\bfn)}{d_\bfn^{\Omega}}B\right)\right\}\\
                \nonumber    &=\frac{d_\bfn^{\Omega}}{N(l\bfn)}p\left(\frac{N(l\bfn)}{d_\bfn^{\Omega}}B\right).
    \end{align}
    By the proof of Theorem \ref{spectra_reduced_GLT}, we know that
    \begin{equation*}
        \lim\limits_{\bfn\to\infty}\frac{d_\bfn^{\Omega}}{N(l\bfn)}=\frac{\Ld(\Omega)}{l^d}>0.
    \end{equation*}
    As a consequence, given any GLT $\{A_\bfn\}_\bfn\in\mathcal{G}_\Omega^E$ with symbol $f$ and such that $\{B_\bfn\}_\bfn=\mathcal{R}(\{A_\bfn\}_\bfn)$, we have that 
    \begin{equation*}
        \left\{\frac{d_\bfn^{\Omega}}{N(l\bfn)}A_\bfn\right\}_\bfn\sim^{Q_{\bfy,l}}_{\mathrm{GLT}}\frac{\Ld(\Omega)}{l^d}f.
    \end{equation*}
    
    For every $F,H$ measurable such that $F\sqcup H = Q_{\bfy,l}\times[-\pi,\pi]^d$, we have
    \begin{align*}
        \frac{\mathcal{L}^{2d}(F)}{(2\pi l)^d}+\esssup_{H}\left|\frac{\Ld(\Omega)}{l^d}f\right|&\geq \frac{\Ld(\Omega)}{l^d}\left( \frac{\mathcal{L}^{2d}(F\cap(\Omega\times[-\pi,\pi]^d))}{(2\pi )^d\Ld(\Omega)}+\esssup_{H\cap(\Omega\times[-\pi,\pi]^d)}\left|f\right|\right)\\
        &\geq \frac{\Ld(\Omega)}{l^d}d_{m,\Omega}(f|_{\Omega\times[-\pi,\pi]^d},0).
    \end{align*}
    By taking the $\inf$ on the left hand side, we obtain 
    \begin{equation}\label{iso_red_eq_1}
        d_{m,Q_{\bfy,l}}\left(\frac{\Ld(\Omega)}{l^d}f,0\right)\geq\frac{\Ld(\Omega)}{l^d}d_{m,\Omega}(f|_{\Omega\times[-\pi,\pi]^d},0).
    \end{equation}
    Conversely, since $f=0$ almost everywhere outside $\Omega\times[-\pi,\pi]^d$, for every pair of measurable sets $F,H$ such that $F\sqcup H=\Omega\times[-\pi,\pi]^d$, we have 
    \begin{align*}
        \frac{\mathcal{L}^{2d}(F)}{(2\pi)^d\Ld(\Omega)}+\esssup_{H}\left|f\right|&=\frac{l^d}{\Ld(\Omega)}\left(\frac{\mathcal{L}^{2d}(F)}{(2\pi l)^d}+\esssup_{H\cup ((Q_{\bfy,l}\setminus\Omega)\times[-\pi,\pi]^d)}\left|\frac{\Ld(\Omega)}{l^d}f\right|\right)\\
        &\geq \frac{l^d}{\Ld(\Omega)}d_{m,Q_{\bfy,l}}\left(\frac{\Ld(\Omega)}{l^d}f,0\right).
    \end{align*}
    Taking again the $\inf$ on the right hand side, we obtain the converse of the inequality in \eqref{iso_red_eq_1}, thus proving that 
    \begin{equation}\label{iso_red_eq_2}
        d_{m,Q_{\bfy,l}}\left(\frac{\Ld(\Omega)}{l^d}f,0\right)=\frac{\Ld(\Omega)}{l^d}d_{m,\Omega}(f|_{\Omega\times[-\pi,\pi]^d},0).
    \end{equation}
    By Theorem \ref{equiv_GLT_meas} applied to $\left\{\frac{d_\bfn^{\Omega}}{N(l\bfn)}A_\bfn\right\}_\bfn$ and Equation \eqref{iso_red_eq_2}, it follows that 
    \begin{align}\label{iso_red_eq_3}
        \limsup\limits_{\bfn\to\infty}p\left(\frac{d_\bfn^{\Omega}}{N(l\bfn)}A_\bfn\right)&=d_{a.c.s.}\left(\left\{\frac{d_\bfn^{\Omega}}{N(l\bfn)}A_\bfn\right\}_\bfn,\{0\}_\bfn\right)\\
        \nonumber&=d_{m,Q_{\bfy,l}}\left(\frac{\Ld(\Omega)}{l^d}f,0\right)\\
        \nonumber&=\frac{\Ld(\Omega)}{l^d}d_{m,\Omega}(f|_{\Omega\times[-\pi,\pi]^d},0).
    \end{align}
    Using now Equations \eqref{iso_red_eq_4} and \eqref{iso_red_eq_3}, we have
    \begin{align*}
    d_{a.c.s.}(\{B_\bfn\}_\bfn,\{0\}_\bfn)&=\limsup\limits_{\bfn\to\infty}p(B_\bfn)\\
          &=\limsup\limits_{\bfn\to\infty}p\left(\frac{N(l\bfn)}{d_\bfn^\Omega}\frac{d_\bfn^\Omega}{N(l\bfn)}B_\bfn\right)\\
          &=\limsup\limits_{\bfn\to\infty}\frac{N(l\bfn)}{d_\bfn^\Omega}p\left(E_\bfn\left(\frac{d_\bfn^\Omega}{N(l\bfn)}B_\bfn\right)\right)\\
          &=\limsup\limits_{\bfn\to\infty}\frac{N(l\bfn)}{d_\bfn^\Omega}p\left(\frac{d_\bfn^\Omega}{N(l\bfn)}A_\bfn\right)\\
          &=\frac{l^d}{\Ld(\Omega)}\frac{\Ld(\Omega)}{l^d}d_{m,\Omega}(f|_{\Omega\times[-\pi,\pi]^d},0)=d_{m,\Omega}(f|_{\Omega\times[-\pi,\pi]^d},0).
    \end{align*}
    Noticing that $\{B_\bfn\}_\bfn\sim^\Omega_{\mathrm{GLT}}f|_{\Omega\times[-\pi,\pi]^d}$ and that both $d_{a.c.s.}$ and $d_{m,\Omega}$ are invariant under translation, this concludes the proof. 
\end{proof}

\begin{rem}\label{rem_restr_btw_domains}
    If we consider an open domain $\Omega$, such that $d_\bfn^{\Omega}=\#\Theta_{\bfn,\Omega}<+\infty$, we can naturally define the algebra of all matrix-sequences of size $\{d_\bfn^{\Omega}\}_\bfn$.
    
    In addition, if two domains $\Omega_1\subset\Omega_2$ are such that $d_\bfn^{\Omega_i}=\#\Theta_{\bfn,\Omega_i}<+\infty$ for $i=1,2$ and for every $\bfn$, then we can define restriction and extension operators between $\mathcal{M}_{\Omega_1}$ and $\mathcal{M}_{\Omega_2}$. 
    
    Indeed, $\Omega_1\subset\Omega_2$ implies that $\Theta_{\bfn,\Omega_1}\subset\Theta_{\bfn,\Omega_2}$. Thus, consider the family of matrices $\Pi_{\bfn}(=\Pi_{\bfn,\Omega_2,\Omega_1})\in\mathbb{C}^{d_{\bfn}^{\Omega_1}\times d_{\bfn}^{\Omega_2}}$, defined by
\begin{equation*}
    (\Pi_{\bfn})_{\bfi,\bfj}=\delta_{\bfi,\bfj},\qquad\bfi\in\Theta_{\bfn,\Omega_1},\quad \bfj\in\Theta_{\bfn,\Omega_2}.
\end{equation*}
Similarly as above, the restriction operator $\mathcal{R}_{\Omega_2,\Omega_1}=\{R_{\bfn}\}_\bfn:\mathcal{M}_{\Omega_2}\to\mathcal{M}_{\Omega_1}$ and the extension operator $\mathcal{E}_{\Omega_1,\Omega_2}=\{E_{\bfn}\}_\bfn:\mathcal{M}_{\Omega_1}\to\mathcal{M}_{\Omega_2}$ are defined as 
\begin{equation*}
	R_{\bfn}(=R_{\bfn,\Omega_2,\Omega_1}):\mathbb{C}^{d_{\bfn}^{\Omega_2}\times d_{\bfn}^{\Omega_2}}\to\mathbb{C}^{d_{\bfn}^{\Omega_1}\times d_{\bfn}^{\Omega_1}},
\end{equation*}
\begin{equation*}
	E_{\bfn}(=E_{\bfn,\Omega_1,\Omega_2}):\mathbb{C}^{d_{\bfn}^{\Omega_1}\times d_{\bfn}^{\Omega_1}}\to\mathbb{C}^{d_{\bfn}^{\Omega_2}\times d_{\bfn}^{\Omega_2}},
\end{equation*}
where, explicitly,
\begin{equation*}
	R_{\bfn}(A_{\bfn}):=\Pi_{\bfn}A_{\bfn}\Pi_{\bfn}^T,
\end{equation*}
\begin{equation*}
	E_{\bfn}(B_{\bfn}):=\Pi_{\bfn}^T B_{\bfn}\Pi_{\bfn}. 
\end{equation*}

The operators $\mathcal{R}_{\Omega_2,\Omega_1}$ and $\mathcal{E}_{\Omega_1,\Omega_2}$ share the same structure of restriction and extension operators between a hypercube $Q$ and a domain $\Omega$. For this reason, all operators factor through ``intermediate domains'' as in Remark \ref{well-posedness}.

Indeed, this compatibility observation yields
\begin{align*}
    \mathcal{R}_{\Omega_2,\Omega_1}(\mathcal{G}_{\Omega_2})&\subset\mathcal{G}_{\Omega_1},\\
    \mathcal{E}_{\Omega_1,\Omega_2}(\mathcal{G}_{\Omega_1})&\subset\mathcal{G}_{\Omega_2}.
\end{align*}
Moreover, reasoning similarly as in Remark \ref{rest_surjective}, one can prove that the restriction operator is surjective, whilst the image of the extension operator is the sub-algebra of $\mathcal{G}_{\Omega_2}$ of sequences whose canonical symbol satisfies 
\begin{equation*}
        f(\bfx,\bftheta)=0,\qquad \text{for a.e. }\quad(\bfx,\bftheta)\in (\Omega_2\setminus \Omega_1)\times[-\pi,\pi]^d.
    \end{equation*}
For the same reason as in Corollary \ref{product_hypercubes}, the extension operator $\mathcal{E}_{\Omega_1,\Omega_2}$ is a $*$-algebra morphism, whilst the restriction operator $\mathcal{R}_{\Omega_2,\Omega_1}$ is a $*$-algebra morphism, when restricted to the image of $\mathcal{E}_{\Omega_1,\Omega_2}$.
Finally, by Lemma \ref{acs_cts_operators} and Remark \ref{acs_same_structure}, both the extension and the restriction operators are continuous with respect to the a.c.s. convergence, also in this case, as long as the sequences of dimensions have the same rate of divergence (note that for the class of domains of interest, this will always be the case, as a byproduct of item $(iii)$ of Lemma \ref{dimensions}).
\end{rem}

\section{Generalized a.c.s. convergence}\label{sec-gacs}


In this section we briefly recall the concept of generalized approximating class of sequences (g.a.c.s.) and its main application in the study of spectral distributions (see \cite{gacs} for the original definition and some applications).

\begin{defn}\label{def_gacs}
Let $\{A_{n}\}_n$ be a square matrix-sequence of size $d_n$, such that $ d_n\nearrow\infty$, and let $\left\{\{B_{n,t}\}_{n}\right\}_{t}$ be a sequence of square matrix-sequences of size $d_{n,t}$. Denoting by $ \oplus$ the standard direct sum of matrices, we say that $\left\{\{B_{n,t}\}_{n}\right\}_{t}$ is a generalized approximating class of sequences $(\mathrm{g.a.c.s.})$ for $\{A_{n}\}_n$
if, for every $t$, there exists $n_t$ such that, for $n>n_t$,
\begin{equation*}
	A_n =  U_{n,t} \left( B_{n,t}\oplus0_{n,t}\right)V_{n,t} + S_{n,t} + N_{n,t},
\end{equation*}
where $0_{n,t}$ is the null matrix of size $\left(d_n-d_{n,t}\right)$, $U_{n,t}$ and $V_{n,t}$ are two unitary matrices of size $d_n$ and $S_{n,t}, N_{n,t}$ are matrices of the same size as $A_n$, satisfying:
\begin{equation*}
	\textnormal{rank}\left(S_{n,t}\right) \leq c(t) d_n;
\end{equation*}
\begin{equation*}
	\left\|N_{n,t}\right\|\leq \omega(t);
\end{equation*}
\begin{equation*}
	d_n-d_{n,t}=:m_{n,t} \leq m(t)d_n;
\end{equation*}
and
\begin{equation*}
	\lim_{t \to \infty} c(t) =\lim_{t \to \infty} \omega(t) = \lim_{t \to \infty} m(t) =0.
\end{equation*}
If we have a sequence $\{A_{n}\}_n$	of Hermitian matrices, we also ask $\left\{\{B_{n,t}\}_{n}\right\}_{t}$ to be Hermitian and $V_{n,t}=U_{n,t}^*$ for all $n$ and $t$.

The sequences $\left\{\{U_{n,t}\}_n\right\}_t$ and $\left\{\{V_{n,t}\}_n\right\}_t$ are called structure matrix-sequences.
\end{defn}
\begin{rem}\label{rem_obs_ext_operator}
    In the next section, we will need the following useful observation.
    Consider two domains $\Omega_1\subset\Omega_2$, with $\Ld(\partial\Omega_i)=0$, for $i=1,2$. Fix any $\bfn$ and any $B_\bfn\in\mathcal{C}^{d_\bfn^{\Omega_1}\times d_\bfn^{\Omega_1}}$ and consider the extension operator $E_{\bfn}:\mathbb{C}^{d_{\bfn}^{\Omega_1}\times d_{\bfn}^{\Omega_1}}\to\mathbb{C}^{d_{\bfn}^{\Omega_2}\times d_{\bfn}^{\Omega_2}}$. 
    
    Define $\tilde{\Pi}_\bfn\in\mathbb{C}^{d_{\bfn}^{\Omega_2}\times d_{\bfn}^{\Omega_2}}$ as 
    \begin{equation*}
           \tilde{\Pi}_\bfn:= \left(
    \begin{array}{c} 
      \Pi_\bfn \\
      \hline
      C_\bfn
    \end{array} 
    \right), 
    \end{equation*}
    where $C_\bfn(=C_{\bfn,\Omega_2,\Omega_1})$ is any matrix of size $(d_\bfn^{\Omega_2}-d_\bfn^{\Omega_1})\times d_\bfn^{\Omega_2}$ that completes $\Pi_\bfn$ to a permutation matrix (for any row of $C_\bfn$, set all but one entry to $0$, and the last entry equal to $1$, in such a way that any column of $\tilde{\Pi}_\bfn$ has just one non-zero entry). Then, 
    \begin{equation*}
        E_\bfn(B_\bfn)=\Pi_\bfn^{T}B_\bfn\Pi_\bfn=\tilde{\Pi}_\bfn^{T}\tilde{B}_\bfn\tilde{\Pi}_\bfn,
    \end{equation*}
    with $\tilde{B}_\bfn:=B_\bfn\oplus0_\bfn$ and $0_\bfn(=0_{\bfn,\Omega_2,\Omega_1})$ is a square null matrix of size $d_\bfn^{\Omega_2}-d_\bfn^{\Omega_1}$.
    
    As a consequence, any square matrix $A_\bfn$ of size $d_\bfn^{\Omega}$ can be written as 
    \begin{equation}\label{restr_ext_identity}
        A_\bfn=E_\bfn(R_\bfn(A_\bfn))+S_\bfn=\tilde{\Pi}_\bfn^{T}\left(R_\bfn(A_\bfn)\oplus0_\bfn\right)\tilde{\Pi}_\bfn+S_\bfn,
    \end{equation}
    with $S_\bfn(=S_{\bfn,\Omega_2,\Omega_1})$ (which in the following will play the role of a rank correction) of rank bounded by $2(d_\bfn^{\Omega_2}-d_\bfn^{\Omega_1})$, since it has at most $d_\bfn^{\Omega_2}-d_\bfn^{\Omega_1}$ rows and columns with non-zero entries.
\end{rem}

\begin{thm}[\cite{gacs}]\label{gacs_sing_val}
	Let $\{A_n\}_n$ be a matrix-sequence of size $ d_n$ with $ d_n $ being an increasing sequence of integers. Let $\left\{\{B_{n,t}\}_{n}\right\}_{t}$ be a $\mathrm{g.a.c.s.}$ for $\{A_n\}_n$. If, $\forall t$, $\exists\left(f_t,\Omega_t\right)$ such that \\
	\begin{itemize}
		\item $\{B_{n,t}\}_{n} \sim_{\sigma} (f_t,\Omega_t)$;\\
		\item $\Omega_t \subset \Omega_{t+1}$, $\forall t$;\\
		\item $\Omega := \bigcup\limits_{t>0} \Omega_t$ of finite measure;\\
		\item $\exists f: \Omega\to\mathbb{C}
		$ measurable, such that $f^{E}_t\to f$ in measure, $t\to +\infty$, with
	\end{itemize}
	\begin{equation*}
	f_t^E= \begin{cases}
	f_t(s) & \text{if } s \in \Omega_t, \\
	0 & \text{if } s \in \Omega\setminus \Omega_t,
	\end{cases}
	\end{equation*}
	then $\{A_n\}_n \sim_{\sigma} (f,\Omega)$.

    If, additionally, $\{A_n\}_n$ is a sequence of Hermitian matrices, then it also holds $\{A_n\}_n \sim_{\lambda} (f,\Omega)$.
\end{thm}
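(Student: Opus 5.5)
The idea is to sandwich $\{A_n\}_n$ between the "padded" sequences $C_{n,t}:=U_{n,t}(B_{n,t}\oplus 0_{n,t})V_{n,t}$. We show that, for every test function $F$, the empirical averages of $A_n$ and of $B_{n,t}$ differ by a quantity controlled by $c(t),\omega(t),m(t)$. We then let $t\to\infty$ on the symbol side. Because $C_c(\mathbb{R})$ functions are uniform limits of compactly supported Lipschitz functions, it suffices to prove the distributional identity for $F\in C_c(\mathbb{R})$ Lipschitz. The general case then follows by a $3\varepsilon$-argument, since both sides of \eqref{distribution:sv-sv} change by at most $\|F-G\|_\infty$ when $F$ is replaced by $G$.

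\textbf{Step 1 (from $A_n$ to $C_{n,t}$).} Fix $t$ and $n>n_t$. Then $A_n-C_{n,t}=S_{n,t}+N_{n,t}$ with $\mathrm{rank}(S_{n,t})\le c(t)d_n$ and $\|N_{n,t}\|\le\omega(t)$. I would invoke the standard perturbation estimate underlying Theorem \ref{acs} (see \cite{glt-book-1}). Interlacing handles the low-rank part and Weyl/Mirsky handles the small-norm part. The estimate reads
\begin{equation*}
\Bigl|\frac1{d_n}\sum_{i=1}^{d_n}F(\sigma_i(A_n))-\frac1{d_n}\sum_{i=1}^{d_n}F(\sigma_i(C_{n,t}))\Bigr|\le 2c(t)\|F\|_\infty+\mathrm{Lip}(F)\,\omega(t).
\end{equation*}
I expect this to be the main technical point, and I would simply quote it rather than reprove it.

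\textbf{Step 2 (from $C_{n,t}$ to $B_{n,t}$).} Since $U_{n,t},V_{n,t}$ are unitary, the singular values of $C_{n,t}$ are those of $B_{n,t}$ together with $m_{n,t}$ zeros. Hence
\begin{equation*}
\frac1{d_n}\sum_{i}F(\sigma_i(C_{n,t}))=\Bigl(1-\frac{m_{n,t}}{d_n}\Bigr)\frac1{d_{n,t}}\sum_iF(\sigma_i(B_{n,t}))+\frac{m_{n,t}}{d_n}F(0).
\end{equation*}
This differs from the $B_{n,t}$-average by at most $2m(t)\|F\|_\infty$. We use only the bound $m_{n,t}\le m(t)d_n$, and no convergence of $d_{n,t}/d_n$ is needed. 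Letting $n\to\infty$ and using $\{B_{n,t}\}_n\sim_\sigma(f_t,\Omega_t)$, both $\limsup_n$ and $\liminf_n$ of the $A_n$-average lie within
\begin{equation*}
\varepsilon(t):=2(c(t)+m(t))\|F\|_\infty+\mathrm{Lip}(F)\,\omega(t)
\end{equation*}
of $I_t:=\frac{1}{\mathcal{L}(\Omega_t)}\int_{\Omega_t}F(|f_t|)$. Here $\mathcal{L}$ denotes the relevant Lebesgue measure.

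\textbf{Step 3 ($t\to\infty$).} By monotone convergence, $\mathcal{L}(\Omega_t)\to\mathcal{L}(\Omega)<\infty$. Moreover
\begin{equation*}
\int_{\Omega_t}F(|f_t|)=\int_\Omega F(|f^E_t|)-\mathcal{L}(\Omega\setminus\Omega_t)F(0),
\end{equation*}
and the last term tends to $0$. Since $f_t^E\to f$ in measure and $F$ is bounded and uniformly continuous, $F(|f^E_t|)\to F(|f|)$ in measure with a uniform bound, so dominated convergence on the finite-measure set $\Omega$ gives $I_t\to\frac1{\mathcal{L}(\Omega)}\int_\Omega F(|f|)$. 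Since $\varepsilon(t)\to0$, the $\limsup$ and $\liminf$ of the $A_n$-averages coincide with this value, which proves $\{A_n\}_n\sim_\sigma(f,\Omega)$.

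\textbf{Hermitian case.} Here $V_{n,t}=U_{n,t}^*$ and $B_{n,t}$ is Hermitian, so $C_{n,t}$ is Hermitian with eigenvalues those of $B_{n,t}$ plus $m_{n,t}$ zeros. The difference $A_n-C_{n,t}$ is Hermitian, so we may replace $S_{n,t},N_{n,t}$ by their Hermitian parts $(S_{n,t}+S_{n,t}^*)/2$ and $(N_{n,t}+N_{n,t}^*)/2$. This at most doubles the rank bound and keeps the norm bound $\omega(t)$. The Hermitian eigenvalue version of the Step 1 estimate (Cauchy interlacing and Weyl) then applies with test functions $F\in C_c(\mathbb{C})$ restricted to $\mathbb{R}$. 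Steps 2 and 3 go through verbatim with $\lambda_i$ in place of $\sigma_i$, using $\{B_{n,t}\}_n\sim_\lambda(f_t,\Omega_t)$, which holds as part of the g.a.c.s. setting.
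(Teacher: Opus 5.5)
The paper gives no proof of this statement; it is quoted from \cite{gacs}. On its own merits your sandwich argument is the right one. It is also slightly more robust than a naive reduction to Theorem \ref{acs}. The padded sequences $\{U_{n,t}(B_{n,t}\oplus 0_{n,t})V_{n,t}\}_n$ are a classical a.c.s. for $\{A_n\}_n$, but identifying their distribution would require $\lim_n m_{n,t}/d_n$ to exist, which is not assumed. Your Step 2 uses only $m_{n,t}\le m(t)d_n$ together with $\limsup$ and $\liminf$. The density reduction, Step 2 and Step 3 are correct. Two points need repair.

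First, the estimate you propose to quote in Step 1 is false for general Lipschitz $F$. Interlacing only says that the counting functions $x\mapsto\#\{i:\sigma_i(X)>x\}$ of $X=A_n$ and $X=A_n-S_{n,t}$ differ by at most $\mathrm{rank}(S_{n,t})$. Integrating by parts, the rank contribution is $c(t)\,\mathrm{TV}(F)$, not $2c(t)\|F\|_\infty$. To see that the latter fails, take a rank-one positive semidefinite update of $\mathrm{diag}(c_1,\dots,c_k)$ with $c_1>\dots>c_k>0$. It can realize any strictly interlacing spectrum $a_1>c_1>a_2>\dots>a_k>c_k$. A zigzag $F$ with $F(c_j)=0$ and $F(a_j)=1$ then makes the two sums differ by about $k$, although $\|F\|_\infty=1$. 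Stacking such blocks diagonally violates your normalized inequality as well.

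The fix costs nothing, because $F$ is fixed while $t\to\infty$. Moreover $\mathrm{TV}(F)$ is at most $\mathrm{Lip}(F)$ times the length of an interval containing $\mathrm{supp}\,F$. So take $\varepsilon(t)=c(t)\,\mathrm{TV}(F)+\mathrm{Lip}(F)\,\omega(t)+2m(t)\|F\|_\infty$. Alternatively, test only on nonnegative tent functions, for which $\mathrm{TV}(F)=2\|F\|_\infty$, and conclude by linearity and density.

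Second, in the Hermitian case $\{B_{n,t}\}_n\sim_\lambda(f_t,\Omega_t)$ does not hold ``as part of the g.a.c.s. setting''. Definition \ref{def_gacs} only gives that $B_{n,t}$ is Hermitian and $V_{n,t}=U_{n,t}^*$. A singular value distribution of Hermitian matrices does not determine their eigenvalue distribution. This has to be an additional hypothesis, exactly as in the Hermitian part of Theorem \ref{acs}. Without it the conclusion fails. For example, take $A_n=B_{n,t}=\mathrm{diag}(1,-1,1,-1,\dots)$, $U_{n,t}=V_{n,t}=I$, $\Omega_t=\Omega=(0,1)$ and $f_t=f\equiv1$. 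Once you state the hypothesis explicitly (it forces $f_t$, hence $f$, to be real a.e.), your argument goes through. After passing to Hermitian parts, the rank term becomes $2c(t)\,\mathrm{TV}(F|_{\mathbb{R}})$.
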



\section{The algebra of unbounded GLT}\label{sec-UGLT}

In this section we introduce the main object of our derivations. Using all the tools from the previous sections, we construct an algebra of matrix-sequences canonically associated to functions over an unbounded domain with finite measure, which we call unbounded GLT (uGLT), we study the main algebraic and spectral properties of the uGLT sequences, ultimately proving the equivalence between uGLT sequences and measurable functions over $\Omega$. For the convenience of the reader, we divide this section in three subsections, each devoted to develop some crucial aspects of our work.

\subsection{Geometric ingredients and definition of uGLT}

In this subsection, we construct an algebra of matrix-sequences canonically associated with functions over an unbounded domain with finite measure.

First of all, consider an open domain $\Omega$, such that 
\begin{equation*}
    \mathcal{L}^d(\Omega)<+\infty,\qquad\mathcal{L}^d(\partial\Omega)=0.
\end{equation*}
Note that the definition of the grid associated with a bounded domain in \eqref{def_grid} gives a finite number of points for every $\bfn$, even if $\Omega$ is unbounded.
Indeed, considering $\Omega$ unbounded, set as before
\begin{equation*}    
\Theta_{\bfn,\Omega}:=\left\{\bfp\in\Theta_{\bfn}\,\big|\, B_{\bfn}(\bfp)\subset\Omega\right\}
\end{equation*}
and 
\begin{equation*}
    d_{\bfn}^{\Omega}:=\#\Theta_{\bfn,\Omega}.
\end{equation*}
Then 
\begin{align*}
    d_{\bfn}^{\Omega}&=\#\left\{\bfp\in\Theta_\bfn\,\vert\, B_\bfn(\bfp)\subset\Omega\right\}\\
        &=\sum\limits_{\{\bfp\in\Theta_\bfn\,\vert\, B_\bfn(\bfp)\subset\Omega\}}1\\
        &=\sum\limits_{\{\bfp\in\Theta_\bfn\,\vert\, B_\bfn(\bfp)\subset\Omega\}}2^{-d}N(\bfn)\mathcal{L}^d(B_{\bfn}(\bfp))\\
        &\leq2^d2^{-d}N(n)\mathcal{L}^d(\Omega),
\end{align*}
using that 
\begin{equation}\label{eq_3}
    \bigcup\limits_{\{\bfp\in\Theta_\bfn\,\vert\, B_\bfn(\bfp)\not\subset\Omega\}}B_\bfn(\bfp)\subset\Omega,
\end{equation}
and that any point in $\Omega$ appears at most in $2^d$ sets of the union in the left hand side of \eqref{eq_3}.

In particular, $d_{\bfn}^{\Omega}$ is finite for every $\bfn$ and bounded from above by $\mathcal{L}^d(\Omega)N(\bfn)$.

Applying Remark \ref{rem_restr_btw_domains}, it is possible to construct restriction and extension operators also for the algebra $\mathcal{M}_\Omega$, where $\Ld(\Omega)<\infty$ and $\Ld(\partial\Omega)=0$

\begin{defn}[Regular exhaustion]\label{reg_exhaustion}
    Let $\Omega$ be an open domain, such that $\Ld(\Omega)<\infty$, $\Ld(\partial\Omega)=0$. A regular exhaustion of $\Omega$ is a family of open domains $\{\Omega_t\}_t$ (where $t$ is a discrete or continuous index), such that
    \begin{enumerate}
    \item\label{reg_item_1} $\forall t$, $\Omega_t$ is bounded and such that $\Ld(\partial\Omega_t)=0$;
    \item\label{reg_item_2} $\forall t<t'$, $\Omega_t\subset\Omega_{t'}$;
    \item\label{reg_item_3} $\bigcup_t\Omega_t=\Omega$.
    \end{enumerate}

\end{defn}
\begin{exmp}\label{exmp:reg_exhaustion}
    Any domain $\Omega$ as in Definition \ref{reg_exhaustion} above possesses a regular exhaustion. 
    Indeed, for every $t>0$, define
    \begin{equation*}
        \Omega_t:=\left\{\bfx\in\Omega\,\vert\,\|\bfx\|_\infty<t\right\},
    \end{equation*}
then, by construction, $\Omega_t$ is open and satisfies items \eqref{reg_item_2}-\eqref{reg_item_3} of Definition \ref{reg_exhaustion}. Moreover, for every $t>0$, it holds 
\begin{equation*}
    \partial\Omega\setminus\partial\Omega_t\subset E_t:=\left\{\bfx\in\Omega\,\vert\,\|\bfx\|_\infty=t\right\}.
\end{equation*}
The sets $\{E_t\}_{t>0}$ all have zero $d$-dimensional Lebesgue measure. 
Therefore, $\{\Omega_{t}\}_t$ is a regular exhaustion for $\Omega$. 
\end{exmp}

The next lemma establishes some useful preliminary properties of the dimension of the grid $d_\bfn^{\Omega}$, associated with a domain $\Omega$.
\begin{lem}\label{dimensions}
    Let $\Omega$ be an open domain, such that $\Ld(\Omega)<\infty$, $\Ld(\partial\Omega)=0$. Consider a regular exhaustion $\{\Omega_t\}_t$ of $\Omega$.\\
    \begin{itemize}
        \item[(i)]\label{dim_item_1} There exist $\bfn_0$ and $c>0$, such that $d_\bfn^{\Omega}\geq cN(\bfn)$, for every $\bfn\geq\bfn_0$;\\
        \item[(ii)]\label{dim_item_2} there exists $c'>0$ such that, for every $t$ and every $\bfn\geq\bfn_t$, it holds 
        \begin{equation*}
            \frac{d_\bfn^{\Omega}-d_\bfn^{\Omega_t}}{d_\bfn^{\Omega}}\leq c'\Ld(\Omega\setminus\Omega_t);
        \end{equation*}
        \item[(iii)]\label{dim_item_3} as $\bfn\to\infty$, it holds
        \begin{equation*}
            \frac{d_\bfn^{\Omega}}{N(\bfn)}\to\Ld(\Omega).
        \end{equation*}
    \end{itemize}
\end{lem}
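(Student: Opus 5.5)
The plan is to prove (iii) first, deduce (i) from it, and then prove (ii) by a covering estimate on the grid points lying in $\Omega$ but not well inside $\Omega_t$. Throughout I use two facts already established in the paper. First, for any open $\Omega$ of finite measure, $d_\bfn^{\Omega}\le \Ld(\Omega)N(\bfn)$, as computed at the beginning of this subsection. Second, for every bounded open $\Omega'$ with $\Ld(\partial\Omega')=0$, the proof of Theorem \ref{spectra_reduced_GLT} gives $d_\bfn^{\Omega'}/N(\bfn)\to\Ld(\Omega')$.

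\emph{Item (iii).} Since $\Omega_t\subset\Omega$, we have $\Theta_{\bfn,\Omega_t}\subset\Theta_{\bfn,\Omega}$. Hence
\[
\Ld(\Omega_t)=\lim_{\bfn\to\infty}\frac{d_\bfn^{\Omega_t}}{N(\bfn)}\le\liminf_{\bfn\to\infty}\frac{d_\bfn^{\Omega}}{N(\bfn)}\le\limsup_{\bfn\to\infty}\frac{d_\bfn^{\Omega}}{N(\bfn)}\le\Ld(\Omega).
\]
The sets $\Omega_t$ increase to $\Omega$, so $\Ld(\Omega_t)\to\Ld(\Omega)$ by continuity of measure. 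Letting $t\to\infty$ in the left-hand side gives (iii).

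\emph{Item (i).} This follows immediately from (iii): take $c=\Ld(\Omega)/2>0$ and choose $\bfn_0$ such that $d_\bfn^{\Omega}\ge cN(\bfn)$ for all $\bfn\ge\bfn_0$.

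\emph{Item (ii).} Fix $t$. I split the grid points counted by $d_\bfn^{\Omega}-d_\bfn^{\Omega_t}$ into two classes.
\begin{itemize}
\item[(a)] Points $\bfp\in\Omega_t$ with $B_\bfn(\bfp)\not\subset\Omega_t$. Their number is at most $N(\bfn)\Ld(U_\bfn^{t})$, where $U_\bfn^t$ is the boundary layer of $\Omega_t$ built exactly as in the proof of Lemma \ref{restr_ext_reduced_GLT}. Since $\Omega_t$ is bounded with $\Ld(\partial\Omega_t)=0$, we have $\Ld(U_\bfn^t)\to0$ as $\bfn\to\infty$.
\item[(b)] Points $\bfp\notin\Omega_t$ with $B_\bfn(\bfp)\subset\Omega$. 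Each such box lies in $\Omega$. Either it lies in $\Omega\setminus\overline{\Omega_t}$, or it meets $\partial\Omega_t$; in the latter case its center lies in $\overline{\Omega_t}$ up to the layer, so the box is contained in the same kind of shrinking neighbourhood of $\partial\Omega_t$. Every point of $\mathbb{R}^d$ belongs to at most $2^d$ boxes, and each box has measure $2^d/N(\bfn)$. The counting argument of Lemma \ref{restr_ext_reduced_GLT} therefore bounds this class by $N(\bfn)\bigl(\Ld(\Omega\setminus\Omega_t)+\Ld(V_\bfn^t)\bigr)$, where $V_\bfn^t$ is a neighbourhood of $\partial\Omega_t$ with $\Ld(V_\bfn^t)\to0$.
\end{itemize}
Combining the two classes gives
\[
d_\bfn^{\Omega}-d_\bfn^{\Omega_t}\le N(\bfn)\bigl(\Ld(\Omega\setminus\Omega_t)+\varepsilon_{\bfn,t}\bigr),\qquad \varepsilon_{\bfn,t}\xrightarrow[\bfn\to\infty]{}0\ \text{ for each fixed } t.
\]

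The main obstacle is removing the additive error $\varepsilon_{\bfn,t}$, which is exactly what the purely multiplicative bound demands. The key observation is that $\Ld(\Omega\setminus\Omega_t)>0$ for every $t$ in the unbounded case that this section targets. Indeed, $\Omega$ is open and unbounded while $\Omega_t$ is bounded, so $\Omega\setminus\overline{\Omega_t}$ is a nonempty open set and has positive measure. We may therefore choose $\bfn_t\ge\bfn_0$ such that $\varepsilon_{\bfn,t}\le\Ld(\Omega\setminus\Omega_t)$ for all $\bfn\ge\bfn_t$. For such $\bfn$, using (i),
\[
\frac{d_\bfn^{\Omega}-d_\bfn^{\Omega_t}}{d_\bfn^{\Omega}}\le\frac{2N(\bfn)\Ld(\Omega\setminus\Omega_t)}{cN(\bfn)}=c'\,\Ld(\Omega\setminus\Omega_t),\qquad c'=\frac{2}{c}=\frac{4}{\Ld(\Omega)}.
\]
The constant $c'$ does not depend on $t$, as required.

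For bounded $\Omega$ the inequality can fail. If $\Ld(\Omega\setminus\Omega_t)=0$ but $\Omega_t\ne\Omega$ (for instance, $\Omega_t$ is $\Omega$ with a slit removed), then the left-hand side is positive while the right-hand side vanishes. I would therefore state explicitly that (ii) is used only for unbounded $\Omega$, or only for those $t$ with $\Ld(\Omega\setminus\Omega_t)>0$.
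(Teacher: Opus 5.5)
Your proof is correct for unbounded $\Omega$, and your caveat about bounded $\Omega$ is justified. You reorganize the paper's argument. The paper first proves (i) by counting lattice points in a small cube $B_\infty(\bfx,r/2)\subset\Omega$. It then proves (ii) by essentially your covering argument: its split of the difference set into $\Theta_{\bfn,\Omega\setminus\overline{\Omega}_t}$ and the set $J_{\bfn,t}$ of boxes in $\Omega$ meeting $\partial\Omega_t$ is your (a)/(b) split regrouped. Only then does it deduce (iii) from (ii) and the bounded case.

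Your sandwich $\Ld(\Omega_t)\le\liminf d_\bfn^{\Omega}/N(\bfn)\le\limsup d_\bfn^{\Omega}/N(\bfn)\le\Ld(\Omega)$ makes (iii) independent of (ii). That is the better order, since (ii) is the fragile item. The cost is that your (i) rests on the bounded-case limit, which the paper extracts from GLT theory in the proof of Theorem~\ref{spectra_reduced_GLT}; the paper's (i) is elementary.

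Once (iii) holds for both $\Omega$ and $\Omega_t$, your covering argument for (ii) is superfluous. For each fixed $t$,
\[
\frac{d_\bfn^{\Omega}-d_\bfn^{\Omega_t}}{d_\bfn^{\Omega}}\longrightarrow\frac{\Ld(\Omega\setminus\Omega_t)}{\Ld(\Omega)},
\]
so for $\bfn\ge\bfn_t$ the ratio is at most $2\Ld(\Omega\setminus\Omega_t)/\Ld(\Omega)$ whenever $\Ld(\Omega\setminus\Omega_t)>0$.

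Your caveat is a genuine defect of the statement, and the paper's proof does not catch it. The paper treats only $\Omega_t=\Omega$ separately. It then chooses $\bfn_t$ with $\Ld(U_{\bfn,t})\le\Ld(\Omega\setminus\Omega_t)$. This is impossible when $\Ld(\Omega\setminus\Omega_t)=0$ but $\Omega_t\neq\Omega$: every point of $\Omega\setminus\Omega_t$ then lies on $\partial\Omega_t$, so $J_{\bfn,t}\neq\emptyset$ and $\Ld(U_{\bfn,t})>0$ for large $\bfn$. A concrete instance is $\Omega=(0,1)^2$ with $\Omega_t=\Omega\setminus(\{1/2\}\times(0,1/t])$, $t\ge1$. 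This is a legitimate regular exhaustion, and (ii) fails for every $t$. For unbounded $\Omega$, your observation that $\Omega\setminus\overline{\Omega_t}$ is a nonempty open set settles the matter.

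Restricting (ii) to unbounded $\Omega$ is not an adequate repair, however. The paper invokes (ii) for bounded domains: in Step~1 of Theorem~\ref{RGLT_equiv_UGLT_bounded}, and for $\Omega_0$ with the exhaustion $\{\Omega_t\cap\Omega_0\}_t$ in Theorem~\ref{perm_spectr_info}. Restricting to those $t$ with $\Ld(\Omega\setminus\Omega_t)>0$ does not help either. What those g.a.c.s. arguments need is the additive form. Fix any $\eta(t)>0$ with $\eta(t)\to0$. Then for every $t$ there should be $\bfn_t$ with
\[
\frac{d_\bfn^{\Omega}-d_\bfn^{\Omega_t}}{d_\bfn^{\Omega}}\le c'\Ld(\Omega\setminus\Omega_t)+\eta(t)\qquad\text{for all }\bfn\ge\bfn_t .
\]
Your estimate with $\varepsilon_{\bfn,t}$, or the limit displayed above, gives exactly this for every admissible $\Omega$, bounded or not. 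It suffices for every later use, since $m(t)=c'\Ld(\Omega\setminus\Omega_t)+\eta(t)\to0$.
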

\begin{proof}
    Since $\Omega$ is open, there exist $\bfx\in\Omega$ and $r>0$ such that
    \begin{equation*}
        B_{\infty}(\bfx,r):=\left\{\bfy\in\mathbb{R}^d\,\big\vert\,|y_i-x_i|<r,\,\, i=1,\dots,d\right\}\subset\Omega.
    \end{equation*}
    As $\bfn\to\infty$, there exists $\bfn_0$ such that 
    \begin{equation*}
        \frac{1}{\min_{i}n_i}<\frac r2,
    \end{equation*}
    for every $\bfn\geq\bfn_0$. In particular, $\bfn\geq\bfn_0$ implies that
    \begin{equation*}
        \Theta_\bfn\cap B_{\infty}(\bfx,r/2)\subset\Theta_{\bfn,\Omega}.
    \end{equation*}
    We want to estimate $\#\Theta_\bfn\cap B_{\infty}(\bfx,r/2)$ from below to get a lower bound for $d_\bfn^{\Omega}$. 
    
    Consider an interval $(a,b)\subset\mathbb{R}$ and $n\in\mathbb{N}$. Then, the number of points of the form $i/n$, with $i\in\mathbb{Z}$, contained in $(a,b)$, is at least
    \begin{equation*}
        \bigg\lfloor\frac{n}{b-a}\bigg\rfloor-1
    \end{equation*}
    and, for $n$ large enough, we have
    \begin{equation*}
        \bigg\lfloor\frac{n}{b-a}\bigg\rfloor-1\geq\frac{n}{2(b-a)}.
    \end{equation*}
    Now, possibly increasing $\bfn_0$, and applying the previous reasoning to all $d$ directions, we have 
    \begin{equation*}
        d_\bfn^{\Omega}\geq\#\Theta_\bfn\cap B_{\infty}(\bfx,r/2)\geq\prod\limits_{i=1}^d\frac{n_i}{2r}=\frac1{(2r)^d}N(\bfn),
    \end{equation*}
    for every $\bfn\geq\bfn_0$, which proves item (i). 
    
    For proving item (ii), note that
    \begin{equation*}
        d_\bfn^{\Omega}-d_\bfn^{\Omega_t}=\#I_{\bfn,t},
    \end{equation*}
    where 
    \begin{equation*}
        I_{\bfn,t}:=\{\bfp\in\Theta_\bfn\,\vert\,B_\bfn(\bfp)\subset\Omega,\,B_\bfn(\bfp)\not\subset\Omega_t\}.
    \end{equation*}
    If, for some $t$, $\Omega_t=\Omega$, then item (ii) is trivially true for that $t$ for any $c'$ and every $\bfn$.
    Otherwise, we split the set of indices in two disjoint subsets and estimate the cardinality of each of them separately. Indeed, for every $(\bfn,t)$, define
    \begin{align*}
        J_{\bfn,t}&:=\{\bfp\in\Theta_\bfn\,\vert\,B_\bfn(\bfp)\subset\Omega,\,B_\bfn(\bfp)\not\subset\Omega\setminus\Omega_t,\,B_\bfn(\bfp)\not\subset\Omega_t\}\\
        &=\{\bfp\in\Theta_\bfn\,\vert\,B_\bfn(\bfp)\subset\Omega,\,B_\bfn(\bfp)\cap\partial\Omega_t\neq\emptyset\}.
    \end{align*}
    Moreover, note that $\Omega\setminus\overline{\Omega}_t$ is open and such that
    \begin{align*}
        \Theta_{\bfn,\,\Omega\setminus\overline{\Omega}_t}&=\{\bfp\in\Theta_\bfn\,\vert\,B_\bfn(\bfp)\subset\Omega\setminus\overline{\Omega}_t\}\\
        &=\{\bfp\in\Theta_\bfn\,\vert\,B_\bfn(\bfp)\subset\Omega\setminus\Omega_t\}.
    \end{align*}
    By definition, it holds that 
    \begin{equation*}
        I_{\bfn,t}=\Theta_{\bfn,\,\Omega\setminus\overline{\Omega}_t}\sqcup J_{\bfn,t}.
    \end{equation*}
    We estimate the cardinality of $J_{\bfn,t}$ with a covering argument, already used before. 
    Set 
    \begin{align*}
        U_{\bfn,t}&:=\bigcup\limits_{\bfp\in J_{\bfn,t}}B_\bfn(\bfp),\\
        V_{\bfn,t}&:=\left\{\bfx\in\Omega\,\bigg\vert\,\mathrm{dist}(\bfx,\partial\Omega_t)<\frac{1}{\min_i n_i}\right\},
    \end{align*}
    and, by construction, $U_{\bfn,t}\subset V_{\bfn,t}$ for every $(\bfn,t)$. Moreover, the sets $\{V_{\bfn,t}\}_\bfn$ are decreasing and 
    \begin{equation*}
        \bigcap_{\bfn}V_{\bfn,t}=\partial\Omega_t\cap\Omega.
    \end{equation*}
    Therefore, since $\overline{\Omega}_t$ is bounded, by outer regularity,
    \begin{equation*}
    0\leq\limsup_\bfn\Ld(U_{\bfn,t})\leq\lim_\bfn\Ld(V_{\bfn,t})=\Ld(\partial\Omega_t\cap\Omega)=0.
    \end{equation*}
    In particular, for a fixed $t$, there exists $\bfn_t$ such that, for every $\bfn\geq\bfn_t$,
    \begin{equation*}
        \Ld(U_{\bfn,t})\leq\Ld(\Omega\setminus\Omega_t).
    \end{equation*}
    Putting all this info together and using item (i), we get
    \begin{align*}
        \#J_{\bfn,t}&=\sum\limits_{\bfp\in J_{\bfn,t}}1\\
            &=\sum\limits_{\bfp\in J_{\bfn,t}}2^{-d}N(\bfn)\Ld(B_\bfn(\bfp))\\
            &\leq2^d2^{-d}N(\bfn)\Ld(U_{\bfn,t})\\
            &\leq\frac1cd_\bfn^{\Omega}\Ld(\Omega\setminus\Omega_t),
    \end{align*}
    for every $\bfn\geq\bfn_t$. On the other hand,
    \begin{align*}
        \#\Theta_{\bfn,\,\Omega\setminus\overline{\Omega}_t}=d_\bfn^{\Omega\setminus\overline{\Omega}_t}&\leq N(\bfn)\Ld(\Omega\setminus\overline{\Omega}_t)\\
        &\leq\frac1c d_\bfn^{\Omega}\Ld(\Omega\setminus\Omega_t),
    \end{align*}
    where we used again that $\Ld(\partial\Omega_t)=0$. 
    
    We just proved that, for every $\bfn\geq\bfn_t$, it holds
    \begin{equation*}
        d_\bfn^{\Omega}-d_\bfn^{\Omega_t}=\#I_{\bfn,t}\leq\frac2c d_\bfn^{\Omega}\Ld(\Omega\setminus\Omega_t),
    \end{equation*}
    which is precisely item (ii), with $c'=2/c$ (Note that $c'$ does not depend on $t$, since all the dependence is encoded in the choice of $\bfn_t$). 

    Recalling the proof of Theorem \ref{spectra_reduced_GLT}, we already know that item (iii) holds if $\Omega$ is bounded. Fix any $\varepsilon>0$, and pick $t$ such that $\Ld(\Omega\setminus\Omega_t)\leq\varepsilon.$
    Applying item (ii) and item (iii) to $\Omega_t$, there exists $\bfn_t$ such that, for every $\bfn\geq\bfn_t$, we have
    \begin{align*}
          \frac{d_\bfn^{\Omega}-d_\bfn^{\Omega_t}}{N(\bfn)}&\leq\Ld(\Omega)\frac{d_\bfn^{\Omega}-d_\bfn^{\Omega_t}}{d_\bfn^{\Omega}}\\
          &\leq c'\Ld(\Omega)\Ld(\Omega\setminus\Omega_t)\\
          &\leq c'\Ld(\Omega)\varepsilon,
    \end{align*}
    and also
    \begin{equation*}
        \bigg\vert\frac{d_\bfn^{\Omega_t}}{N(\bfn)}-\Ld(\Omega_t)\bigg\vert\leq\varepsilon.
    \end{equation*}
    As a consequence,
    \begin{equation*}
        \bigg\vert\frac{d_\bfn^{\Omega}}{N(\bfn)}-\Ld(\Omega)\bigg\vert\leq\frac{d_\bfn^{\Omega}-d_\bfn^{\Omega_t}}{N(\bfn)}+\bigg\vert\frac{d_\bfn^{\Omega_t}}{N(\bfn)}-\Ld(\Omega_t)\bigg\vert+\Ld(\Omega\setminus\Omega_t)\leq \big(c'\Ld(\Omega)+2\big)\varepsilon,
    \end{equation*}
    which proves item (iii), by the arbitrariness of $\varepsilon>0$.
\end{proof}

\begin{defn}[Unbounded GLT sequences]\label{def_UGLT}
    Let $\Omega$ be an open domain, such that $\Ld(\Omega)<\infty$, $\Ld(\partial\Omega)=0$.
    Consider the algebra $\mathcal{M}_{\Omega}$ of all matrix-sequences of size $\{d_\bfn^{\Omega}\}_\bfn$. An element $\{A_\bfn\}_\bfn\in\mathcal{M}_\Omega$ is called an unbounded GLT over $\Omega$ if there exists a regular exhaustion $\{\Omega_t\}_t$ of $\Omega$ and a sequence of matrix-sequences $\left\{\{B_{\bfn,t}\}_\bfn\right\}_t$ of size $\left\{\{d_\bfn^{\Omega_t}\}_\bfn\right\}_t$, such that
    \begin{itemize}
        \item[(i)] $\forall t$, $\{B_{\bfn,t}\}_\bfn\in\mathcal{G}_{\Omega_t}$, with GLT symbol $f_t:\Omega_t\times[-\pi,\pi]^d\to\mathbb{C}$;
        \item[(ii)] $\left\{\{B_{\bfn,t}\}_\bfn\right\}_t$ is a $\mathrm{g.a.c.s.}$ for $\{A_\bfn\}_\bfn$, with structure matrix-sequences given by $\left\{\left\{\tilde{\Pi}^{T}_{\bfn,t}\right\}_\bfn\right\}_t$ and $\left\{\left\{\tilde{\Pi}_{\bfn,t}\right\}_\bfn\right\}_t$, as in Remark \ref{rem_obs_ext_operator};
        \item[(iii)] $\forall t<t'$, $f_t=f_{t'}$, almost everywhere in  $\Omega_t\times[-\pi,\pi]^d$.
    \end{itemize}
    The sequence $\{\{B_{\bfn,t}\}_\bfn\}_t$ is regarded as an approximating sequence for $\{A_\bfn\}_\bfn$, associated with the exhaustion $\{\Omega_t\}_t$.
\end{defn}
\begin{rem}[Identity sequence]
    Given an open domain $\Omega$, such that $\Ld(\Omega)<+\infty$, and $\Ld(\partial\Omega)=0$, the identity matrix-sequence $\{I_\bfn\}_\bfn\in\mathcal{M}_\Omega$ is an unbounded GLT.  
\end{rem}
\begin{proof}
    For every open set $D$, throughout the proof, denote by $\{I_\bfn^{D}\}_\bfn$ the identity matrix-sequence in $\mathcal{M}_D$. 
    Consider a regular exhaustion $\{\Omega_t\}_t$ of $\Omega$. By Remark \ref{rem_obs_ext_operator}, for every $t$, we can write
    \begin{equation*}
        \{I_\bfn^{\Omega}\}_\bfn=\mathcal{E}_{\Omega_t,\Omega}(\{I_\bfn^{\Omega_t}\}_\bfn)+\{S_{\bfn,t}\}_\bfn,
    \end{equation*}
    with $\mathrm{rank}(S_{\bfn,t})\leq 2(d_\bfn^{\Omega}-d_\bfn^{\Omega_t})$. 
    
    Note that, for every $t$, we have
    \begin{equation*}
        \{I_\bfn^{\Omega_t}\}_\bfn\sim_{\mathrm{GLT}}^{\Omega_t}1,
    \end{equation*}
    so that, by item (ii) of Lemma \ref{dimensions}, we conclude that $\{\{I_\bfn^{\Omega_t}\}_\bfn\}_t$ is a g.a.c.s. for $\{I_\bfn\}_\bfn$, which concludes the proof.
\end{proof}

\subsection{Uniqueness of the canonical uGLT symbol} This section is devoted to the unveiling of Definition \ref{def_UGLT}. More precisely, we prove that any approximation of $\{A_\bfn\}_\bfn$ in the sense of Definition \ref{def_UGLT} produces the same symbol, which we call the canonical symbol function or the unbounded GLT symbol of the sequence $\{A_\bfn\}_\bfn$.

\begin{rem}\label{rem_unboundedGLT_distribution}
    In view of Definition \ref{def_UGLT}, if $\{A_\bfn\}_\bfn$ is an unbounded GLT over a domain $\Omega$, then a direct application of Theorem \ref{gacs_sing_val} shows that $\{A_n\}_n \sim_{\sigma} (f,\Omega\times[-\pi,\pi]^d)$, where $f=f_t$ almost everywhere on $\Omega_t$, for every $t$ (and similarly for the eigenvalue distribution, if $\{A_\bfn\}_\bfn$ is Hermitian).

    The symbol $f$ seems to depend highly on the choice of the exhaustion and of the approximating sequence. Indeed, this is not the case, as we show in the following pair of results. In particular, we also see that the selected symbol $f$ carries a lot of the asymptotic structure of the sequence $\{A_\bfn\}_\bfn$ and behaves well under algebra operations.
\end{rem}
\begin{thm}\label{RGLT_equiv_UGLT_bounded}
    Let $\Omega$ be an open \textbf{bounded} domain, such that $\Ld(\partial\Omega)=0$. Then, a matrix-sequence $\{A_\bfn\}_\bfn\in\mathcal{M}_\Omega$ is a reduced GLT if and only if it is an unbounded GLT over $\Omega$.

    Moreover, if $\{A_\bfn\}_\bfn\in\mathcal{G}_\Omega$, then any choice of regular exhaustion of $\Omega$ and of $\mathrm{g.a.c.s.}$ for $\{A_\bfn\}_\bfn$ as in Definition \ref{def_UGLT} computes the reduced GLT symbol of $\{A_\bfn\}_\bfn$.

\end{thm}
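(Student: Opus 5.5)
We prove the two implications separately and then the symbol statement.

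\emph{Reduced $\Rightarrow$ unbounded.} Let $\{A_\bfn\}_\bfn\in\mathcal{G}_\Omega$ with reduced symbol $f$. Take an exhaustion of $\Omega$ (for instance the one of Example \ref{exmp:reg_exhaustion}) and set $B_{\bfn,t}:=R_{\bfn,\Omega,\Omega_t}(A_\bfn)$. By Remark \ref{rem_restr_btw_domains}, $\{B_{\bfn,t}\}_\bfn\in\mathcal{G}_{\Omega_t}$. By the compatibility of symbols under restriction (Corollary \ref{cor_restr_ext_standard_GLT_symbol} together with the factorization in Remark \ref{well-posedness}), its symbol is $f_t=f|_{\Omega_t\times[-\pi,\pi]^d}$. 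This gives item (iii) of Definition \ref{def_UGLT}.

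Identity \eqref{restr_ext_identity} of Remark \ref{rem_obs_ext_operator} gives
\[
A_\bfn=\tilde{\Pi}_{\bfn,t}^T\bigl(B_{\bfn,t}\oplus 0\bigr)\tilde{\Pi}_{\bfn,t}+S_{\bfn,t},\qquad \mathrm{rank}(S_{\bfn,t})\le 2\bigl(d_\bfn^\Omega-d_\bfn^{\Omega_t}\bigr).
\]
Lemma \ref{dimensions}(ii) bounds both the size defect and this rank by $c'\Ld(\Omega\setminus\Omega_t)\,d_\bfn^\Omega$, which tends to $0$ as $t\to\infty$. So we have a g.a.c.s.\ with the prescribed structure matrices and $N_{\bfn,t}=0$.

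\emph{Unbounded $\Rightarrow$ reduced.} Let $\{A_\bfn\}_\bfn$ be an unbounded GLT, with exhaustion $\{\Omega_t\}_t$ and approximants $\{B_{\bfn,t}\}_\bfn\in\mathcal{G}_{\Omega_t}$ with symbols $f_t$. Set $C_{\bfn,t}:=E_{\bfn,\Omega_t,\Omega}(B_{\bfn,t})=\tilde{\Pi}_{\bfn,t}^T(B_{\bfn,t}\oplus0)\tilde{\Pi}_{\bfn,t}$.

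By Remark \ref{rem_restr_btw_domains} (the domains are bounded here), $\{C_{\bfn,t}\}_\bfn\in\mathcal{G}_\Omega$ with symbol $f_t^E$, the extension by zero. The g.a.c.s.\ condition in Definition \ref{def_UGLT}(ii) reads exactly $A_\bfn=C_{\bfn,t}+S_{\bfn,t}+N_{\bfn,t}$ with small rank and norm parts. Hence $\{\{C_{\bfn,t}\}_\bfn\}_t$ is a genuine a.c.s.\ for $\{A_\bfn\}_\bfn$ in $\mathcal{M}_\Omega$.

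Next, item (iii) of Definition \ref{def_UGLT} lets us glue the $f_t$ into one measurable $f$ on $\Omega\times[-\pi,\pi]^d$ with $f=f_t$ a.e.\ on $\Omega_t\times[-\pi,\pi]^d$. Then $f_t^E\to f$ in measure, since they differ only on $(\Omega\setminus\Omega_t)\times[-\pi,\pi]^d$, whose measure tends to $0$.

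It remains to show $\mathcal{G}_\Omega$ is closed under a.c.s.\ limits with symbols converging in measure. This is the main obstacle, the analogue of \ref{GLT5} for reduced GLT. I would deduce it from Theorem \ref{isometry_red_GLT}:
\begin{itemize}
\item By surjectivity, pick $\{A'_\bfn\}_\bfn\in\mathcal{G}_\Omega$ with symbol $f$.
\item By the isometry, $d_{a.c.s.}(\{C_{\bfn,t}\},\{A'_\bfn\})=d_{m,\Omega}(f_t^E,f)\to0$.
\item Also $d_{a.c.s.}(\{C_{\bfn,t}\},\{A_\bfn\})\to0$ by Theorem \ref{completeness_acs_conv}.
\item The triangle inequality gives $d_{a.c.s.}(\{A_\bfn\},\{A'_\bfn\})=0$, i.e.\ $\{A_\bfn-A'_\bfn\}_\bfn\sim_\sigma 0$.
\end{itemize}
Finally, zero-distributed sequences lie in $\mathcal{G}_\Omega$ with symbol $0$: extend one to $Q_{\bfy,l}$, where it remains zero-distributed by Lemma \ref{acs_cts_operators} and is GLT by \ref{GLT2}, then restrict back. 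So $\{A_\bfn\}_\bfn=\{A'_\bfn\}_\bfn+\{A_\bfn-A'_\bfn\}_\bfn\in\mathcal{G}_\Omega$ with reduced symbol $f$. This proves the reverse implication.

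\emph{Symbol statement.} The same argument applies to an arbitrary exhaustion and g.a.c.s.\ for a given $\{A_\bfn\}_\bfn\in\mathcal{G}_\Omega$. The glued symbol $f$ is its reduced symbol, since uniqueness of the reduced symbol (the isometry of Theorem \ref{isometry_red_GLT}) forces any two such symbols to agree a.e. In other words, every admissible choice computes the reduced GLT symbol.
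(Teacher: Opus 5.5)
Your proof is correct. The forward direction (restrict to $\Omega_t$, apply \eqref{restr_ext_identity} and Lemma \ref{dimensions}(ii)) is exactly the paper's Step 1, but the reverse direction takes a different route.

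\textbf{The paper's route.} It extends the whole g.a.c.s.\ identity from $\Omega$ to a hypercube $Q_{\bfy,l}\supset\Omega$ via $\mathcal{E}_{\Omega,\bfy,l}$. It then observes that $\{\mathcal{E}_{\Omega_t,\bfy,l}(\{B_{\bfn,t}\}_\bfn)\}_t$ is an a.c.s.\ for $\mathcal{E}_{\Omega,\bfy,l}(\{A_\bfn\}_\bfn)$, because extension does not increase rank or norm and $d_\bfn^\Omega$, $N(l\bfn)$ grow at the same rate. Next it invokes the a.c.s.\ closure of the classical algebra $\mathcal{G}_{\bfy,l}$ (Theorem \ref{equiv_GLT_meas}, \ref{GLT5}). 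Finally it comes back to $\Omega$ using $\mathcal{R}_{\bfy,l,\Omega}\circ\mathcal{E}_{\Omega,\bfy,l}=\mathrm{Id}$.

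\textbf{Your route.} You stay inside $\mathcal{M}_\Omega$. Extending only from $\Omega_t$ to $\Omega$ turns the g.a.c.s.\ literally into an a.c.s., with no change of dimension sequence. You then prove a reduced analogue of \ref{GLT5}: it follows from Theorem \ref{isometry_red_GLT} (surjectivity, isometry, triangle inequality) together with the observation that zero-distributed sequences lie in $\mathcal{G}_\Omega$.

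\textbf{What each buys.} The paper's route is shorter and needs only the classical closure property. Yours relies on the heavier reduced isometry theorem, whose own proof passes through the hypercube anyway. In exchange, it yields a reusable closure statement for $\mathcal{G}_\Omega$. Through $\{A_\bfn\}_\bfn=\{A'_\bfn\}_\bfn+\{A_\bfn-A'_\bfn\}_\bfn$, it also identifies the reduced symbol as the glued $f$ for an \emph{arbitrary} exhaustion and g.a.c.s. This makes the ``Moreover'' claim explicit. By contrast, the paper's Step 1 only treats the canonical approximants $\mathcal{R}_{\Omega,\Omega_t}(\{A_\bfn\}_\bfn)$. The general case is left implicit in its Step 2, where it follows by tracking symbols through \ref{GLT5} on $Q_{\bfy,l}$.
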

\begin{proof}
    \noindent{\textbf{Step 1.}} We prove that any reduced GLT $\{A_\bfn\}_\bfn\in\mathcal{G}_\Omega$ is an unbounded GLT over $\Omega$. As a byproduct, we obtain that any choice of regular exhaustion and of definining sequence for $\{A_\bfn\}_\bfn$ computes the reduced GLT symbol.

    Indeed, consider $\{A_\bfn\}_\bfn\in\mathcal{G}_\Omega$, with associated symbol $f:\Omega\times[-\pi,\pi]^d\to\mathbb{C}$, and consider a regular exhaustion $\{\Omega_t\}_t$ of $\Omega$. 

    For every $t$, define
    \begin{equation*}
        \{B_{\bfn,t}\}_\bfn:=\mathcal{R}_{\Omega,\Omega_t}(\{A_\bfn\}_\bfn).
    \end{equation*}
    Then, $\{B_{\bfn,t}\}_\bfn\in\mathcal{G}_{\Omega_t}$, for every $t$, with canonical symbol $f_t:=f|_{\Omega_t\times[-\pi,\pi]^d}$. In particular, for every $t<s$, it trivially holds $f_t=f_s$ almost everywhere in $\Omega_t\times[-\pi,\pi]^d$. Therefore, if we prove that $\left\{\{B_{\bfn,t}\}_\bfn\right\}_t$ is a $\mathrm{g.a.c.s.}$ for $\{A_\bfn\}_\bfn$, then $\{A_\bfn\}_\bfn$ is an unbounded GLT over $\Omega$.
    
    Now, using Equation \eqref{restr_ext_identity} in Remark \ref{rem_obs_ext_operator}, for every $(\bfn,t)$, there exist square matrices $\tilde{\Pi}_{\bfn,t},S_{\bfn,t}$ of size $d_\bfn^{\Omega}$, such that 
    \begin{equation}\label{gacs_red_unb}
        A_\bfn=\tilde{\Pi}_{\bfn,t}^{T}\left(B_{\bfn,t}\oplus0_{\bfn,t}\right)\tilde{\Pi}_{\bfn,t}+S_{\bfn,t},
    \end{equation}
    where, for every $(\bfn,t)$, $\tilde{\Pi}_{\bfn,t}$ is a permutation, $\mathrm{rank}(S_{\bfn,t})\leq2(d_\bfn^{\Omega}-d_\bfn^{\Omega_t})$ and $0_{\bfn,t}$ is the zero matrix of size $d_\bfn^{\Omega}-d_\bfn^{\Omega_t}$.
    Therefore, we can define 
    \begin{equation*}
        m(t):=c'\Ld(\Omega\setminus\Omega_t),\qquad \omega(t):=0,\qquad c(t):=2c'\Ld(\Omega\setminus\Omega_t),
    \end{equation*}
    and item (ii) of Lemma \ref{dimensions} implies that the decomposition in Equation \eqref{gacs_red_unb} provides that $\left\{\{B_{\bfn,t}\}_\bfn\right\}_t$ is a $\mathrm{g.a.c.s.}$ for $\{A_\bfn\}_\bfn$, with the correct structure matrix-sequences, as desired. 

    \noindent{\textbf{Step 2.}} We prove that any unbounded GLT $\{A_\bfn\}_\bfn\in\mathcal{M}_\Omega$ is a reduced GLT over $\Omega$. 
    
    As a matter of fact, consider an unbounded GLT $\{A_\bfn\}_\bfn$ over $\Omega$, and consider its defining exhaustion $\{\Omega_t\}_t$, with corresponding approximating sequence $\left\{\{B_{\bfn,t}\}_\bfn\right\}_t$. Since $\Omega$ is bounded, choose a hypercube $Q_{\bfy,l}$ such that $\Omega\subset Q_{\bfy,l}$, and note that $\{A_\bfn\}_\bfn\in\mathcal{M}_{\Omega}$, so we can apply to it the extension operator $\mathcal{E}_{\Omega,\bfy,l}:\mathcal{M}_\Omega\to\mathcal{M}_{\bfy,l}$.
    Write 
    \begin{equation}\label{identity_1}
        \{A_\bfn\}_\bfn=\{\tilde{\Pi}_{\bfn,t}^{T}\left(B_{\bfn,t}\oplus0_{\bfn,t}\right)\tilde{\Pi}_{\bfn,t}\}_\bfn+\{N_{\bfn,t}\}_\bfn+\{S_{\bfn,t}\}_\bfn,
    \end{equation}
    where $\{N_{\bfn,t}\}_\bfn$ are the norm corrections and $\{S_{\bfn,t}\}_\bfn$ are the rank corrections.
    
    Since the structure matrix-sequences of the $\mathrm{g.a.c.s.}$ are compatible with restrictions and extensions, applying $\mathcal{E}_{\Omega,\bfy,l}$ to Equation \eqref{identity_1}, we get
    \begin{align*}
        \mathcal{E}_{\Omega,\bfy,l}(\{A_\bfn\}_\bfn)&= \mathcal{E}_{\Omega,\bfy,l}(\{\tilde{\Pi}_{\bfn,t}^{T}\left(B_{\bfn,t}\oplus0_{\bfn,t}\right)\tilde{\Pi}_{\bfn,t}\}_\bfn)+\mathcal{E}_{\Omega,\bfy,l}(\{N_{\bfn,t}\}_\bfn)+\mathcal{E}_{\Omega,\bfy,l}(\{S_{\bfn,t}\}_\bfn)\\
        &=\mathcal{E}_{\Omega_t,\bfy,l}(\{B_{\bfn,t}\}_\bfn)+\mathcal{E}_{\Omega,\bfy,l}(\{N_{\bfn,t}\}_\bfn)+\mathcal{E}_{\Omega,\bfy,l}(\{S_{\bfn,t}\}_\bfn).
    \end{align*}
    In addition, the extension operator $\mathcal{E}_{\Omega,\bfy,l}$ does not increase the norm nor the rank of any element of the matrix-sequence and the sequences of dimensions $\{N(l\bfn)\}_\bfn$ and $\{d_\bfn^\Omega\}_\bfn$ have the same rate of divergence (recall the proof of Theorem \ref{spectra_reduced_GLT}). As a consequence, this proves that $\left\{\{\mathcal{E}_{\Omega_t,\bfy,l}(\{B_{\bfn,t}\}_\bfn)\}_\bfn\right\}_t$ is an $\mathrm{a.c.s.}$ for $\{\mathcal{E}_{\Omega,\bfy,l}(\{A_\bfn\}_\bfn)\}_\bfn$.

    As $\left\{\{B_{\bfn,t}\}_\bfn\right\}_t$ are reduced GLT, the approximating sequences $\{\mathcal{E}_{\Omega_t,\bfy,l}(\{B_{\bfn,t}\}_\bfn)\}_t$ are regular GLT over $Q_{\bfy,l}$. Since the GLT algebra $\mathcal{G}_{\bfy,l}$ is closed under the $\mathrm{a.c.s.}$ convergence (see Theorem \ref{equiv_GLT_meas}), it follows that $\mathcal{E}_{\Omega,\bfy,l}(\{A_\bfn\}_\bfn)\in\mathcal{G}_{\bfy,l}$ and 
    \begin{equation*}
        \{A_\bfn\}_\bfn=\left(\mathcal{R}_{\bfy,l,\Omega}\circ\mathcal{E}_{\Omega,\bfy,l}\right)(\{A_\bfn\}_\bfn)
    \end{equation*}
    is a reduced GLT.
\end{proof}
    If we have two open domains $\Omega_1$ and $\Omega_2$, such that $\Ld(\Omega_i)<\infty$ and $\Ld(\partial\Omega_i)=0$, for $i=1,2$, there are two natural maps between $\mathcal{M}_{\Omega_1}\to\mathcal{M}_{\Omega_2}$. More precisely, we can extend the matrix-sequences to $\Omega_1\cup\Omega_2$ and then restricting them to $\Omega_2$, or we can restrict them to $\Omega_1\cap\Omega_2$, and then extend them to $\Omega_2$. Indeed, as we see in the following lemma, these two natural operations commute.
\begin{lem}\label{lem:auxiliary_lem1}
    Consider two open domains $\Omega_1$ and $\Omega_2$, such that $\Ld(\Omega_i)<\infty$, and $\Ld(\partial\Omega_i)=0$, for $i=1,2$. Suppose that $\Omega_1\cap\Omega_2\neq\emptyset$. Then the following diagram commutes:
    \[ \begin{tikzcd}
\mathcal{M}_{\Omega_1} \arrow{r}{\mathcal{E}_{\Omega_1,\Omega_1\cup\Omega_2}}\arrow[swap]{d}{\mathcal{R}_{\Omega_1,\Omega_1\cap\Omega_2}} & \mathcal{M}_{\Omega_1\cup\Omega_2} \arrow{d}{\mathcal{R}_{\Omega_1\cup\Omega_2,\Omega_2}} \\%
\mathcal{M}_{\Omega_1\cap\Omega_2} \arrow{r}{\mathcal{E}_{\Omega_1\cap\Omega_2,\Omega_2}}& \mathcal{M}_{\Omega_2}
\end{tikzcd}
\]
\end{lem}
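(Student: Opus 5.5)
The plan is to verify the identity matrix by matrix: fix $\bfn$ and show that both compositions send every $A_\bfn\in\mathbb{C}^{d_\bfn^{\Omega_1}\times d_\bfn^{\Omega_1}}$ to the same matrix of size $d_\bfn^{\Omega_2}$. All operators have the form $X\mapsto\Pi X\Pi^T$ or $X\mapsto\Pi^TX\Pi$, with $\Pi$ a $0/1$ selection matrix induced by an inclusion of grids. So each composition is $X\mapsto M X M^T$ for a single rectangular matrix $M$, and it suffices to show the two matrices $M$ coincide.

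First, the grids must nest correctly. Since $\Omega_1\cap\Omega_2\subset\Omega_i\subset\Omega_1\cup\Omega_2$, the definition \eqref{def_grid} gives
\[
\Theta_{\bfn,\Omega_1\cap\Omega_2}\subset\Theta_{\bfn,\Omega_i}\subset\Theta_{\bfn,\Omega_1\cup\Omega_2}.
\]
In fact $\Theta_{\bfn,\Omega_1\cap\Omega_2}=\Theta_{\bfn,\Omega_1}\cap\Theta_{\bfn,\Omega_2}$, because $B_\bfn(\bfp)\subset\Omega_1\cap\Omega_2$ holds iff $B_\bfn(\bfp)\subset\Omega_1$ and $B_\bfn(\bfp)\subset\Omega_2$. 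This set equality is the key point. By contrast, $\Theta_{\bfn,\Omega_1\cup\Omega_2}$ may strictly contain $\Theta_{\bfn,\Omega_1}\cup\Theta_{\bfn,\Omega_2}$, but this will not matter. Finiteness of all the $d_\bfn$ holds since all four sets have finite measure, so the operators are defined as in Remark \ref{rem_restr_btw_domains}. One should also check the regularity hypotheses on $\Omega_1\cup\Omega_2$ and $\Omega_1\cap\Omega_2$: both $\partial(\Omega_1\cup\Omega_2)$ and $\partial(\Omega_1\cap\Omega_2)$ lie in $\partial\Omega_1\cup\partial\Omega_2$, hence are null.

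Next, compute entrywise. Index rows by $\bfi\in\Theta_{\bfn,\Omega_2}$ and columns by $\bfj\in\Theta_{\bfn,\Omega_1}$. The top route gives $M=\Pi_{\Omega_1\cup\Omega_2,\Omega_2}\Pi_{\Omega_1\cup\Omega_2,\Omega_1}^T$, so
\[
M_{\bfi,\bfj}=\sum_{\bfk\in\Theta_{\bfn,\Omega_1\cup\Omega_2}}\delta_{\bfi,\bfk}\delta_{\bfk,\bfj}=\delta_{\bfi,\bfj}.
\]
This is nonzero exactly when $\bfi=\bfj\in\Theta_{\bfn,\Omega_1}\cap\Theta_{\bfn,\Omega_2}$. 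The bottom route gives $M'=\Pi_{\Omega_2,\Omega_1\cap\Omega_2}^T\Pi_{\Omega_1,\Omega_1\cap\Omega_2}$, so
\[
M'_{\bfi,\bfj}=\sum_{\bfk\in\Theta_{\bfn,\Omega_1\cap\Omega_2}}\delta_{\bfk,\bfi}\delta_{\bfk,\bfj}.
\]
This equals $1$ iff $\bfi=\bfj\in\Theta_{\bfn,\Omega_1\cap\Omega_2}$. By the set equality above, $M=M'$. Hence
\[
R_\bfn(E_\bfn(A_\bfn))=MA_\bfn M^T=M'A_\bfn M'^T=E_\bfn(R_\bfn(A_\bfn))
\]
for every $\bfn$, and the diagram commutes on $\mathcal{M}_{\Omega_1}$.

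The main obstacle is only bookkeeping: keeping track of the index sets with their lexicographic orderings, and confirming $\Theta_{\bfn,\Omega_1\cap\Omega_2}=\Theta_{\bfn,\Omega_1}\cap\Theta_{\bfn,\Omega_2}$. Nothing asymptotic is involved, and the hypothesis $\Omega_1\cap\Omega_2\neq\emptyset$ only guarantees that the intermediate space is nontrivial for large $\bfn$.
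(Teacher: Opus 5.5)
Your proof is correct and follows the paper's route: both reduce commutativity to equality of the two composed selection matrices and check it entrywise with Kronecker deltas. Your version is slightly cleaner in two ways. You state explicitly the key identity $\Theta_{\bfn,\Omega_1\cap\Omega_2}=\Theta_{\bfn,\Omega_1}\cap\Theta_{\bfn,\Omega_2}$, which the paper uses only implicitly. You also place the transposes in the bottom-route product consistently, whereas the paper's displayed version has mismatched dimensions.
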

\begin{proof}
    For every $\bfn$, consider the matrix-sequences defining the restriction and extension operators $\{\Pi_\bfn\}_\bfn$. Throughout this proof, we write explicitly the dependence on the domains. 
    
    With careful computation, the Lemma is just a consequence of the definition of the operators involved. Indeed, fix $\bfn$ and, for every $A_\bfn\in\mathbb{C}^{d_\bfn^{\Omega_1}\times d_\bfn^{\Omega_1}}$, we have 
    \begin{align*}
       R_{\bfn,\Omega_1\cup\Omega_2,\Omega_2}\circ E_{\bfn,\Omega_1,\Omega_1\cup\Omega_2}(A_\bfn)&=\Pi_{\bfn,\Omega_1\cup\Omega_2,\Omega_2}\Pi_{\bfn,\Omega_1\cup\Omega_2,\Omega_1}^T A_{\bfn}\Pi_{\bfn,\Omega_1\cup\Omega_2,\Omega_1}\Pi_{\bfn,\Omega_1\cup\Omega_2,\Omega_2}^T\\
       &=\left(\Pi_{\bfn,\Omega_1\cup\Omega_2,\Omega_1}\Pi_{\bfn,\Omega_1\cup\Omega_2,\Omega_2}^T\right)^TA_\bfn\left(\Pi_{\bfn,\Omega_1\cup\Omega_2,\Omega_1}\Pi_{\bfn,\Omega_1\cup\Omega_2,\Omega_2}^T\right),\\
        E_{\bfn,\Omega_1\cap\Omega_2,\Omega_2}\circ R_{\bfn,\Omega_1,\Omega_1\cap\Omega_2}(A_\bfn)&=\Pi_{\bfn,\Omega_2,\Omega_1\cap\Omega_2}\Pi_{\bfn,\Omega_1,\Omega_1\cap\Omega_2}^T A_{\bfn}\Pi_{\bfn,\Omega_1,\Omega_1\cap\Omega_2}\Pi_{\bfn,\Omega_2,\Omega_1\cap\Omega_2}^T\\
        &=\left(\Pi_{\bfn,\Omega_1,\Omega_1\cap\Omega_2}\Pi_{\bfn,\Omega_2,\Omega_1\cap\Omega_2}^T\right)^TA_\bfn\left(\Pi_{\bfn,\Omega_1,\Omega_1\cap\Omega_2}\Pi_{\bfn,\Omega_2,\Omega_1\cap\Omega_2}^T\right).
    \end{align*}
    Therefore, the thesis reduces to prove that 
    \begin{equation}\label{quella_equation}
    \Pi_{\bfn,\Omega_1\cup\Omega_2,\Omega_1}\Pi_{\bfn,\Omega_1\cup\Omega_2,\Omega_2}^T=\Pi_{\bfn,\Omega_1,\Omega_1\cap\Omega_2}\Pi_{\bfn,\Omega_2,\Omega_1\cap\Omega_2}^T.
    \end{equation}
    For $\bfi\in\Theta_{\bfn,\Omega_1}$ and $\bfj\in\Theta_{\bfn,\Omega_2}$, the right hand side of the above equation is
    \begin{align}\label{large_formula}
        \left(\Pi_{\bfn,\Omega_1\cup\Omega_2,\Omega_1}\Pi_{\bfn,\Omega_1\cup\Omega_2,\Omega_2}^T\right)_{\bfi,\bfj}&=\sum_{\bfk\in\Theta_{\Omega_1\cup\Omega_2}}\delta_{\bfi,\bfk}\delta_{\bfk,\bfj}.
    \end{align}
    Then, note that $\delta_{\bfi,\bfk}\delta_{\bfk,\bfj}$ is non-zero if and only if $\bfi=\bfk=\bfj$. In particular, if $\bfi$ or $\bfj$ are not in $\Theta_{\Omega_1\cap\Omega_2}$, all the terms in the sum are zero. On the other hand, if $\bfi,\bfj\in\Theta_{\Omega_1\cap\Omega_2}$, it is not restrictive to sum only on the indices $\bfk\in\Theta_{\Omega_1\cap\Omega_2}$. Carrying on from \eqref{large_formula}, we get
    \begin{align*}
        \sum_{\bfk\in\Theta_{\Omega_1\cup\Omega_2}}\delta_{\bfi,\bfk}\delta_{\bfk,\bfj}&=\sum_{\bfk\in\Theta_{\Omega_1\cap\Omega_2}}\delta_{\bfi,\bfk}\delta_{\bfk,\bfj}\\
        &=\left(\Pi_{\bfn,\Omega_1,\Omega_1\cap\Omega_2}\Pi_{\bfn,\Omega_2,\Omega_1\cap\Omega_2}^T\right)_{\bfi,\bfj},
    \end{align*}
    which is precisely \eqref{quella_equation}. Since this holds for all $\bfi,\bfj$, and for all $\bfn$, the thesis follows.
\end{proof}

The next result shows that the property of being an unbounded GLT is preserved by the restriction operators, and is compatible with taking the corresponding restriction of the canonical symbols.
\begin{thm}[Permanence of spectral information]\label{perm_spectr_info}
    Let $\Omega$ be an open domain, such that $\Ld(\Omega)<\infty$, $\Ld(\partial\Omega)=0$. Consider an unbounded GLT $\{A_\bfn\}_\bfn\in\mathcal{M}_\Omega$. Then, for every open bounded domain $\Omega_0\subset\Omega$ such that $\Ld(\partial\Omega_0)=0$, it holds $\mathcal{R}_{\Omega,\Omega_0}(\{A_\bfn\}_\bfn)\in\mathcal{G}_{\Omega_0}$.
    
    Additionally, let $f:\Omega\times[-\pi,\pi]^d\to\mathbb{C}$ be any symbol for $\{A_\bfn\}_\bfn$, computed via an approximating sequence defining $\{A_\bfn\}_\bfn$. Then
    \begin{equation*}
        \mathcal{R}_{\Omega,\Omega_0}(\{A_\bfn\}_\bfn)\sim_{GLT}^{\Omega_0}f|_{\Omega_0\times[-\pi,\pi]^d}.
    \end{equation*}
\end{thm}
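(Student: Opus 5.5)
The plan is to transport the g.a.c.s. defining $\{A_\bfn\}_\bfn$ into an ordinary a.c.s. inside $\mathcal{M}_{\Omega_0}$ by applying $\mathcal{R}_{\Omega,\Omega_0}$, and then conclude with closedness of $\mathcal{G}_{\Omega_0}$ under a.c.s. limits. Fix an exhaustion $\{\Omega_t\}_t$ and an approximating sequence $\{\{B_{\bfn,t}\}_\bfn\}_t$, with symbols $f_t$, as in Definition \ref{def_UGLT}, and write
\begin{equation*}
A_\bfn=\tilde{\Pi}_{\bfn,t}^{T}\left(B_{\bfn,t}\oplus0_{\bfn,t}\right)\tilde{\Pi}_{\bfn,t}+S_{\bfn,t}+N_{\bfn,t}=\mathcal{E}_{\Omega_t,\Omega}(B_{\bfn,t})+S_{\bfn,t}+N_{\bfn,t},
\end{equation*}
where the second equality uses Remark \ref{rem_obs_ext_operator}.

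Applying $R_{\bfn,\Omega,\Omega_0}$ (which takes minors) gives
\begin{equation*}
R_{\bfn,\Omega,\Omega_0}(A_\bfn)=R_{\bfn,\Omega,\Omega_0}\bigl(E_{\bfn,\Omega_t,\Omega}(B_{\bfn,t})\bigr)+R(S_{\bfn,t})+R(N_{\bfn,t}).
\end{equation*}
By Lemma \ref{lem:auxiliary_lem1}, applied with $\Omega_1=\Omega_t$ and $\Omega_2=\Omega_0$ and with $\Omega_t\cup\Omega_0$ in the role of the union, the first term equals
\begin{equation*}
\mathcal{E}_{\Omega_t\cap\Omega_0,\Omega_0}\circ\mathcal{R}_{\Omega_t,\Omega_t\cap\Omega_0}(\{B_{\bfn,t}\}_\bfn),
\end{equation*}
after factoring the restriction through $\Omega_t\cup\Omega_0\subset\Omega$ as in Remark \ref{well-posedness}. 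We may assume $t$ is large enough that $\Omega_t\cap\Omega_0\neq\emptyset$. The set $\Omega_t\cap\Omega_0$ is bounded, open, and has null boundary, since $\partial(\Omega_t\cap\Omega_0)\subset\partial\Omega_t\cup\partial\Omega_0$. Hence, by Remark \ref{rem_restr_btw_domains} and Corollary \ref{cor_restr_ext_standard_GLT_symbol}, the sequence $\{C_{\bfn,t}\}_\bfn$ given by the first term lies in $\mathcal{G}_{\Omega_0}$, with symbol $(f_t|_{(\Omega_t\cap\Omega_0)\times[-\pi,\pi]^d})^E$, i.e. $f_t$ extended by zero to $\Omega_0\times[-\pi,\pi]^d$.

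The correction terms behave as follows. Taking a minor preserves the bound $\|R(N_{\bfn,t})\|\le\omega(t)$. For the rank term, $\mathrm{rank}(R(S_{\bfn,t}))\le c(t)d_\bfn^{\Omega}$, and Lemma \ref{dimensions}(iii) for both $\Omega$ and $\Omega_0$ gives $d^\Omega_\bfn/d^{\Omega_0}_\bfn\to\Ld(\Omega)/\Ld(\Omega_0)$. This yields a rank bound $C\,c(t)\,d_\bfn^{\Omega_0}$ with $C$ independent of $t$. This step is where the argument genuinely needs $\Ld(\Omega_0)>0$, which holds since $\Omega_0$ is a nonempty open set, so both dimension sequences diverge at comparable rates, as in Remark \ref{acs_same_structure}. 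Therefore $\{\{C_{\bfn,t}\}_\bfn\}_t$ is an a.c.s. for $\mathcal{R}_{\Omega,\Omega_0}(\{A_\bfn\}_\bfn)$.

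It remains to check convergence of the symbols. By item (iii) of Definition \ref{def_UGLT}, $f_t=f$ a.e. on $\Omega_t$, so $(f_t|_{\Omega_t\cap\Omega_0})^E$ differs from $f|_{\Omega_0}$ only on $(\Omega_0\setminus\Omega_t)\times[-\pi,\pi]^d$. The measure of this set tends to $0$, because the $\Omega_t$ increase to $\Omega\supset\Omega_0$ and $\Ld(\Omega_0)<\infty$. Hence the symbols converge to $f|_{\Omega_0\times[-\pi,\pi]^d}$ in measure, i.e. in $d_{m,\Omega_0}$.

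To conclude, embed in a hypercube $Q\supset\Omega_0$ via $\mathcal{E}_{\Omega_0,Q}$, which preserves norms and ranks and has comparable dimensions, exactly as in Step 2 of Theorem \ref{RGLT_equiv_UGLT_bounded}. There \ref{GLT5} gives that $\mathcal{E}(\mathcal{R}_{\Omega,\Omega_0}\{A_\bfn\})\in\mathcal{G}_Q$ with symbol $(f|_{\Omega_0})^E$. Restricting back to $\Omega_0$ then gives the claim, using Theorem \ref{spectra_reduced_GLT} and the identity $\mathcal{R}\circ\mathcal{E}=\mathrm{Id}$.

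The main obstacle is the commutation step. The restriction from $\Omega$ to $\Omega_0$ must be shown to factor correctly through $\Omega_t\cup\Omega_0$, so that Lemma \ref{lem:auxiliary_lem1} applies to domains of finite measure that need not be bounded. One also has to check that the structure permutations $\tilde{\Pi}_{\bfn,t}$ really coincide with $\mathcal{E}_{\Omega_t,\Omega}$.
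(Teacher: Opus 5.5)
Your proposal is correct and follows the paper's proof. It uses the same decomposition, the same restriction through $\Omega_t\cup\Omega_0$ via Lemma~\ref{lem:auxiliary_lem1}, and the same rank and norm control based on the comparability of $d_\bfn^{\Omega}$ and $d_\bfn^{\Omega_0}$. The only difference is packaging: the paper keeps the approximants on $\Omega_t\cap\Omega_0$ as a g.a.c.s., bounds the size gap with Lemma~\ref{dimensions}(ii), and invokes Theorem~\ref{RGLT_equiv_UGLT_bounded}, whereas you extend the approximants by zero to $\Omega_0$ to get an ordinary a.c.s. and inline the hypercube/\ref{GLT5} argument that is Step 2 of that theorem.
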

\begin{proof}
    Fix an unbounded GLT $\{A_\bfn\}_\bfn$ over $\Omega$, and consider a regular exhaustion $\{\Omega_t\}_t$, with corresponding approximating sequence $\left\{\{B_{\bfn,t}\}_\bfn\right\}_t$. In view of Remark \ref{rem_obs_ext_operator}, this explicitly means that there exist rank corrections $\{\{S_{\bfn,t}\}_\bfn\}_t$ and norm corrections $\{\{N_{\bfn,t}\}_\bfn\}_t$, such that, for every $t$, it holds 
    \begin{align*}
        \{A_\bfn\}_\bfn&=\mathcal{E}_{\Omega_t,\Omega}(\{B_{\bfn,t}\}_\bfn)+\{S_{\bfn,t}\}_\bfn+\{N_{\bfn,t}\}_\bfn,
    \end{align*}
    with appropriate estimates on the size difference, on the rank correction and on the norm correction in terms of fixed functions $m(t),c(t),\omega(t)$, respectively, going to $0$ as $t\to\infty$.
    
    Now, we apply the restriction operator $\mathcal{R}_{\Omega,\Omega_0}$ to previous equation, and use Lemma \ref{lem:auxiliary_lem1} together with the property that restrictions and extensions factor through intermediate domains, to get
    \begin{align*}
        \mathcal{R}_{\Omega,\Omega_0}(\{A_\bfn\}_\bfn)&=\mathcal{R}_{\Omega,\Omega_0}\circ\mathcal{E}_{\Omega_t,\Omega}(\{B_{\bfn,t}\}_\bfn)+\mathcal{R}_{\Omega,\Omega_0}(\{S_{\bfn,t}\}_\bfn)+\mathcal{R}_{\Omega,\Omega_0}(\{N_{\bfn,t}\}_\bfn)\\
         &=\mathcal{R}_{\Omega_t\cup\Omega_0,\Omega_0}\circ\mathcal{E}_{\Omega_t,\Omega_t\cup\Omega_0}(\{B_{\bfn,t}\}_\bfn)+\mathcal{R}_{\Omega,\Omega_0}(\{S_{\bfn,t}\}_\bfn)+\mathcal{R}_{\Omega,\Omega_0}(\{N_{\bfn,t}\}_\bfn)\\
        &=\mathcal{E}_{\Omega_t\cap\Omega_0,\Omega_0}(\mathcal{R}_{\Omega_t,\Omega_t\cap\Omega_0}(\{B_{\bfn,t}\}_\bfn))+\mathcal{R}_{\Omega,\Omega_0}(\{S_{\bfn,t}\}_\bfn)+\mathcal{R}_{\Omega,\Omega_0}(\{N_{\bfn,t}\}_\bfn).
    \end{align*}
    Fix $t$ and, since $\left\{\{B_{\bfn,t}\}_\bfn\right\}_t$ is a $\mathrm{g.a.c.s.}$ for $\{A_\bfn\}_\bfn$, for $\bfn>\bfn_t$, we have
    \begin{equation*}
        \mathrm{rank}(\mathcal{R}_{\Omega,\Omega_0}(S_{\bfn,t}))\leq\mathrm{rank}(S_{\bfn,t})\leq c(t)d_\bfn^{\Omega},
    \end{equation*}
    where $c(t)$ is a fixed function going to $0$, as $t\to\infty$. 
    
    In order to prove a g.a.c.s. result for the restricted sequence, we need to estimate the above ranks in terms of $d_\bfn^{\Omega_0}$ and not of $d_\bfn^{\Omega}$.
    
    Indeed, thanks to the preliminary discussion at the beginning of this section and to item (i) of Lemma \ref{dimensions}, for every admissible domain $\tilde{\Omega}$ and every $t$, there exists $C=C(\tilde{\Omega})>1$ and $\bfn_t$, such that
    \begin{equation*}
        C(\tilde{\Omega})^{-1}N(\bfn)<d_\bfn^{\tilde{\Omega}}<C(\tilde{\Omega})N(\bfn),
    \end{equation*}
    for $\bfn>\bfn_t$.
    
    In particular, applying this to the domains $\Omega$ and $\Omega_0$, it yields
    \begin{equation*}
        d_\bfn^{\Omega}<C(\Omega)N(\bfn)<C(\Omega)C(\Omega_0)d_\bfn^{\Omega_0},
    \end{equation*}
    from which it follows
    \begin{equation*}
        \mathrm{rank}(\mathcal{R}_{\Omega,\Omega_0}(D_{\bfn,t}))\leq \tilde{c}(t)d_\bfn^{\Omega_0},
    \end{equation*}
    for every $\bfn>\bfn_t$ and with $\tilde{c}(t):=C(\Omega)C(\Omega_0)c(t)$.
    
    Similarly, for the norm corrections, for every $\bfn>\bfn_t$, it holds
    \begin{equation*}
        \|\mathcal{R}_{\Omega,\Omega_0}(N_{\bfn,t})\|\leq \|N_{\bfn,t})\|\leq \omega(t)d_\bfn^{\Omega}\leq\tilde{\omega}(t)d_\bfn^{\Omega_0},
    \end{equation*}
    with $\tilde{\omega}(t):=C(\Omega)C(\Omega_0)\omega(t)$ and $\omega(t)$ fixed and going to $0$ as $t\to\infty$. 
    
    If $\{B_{\bfn,t}\}_\bfn\sim^{\Omega_t}_{\mathrm{GLT}}f_t$, then $\mathcal{R}_{\Omega_t,\Omega_t\cap\Omega_0}(\{B_{\bfn,t}\}_\bfn)\sim^{\Omega_t\cap\Omega_0}_{\mathrm{GLT}}f_t|_{(\Omega_t\cap\Omega_0)\times[-\pi,\pi]^d}$. In addition, the size difference $d_\bfn^{\Omega_0}-d_\bfn^{\Omega_t\cap\Omega_0}$ can be estimated, for $\bfn>\bfn_t$, using item (ii) of Lemma \ref{dimensions}. More precisely, for every $\bfn>\bfn_t$, it holds
    \begin{equation*}
        d_\bfn^{\Omega_0}-d_\bfn^{\Omega_t\cap\Omega_0}\leq c'(\Omega_0)d_\bfn^{\Omega_0}\Ld(\Omega_0\setminus(\Omega_t\cap\Omega_0)).
    \end{equation*}
    Since $\{\Omega_t\cap\Omega_0\}_t$ is a regular exhaustion of $\Omega_0$ and in view of Remark \ref{rem_obs_ext_operator}, it follows that $\{\mathcal{R}_{\Omega_t,\Omega_t\cap\Omega_0}(\{B_{\bfn,t}\}_\bfn)\}_t$ is a g.a.c.s. for $\mathcal{R}_{\Omega,\Omega_0}(\{A_\bfn\}_\bfn)$, with canonical symbols $\{f_t|_{(\Omega_t\cap\Omega_0)\times[-\pi,\pi]^d}\}_t$. 

    As a consequence, $\mathcal{R}_{\Omega,\Omega_0}(\{A_\bfn\}_\bfn)$ is an unbounded GLT over $\Omega_0$, with symbol $f|_{\Omega_0\times[-\pi,\pi]^d}$, where $f$ is the symbol of $\{A_\bfn\}_\bfn$ computed using the approximating sequence $\{\{B_{\bfn,t}\}_\bfn\}_t$. Thanks to Theorem \ref{RGLT_equiv_UGLT_bounded}, this proves that
    \begin{equation*}
        \mathcal{R}_{\Omega,\Omega_0}(\{A_\bfn\}_\bfn)\sim_{\mathrm{GLT}}^{\Omega_0}f|_{\Omega_0\times[-\pi,\pi]^d},
    \end{equation*}
    which ends the proof.
\end{proof}
\begin{cor}[Independence from the approximation]\label{indep_on_approx}
    Let $\Omega$ be an open domain, such that $\Ld(\Omega)<\infty$, $\Ld(\partial\Omega)=0$. Consider an unbounded GLT $\{A_\bfn\}_\bfn\in\mathcal{M}_\Omega$. Then, any regular exhaustion and any approximating sequence compute the same symbol function $f:\Omega\times[-\pi,\pi]^d\to\mathbb{C}$, up to equality almost everywhere.

    In addition, for every regular exhaustion $\{\Omega_t\}_t$ of $\Omega$, there exists an approximating sequence $\left\{\{B_{\bfn,t}\}_\bfn\right\}_t$ for $\{A_\bfn\}_\bfn$ such that $\{B_{\bfn,t}\}_\bfn\in\mathcal{G}_{\Omega_t}$.
\end{cor}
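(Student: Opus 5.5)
The plan is to deduce both claims from Theorem \ref{perm_spectr_info} (permanence of spectral information) combined with the uniqueness of reduced GLT symbols on bounded domains.

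\emph{Uniqueness.} Let $f$ and $g$ be two symbols of $\{A_\bfn\}_\bfn$, computed from two regular exhaustions $\{\Omega_t\}_t$ and $\{\Omega'_s\}_s$ with their respective approximating sequences. Fix any $t$. Since $\Omega_t$ is open, bounded, contained in $\Omega$ and satisfies $\Ld(\partial\Omega_t)=0$, Theorem \ref{perm_spectr_info} applies with $\Omega_0=\Omega_t$. Using the first approximation it gives
\begin{equation*}
\mathcal{R}_{\Omega,\Omega_t}(\{A_\bfn\}_\bfn)\sim_{\mathrm{GLT}}^{\Omega_t} f|_{\Omega_t\times[-\pi,\pi]^d},
\end{equation*}
and using the second approximation it gives
\begin{equation*}
\mathcal{R}_{\Omega,\Omega_t}(\{A_\bfn\}_\bfn)\sim_{\mathrm{GLT}}^{\Omega_t} g|_{\Omega_t\times[-\pi,\pi]^d}.
\end{equation*}
The reduced GLT symbol on a bounded domain is unique up to a.e. equality, by the remark after Theorem \ref{spectra_reduced_GLT} or equivalently by the isometry of Theorem \ref{isometry_red_GLT}. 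Hence $f=g$ a.e. on $\Omega_t\times[-\pi,\pi]^d$. Taking the countable union over $t$ (restricting to integer $t$ if the index is continuous) covers $\Omega$, so $f=g$ a.e. on $\Omega\times[-\pi,\pi]^d$. One point needs care: Theorem \ref{perm_spectr_info} is stated for \emph{any} symbol computed via \emph{some} approximating sequence, so it can indeed be applied separately to each of the two approximations.

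\emph{Existence for an arbitrary exhaustion.} Let $\{\Omega_t\}_t$ be any regular exhaustion and set $\{B_{\bfn,t}\}_\bfn:=\mathcal{R}_{\Omega,\Omega_t}(\{A_\bfn\}_\bfn)$. The steps are as follows.
\begin{enumerate}
\item Theorem \ref{perm_spectr_info} gives $\{B_{\bfn,t}\}_\bfn\in\mathcal{G}_{\Omega_t}$ with symbol $f_t=f|_{\Omega_t\times[-\pi,\pi]^d}$.
\item The compatibility condition (iii) of Definition \ref{def_UGLT} then holds automatically, since every $f_t$ is a restriction of the same $f$.
\item For the g.a.c.s. property, I use the identity \eqref{restr_ext_identity} of Remark \ref{rem_obs_ext_operator}:
\begin{equation*}
A_\bfn=\tilde{\Pi}_{\bfn,t}^T\left(B_{\bfn,t}\oplus 0_{\bfn,t}\right)\tilde{\Pi}_{\bfn,t}+S_{\bfn,t},\qquad \mathrm{rank}(S_{\bfn,t})\le 2\left(d_\bfn^{\Omega}-d_\bfn^{\Omega_t}\right).
\end{equation*}
This is exactly the argument of Step 1 of Theorem \ref{RGLT_equiv_UGLT_bounded}, but now with $\Omega$ unbounded.
\item Item (ii) of Lemma \ref{dimensions} bounds both the size defect and the rank correction by $c'\Ld(\Omega\setminus\Omega_t)\,d_\bfn^\Omega$ for $\bfn\ge\bfn_t$.
\item Since $\Ld(\Omega)<\infty$ and $\Omega_t\uparrow\Omega$, continuity of measure gives $\Ld(\Omega\setminus\Omega_t)\to0$. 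So I can take $m(t)=c'\Ld(\Omega\setminus\Omega_t)$, $c(t)=2m(t)$ and $\omega(t)=0$, with the structure matrices required by Definition \ref{def_UGLT}.
\end{enumerate}

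The main obstacle is the reliance on Theorem \ref{perm_spectr_info}, whose proof is somewhat loose about the rank and norm bounds. I will take it as given. The remaining care is ensuring that Lemma \ref{dimensions}(ii) holds for unbounded $\Omega$ with a constant $c'$ independent of $t$, which that lemma asserts.
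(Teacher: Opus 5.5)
Your proposal is correct and follows essentially the paper's own argument. Uniqueness comes from Theorem \ref{perm_spectr_info} together with the a.e.\ uniqueness of reduced GLT symbols on bounded subdomains covering $\Omega$; you use the sets $\Omega_t$ of one exhaustion as the cover, while the paper uses any such cover, e.g.\ that of Example \ref{exmp:reg_exhaustion}. Existence comes from the choice $\{B_{\bfn,t}\}_\bfn:=\mathcal{R}_{\Omega,\Omega_t}(\{A_\bfn\}_\bfn)$, the decomposition of Remark \ref{rem_obs_ext_operator} and Lemma \ref{dimensions}(ii), with no norm correction. One cosmetic point: for a continuous index you should pass to a cofinal sequence $t_k$ rather than literally to integer $t$, so that $\bigcup_k\Omega_{t_k}=\Omega$.
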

    Thanks to the previous corollary, for any unbounded GLT there is a canonical choice of symbol function. Therefore, with a slight abuse of notation, if $\{A_\bfn\}_\bfn$ is an unbounded GLT over a domain $\Omega$ with canonical symbol $f:\Omega\times[-\pi,\pi]^d\to\mathbb{C}$, we write 
     \begin{equation*}
        \{A_{\bfn}\}_{\bfn}\sim_{\mathrm{GLT}}^{\Omega}\,f.
    \end{equation*}
    Note that, thanks to Theorem \ref{RGLT_equiv_UGLT_bounded}, this notation is compatible with the one of reduced GLT, if $\Omega$ is a bounded domain.
\begin{proof}[Proof of Corollary \ref{indep_on_approx}]
    We exploit the uniqueness of canonical reduced GLT symbols, up to equality almost everywhere. 
    Consider an unbounded GLT $\{A_\bfn\}_\bfn$ over a domain $\Omega$, with a symbol $f:\Omega\times[-\pi,\pi]^d\to\mathbb{C}$, computed via an approximating sequence. Then, for every open bounded domain $\Omega_0\subset\Omega$, such that $\Ld(\partial\Omega_0)=0$, from Theorem \ref{perm_spectr_info}, we know that $f|_{\Omega_0\times[-\pi,\pi]^d}$ is uniquely determined as the canonical reduced GLT symbol of $\mathcal{R}_{\Omega,\Omega_0}(\{A_\bfn\}_\bfn)$.\\ Since $\Omega$ can be covered by open bounded domains with boundary of zero measure (see, e.g., Example \ref{exmp:reg_exhaustion}), this implies that $f$ is determined, up to equality almost everywhere, by its restrictions on the cover.

    Now, consider a regular exhaustion $\{\Omega_t\}_t$ of $\Omega$. The natural candidate of approximating sequence for $\{A_\bfn\}_\bfn$ associated to $\{\Omega_t\}_t$ is 
    \begin{equation*}
        \{B_{\bfn,t}\}_\bfn:=\mathcal{R}_{\Omega,\Omega_t}(\{A_\bfn\}_\bfn).
    \end{equation*}
    Indeed, by Theorem \ref{perm_spectr_info} and by the previous argument, items $(i)$ and $(iii)$ of Definition \ref{def_UGLT} are automatically satisfied, with
    $\{B_{\bfn,t}\}_\bfn\sim_{\mathrm{GLT}}^{\Omega_t}f|_{\Omega_t\times[-\pi,\pi]^d}$. We are left to prove that $\{\{B_{\bfn,t}\}_\bfn\}_t$ is a g.a.c.s. for $\{A_\bfn\}_\bfn$.
    By Remark \ref{rem_obs_ext_operator}, for every $t$, we can write
    \begin{equation*}
        \{A_\bfn\}_\bfn=\mathcal{E}_{\Omega_t,\Omega}(\{B_{\bfn,t}\}_\bfn)+\{S_{\bfn,t}\}_{\bfn},
    \end{equation*}
    with 
    \begin{equation*}
        \mathrm{rank}(S_{\bfn,t})\leq 2(d_{\bfn}^{\Omega}-d_\bfn^{\Omega_t}),
    \end{equation*}
    for every $\bfn$ and every $t$. In particular, thanks to item (ii) of Lemma \ref{dimensions}, this implies that both the size difference and the rank correction satisfy the hypotheses for the g.a.c.s. convergence, with 
    \begin{equation*}
        m(t)=c(t):=2c'(\Omega)\Ld(\Omega\setminus\Omega_t).
    \end{equation*}
    Since there are no norm corrections, this implies that $\{\{B_{\bfn,t}\}_\bfn\}_t$ is a g.a.c.s. for $\{A_\bfn\}_\bfn$, as desired, proving that any regular exhaustion of $\Omega$ possesses an approximating sequence for $\{A_\bfn\}$, in the sense of Definition \ref{def_UGLT}.
\end{proof}

\subsection{Algebra properties, a.c.s. closure and equivalence with measurable functions}

Finally, we are ready to establish the properties of the set of unbounded GLT over any domain. Indeed, in the next result, we show that it is an algebra of matrix-sequences, and shares all the crucial properties as the usual algebras of (reduced) GLT sequences.

\begin{thm}[The algebra of unbounded GLT]\label{main_theorem}
Let $\Omega$ be an open domain, such that $\Ld(\Omega)<\infty$, $\Ld(\partial\Omega)=0$, and define $\mathcal{G}_{\Omega}$ to be the set of all unbounded GLT over $\Omega$. Consider $\{A_\bfn\}_\bfn,\{A'_\bfn\}_\bfn\in\mathcal{G}_{\Omega}$, with canonical symbols $f,g$, and $\alpha,\beta\in\mathbb{C}$. Then
\begin{itemize}
    \item[(i)]\label{main_item_i} $\{A_\bfn^*\}_\bfn\in\mathcal{G}_\Omega$ and $ \{A^*_{\bfn}\}_{\bfn}\sim_{\mathrm{GLT}}^{\Omega}\,\bar{f};$
    \item[(ii)]\label{main_item_ii} $\{\alpha A_\bfn+\beta A'_\bfn\}_\bfn\in\mathcal{G}_\Omega$ and $\{\alpha A_{\bfn}+\beta A'_\bfn\}_{\bfn}\sim_{\mathrm{GLT}}^{\Omega}\,\alpha f+\beta g;$
    \item[(iii)]\label{main_item_iii} $\{A_\bfn A'_\bfn\}_\bfn\in\mathcal{G}_\Omega$ and $\{A_{\bfn} A'_\bfn\}_{\bfn}\sim_{\mathrm{GLT}}^{\Omega}\,fg$.
\end{itemize}
\end{thm}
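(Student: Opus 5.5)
The plan is to fix one regular exhaustion $\{\Omega_t\}_t$ of $\Omega$ (for instance the one of Example \ref{exmp:reg_exhaustion}) and, by Corollary \ref{indep_on_approx}, use the canonical approximating sequences
\begin{equation*}
\{B_{\bfn,t}\}_\bfn:=\mathcal{R}_{\Omega,\Omega_t}(\{A_\bfn\}_\bfn)\sim_{\mathrm{GLT}}^{\Omega_t}f|_{\Omega_t\times[-\pi,\pi]^d},\qquad \{B'_{\bfn,t}\}_\bfn:=\mathcal{R}_{\Omega,\Omega_t}(\{A'_\bfn\}_\bfn)\sim_{\mathrm{GLT}}^{\Omega_t}g|_{\Omega_t\times[-\pi,\pi]^d}.
\end{equation*}
For both sequences the decomposition of Remark \ref{rem_obs_ext_operator} holds with no norm correction:
\begin{equation*}
A_\bfn=E_{\bfn,t}(B_{\bfn,t})+S_{\bfn,t},\qquad A'_\bfn=E_{\bfn,t}(B'_{\bfn,t})+S'_{\bfn,t},
\end{equation*}
where $E_{\bfn,t}$ is the extension $\Omega_t\to\Omega$. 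Both ranks are at most $2(d_\bfn^\Omega-d_\bfn^{\Omega_t})\le 2c'\Ld(\Omega\setminus\Omega_t)d_\bfn^\Omega$ for $\bfn\ge\bfn_t$, by item (ii) of Lemma \ref{dimensions}. In each of (i)--(iii) the candidate approximating sequence is the corresponding algebraic combination of $B_{\bfn,t}$ and $B'_{\bfn,t}$. It lies in $\mathcal{G}_{\Omega_t}$ with the expected symbol restricted to $\Omega_t$, since reduced GLT algebras are $*$-algebras with the usual symbol calculus (Remark \ref{algebra_morphisms_red}). Compatibility (iii) of Definition \ref{def_UGLT} is then automatic, because all symbols are restrictions of the single function $\bar f$, $\alpha f+\beta g$ or $fg$.

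For (i) and (ii) the argument is immediate. We have $E_{\bfn,t}(B)^*=E_{\bfn,t}(B^*)$ because $\Pi$ is real, and $E_{\bfn,t}$ is linear. This gives
\begin{align*}
A_\bfn^*&=E_{\bfn,t}(B_{\bfn,t}^*)+S_{\bfn,t}^*,\\
\alpha A_\bfn+\beta A'_\bfn&=E_{\bfn,t}(\alpha B_{\bfn,t}+\beta B'_{\bfn,t})+\alpha S_{\bfn,t}+\beta S'_{\bfn,t}.
\end{align*}
The rank corrections are bounded by $4c'\Ld(\Omega\setminus\Omega_t)d_\bfn^\Omega\to0$ in $t$, and the structure matrices are the permutations $\tilde\Pi_{\bfn,t}$ of Remark \ref{rem_obs_ext_operator}. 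Hence we obtain a g.a.c.s. of the required form, and the symbol follows from Corollary \ref{indep_on_approx}.

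For (iii), use Corollary \ref{product_hypercubes} (extensions are multiplicative, Remark \ref{rem_restr_btw_domains}) to write
\begin{equation*}
A_\bfn A'_\bfn=E_{\bfn,t}(B_{\bfn,t}B'_{\bfn,t})+S_{\bfn,t}E_{\bfn,t}(B'_{\bfn,t})+E_{\bfn,t}(B_{\bfn,t})S'_{\bfn,t}+S_{\bfn,t}S'_{\bfn,t}.
\end{equation*}
Each of the last three terms has rank at most $\mathrm{rank}(S_{\bfn,t})$ or $\mathrm{rank}(S'_{\bfn,t})$. Their sum therefore has rank at most $6c'\Ld(\Omega\setminus\Omega_t)d_\bfn^\Omega$, which is a valid rank correction. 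Since $B_{\bfn,t}B'_{\bfn,t}\sim^{\Omega_t}_{\mathrm{GLT}}fg|_{\Omega_t\times[-\pi,\pi]^d}$, we get $\{A_\bfn A'_\bfn\}_\bfn\in\mathcal{G}_\Omega$ with canonical symbol $fg$.

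The main obstacle I expect is the product. If one started from an arbitrary approximating sequence containing norm corrections $N_{\bfn,t}$, the cross terms $N_{\bfn,t}A'_\bfn$ would require control of $\|A'_\bfn\|$, which is not available. That route would need a Lemma \ref{preserving_zero_distribution}-type argument based on sparse unboundedness (Proposition \ref{sv_neq_zero_a_e}). The key step that avoids this issue is the choice of the canonical restricted approximants from Corollary \ref{indep_on_approx}, since they produce pure rank corrections. The remaining point to check is the uniformity of the thresholds $\bfn_t$: one takes the maximum of the thresholds coming from Lemma \ref{dimensions} for the finitely many terms involved.
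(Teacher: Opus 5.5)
Your proposal is correct and follows essentially the same route as the paper. You fix a regular exhaustion and use the canonical restricted approximants $\mathcal{R}_{\Omega,\Omega_t}(\{A_\bfn\}_\bfn)$ from Corollary \ref{indep_on_approx}, so only rank corrections of order $d_\bfn^{\Omega}-d_\bfn^{\Omega_t}$ appear, and you use linearity and multiplicativity of the extension operators together with item (ii) of Lemma \ref{dimensions}. The paper does the same, with the same $6(d_\bfn^{\Omega}-d_\bfn^{\Omega_t})$ bound on the product cross terms. The only difference is cosmetic and in (i): the paper takes adjoints directly in an arbitrary defining g.a.c.s. decomposition, where norm corrections are harmless because adjoints preserve ranks and norms, rather than going through the canonical approximants.
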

\begin{proof}
    Item (i) is an immediate consequence of Definition \ref{def_UGLT}, together with the fact that reduced GLT algebras are closed under taking the complex conjugate, and that the complex conjugate operation preserves ranks and norms.

    We prove item (ii)-(iii) jointly, fully relying on Corollary \ref{indep_on_approx}. Indeed, we consider a regular exhaustion $\{\Omega_t\}_t$ of $\Omega$ and, thanks to the proof of Corollary \ref{indep_on_approx}, we can write
    \begin{align*}
         \{A_\bfn\}_\bfn&=\mathcal{E}_{\Omega_t,\Omega}(\{B_{\bfn,t}\}_\bfn)+\{S_{\bfn,t}\}_{\bfn},\\
          \{A'_\bfn\}_\bfn&=\mathcal{E}_{\Omega_t,\Omega}(\{B'_{\bfn,t}\}_\bfn)+\{S'_{\bfn,t}\}_{\bfn},
    \end{align*}
    where
    \begin{equation*}
        \{B_{\bfn,t}\}_\bfn:=\mathcal{R}_{\Omega,\Omega_t}(\{A_\bfn\}_\bfn),\qquad\{B'_{\bfn,t}\}_\bfn:=\mathcal{R}_{\Omega,\Omega_t}(\{A'_\bfn\}_\bfn),
    \end{equation*}
    and $\{S_{\bfn,t}\}_\bfn$, $\{S'_{\bfn,t}\}_\bfn$ are appropriate rank corrections, satisfying
    \begin{equation*}
        \mathrm{rank}(S_{\bfn,t}),\mathrm{rank}(S'_{\bfn,t})\leq 2(d_{\bfn}^{\Omega}-d_\bfn^{\Omega_t}),\quad\forall \bfn,\,\forall t.
    \end{equation*}
    Then, we recall that the algebras of reduced GLT are closed under linear combinations and products, and that for those algebras the operation of taking canonical symbols respects the algebra structure, yielding 
    \begin{align*}
        \{\alpha B_{\bfn,t}+\beta B'_{\bfn,t}\}&\sim_{\mathrm{GLT}}^{\Omega_t}(\alpha f+\beta g)|_{\Omega_t\times[-\pi,\pi]^d},\\
        \{B_{\bfn,t} B'_{\bfn,t}\}&\sim_{\mathrm{GLT}}^{\Omega_t} (fg)|_{\Omega_t\times[-\pi,\pi]^d}.
    \end{align*}
    Moreover, recalling that the extension operators are $*$-algebra morphisms (see Remarks \ref{well-posedness}-\ref{algebra_morphisms_red}), it follows that
    \begin{equation}\label{main_eq_1}
        \{\alpha A_\bfn+\beta A'_\bfn\}_\bfn=\mathcal{E}_{\Omega_t,\Omega}(\{\alpha B_{\bfn,t}+\beta B'_{\bfn,t}\}_\bfn)+\{S_{\bfn,t}+S'_{\bfn,t}\}_{\bfn},
    \end{equation}
    \begin{equation}\label{main_eq_2}
        \{A_\bfn A'_\bfn\}_\bfn =\mathcal{E}_{\Omega_t,\Omega}(\{B_{\bfn,t} B'_{\bfn,t}\}_\bfn)+\mathcal{E}_{\Omega_t,\Omega}(\{B_{\bfn,t}\}_\bfn)\{S'_{\bfn,t}\}_{\bfn}+\{S_{\bfn,t}\}_{\bfn}\mathcal{E}_{\Omega_t,\Omega}(\{B'_{\bfn,t}\}_\bfn)+\{S_{\bfn,t} S'_{\bfn,t}\}_{\bfn}.
    \end{equation}
    By using item (ii) of Lemma \ref{dimensions}, together with 
    \begin{equation*}
        \mathrm{rank}(S_{\bfn,t}+S'_{\bfn,t})\leq 4(d_{\bfn}^{\Omega}-d_\bfn^{\Omega_t}),
    \end{equation*}
    \begin{equation*}
        \mathrm{rank}(E_{\bfn,\Omega_t,\Omega}(B_{\bfn,t})S'_{\bfn,t}+S_{\bfn,t}E_{\bfn,\Omega_t,\Omega}(B'_{\bfn,t})+S_{\bfn,t} S'_{\bfn,t})\leq 6(d_{\bfn}^{\Omega}-d_\bfn^{\Omega_t}),
    \end{equation*}
    Equations \eqref{main_eq_1}-\eqref{main_eq_2} prove that
    \begin{itemize}
        \item $\{\{\alpha B_{\bfn,t}+\beta B'_{\bfn,t}\}_\bfn\}_t$ is a g.a.c.s. for $\{\alpha A_\bfn + \beta A'_\bfn\}_\bfn$;
        \item $\{\{B_{\bfn,t} B'_{\bfn,t}\}_\bfn\}_t$ is a g.a.c.s. for $\{A_\bfn A'_\bfn\}_\bfn$.
    \end{itemize}
    If we denote $f_t:=f|_{\Omega_t\times[-\pi,\pi]^d}$ and $g_t:=g|_{\Omega_t\times[-\pi,\pi]^d}$, then, since $\{\Omega_t\}_t$ is a regular exhaustion of $\Omega$, we have
    \begin{itemize}
        \item $\alpha f_t^E+\beta g_t^E\to \alpha f+\beta g$, in measure;
        \item $f_t^Eg_t^E\to fg$, in measure,
    \end{itemize}
    concluding the proof of items (ii)-(iii), according to Definition \ref{def_UGLT}.
\end{proof}

\begin{thm}[a.c.s. closure]\label{thm_acs_closure}
    Let $\Omega$ be an open domain, such that $\Ld(\Omega)<+\infty$ and $\Ld(\partial\Omega)=0$. Consider a sequence of unbounded $\mathrm{GLT}$ $\{\{A_{\bfn,s}\}_\bfn\}_s\sim_{\mathrm{GLT}}^{\Omega}f_s$, over $\Omega$. Then, the following are equivalent:
    \begin{itemize}
        \item[(i)]\label{thm_acs_closure_item_i} There exists $\{A_\bfn\}_\bfn\in\mathcal{M}_\Omega$, such that $\{\{A_{\bfn,s}\}_\bfn\}_s\overset{a.c.s.}{\longrightarrow}\{A_\bfn\}_\bfn$; 
        \item[(ii)]\label{thm_acs_closure_item_ii} There exists $f\in\mathcal{L}^0(\Omega\times[-\pi,\pi]^d)$ such that $f_s\to f$ in measure.
    \end{itemize}
    Moreover, in such a case, $\{A_\bfn\}_\bfn\in\mathcal{G}_\Omega$ and $f$ is its unbounded GLT symbol.
\end{thm}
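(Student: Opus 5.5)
The strategy is to reduce everything to the bounded case through restrictions to a fixed regular exhaustion $\{\Omega_t\}_t$ of $\Omega$ (e.g. the one of Example \ref{exmp:reg_exhaustion}). There we can use Theorem \ref{isometry_red_GLT} and the a.c.s. continuity of the restriction operators (Remark \ref{rem_restr_btw_domains}). The tail $\Omega\setminus\Omega_t$ is then controlled uniformly by item (ii) of Lemma \ref{dimensions}.

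\emph{(i)$\Rightarrow$(ii).} Fix $t$ and restrict to $\Omega_t$. Since $\mathcal{R}_{\Omega,\Omega_t}$ is a.c.s. continuous, $\{\mathcal{R}_{\Omega,\Omega_t}(\{A_{\bfn,s}\}_\bfn)\}_s$ is a.c.s. Cauchy in $\mathcal{M}_{\Omega_t}$. By Theorem \ref{perm_spectr_info} these are reduced GLT with symbols $f_s|_{\Omega_t\times[-\pi,\pi]^d}$. Theorem \ref{isometry_red_GLT} then shows that $\{f_s|_{\Omega_t\times[-\pi,\pi]^d}\}_s$ is Cauchy in measure, so it converges in measure to some $g_t$. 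Compatibility in $t$ (up to null sets) lets us glue the $g_t$ into a measurable $f$ on $\Omega$. Because $\Ld(\Omega\setminus\Omega_t)\to 0$ and $\Omega$ has finite measure, convergence in measure on each $\Omega_t$ gives $f_s\to f$ in measure on $\Omega\times[-\pi,\pi]^d$: split $d_{m,\Omega}(f_s,f)$ into the part on $\Omega_t$ and a set of normalized measure $\Ld(\Omega\setminus\Omega_t)/\Ld(\Omega)$.

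\emph{(ii)$\Rightarrow$(i), and identification of the limit.} First I construct a candidate limit. By Theorem \ref{isometry_red_GLT}, choose $\{B_{\bfn,t}\}_\bfn\in\mathcal{G}_{\Omega_t}$ with symbol $f|_{\Omega_t\times[-\pi,\pi]^d}$; by uniqueness we may take $B_{\bfn,t'}$ compatible under restriction only up to zero-distributed terms. Gluing these directly is delicate, so I will instead show that $\{\{A_{\bfn,s}\}_\bfn\}_s$ is a.c.s. Cauchy in $\mathcal{M}_\Omega$ and then invoke completeness (Theorem \ref{completeness_acs_conv}).

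By Remark \ref{rem_obs_ext_operator}, $A_{\bfn,s}-A_{\bfn,r}$ equals $\mathcal{E}_{\Omega_t,\Omega}\mathcal{R}_{\Omega,\Omega_t}(A_{\bfn,s}-A_{\bfn,r})$ plus a rank correction of rank at most $2(d_\bfn^\Omega-d_\bfn^{\Omega_t})\le 2c'\Ld(\Omega\setminus\Omega_t)d_\bfn^\Omega$. Using the computation \eqref{iso_red_eq_4} (extension only appends zero singular values) together with Theorem \ref{isometry_red_GLT}, the main part has a.c.s. distance from zero bounded by a constant times $d_{m,\Omega_t}(f_s,f_r)$, where the constant comes from the ratio $d_\bfn^{\Omega_t}/d_\bfn^\Omega$. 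Hence
\[
d_{a.c.s.}(\{A_{\bfn,s}\}_\bfn,\{A_{\bfn,r}\}_\bfn)\le C\, d_{m,\Omega_t}(f_s,f_r)+2c'\Ld(\Omega\setminus\Omega_t).
\]
Choose $t$ first to make the tail small, then $s,r$ large. This gives the a.c.s. limit $\{A_\bfn\}_\bfn$.

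\emph{Membership in $\mathcal{G}_\Omega$ with symbol $f$.} Set $B_{\bfn,t}:=\mathcal{R}_{\Omega,\Omega_t}(A_\bfn)$. Restricting the a.c.s. convergence to $\Omega_t$ and using closedness of $\mathcal{G}_{\Omega_t}$ under a.c.s. limits (Theorem \ref{isometry_red_GLT} plus completeness of $L^0$), we get $\{B_{\bfn,t}\}_\bfn\sim^{\Omega_t}_{\mathrm{GLT}}f|_{\Omega_t\times[-\pi,\pi]^d}$. Then, exactly as in the proof of Corollary \ref{indep_on_approx}, $\{\{B_{\bfn,t}\}_\bfn\}_t$ is a g.a.c.s. for $\{A_\bfn\}_\bfn$ with permutation structure matrices and rank corrections controlled by Lemma \ref{dimensions}(ii). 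So $\{A_\bfn\}_\bfn\in\mathcal{G}_\Omega$ with canonical symbol $f$.

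\emph{Main obstacle.} The main obstacle is the uniform tail control in the Cauchy estimate. The rank bound must be expressed relative to $d_\bfn^\Omega$ rather than $d_\bfn^{\Omega_t}$, and it must hold for $\bfn$ beyond a threshold depending only on $t$, not on $s,r$. Lemma \ref{dimensions}(ii),(iii) supplies exactly this, and the ratio $d_\bfn^{\Omega_t}/d_\bfn^\Omega\to\Ld(\Omega_t)/\Ld(\Omega)\le1$ keeps the constant $C$ uniform.
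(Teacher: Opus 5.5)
Your proof is correct. For (i)$\Rightarrow$(ii) and for identifying the symbol of the limit, you argue as the paper does: restrict to $\Omega_t$, apply Theorem~\ref{perm_spectr_info} and Theorem~\ref{isometry_red_GLT}, glue the limits $g_t$, and conclude via the g.a.c.s.\ $\{\mathcal{R}_{\Omega,\Omega_t}(\{A_\bfn\}_\bfn)\}_t$ as in Corollary~\ref{indep_on_approx}. For (ii)$\Rightarrow$(i) your route is genuinely different and shorter. The paper carries out exactly the gluing you call ``delicate''. It defines $\{B_{\bfn,t}\}_\bfn$ as the a.c.s.\ limit in $s$ of $\mathcal{R}_{\Omega,\Omega_t}(\{A_{\bfn,s}\}_\bfn)$. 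It then shows that $\{\mathcal{E}_{\Omega_t,\Omega}(\{B_{\bfn,t}\}_\bfn)\}_t$ is a.c.s.\ Cauchy in $t$: the discrepancy between $\mathcal{R}_{\Omega_{t'},\Omega_t}(\{B_{\bfn,t'}\}_\bfn)$ and $\{B_{\bfn,t}\}_\bfn$ is zero-distributed, and what remains is a rank correction of relative size about $\Ld(\Omega_{t'}\setminus\Omega_t)/\Ld(\Omega)$. It takes $\{A_\bfn\}_\bfn$ to be that limit, and needs a separate step to prove $\{A_{\bfn,s}\}_\bfn\to\{A_\bfn\}_\bfn$. That step uses the scaling identity for $p(E_{\bfn,\Omega_t,\Omega}(B))$ and a comparison between $d_{m,\Omega_t}$ of rescaled restricted symbols and $d_{m,\Omega}$, and yields the sharp bound $d_{a.c.s.}(\{A_{\bfn,s}\}_\bfn,\{A_\bfn\}_\bfn)\le d_{m,\Omega}(f_s,f)$. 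You instead show directly that $\{\{A_{\bfn,s}\}_\bfn\}_s$ is a.c.s.\ Cauchy in $\mathcal{M}_\Omega$. Theorem~\ref{completeness_acs_conv} then gives the limit and the convergence at once, and one membership argument serves both directions. Your constant can even be taken equal to $1$, since $p(E_{\bfn,\Omega_t,\Omega}(B))\le p(B)$ whenever $d_\bfn^{\Omega_t}\le d_\bfn^{\Omega}$. Your approach thus avoids the two-parameter construction, while the paper's delivers a quantitative estimate that foreshadows Theorem~\ref{thm_isom_final}. You can recover that estimate too: let $r\to\infty$ and then $t\to\infty$ in your Cauchy bound, using $d_{m,\Omega_t}(g|_{\Omega_t\times[-\pi,\pi]^d},0)\le\frac{\Ld(\Omega)}{\Ld(\Omega_t)}\,d_{m,\Omega}(g,0)$ and the vanishing of the tail term.
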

\begin{proof}First of all, consider a regular exhaustion $\{\Omega_t\}_t$ of $\Omega$.

    \noindent{\textbf{Step 1.}} We prove that $(i)\Rightarrow(ii)$. 
    
    Indeed, assume that there exists $\{A_\bfn\}_\bfn\in\mathcal{M}_\Omega$, such that $\{\{A_{\bfn,s}\}_\bfn\}_s\overset{a.c.s.}{\longrightarrow}\{A_\bfn\}_\bfn$.

    By continuity with respect to the a.c.s. convergence of the restriction operators (recall Remark \ref{rem_restr_btw_domains}), for every $t$, it follows that
    \begin{equation*}
        \mathcal{R}_{\Omega,\Omega_t}(\{\{A_{\bfn,s}\}_\bfn\}_s)\overset{a.c.s.}{\longrightarrow}\mathcal{R}_{\Omega,\Omega_t}(\{A_\bfn\}_\bfn),\qquad s\to+\infty.
    \end{equation*}
    On the other hand, by Theorem \ref{perm_spectr_info}, for every $s$ and every $t$, it holds 
    \begin{equation*}
        \mathcal{R}_{\Omega,\Omega_t}(\{A_{\bfn,s}\}_\bfn)\sim_{\mathrm{GLT}}^{\Omega_t}f_s|_{\Omega_t\times[-\pi,\pi]^d}.
    \end{equation*}
    By Theorem \ref{isometry_red_GLT}, any reduced GLT algebra is closed under a.c.s. convergence. Moreover, the a.c.s. convergence of a sequence of reduced GLT is equivalent to the convergence in measure of the corresponding canonical symbols. Therefore, for every $t$, we have $\mathcal{R}_{\Omega,\Omega_t}(\{A_\bfn\}_\bfn)\in\mathcal{G}_{\Omega_t}$ and there exists a function $g_t\in\mathcal{L}^0(\Omega_t\times[-\pi,\pi]^d)$, such that
    \begin{itemize}
        \item $f_s|_{\Omega_t\times[-\pi,\pi]^d}\to g_t$ in measure as $s\to +\infty$;
        \item $\mathcal{R}_{\Omega,\Omega_t}(\{A_\bfn\}_\bfn)\sim_{\mathrm{GLT}}^{\Omega_t}g_t$.
    \end{itemize}
    Since the restriction operators factor through intermediate domains and by the uniqueness of the reduced GLT symbol, for every $t<t'$, we have $g_{t'}|_{\Omega_{t}\times[-\pi,\pi]^d}=g_t$ almost everywhere in $\Omega_t\times[-\pi,\pi]^d$. Thus, we can define a measurable function $f:\Omega\times[-\pi,\pi]^d\to\mathbb{C}$ by declaring it to be equal to $g_t$ on $\Omega_t\times[-\pi,\pi]^d$, for every $t$. 
    Thanks to Remark \ref{rem_obs_ext_operator}, for every $t$, we can write 
    \begin{equation*}
        \{A_{\bfn}\}_\bfn=\mathcal{E}_{\Omega_t,\Omega}(\mathcal{R}_{\Omega,\Omega_t}(\{A_{\bfn}\}_\bfn))+\{S_{\bfn,t}\}_\bfn,
    \end{equation*}
    with $\mathrm{rank}(S_{\bfn,t})\leq 2(d_{\bfn}^{\Omega}-d_\bfn^{\Omega_t})$.
    
    Applying item (ii) of Lemma \ref{dimensions}, it follows that $\{\mathcal{R}_{\Omega,\Omega_t}(\{A_\bfn\}_\bfn)\}_t$ is a g.a.c.s. for $\{A_\bfn\}_\bfn$ made with compatible reduced GLT sequences. By Definition \ref{def_UGLT}, we conclude that $\{A_\bfn\}_\bfn\in\mathcal{G}_\Omega$ with canonical symbol $f$.
    
    It remains to prove that $f_s\to f$ in measure in $\Omega$. Indeed, for every $\delta>0$, there exists $t$ such that $\Ld(\Omega\setminus\Omega_t)\leq\delta$. As a consequence, for every $\varepsilon>0$, we have
    \begin{equation*}
        \mathcal{L}^{2d}(\{|f_s-f|>\varepsilon\})\leq (2\pi)^d\Ld(\Omega\setminus\Omega_t)+\mathcal{L}^{2d}(\{|(f_s-f)|_{\Omega_t}|>\varepsilon\})=(2\pi)^d\delta+\mathcal{L}^{2d}(\{|f_s|_{\Omega_t}-g_t|>\varepsilon\}),
    \end{equation*}
    so that passing to the $\limsup$, we have
    \begin{equation*}
        \limsup\limits_{s\to+\infty}\mathcal{L}^{2d}(\{|f_s-f|>\varepsilon\})\leq (2\pi)^d\delta+\limsup\limits_{s\to+\infty}\mathcal{L}^{2d}(\{|f_s|_{\Omega_t\times[-\pi,\pi]^d}-g_t|>\varepsilon\})=(2\pi)^d\delta.
    \end{equation*}
    By the arbitrariness of $\delta>0$, we conclude that $f_s\to f$ in measure.

    We divide the proof of $(ii)\Rightarrow(i)$ in two steps.\\
    \noindent{\textbf{Step 2.}} We find a candidate a.c.s. limit for $\{\{A_{\bfn,s}\}_\bfn\}_s$ in $\mathcal{M}_\Omega$, and we prove that it is an unbounded GLT over $\Omega$, with canonical symbol $f$.
    
    Indeed, assume that there exists $f\in\mathcal{L}^0(\Omega\times[-\pi,\pi]^d)$ such that $f_s\to f$ in measure, as $s\to+\infty$. Applying again Theorem \ref{isometry_red_GLT} to the sequences $\{\mathcal{R}_{\Omega,\Omega_t}(\{A_{\bfn,s}\}_\bfn)\}_s$ for $t$ fixed, since $f_s|_{\Omega_t}\to f|_{\Omega_t}$ in measure, it follows that, for every $t$ there is a sequence $\{B_{\bfn,t}\}_\bfn\in\mathcal{G}_{\Omega_t}$, such that
    \begin{itemize}
        \item $\mathcal{R}_{\Omega,\Omega_t}(\{\{A_{\bfn,s}\}_\bfn\}_s)\overset{a.c.s.}{\longrightarrow}\{B_{\bfn,t}\}_\bfn,$ as $s\to+\infty;$
        \item $\{B_{\bfn,t}\}_\bfn\sim_{\mathrm{GLT}}^{\Omega_t}f|_{\Omega_t\times[-\pi,\pi]^d}$.
    \end{itemize}
    Now, by a.c.s. continuity of the restriction operators, for every $t<t'$, it follows
    \begin{equation*}
        \mathcal{R}_{\Omega_{t'},\Omega_t}(\{B_{\bfn,t'}\}_\bfn)-\{B_{\bfn,t}\}_\bfn\sim_{\mathrm{GLT}}^{\Omega_t}0.
    \end{equation*}
    Applying the extension operator $\mathcal{E}_{\Omega_t,\Omega}$ to the previous identity and using the a.c.s. continuity, we get
    \begin{equation}\label{acs_eq_1}
        \mathcal{E}_{\Omega_t,\Omega}(\mathcal{R}_{\Omega_{t'},\Omega_t}(\{B_{\bfn,t'}\}_\bfn))-\mathcal{E}_{\Omega_t,\Omega}(\{B_{\bfn,t}\}_\bfn)\sim_{\sigma}0.
    \end{equation}
    As usual, by Remark \ref{rem_obs_ext_operator}, for every $t<t'$, we can write
    \begin{equation}\label{acs_eq_2}
        \{B_{\bfn,t'}\}=\mathcal{E}_{\Omega_{t'},\Omega}(\mathcal{R}_{\Omega_{t'},\Omega_t}(\{B_{\bfn,t'}\}_\bfn))+\{S_{\bfn,t,t'}\}_\bfn,
    \end{equation}
    with $\mathrm{rank}(S_{\bfn,t,t'})\leq 2(d_\bfn^{\Omega_{t'}}-d_\bfn^{\Omega})$.
    
    Since the extension operator $\mathcal{E}_{\Omega_{t'},\Omega}$ is rank preserving, by definition of $d_{a.c.s.}$ together with item (iii) of Lemma \ref{dimensions}, this implies that 
    \begin{align}\label{acs_eq_3}
        d_{a.c.s.}(\mathcal{E}_{\Omega_t,\Omega}(\{S_{\bfn,t,t'}\}_\bfn),\{0\}_\bfn)&=\limsup\limits_{\bfn\to\infty} p(\mathcal{E}_{\Omega_t,\Omega}(\{S_{\bfn,t,t'}\}_\bfn))\\
       \nonumber &\leq\limsup\limits_{\bfn\to\infty}2\frac{d_\bfn^{\Omega_{t'}}-d_\bfn^{\Omega_t}}{d_\bfn^{\Omega}}\\
       \nonumber &=2\frac{\Ld(\Omega_{t'})-\Ld(\Omega_t)}{\Ld(\Omega)}=2\frac{\Ld(\Omega_{t'}\setminus\Omega_t)}{\Ld(\Omega)}.
    \end{align}
    Combining now Equations \eqref{acs_eq_1}-\eqref{acs_eq_3}, for $t<t'$, we obtain
    \begin{align*}
        d_{a.c.s.}(\mathcal{E}_{\Omega_{t'},\Omega}(\{B_{\bfn,t'}\}_\bfn),\mathcal{E}_{\Omega_t,\Omega}(\{B_{\bfn,t}\}_\bfn))&\leq d_{a.c.s.}(\mathcal{E}_{\Omega_{t'},\Omega}(\{B_{\bfn,t'}\}_\bfn),\mathcal{E}_{\Omega_t,\Omega}(\mathcal{R}_{\Omega_{t'},\Omega_t}(\{B_{\bfn,t'}\}_\bfn)))\\
        &+d_{a.c.s.}(\mathcal{E}_{\Omega_t,\Omega}(\mathcal{R}_{\Omega_{t'},\Omega_t}(\{B_{\bfn,t'}\}_\bfn)),\mathcal{E}_{\Omega_t,\Omega}(\{B_{\bfn,t}\}_\bfn))\\
        &=d_{a.c.s.}(\mathcal{E}_{\Omega_t,\Omega}(\{S_{\bfn,t,t'}\}_\bfn),0)+0\\
        &\leq 2\frac{\Ld(\Omega_{t'}\setminus\Omega_t)}{\Ld(\Omega)}.
    \end{align*}
    In addition, since 
    \begin{equation*}
        \lim\limits_{(t,t')\to+\infty}2\frac{\Ld(\Omega_{t'}\setminus\Omega_t)}{\Ld(\Omega)}=0,
    \end{equation*}
   it follows that $\{\mathcal{E}_{\Omega_t,\Omega}(\{B_{\bfn,t}\}_\bfn)\}_t$ is a.c.s.-Cauchy in $\mathcal{M}_\Omega$. By Theorem \ref{completeness_acs_conv}, there exists $\{A_\bfn\}_\bfn\in\mathcal{M}_\Omega$, such that $\mathcal{E}_{\Omega_t,\Omega}(\{B_{\bfn,t}\}_\bfn)\overset{a.c.s.}{\longrightarrow}\{A_\bfn\}_\bfn$. Since this is equivalent to $\{\{B_{\bfn,t}\}_\bfn\}_t$ being a g.a.c.s. for $\{A_\bfn\}_\bfn$, it follows that $\{A_\bfn\}_\bfn\in\mathcal{G}_{\Omega}$, with canonical symbol $f$. 

   \noindent{\textbf{Step 3.}} We prove that $\{\{A_{\bfn,s}\}_\bfn\}_s\overset{a.c.s.}{\longrightarrow}\{A_\bfn\}_\bfn$, as $s\to+\infty$.
   
   Since $f_s\to f$ in measure in $\Omega\times[-\pi,\pi]^d$, clearly, for every $t$, $f_s|_{\Omega_t\times[-\pi,\pi]^d}\to f|_{\Omega_t\times[-\pi,\pi]^d}$ in measure in $\Omega_t\times[-\pi,\pi]^d$. Nevertheless, we need a more refined estimate of the convergence of the restrictions.
   
   Fix $s$ and $\varepsilon>0$ and, by definition of $d_{m,\Omega}$, there are two measurable sets $F_\varepsilon,H_\varepsilon\subset\Omega\times[-\pi,\pi]^d$, such that
   \begin{itemize}
       \item $F_\varepsilon\sqcup H_\varepsilon=\Omega\times[-\pi,\pi]^d$;
       \item $\frac{\mathcal{L}^{2d}(F_\varepsilon)}{(2\pi)^d\Ld(\Omega)}+\esssup\limits_{H_\varepsilon}|f_s-f|\leq d_{m,\Omega}(f_s,f)+\varepsilon$.
   \end{itemize}
   Now, we use their intersections with $\Omega_t\times[-\pi,\pi]^d$ to estimate from above the distance $d_{m,\Omega_t}$ between some suitable rescaling of the restrictions. Indeed,
   \begin{align*}
       d_{m,\Omega}(f_s,f)+\varepsilon&\geq \frac{\mathcal{L}^{2d}(F_\varepsilon)}{(2\pi)^d\Ld(\Omega)}+\esssup\limits_{H_\varepsilon}|f_s-f|\\
       &\geq \frac{\mathcal{L}^{2d}(F_\varepsilon\cap(\Omega_t\times[-\pi,\pi]^d))}{(2\pi)^d\Ld(\Omega)}+\esssup\limits_{H_\varepsilon\cap(\Omega_t\times[-\pi,\pi]^d)}|f_s-f|\\
       &=\frac{\Ld(\Omega_t)}{\Ld(\Omega)}\left(\frac{\mathcal{L}^{2d}(F_\varepsilon\cap(\Omega_t\times[-\pi,\pi]^d))}{(2\pi)^d\Ld(\Omega_t)}+\frac{\Ld(\Omega)}{\Ld(\Omega_t)}\cdot\esssup\limits_{H_\varepsilon\cap(\Omega_t\times[-\pi,\pi]^d)}|f_s-f|\right)\\
       &\geq \frac{\Ld(\Omega_t)}{\Ld(\Omega)}d_{m,\Omega_t}\left(\frac{\Ld(\Omega)}{\Ld(\Omega_t)}f_s|_{\Omega_t\times[-\pi,\pi]^d},\frac{\Ld(\Omega)}{\Ld(\Omega_t)}f|_{\Omega_t\times[-\pi,\pi]^d}\right).
   \end{align*}
   By the arbitrariness of $\varepsilon>0$, and reversing the inequality, we obtain
   \begin{equation}\label{acs_eq_4}
       d_{m,\Omega_t}\left(\frac{\Ld(\Omega)}{\Ld(\Omega_t)}f_s|_{\Omega_t\times[-\pi,\pi]^d},\frac{\Ld(\Omega)}{\Ld(\Omega_t)}f|_{\Omega_t\times[-\pi,\pi]^d}\right)\leq \frac{\Ld(\Omega)}{\Ld(\Omega_t)} d_{m,\Omega}(f_s,f).
   \end{equation}
   On the other hand, similarly as in the proof of Theorem \ref{isometry_red_GLT}, for fixed $\bfn$ and $t$ and for any $B\in\mathbb{C}^{d_\bfn^{\Omega}\times d_\bfn^{\Omega}}$, we have
   \begin{equation}\label{acs_eq_5}
       p(E_{\bfn,\Omega_t,\Omega}(B))=\frac{d_\bfn^{\Omega_t}}{d_\bfn^{\Omega}}p\left(\frac{d_\bfn^{\Omega}}{d_\bfn^{\Omega_t}}B\right).
   \end{equation}
   Using Equation \eqref{acs_eq_5}, item (iii) of Lemma \ref{dimensions} and passing to the $\limsup$, for any $\{B_\bfn\}_\bfn\in\mathcal{M}_{\Omega_t}$, we have
   \begin{equation}\label{acs_eq_6}
       d_{a.c.s.}\left(\mathcal{E}_{\Omega_t,\Omega}(\{B_\bfn\}_\bfn),\{0\}_\bfn\right)=\frac{\Ld(\Omega_t)}{\Ld(\Omega)}d_{a.c.s.}\left(\left\{\frac{d_\bfn^{\Omega}}{d_\bfn^{\Omega_t}}B_\bfn\right\}_\bfn,\{0\}_\bfn\right).
   \end{equation}
   Now, we want to estimate the quantity $d_{a.c.s.}(\{A_{\bfn,s}\}_\bfn,\{A_{\bfn}\}_\bfn)$. In order to do so, we decompose their difference into several pieces and estimate each one of them singularly. First of all, by Remark \ref{rem_obs_ext_operator}, for every $t$ and every $s$, we can write
   \begin{equation*}
       \{A_{\bfn,s}-A_\bfn\}_\bfn =\mathcal{E}_{\Omega_t,\Omega}(\mathcal{R}_{\Omega,\Omega_t}(\{A_{\bfn,s}-A_\bfn\}_\bfn))+\{S_{\bfn,s,t}\}_\bfn,
   \end{equation*}
   with $\mathrm{rank}(S_{\bfn,s,t})\leq 2(d_\bfn^{\Omega}-d_\bfn^{\Omega_t})$. Using the linearity of the operators, we obtain 
   \begin{align}\label{acs_eq_extra}
       \{A_{\bfn,s}-A_\bfn\}_\bfn &=\mathcal{E}_{\Omega_t,\Omega}(\mathcal{R}_{\Omega,\Omega_t}(\{A_{\bfn,s}-A_\bfn\}_\bfn))+\{S_{\bfn,s,t}\}_\bfn\\
       \nonumber&=\left(\mathcal{E}_{\Omega_t,\Omega}(\mathcal{R}_{\Omega,\Omega_t}(\{A_{\bfn,s}\}_\bfn))-\mathcal{E}_{\Omega_t,\Omega}(\{B_{\bfn,t}\}_\bfn)\right)\\
       \nonumber&+\left(\mathcal{E}_{\Omega_t,\Omega}(\{B_{\bfn,t}\}_\bfn)-\mathcal{E}_{\Omega_t,\Omega}(\mathcal{R}_{\Omega,\Omega_t}(\{A_{\bfn}\}_\bfn))\right)+\{S_{\bfn,s,t}\}_\bfn\\
   \end{align}
   Then, by the rank estimate and similarly as above, we have
   \begin{equation}\label{acs_eq_7}
       d_{a.c.s.}(\{S_{\bfn,s,t}\}_\bfn,\{0\}_\bfn)\leq 2\frac{\Ld(\Omega\setminus\Omega_t)}{\Ld(\Omega)}.
   \end{equation}
   Moreover, as a byproduct of the proof that $\{A_\bfn\}_\bfn$ is an unbounded GLT over $\Omega$, and since $\mathcal{R}_{\Omega,\Omega_t}(\{A_\bfn\}_\bfn)-\{B_{\bfn,t}\}_\bfn\sim_{\mathrm{GLT}}^{\Omega_t}0$ for every $t$,
   \begin{equation}\label{acs_eq_8}
       d_{a.c.s.}(\mathcal{E}_{\Omega_t,\Omega}(\{B_{\bfn,t}\}_\bfn),\mathcal{E}_{\Omega_t,\Omega}(\mathcal{R}_{\Omega,\Omega_t}(\{A_\bfn\}_\bfn)))=d_{a.c.s.}(\{B_{\bfn,t}\}_\bfn,\mathcal{R}_{\Omega,\Omega_t}(\{A_\bfn\}_\bfn))=0.
   \end{equation}
   Finally, we consider the following rescaled sequences 
   \begin{itemize}
       \item for every $s$, $\mathcal{R}_{\Omega,\Omega_t}\left(\left\{\frac{d_\bfn^{\Omega}}{d_\bfn^{\Omega_t}}A_{\bfn,s}\right\}_\bfn\right)\sim_{\mathrm{GLT}}^{\Omega_t}\frac{\Ld(\Omega)}{\Ld(\Omega_t)}f_s|_{\Omega_t\times[-\pi,\pi]^d}$;
       \item $\left\{\frac{d_\bfn^{\Omega}}{d_\bfn^{\Omega_t}}B_{\bfn,t}\right\}_\bfn\sim_{\mathrm{GLT}}^{\Omega_t}\frac{\Ld(\Omega)}{\Ld(\Omega_t)}f|_{\Omega_t\times[-\pi,\pi]^d}$.
   \end{itemize}
   Since these are reduced GLT, we can apply to them Theorem \ref{isometry_red_GLT}, so that the a.c.s. distance between any two of them is equal to the distance with respect to $d_{m,\Omega_t}$ of the corresponding symbols. 
   Therefore, using Equations \eqref{acs_eq_4} and \eqref{acs_eq_6}, we obtain 
   \begin{align}\label{acs_eq_9}
       \nonumber d_{a.c.s.}\left(\mathcal{E}_{\Omega_t,\Omega}(\mathcal{R}_{\Omega,\Omega_t}(\{A_{\bfn,s}\}_\bfn)),\mathcal{E}_{\Omega_t,\Omega}(\{B_{\bfn,t}\}_\bfn)\right)&=\frac{\Ld(\Omega_t)}{\Ld(\Omega)}d_{a.c.s.}\left(\mathcal{R}_{\Omega,\Omega_t}\left(\left\{\frac{d_\bfn^{\Omega}}{d_\bfn^{\Omega_t}}A_{\bfn,s}\right\}_\bfn\right),\left\{\frac{d_\bfn^{\Omega}}{d_\bfn^{\Omega_t}}B_{\bfn,t}\right\}_\bfn\right)\\
         &=\frac{\Ld(\Omega_t)}{\Ld(\Omega)}d_{m,\Omega_t}\left(\frac{\Ld(\Omega)}{\Ld(\Omega_t)}f_s|_{\Omega_t\times[-\pi,\pi]^d},\frac{\Ld(\Omega)}{\Ld(\Omega_t)}f|_{\Omega_t\times[-\pi,\pi]^d}\right)\\
        \nonumber &\leq \frac{\Ld(\Omega_t)}{\Ld(\Omega)}\frac{\Ld(\Omega)}{\Ld(\Omega_t)}d_{m,\Omega}(f_s,f)=d_{m,\Omega}(f_s,f).
   \end{align}
   To conclude, we compute the a.c.s. distance between $\{A_{\bfn,s}\}_\bfn$ and $\{A_\bfn\}_\bfn$, by using the expansion given by Equation \eqref{acs_eq_extra}, together with the estimates in Equations \eqref{acs_eq_7}-\eqref{acs_eq_9}. Indeed, for every $t$, we have 
   \begin{align}\label{acs_eq_10}
       d_{a.c.s.}\left(\{A_{\bfn,s}\}_\bfn, \{A_\bfn\}_\bfn\right)&\leq d_{a.c.s.}\left(\mathcal{E}_{\Omega_t,\Omega}(\mathcal{R}_{\Omega,\Omega_t}(\{A_{\bfn,s}\}_\bfn)),\mathcal{E}_{\Omega_t,\Omega}(\{B_{\bfn,t}\}_\bfn)\right)\\
       \nonumber&+d_{a.c.s.}(\mathcal{E}_{\Omega_t,\Omega}(\{B_{\bfn,t}\}_\bfn),\mathcal{E}_{\Omega_t,\Omega}(\mathcal{R}_{\Omega,\Omega_t}(\{A_\bfn\}_\bfn)))\\
       \nonumber&+d_{a.c.s.}(\{S_{\bfn,s,t}\}_\bfn,\{0\}_\bfn)\\
       \nonumber&\leq d_{m,\Omega}(f_s,f)+ 0 + 2\frac{\Ld(\Omega\setminus\Omega_t)}{\Ld(\Omega)}.
   \end{align}
   Since Equation \eqref{acs_eq_10} holds for every $t$, by letting $t\to+\infty$, we obtain
   \begin{equation*}
       d_{a.c.s.}\left(\{A_{\bfn,s}\}_\bfn, \{A_\bfn\}_\bfn\right)\leq d_{m,\Omega}(f_s,f),
   \end{equation*}
   which concludes the proof, since $d_{m,\Omega}(f_s,f)\to 0$ as $s\to+\infty$.
   \end{proof}

\begin{exmp}\label{example_UGLT}
    Let $\Omega$ be an open domain, such that $\Ld(\Omega)<+\infty$ and $\Ld(\partial\Omega)=0$. In addition, consider an open bounded domain $\Omega_0\subset\Omega$, such that $\Ld(\partial\Omega_0)=0$. Then, for every $\{B_\bfn\}_\bfn\in\mathcal{G}_{\Omega_0}$ with canonical symbol $f$ over $\Omega_0 \times [-\pi,\pi]^d$, the sequence $\mathcal{E}_{\Omega_0,\Omega}(\{B_\bfn\}_\bfn)\in\mathcal{M}_\Omega$ is an unbounded GLT, with canonical symbol $f^E$ (i.e. the extension by $0$ of $f$ to the whole $\Omega \times [-\pi,\pi]^d$).
\end{exmp}
\begin{proof}
    Consider a regular exhaustion $\{\Omega_t\}_t$ of $\Omega$. Since restriction and extension operators factor through intermediate domains, we have 
    \begin{align*}
        \mathcal{R}_{\Omega,\Omega_t}(\mathcal{E}_{\Omega_0,\Omega}(\{B_\bfn\}_\bfn))&=(\mathcal{R}_{\Omega_0\cup\Omega_t,\Omega_t}\circ\mathcal{R}_{\Omega,\Omega_0\cup\Omega_t}\circ\mathcal{E}_{\Omega_0\cup\Omega_t,\Omega}\circ\mathcal{E}_{\Omega_0,\Omega_0\cup\Omega_t})(\{B_\bfn\}_\bfn)\\
        &=(\mathcal{R}_{\Omega_0\cup\Omega_t,\Omega_t}\circ\mathrm{Id}_{\mathcal{M}_{\Omega_0\cup\Omega_t}}\circ\mathcal{E}_{\Omega_0,\Omega_0\cup\Omega_t})(\{B_\bfn\}_\bfn)\\
        &=(\mathcal{R}_{\Omega_0\cup\Omega_t,\Omega_t}\circ\mathcal{E}_{\Omega_0,\Omega_0\cup\Omega_t})(\{B_\bfn\}_\bfn).
    \end{align*}
    Therefore, since restriction and extension operators send reduced GLT to reduced GLT (and the last line of previous equation does not involve unbounded domains), we just proved that, for every $t$,
    \begin{equation*}
        \mathcal{R}_{\Omega,\Omega_t}(\mathcal{E}_{\Omega_0,\Omega}(\{B_\bfn\}_\bfn))\sim_{\mathrm{GLT}}^{\Omega_t}f^E|_{\Omega_t\times[-\pi,\pi]^d}.
    \end{equation*}
    Moreover, thanks to Remark \ref{rem_obs_ext_operator}, for every $t$, we can write
    \begin{equation*}
        \mathcal{E}_{\Omega_0,\Omega}(\{B_\bfn\}_\bfn)=\mathcal{E}_{\Omega_t,\Omega}(\mathcal{R}_{\Omega,\Omega_t}(\mathcal{E}_{\Omega_0,\Omega}(\{B_\bfn\}_\bfn)))+\{S_{\bfn,t}\}_\bfn,
    \end{equation*}
    with $\mathrm{rank}(S_{\bfn,t})\leq 2(d_\bfn^{\Omega}-d_\bfn^{\Omega_t})$. By item (ii) of Lemma \ref{dimensions}, this proves that $\{\mathcal{R}_{\Omega,\Omega_t}(\mathcal{E}_{\Omega_0,\Omega}(\{B_\bfn\}_\bfn))\}_t$ is a g.a.c.s. for $\mathcal{E}_{\Omega_0,\Omega}(\{B_\bfn\}_\bfn)$. Thus, since $f^E(\bfx,\bftheta)\mathds{1}_{\Omega_t}(\bfx)\to f^E(\bfx,\bftheta)$ in measure in $\Omega \times [-\pi,\pi]^d$, we conclude that $\mathcal{E}_{\Omega_0,\Omega}(\{B_\bfn\}_\bfn)\sim_{\mathrm{GLT}}^{\Omega}f^E$.
 \end{proof}

\begin{cor}\label{cor_UGLT_surj}
    Let $\Omega$ be an open domain, such that $\Ld(\Omega)<+\infty$ and $\Ld(\partial\Omega)=0$. Then, for every $f\in\mathcal{L}^0(\Omega\times[-\pi,\pi]^d)$, there exists $\{A_\bfn\}_\bfn\in\mathcal{G}_\Omega$, such that
    \begin{equation*}
        \{A_\bfn\}_\bfn\sim_{\mathrm{GLT}}^{\Omega}f.
    \end{equation*}
\end{cor}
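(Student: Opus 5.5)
The plan is to build $\{A_\bfn\}_\bfn$ as an a.c.s. limit of unbounded GLT sequences, each obtained by extending a reduced GLT from a bounded piece of $\Omega$, and then to close the argument with Theorem \ref{thm_acs_closure}. First I fix a regular exhaustion $\{\Omega_t\}_{t\in\mathbb{N}}$ of $\Omega$; the one from Example \ref{exmp:reg_exhaustion} with integer $t$ will do. For each $t$, set $f_t:=f|_{\Omega_t\times[-\pi,\pi]^d}$. This is a measurable function on the bounded domain $\Omega_t$, and $\Ld(\partial\Omega_t)=0$.

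By the surjectivity part of Theorem \ref{isometry_red_GLT}, for each $t$ there is a reduced GLT $\{B_{\bfn,t}\}_\bfn\in\mathcal{G}_{\Omega_t}$ with $\{B_{\bfn,t}\}_\bfn\sim_{\mathrm{GLT}}^{\Omega_t}f_t$. (That part is proved by extending $f_t$ by zero to a hypercube, invoking \ref{GLT0}, and restricting.) Now define
\begin{equation*}
\{A_{\bfn,t}\}_\bfn:=\mathcal{E}_{\Omega_t,\Omega}(\{B_{\bfn,t}\}_\bfn)\in\mathcal{M}_\Omega .
\end{equation*}
Example \ref{example_UGLT}, applied with $\Omega_0=\Omega_t$, shows that $\{A_{\bfn,t}\}_\bfn$ is an unbounded GLT over $\Omega$ whose canonical symbol is $f_t^E=f\cdot\mathds{1}_{\Omega_t}(\bfx)$, the extension of $f_t$ by zero.

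Next I check convergence of the symbols. Since $\{\Omega_t\}_t$ increases to $\Omega$ and $\Ld(\Omega)<\infty$, we have
\begin{equation*}
\mathcal{L}^{2d}\bigl(\{|f_t^E-f|>\varepsilon\}\bigr)\leq(2\pi)^d\Ld(\Omega\setminus\Omega_t)\to0
\end{equation*}
for every $\varepsilon>0$. Hence $f_t^E\to f$ in measure on $\Omega\times[-\pi,\pi]^d$. This is item (ii) of Theorem \ref{thm_acs_closure}, with $s=t$ playing the role of the index of the approximating sequence.

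By the implication (ii)$\Rightarrow$(i) of that theorem, together with its final clause, there exists $\{A_\bfn\}_\bfn\in\mathcal{M}_\Omega$ that is the a.c.s. limit of the sequences $\{A_{\bfn,t}\}_\bfn$. Moreover, $\{A_\bfn\}_\bfn\in\mathcal{G}_\Omega$ and its canonical symbol is $f$, which is exactly the claim.

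The only point that needs checking is that the hypotheses of Theorem \ref{thm_acs_closure} are met, namely that each approximant really is an unbounded GLT with the stated symbol. Example \ref{example_UGLT} gives this directly, so I do not expect a genuine obstacle. As an alternative, one could bypass Theorem \ref{thm_acs_closure} and reproduce its Step 2 instead. That route shows the sequences $\mathcal{E}_{\Omega_t,\Omega}(\{B_{\bfn,t}\}_\bfn)$ are a.c.s.-Cauchy, using compatibility of the symbols under restriction (Corollary \ref{cor_restr_ext_standard_GLT_symbol}) and the rank estimate of Remark \ref{rem_obs_ext_operator} with Lemma \ref{dimensions}. It then takes the limit by completeness (Theorem \ref{completeness_acs_conv}), which yields a g.a.c.s. as required by Definition \ref{def_UGLT}.
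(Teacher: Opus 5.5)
Your proposal is correct and follows the paper's proof essentially step for step. You take reduced GLT sequences $\{B_{\bfn,t}\}_\bfn\sim_{\mathrm{GLT}}^{\Omega_t} f|_{\Omega_t\times[-\pi,\pi]^d}$ from Theorem \ref{isometry_red_GLT}, extend them to $\Omega$ via Example \ref{example_UGLT} (which gives symbol $f\mathds{1}_{\Omega_t}$), and conclude with Theorem \ref{thm_acs_closure}; your estimate $\mathcal{L}^{2d}(\{|f_t^E-f|>\varepsilon\})\le(2\pi)^d\Ld(\Omega\setminus\Omega_t)$ fills in the convergence-in-measure step that the paper only asserts.
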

\begin{proof}
    Consider $f\in\mathcal{L}^{0}(\Omega\times[-\pi,\pi]^d)$ and consider a regular exhaustion $\{\Omega_t\}_t$ of $\Omega$. Thanks to Theorem \ref{isometry_red_GLT}, for every $t$, there exists $\{B_{\bfn,t}\}_\bfn\in\mathcal{G}_{\Omega_t}$, with canonical symbol $f|_{\Omega_t\times[-\pi,\pi]^d}$. By Example \ref{example_UGLT}, we have that $\mathcal{E}_{\Omega_t,\Omega}(\{B_{\bfn,t}\}_\bfn)\in\mathcal{G}_{\Omega}$, with canonical symbol $f(\bfx,\bftheta)\mathds{1}_{\Omega_t}(\bfx)$. Since $f(\bfx,\bftheta)\mathds{1}_{\Omega_t}(\bfx)\to f(\bfx,\bftheta)$ in measure in $\Omega \times [-\pi,\pi]^d$, applying Theorem \ref{thm_acs_closure}, there exists $\{A_\bfn\}_\bfn\in\mathcal{G}_\Omega$, such that 
    \begin{itemize}
        \item $\{\mathcal{E}_{\Omega_t,\Omega}(\{B_{\bfn,t}\}_\bfn)\}_t\overset{a.c.s.}{\longrightarrow}\{A_\bfn\}_\bfn$;
        \item $\{A_\bfn\}_\bfn\sim_{\mathrm{GLT}}^{\Omega}f$,
    \end{itemize}
    which concludes the proof.
\end{proof}
The next result shows that the unbounded GLT class is closed under taking pseudo-inverses, and completes the set of algebra properties satisfied by the latter.
\begin{thm}\label{uGLT_isometry}
Let $\Omega$ be an open domain, such that $\Ld(\Omega)<+\infty$ and $\Ld(\partial\Omega)=0$. Consider $\{A_\bfn\}_\bfn\sim_{\mathrm{GLT}}^{\Omega}f$, such that $f\neq 0$ almost everywhere. Then, $\{A_\bfn^{\dagger}\}_\bfn\in\mathcal{G}_\Omega$, with canonical symbol $f^{-1}$. 
\end{thm}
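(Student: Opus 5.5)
The plan is to reduce the claim to the algebra properties of Theorem \ref{main_theorem}, the surjectivity of Corollary \ref{cor_UGLT_surj}, and the a.c.s. closure of Theorem \ref{thm_acs_closure}. The pseudo-inverse itself is handled by a purely matrix-theoretic argument based on Lemma \ref{preserving_zero_distribution}. Since $f\neq0$ almost everywhere, $f^{-1}$ is a well-defined element of $\mathcal{L}^0(\Omega\times[-\pi,\pi]^d)$. By Corollary \ref{cor_UGLT_surj} there is $\{C_\bfn\}_\bfn\in\mathcal{G}_\Omega$ with $\{C_\bfn\}_\bfn\sim_{\mathrm{GLT}}^{\Omega}f^{-1}$. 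By item (iii) of Theorem \ref{main_theorem}, $\{A_\bfn C_\bfn\}_\bfn\sim_{\mathrm{GLT}}^\Omega 1$. The identity sequence is also an unbounded GLT with symbol $1$, so item (ii) gives $\{A_\bfn C_\bfn - I_\bfn\}_\bfn\sim_{\mathrm{GLT}}^\Omega 0$. By Remark \ref{rem_unboundedGLT_distribution} this yields $Z_\bfn:=A_\bfn C_\bfn-I_\bfn$ with $\{Z_\bfn\}_\bfn\sim_\sigma 0$.

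Next I would show that $A_\bfn^\dagger A_\bfn$ differs from the identity by a small-rank term. By Remark \ref{rem_unboundedGLT_distribution}, $\{A_\bfn\}_\bfn\sim_\sigma(f,\Omega\times[-\pi,\pi]^d)$. Since $f\neq0$ a.e., Proposition \ref{sv_neq_zero_a_e} gives that $\{A_\bfn\}_\bfn$ is s.v. In particular, $\dim\ker A_\bfn\le \#\{i:\sigma_i(A_\bfn)<\varepsilon\}\le r(\varepsilon)d_\bfn^\Omega$ for every $\varepsilon$ and $\bfn$ large, hence $\dim\ker A_\bfn=o(d_\bfn^\Omega)$. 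Now $A_\bfn^\dagger A_\bfn=I_\bfn-P_\bfn$, where $P_\bfn$ is the orthogonal projector onto $\ker A_\bfn$, so $\mathrm{rank}(P_\bfn)=o(d_\bfn^\Omega)$. Moreover, by Remark \ref{rem_sv_su}, $\{A_\bfn^\dagger\}_\bfn$ is s.u.

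With these two facts, multiply $A_\bfn C_\bfn=I_\bfn+Z_\bfn$ on the left by $A_\bfn^\dagger$:
\begin{equation*}
A_\bfn^\dagger = C_\bfn - P_\bfn C_\bfn - A_\bfn^\dagger Z_\bfn .
\end{equation*}
The term $P_\bfn C_\bfn$ has rank $o(d_\bfn^\Omega)$, so it is zero-distributed. The term $A_\bfn^\dagger Z_\bfn$ is zero-distributed by Lemma \ref{preserving_zero_distribution}, applied to the product of the s.u. sequence $\{A_\bfn^\dagger\}_\bfn$ with $\{Z_\bfn\}_\bfn\sim_\sigma0$; the order of the factors is immaterial after taking adjoints, or one can rerun the same proof. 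Hence $\{A_\bfn^\dagger-C_\bfn\}_\bfn\sim_\sigma 0$, that is, $d_{a.c.s.}(\{A_\bfn^\dagger\}_\bfn,\{C_\bfn\}_\bfn)=0$ by Theorem \ref{completeness_acs_conv}.

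To conclude, apply Theorem \ref{thm_acs_closure} to the constant sequence $\{\{A_{\bfn,s}\}_\bfn\}_s$ with $A_{\bfn,s}:=C_\bfn$ for all $s$. This sequence converges a.c.s. to $\{A_\bfn^\dagger\}_\bfn$ by the previous step, and its symbols $f_s=f^{-1}$ trivially converge in measure. The theorem therefore gives $\{A_\bfn^\dagger\}_\bfn\in\mathcal{G}_\Omega$ with canonical symbol $f^{-1}$.

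The main obstacle is the middle step: correctly deriving the s.v. property, and hence the smallness of $\ker A_\bfn$, from the unbounded-GLT distribution. One must check that Proposition \ref{sv_neq_zero_a_e} applies on the domain $\Omega\times[-\pi,\pi]^d$ with the normalization of $d_\bfn^\Omega$. Lemma \ref{dimensions}(iii) guarantees this. The ordering issue in Lemma \ref{preserving_zero_distribution} is only a minor technical point.
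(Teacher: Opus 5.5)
Your proof is correct and follows essentially the same route as the paper. You obtain a uGLT with symbol $f^{-1}$ from Corollary \ref{cor_UGLT_surj}, use the product rule of Theorem \ref{main_theorem} to get a zero-distributed defect, absorb the pseudo-inverse via the s.v./s.u. property, Lemma \ref{preserving_zero_distribution} and a low-rank projector term, and conclude with the a.c.s. closure of Theorem \ref{thm_acs_closure}. The only difference is a mirror image: you left-multiply $A_{\bfn}C_{\bfn}=I_{\bfn}+Z_{\bfn}$ by $A_{\bfn}^\dagger$ and use the kernel projector $I_{\bfn}-A_{\bfn}^\dagger A_{\bfn}$, while the paper right-multiplies the defect $B_{\bfn}A_{\bfn}-I_{\bfn}$ by $A_{\bfn}^\dagger$ and uses the low-rank defect of $A_{\bfn}A_{\bfn}^\dagger$.
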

\begin{proof}
Since $f\neq0$ almost everywhere, applying Corollary \ref{cor_UGLT_surj}, there exists $\{B_n\}_n\in\mathcal{G}_\Omega$, with canonical symbol $f^{-1}$. Thanks to Theorem \ref{main_theorem}, we have that
\begin{equation*}
    \{B_\bfn A_\bfn-I_\bfn\}_\bfn\sim_{\mathrm{GLT}}^{\Omega}0,
\end{equation*}
which implies that 
\begin{equation*}
    d_{a.c.s.}(\{A_\bfn B_\bfn\},\{I_\bfn\}_\bfn)=0.
\end{equation*}
Now, the matrix-sequence $\{A_\bfn\}_\bfn$ admits an almost everywhere non-vanishing symbol function $f$, so it is sparsely vanishing by Proposition \ref{sv_neq_zero_a_e}. By Remark \ref{rem_sv_su}, it follows that the matrix-sequence $\{A_\bfn^\dagger\}_\bfn$ is sparsely unbounded. As a consequence, we can apply Lemma \ref{preserving_zero_distribution} to obtain that
\begin{equation}\label{pseudoinv_eq_1}
    d_{a.c.s.}(B_\bfn A_\bfn A_\bfn^\dagger,A_\bfn^\dagger)=0.
\end{equation}
For every $\bfn$, by definition of pseudo-inverse, since $A_\bfn^\dagger$ acts as the identity on the range of $A_\bfn$ (and vice-versa), we can write
\begin{equation}\label{pseudoinv_eq_2}
    A_\bfn A_\bfn^\dagger=I_\bfn+S_\bfn,
\end{equation}
with $\mathrm{rank}(S_\bfn)=d_\bfn^{\Omega}-\mathrm{rank}(A_\bfn)$. Since $\{A_\bfn\}_\bfn$ is sparsely vanishing, in particular, we have that
\begin{equation*}
    \lim\limits_{\bfn\to \infty}\frac{\mathrm{rank}(S_\bfn)}{d_\bfn^{\Omega}}\lim\limits_{\bfn\to \infty}\frac{d_\bfn^{\Omega}-\mathrm{rank(A_\bfn)}}{d_\bfn^{\Omega}}=\lim\limits_{\bfn\to \infty}\frac{\{i\in\{1,\ldots,d_\bfn^{\Omega}\}\,\vert\,\sigma_i(A_\bfn)=0\}}{d_\bfn^{\Omega}}=0,
\end{equation*}
which, together with Equation \eqref{pseudoinv_eq_2}, implies that
\begin{equation*}
    d_{a.c.s.}(\{A_\bfn A_\bfn^{\dagger}\}_\bfn,\{I_\bfn\}_\bfn)=0.
\end{equation*}
Since $\{B_\bfn\}_\bfn$ admits a symbol function, it is sparsely unbounded by Proposition \ref{sv_neq_zero_a_e}. Thus, we can apply again Lemma \ref{preserving_zero_distribution}, to get that
\begin{equation}\label{pseudoinv_eq_3}
    d_{a.c.s.}(\{B_\bfn A_\bfn A_\bfn^\dagger\}_\bfn,\{B_\bfn\}_\bfn)=0.
\end{equation}
Using now Equations \eqref{pseudoinv_eq_1} and \eqref{pseudoinv_eq_3}, we conclude that 
\begin{equation*}
    d_{a.c.s.}(\{B_\bfn\}_\bfn,\{A_\bfn^\dagger\}_\bfn)=0,
\end{equation*}
which implies that $\{A_\bfn^{\dagger}\}\sim_{\mathrm{GLT}}^{\Omega}f^{-1}$, since the unbounded GLT algebra is closed with respect to the a.c.s. topology, by Theorem \ref{thm_acs_closure}.
\end{proof}
Finally, we are ready to extend Theorem \ref{isometry_red_GLT} to the entire unbounded GLT class.
\begin{thm}\label{thm_isom_final}
 Let $\Omega$ be an open domain, such that $\Ld(\Omega)<+\infty$ and $\Ld(\partial\Omega)=0$. Consider the map $\Phi_\Omega:(\sfrac{\mathcal{G}_\Omega}{\sim_d},d_{a.c.s.})\to(L^0(\Omega\times[-\pi,\pi]^d),d_{m,\Omega})$ that associates to any reduced GLT its canonical symbol. Then, $\Phi_\Omega$ is a surjective isometry.    
\end{thm}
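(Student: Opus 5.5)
Surjectivity is exactly Corollary \ref{cor_UGLT_surj}, and well-definedness on the quotient follows once the isometry identity is in hand. By linearity of the symbol map (Theorem \ref{main_theorem}, item (ii)) and translation invariance of both $d_{a.c.s.}$ and $d_{m,\Omega}$, it suffices to prove
\begin{equation*}
    d_{a.c.s.}(\{A_\bfn\}_\bfn,\{0\}_\bfn)=d_{m,\Omega}(f,0)
\end{equation*}
for every $\{A_\bfn\}_\bfn\sim_{\mathrm{GLT}}^{\Omega}f$. Applying this to the difference of two unbounded GLT, whose symbol is the difference of the symbols, gives the isometry. In particular, sequences at a.c.s. distance zero have a.e. equal symbols, so $\Phi_\Omega$ is well defined and injective on the quotient.

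To prove the identity, I will fix a regular exhaustion $\{\Omega_t\}_t$ and use Corollary \ref{indep_on_approx} to write
\begin{equation*}
    \{A_\bfn\}_\bfn=\mathcal{E}_{\Omega_t,\Omega}(\{B_{\bfn,t}\}_\bfn)+\{S_{\bfn,t}\}_\bfn,
\end{equation*}
where $\{B_{\bfn,t}\}_\bfn=\mathcal{R}_{\Omega,\Omega_t}(\{A_\bfn\}_\bfn)\sim_{\mathrm{GLT}}^{\Omega_t}f|_{\Omega_t\times[-\pi,\pi]^d}$ and $\mathrm{rank}(S_{\bfn,t})\le 2(d_\bfn^\Omega-d_\bfn^{\Omega_t})$. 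By item (iii) of Lemma \ref{dimensions} and the rank estimate, as in \eqref{acs_eq_7}, we get $d_{a.c.s.}(\{S_{\bfn,t}\}_\bfn,0)\le 2\Ld(\Omega\setminus\Omega_t)/\Ld(\Omega)\to0$. Hence
\begin{equation*}
    \bigl|d_{a.c.s.}(\{A_\bfn\}_\bfn,0)-d_{a.c.s.}(\mathcal{E}_{\Omega_t,\Omega}(\{B_{\bfn,t}\}_\bfn),0)\bigr|\to0 .
\end{equation*}
Next, \eqref{acs_eq_6} together with Theorem \ref{isometry_red_GLT}, applied to the rescaled reduced GLT $\{(d_\bfn^\Omega/d_\bfn^{\Omega_t})B_{\bfn,t}\}_\bfn$, whose symbol is $\frac{\Ld(\Omega)}{\Ld(\Omega_t)}f|_{\Omega_t}$, gives
\begin{equation*}
    d_{a.c.s.}(\mathcal{E}_{\Omega_t,\Omega}(\{B_{\bfn,t}\}_\bfn),0)=\frac{\Ld(\Omega_t)}{\Ld(\Omega)}\,d_{m,\Omega_t}\Bigl(\frac{\Ld(\Omega)}{\Ld(\Omega_t)}f|_{\Omega_t\times[-\pi,\pi]^d},0\Bigr).
\end{equation*}

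It remains to show that the right-hand side converges to $d_{m,\Omega}(f,0)$ as $t\to\infty$. This is the main, though elementary, obstacle, and it parallels the computation \eqref{iso_red_eq_2}. Write $\lambda_t=\Ld(\Omega_t)/\Ld(\Omega)\to1$. Unwinding definitions, the quantity equals
\begin{equation*}
    \inf_{F\subset\Omega_t\times[-\pi,\pi]^d}\Bigl\{\frac{\mathcal{L}^{2d}(F)}{(2\pi)^d\Ld(\Omega)}+\esssup_{(\Omega_t\times[-\pi,\pi]^d)\setminus F}|f|\Bigr\}.
\end{equation*}
This is at most $d_{m,\Omega}(f,0)$, by restricting any competitor on $\Omega$ to $\Omega_t$, exactly as in \eqref{acs_eq_4}. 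Conversely, any competitor $F$ on $\Omega_t$ extends to the competitor $F\cup((\Omega\setminus\Omega_t)\times[-\pi,\pi]^d)$ on $\Omega$. This costs an extra $\Ld(\Omega\setminus\Omega_t)/\Ld(\Omega)$, so the quantity is at least $d_{m,\Omega}(f,0)-(1-\lambda_t)$. Letting $t\to\infty$ squeezes the quantity to $d_{m,\Omega}(f,0)$. Combined with the previous step, this proves the identity. The only care needed is that the rescaling factor inside $d_{m,\Omega_t}$ cancels exactly, so no continuity of $p_m$ under scaling is required.
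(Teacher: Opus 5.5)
Your proof is correct and follows the paper's strategy: both arguments reduce to extensions of reduced GLT from bounded subdomains, combine the scaling identity \eqref{acs_eq_6} with Theorem \ref{isometry_red_GLT} applied to the rescaled reduced GLT, and then pass to the limit along an exhaustion. The only difference is organizational. The paper proves an exact isometry on the class $\mathcal{A}_\Omega$ and then appeals to density. You instead approximate each $\{A_\bfn\}_\bfn$ by $\mathcal{E}_{\Omega_t,\Omega}(\mathcal{R}_{\Omega,\Omega_t}(\{A_\bfn\}_\bfn))$ with an explicit rank-error bound, and squeeze the symbol side with error $1-\Ld(\Omega_t)/\Ld(\Omega)$. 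This makes the density-and-continuity step explicit, and it justifies density through Corollary \ref{indep_on_approx} rather than through the surjectivity construction the paper cites.
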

\begin{proof} The surjectivity of $\Phi_\Omega$ is a consequence of Corollary \ref{cor_UGLT_surj}.

Consider an open bounded domain $\Omega_0\subset\Omega$ and $\{B_\bfn\}_\bfn\in\mathcal{G}_{\Omega_0}$, with symbol $f$ over $\Omega$. By Example \ref{example_UGLT}, we know that 
\begin{equation*}
    \mathcal{E}_{\Omega_0,\Omega}(\{B_\bfn\}_\bfn)\sim_{\mathrm{GLT}}^{\Omega}f^E.
\end{equation*}
Since $f^E=0$ almost everywhere in $(\Omega\setminus\Omega_t)\times[-\pi,\pi]^d$, recalling the proof of Theorem \ref{isometry_red_GLT}, we have
\begin{equation}\label{iso_uglt_eq_1}
    d_{m,\Omega}(f^E,0)=\frac{\mathcal{L}^d(\Omega_0)}{\Ld(\Omega)}d_{m,\Omega_0}\left(\frac{\Ld(\Omega)}{\Ld(\Omega_0)}f,0\right),
\end{equation}
and, similarly to the proof of Theorem \ref{thm_acs_closure}, for the extension of the sequence we have
\begin{equation}\label{iso_uglt_eq_2}
d_{a.c.s.}\left(\mathcal{E}_{\Omega_0,\Omega}(\{B_\bfn\}_\bfn),\{0\}_\bfn\right)=\frac{\Ld(\Omega_0)}{\Ld(\Omega)}d_{a.c.s.}\left(\left\{\frac{d_\bfn^{\Omega}}{d_\bfn^{\Omega_0}}B_\bfn\right\}_\bfn,\{0\}_\bfn\right).
\end{equation}
Since 
\begin{equation*}
    \left\{\frac{d_\bfn^{\Omega}}{d_\bfn^{\Omega_0}}B_\bfn\right\}_\bfn\sim_{\mathrm{GLT}}^{\Omega_0}\frac{\Ld(\Omega)}{\Ld(\Omega_0)}f,
\end{equation*}
by applying Theorem \ref{isometry_red_GLT}, we obtain
\begin{align*}
    d_{m,\Omega}(f^E,0)&=\frac{\mathcal{L}^d(\Omega_0)}{\Ld(\Omega)}d_{m,\Omega_0}\left(\frac{\Ld(\Omega)}{\Ld(\Omega_0)}f,0\right)\\
    &=\frac{\Ld(\Omega_0)}{\Ld(\Omega)}d_{a.c.s.}\left(\left\{\frac{d_\bfn^{\Omega}}{d_\bfn^{\Omega_0}}B_\bfn\right\}_\bfn,\{0\}_\bfn\right)\\
    &=d_{a.c.s.}\left(\mathcal{E}_{\Omega_0,\Omega}(\{B_\bfn\}_\bfn),\{0\}_\bfn\right).
\end{align*}
By invariance under translation of $d_{a.c.s.}$ and of $d_{m,\Omega}$, we just proved that $\Phi_\Omega$ is an isometry, when restricted to the set
\begin{equation*}
    \mathcal{A}_\Omega:=\left\{\mathcal{E}_{\Omega_0,\Omega}(\{B_\bfn\}_\bfn)\,\bigg\vert\,\{B_\bfn\}_\bfn\in\mathcal{G}_{\Omega_0},\,\Omega_0\subset\Omega \text{ bounded },\,\Ld(\partial\Omega_0)=0\right\}.
\end{equation*}
Finally, note that as a byproduct of the proof of Corollary \ref{cor_UGLT_surj}, the set $\mathcal{A}_\Omega$ is dense in $\mathcal{G}_\Omega$, so that $\Phi_\Omega$ is an isometry.
\end{proof}

\section{Applications and numerical results}\label{sec:appl}

In this section we introduce the concepts of unbounded Toeplitz and unbounded diagonal sampling sequences. Using these sequences and the algebra properties of reduced and unbounded GLT, we study the spectral distribution of the matrix sequence arising from the discretization of a PDE over an unbounded domain.

\subsection{Unbounded Toeplitz and diagonal sampling sequences}

Let $\Omega$ be an open unbounded domain such that $\Ld(\Omega) < \infty$ and $\Ld(\partial \Omega) =0$ and let $\{\Omega_t\}_t$ be a regular exhaustion of $\Omega$. Let now $f \in L^{1}([-\pi,\pi]^d)$ and let $\{Q_{{\bfy}_t,r_t}\}_t$ be a sequence of hypercubes such that, for every $t$, $ \Omega_t \subseteq Q_{{\bfy}_t,r_t}$ and $Q_{{\bfy_{t}},r_t} \subseteq Q_{{\bfy}_{t+1},r_{t+1}}$. Following the same ideas as in \cite{Barb}, for every $t$, we can define the sequence of reduced Toeplitz over the domain $\Omega_t$ as the restriction of the Toeplitz sequence $\{T_{r_t\bfn}(f)\}_{\bfn}$ over $Q_{{\bfy}_t,r_t}$ to $\Omega_t$. We denote this sequence by $\{T_{\bfn}^{\Omega_t}(f)\}_{\bfn}$. Note that $\{T_{\bfn}^{\Omega_t}(f)\}_{\bfn}$ is a reduced GLT sequence with the canonical symbol $f$ over $\Omega_t \times [-\pi,\pi]^d$. Finally, we consider the extension $\mathcal{E}_{\Omega_t, \Omega}( \{T_{\bfn}^{\Omega_t}(f)\}_\bfn)$ to $\Omega$. We notice that, by Example \ref{example_UGLT}, $\mathcal{E}_{\Omega_t,\Omega}(\{T_{\bfn}^{\Omega_t} (f)\}_{\bfn})$ is an unbounded GLT with canonical symbol $f_t^{E}(\bfx,\bftheta)$ over $\Omega \times [-\pi,\pi]^d$, where
\begin{equation*}
    f_t^{E}(\bfx,\bftheta) = \begin{cases}
      f(\bftheta) & (\bfx,\bftheta) \in \Omega_t \times [-\pi,\pi]^d,\\
      0 & otherwise.
    \end{cases}
\end{equation*}
By Theorem \ref{thm_acs_closure}, since $f_t^{E}$ converges in measure to $\kappa(\bfx,\bftheta)$ over $\Omega \times [-\pi,\pi]^d$, with $\kappa(\bfx,\bftheta)=f(\bftheta)$, $\{ \mathcal{E}_{\Omega_t,\Omega}(\{T_{\bfn}^{\Omega_t}(f)\}_{\bfn})\}_t$ is converging in the a.c.s.\ topology to a limit sequence. We denote such limit by $\{T_{\bfn}^{\Omega}(f)\}_{\bfn}$, which, again by Theorem \ref{thm_acs_closure}, is an unbounded GLT, referred to as unbounded Toeplitz sequence generated by $f$. Therefore,
\begin{equation*}
    \{T_{\bfn}^{\Omega}(f)\}_{\bfn} \sim_{\sigma} (\kappa, \Omega \times [-\pi,\pi]^d),
\end{equation*}
with $\kappa(\bfx,\bftheta)=f(\bftheta)$ and, if $f$ is real-valued almost everywhere,
\begin{equation*}
    \{T_{\bfn}^{\Omega}(f)\}_{\bfn} \sim_{\lambda} (\kappa, \Omega \times [-\pi,\pi]^d).
\end{equation*}

Let now $a:\Omega \to \mathbb{C}$ be an almost everywhere continuous function and denote by $a_t$ its restriction to $\Omega_t$ for every $t$, that is, $a_t : \Omega_t \to \mathbb{C}$ and $a_t(\bfx)=a(\bfx)$ for every $\bfx \in \Omega_t$.
Following the same procedure as above, we define $\{D_{\bfn}^{\Omega}(a)\}_{\bfn}$ as the a.c.s. limit of the extension to $\Omega$ of the reduced diagonal sampling sequences $\{\{D_{\bfn}^{\Omega_t}(a_t)\}_{\bfn}\}_t$ over an exhaustion $\{\Omega_t\}_t$. Clearly,
\begin{equation*}
    \{D_{\bfn}^{\Omega}(a)\}_{\bfn} \sim_{\lambda,\sigma} (\kappa, \Omega \times [-\pi,\pi]^d),
\end{equation*}
with $\kappa(\bfx,\bftheta)=a(\bfx)$.

\subsection{An application with numerical confirmation}

In this subsection we study a model problem whose discretization leads to an unbounded GLT sequence and infer a symbol for the discretized operator. Some numerical tests are conducted in order to confirm the validity of our derivations.

Consider the problem

\begin{equation}\label{test_pb}
    \begin{cases}
        \text{div}(-a\nabla u)=v & \text{ on } \Omega,\\
        u=0 & \text{ on } \partial \Omega,
    \end{cases}
\end{equation}
where $a(x,y)=a(\bfx)$ is a positive non-degenerate variable coefficient on $\Omega = \{ \bfx \in \mathbb{R}^2 \, : \, x>0, y>0, y < g(x)\}$, with
\begin{equation*}
    g(x)=\begin{cases}
        1 & \text{ if } x<1 \\
        \frac{1}{x^2} & \text{ if } x \geq 1.
    \end{cases}
\end{equation*}
Clearly, $\Omega$ is an open unbounded domain with $\Ld(\Omega) < \infty$ and $\Ld(\partial \Omega) =0$. For the discretization of \eqref{test_pb}, we consider as space-step $h=\frac{1}{n}$ and as nodes the points in $\Theta_{\bfn,\Omega} $, $\bfn=(n,n)$.

On this grid, we apply $P_1$ finite elements and, as $n$ increases, we obtain a sequence $\{A_{\bfn}\}_{\bfn}$ which constitutes the discretized version of the original continuous operator. We are interested in studying the spectral properties of the sequence $\{A_{\bfn}\}_{\bfn}$ using the theory we just developed. In order to do so, we consider the regular exhaustion $\{\Omega_t\}_t$ of $\Omega$ given by
\begin{equation*}
    \Omega_t = \left\{\bfx\in\Omega\,\vert\,\|\bfx\|_\infty<t\right\}.
\end{equation*}
As above, let $a_t$ denote the restriction of $a$ to $\Omega_t$.
For a fixed $t$, we consider the grid $\Theta_{\bfn,\Omega_t}$ related to the domain $\Omega_t$. Over this grid, we apply again $P_1$ finite elements, thus obtaining a new sequence that we denote by $\{B_{\bfn,t}\}_{\bfn}$. 

First, we prove that, for every $t$, $\{B_{\bfn,t}\}_{\bfn}$ is a reduced GLT with canonical symbol $a_t(\bfx)f(\bftheta)$ over $\Omega_t \times [-\pi,\pi]^2$. For this purpose, let $B_t = B_{\infty}({\bf0},t)= \left\{\bfx \,\vert\,\|\bfx\|_\infty<t\right\} $ and consider the following auxiliary problem:
\begin{equation}\label{aux_pb}
    \begin{cases}
        \text{div}(-a_t^E \nabla u_t^E)= v_t^E & \text{ on } B_t \\
        u_t^E=0 & \text{ on } \partial B_t,
    \end{cases}
\end{equation}
where
\begin{equation*}
    a_t^E(\bfx) = \begin{cases}
        a(\bfx) & \bfx \in \Omega_t, \\
        0 & otherwise,
    \end{cases} \quad v_t^E(\bfx) = \begin{cases}
        v(\bfx) & \bfx \in \Omega_t, \\
        0 & otherwise.
    \end{cases}
\end{equation*}
By applying $P_1$ elements to \eqref{aux_pb} on the uniform grid $\Theta_{\bfn,B_t}$, we produce a new matrix-sequence, which we denote by $\{C_{\bfn,t}\}_{\bfn}$. The spectral distribution of $\{C_{\bfn,t}\}_{\bfn}$ is known in the literature (see, e.g., \cite[Section 7]{Barb}), from which we know that $\{C_{\bfn,t}\}_{\bfn} $ is a GLT sequence with canonical symbol $ a_t^E(\bfx)h(\bftheta)$ over $B_t \times [-\pi,\pi]^2$, with $h(\bftheta)=h(\theta_1,\theta_2)=4-2\cos(\theta_2)-2\cos(\theta_2)$. In particular, $\{D_{t\bfn}(a_t^E) T_{t \bfn}(h)-C_{\bfn,t}\}_{\bfn} \sim_{\mathrm{GLT}} 0$.

Notice that, by construction, $\{B_{\bfn,t}\}_{\bfn} = \mathcal{R}_{B_t, \Omega_t}(\{C_{\bfn,t}\}_{\bfn})$, so that, by applying Theorem \ref{spectra_reduced_GLT}, we infer that $\{B_{\bfn,t}\}_{\bfn}$ is a reduced GLT with canonical symbol $a_t(\bfx)h(\bftheta)$ over $\Omega_t \times [-\pi,\pi]^2$. Again, by the algebra properties of reduced GLT, this also implies that $\{D_{\bfn}^{\Omega_t}(a_t) T_{\bfn}^{\Omega_t}(h)-B_{\bfn,t}\}_{\bfn} \sim_{\mathrm{GLT}}^{\Omega_t} 0$. 

Finally, the sequence $\{A_{\bfn}\}_{\bfn}$ naturally arises by considering the extension of $\{B_{\bfn,t}\}_{\bfn}$ to $\Omega$. Indeed, proceeding as in \cite[Section 4.1]{gacs}, we have $\{\{B_{\bfn,t}\}_{\bfn}\}_t$ is a g.a.c.s. for $\{A_{\bfn}\}_{\bfn}$, or, equivalently, $\mathcal{E}_{\Omega_t,\Omega}(\{B_{\bfn,t}\}_{\bfn})$ is an a.c.s. for $\{A_{\bfn}\}_{\bfn}$. Now, since $a_t(\bfx)h(\bftheta)$ converges in measure to $a(\bfx)h(\bftheta)$ over $\Omega \times [-\pi,\pi]^2$, applying Theorem \ref{thm_acs_closure}, we conclude that $\{A_{\bfn}\}_{\bfn}$ is an unbounded GLT sequence with canonical symbol $a(\bfx)h(\bftheta)$ over $\Omega \times [-\pi,\pi]^2$. In particular, note that by the algebra properties of unbounded GLT (Theorem \ref{main_theorem}), we have $\{D_{\bfn}^{\Omega}(a)T_{\bfn}^{\Omega}(h)-A_{\bfn}\}_{\bfn} \sim_{\mathrm{GLT}}^{\Omega} 0$.


Let $\mathrm{f}(x,y,\theta_1,\theta_2)=a(x,y)h(\theta_1,\theta_2)$. 
In Figures \ref{FDcoeff_B_t2}-\ref{FDcoeff_B_t8}, we show the behavior of the sequence $\{B_{\bfn,t}\}_{\bfn}$ for different values of the parameter $t$ and of the space-step $h$. In Figures \ref{FDcoeff_Bbig_t2}-\ref{FDcoeff_Bbig_t8}, we verify the spectral distribution of the extension $\mathcal{E}_{\Omega_t,\Omega}(\{B_{\bfn,t}\}_{\bfn})$ for different values of $t$, showing the dependence on the parameter $t$. In particular, as expected from the theory, we notice that the percentage of zero eigenvalues decreases when increasing the parameter $t$. Finally, in Figure \ref{FDcoeff_A_50} we numerically confirm the crucial result of this section, that is, $\{A_{\bfn}\}_{\bfn} \sim_{\lambda} \left(\mathrm{f},\Omega \times [-\pi,\pi]^2\right)$. Note that we make use of the notation $\mathrm{f \chi_t}$ to indicate the function defined on $\Omega \times [-\pi,\pi]^2$ equal to $\mathrm{f}$ in $\Omega_t \times [-\pi,\pi]^2$ and zero otherwise.

\begin{figure}[H]
\centering
  \includegraphics[width=\textwidth]{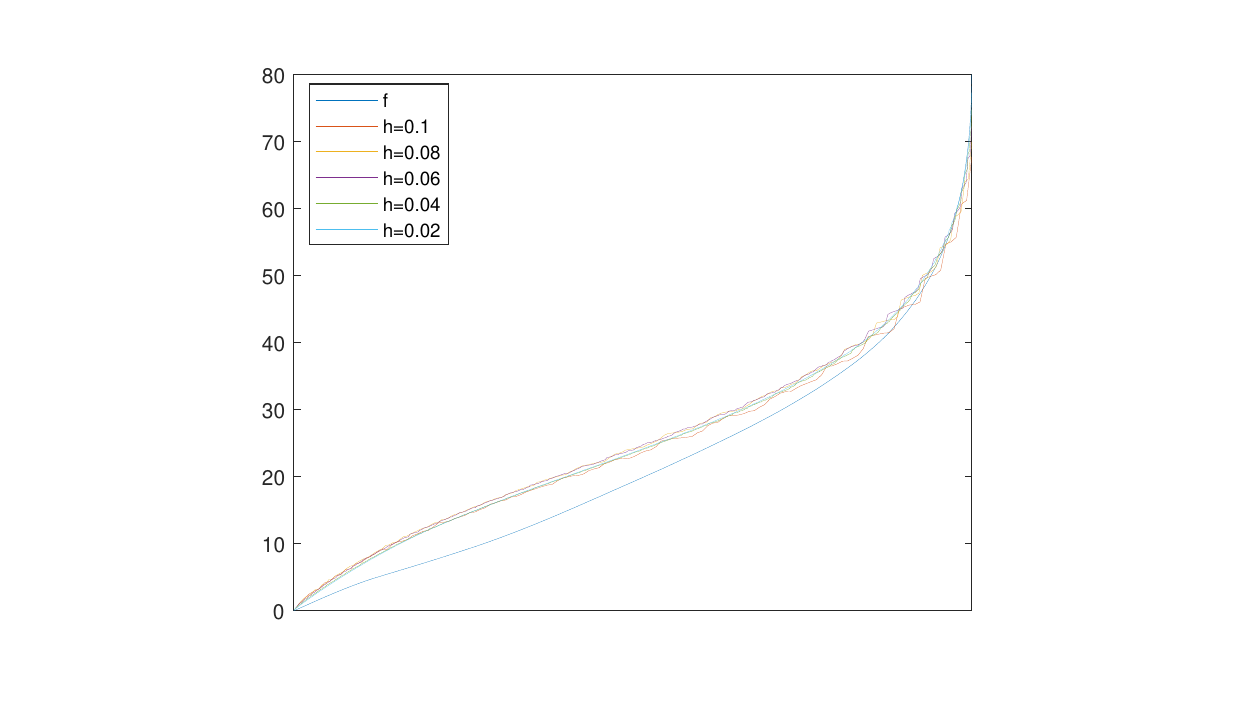} \vskip -0.5cm
  \caption{Eigenvalues distribution of $B_{\bfn,t}$ for different values of $h$  together with the sampling of $\mathrm{f}(x,y,\theta_1,\theta_2)=a(x,y)(4-2\cos \theta_1-2\cos \theta_2)$, $a(x,y)= (10 + x^2 + 2y^2+ \sin^2(x+y)) / ( 1 + x^2 + y^2)$ over $\Omega_t \times [-\pi,\pi]^2$, $t=2$.}
\label{FDcoeff_B_t2}
\end{figure}

\begin{figure}[p]
\centering
  \includegraphics[width=\textwidth]{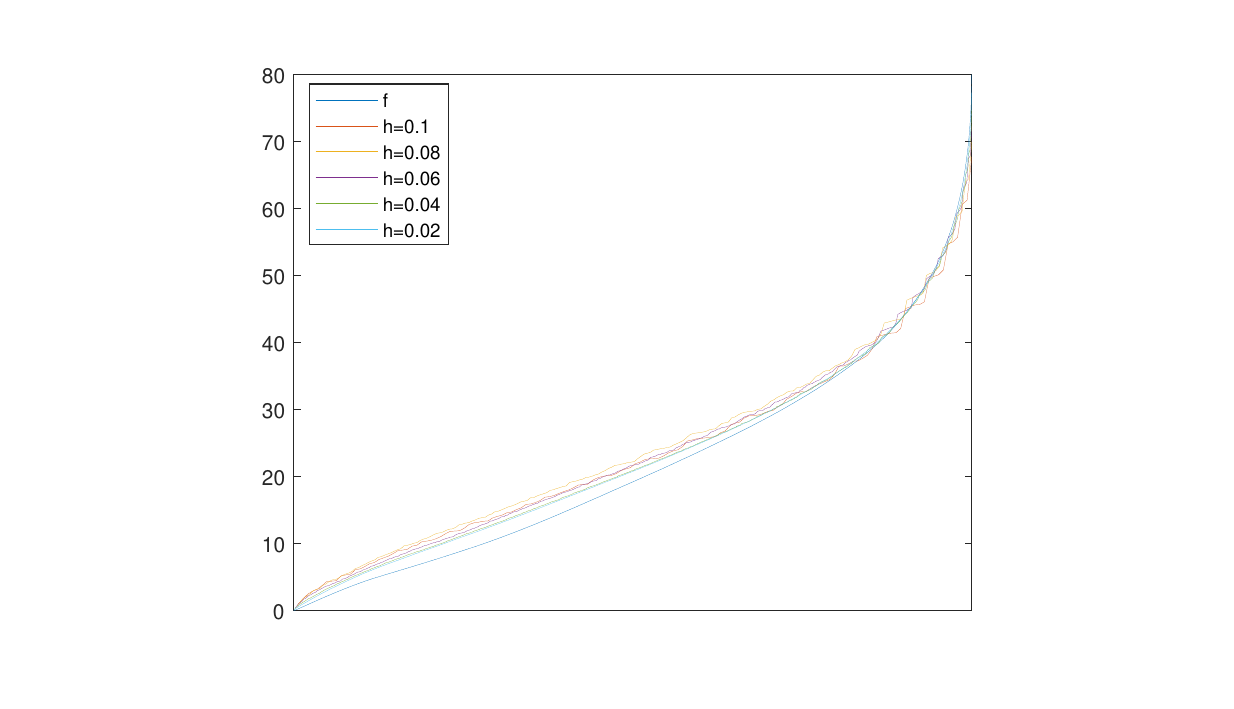} \vskip -0.5cm
  \caption{Eigenvalues distribution of $B_{\bfn,t}$ for different values of $h$  together with the sampling of $\mathrm{f}(x,y,\theta_1,\theta_2)=a(x,y)(4-2\cos \theta_1-2\cos \theta_2)$, $a(x,y)= (10 + x^2 + 2y^2+ \sin^2(x+y)) / ( 1 + x^2 + y^2)$ over $\Omega_t \times [-\pi,\pi]^2$, $t=4$.}
\label{FDcoeff_B_t4}
\end{figure}

\begin{figure}[p]
\centering
  \includegraphics[width=\textwidth]{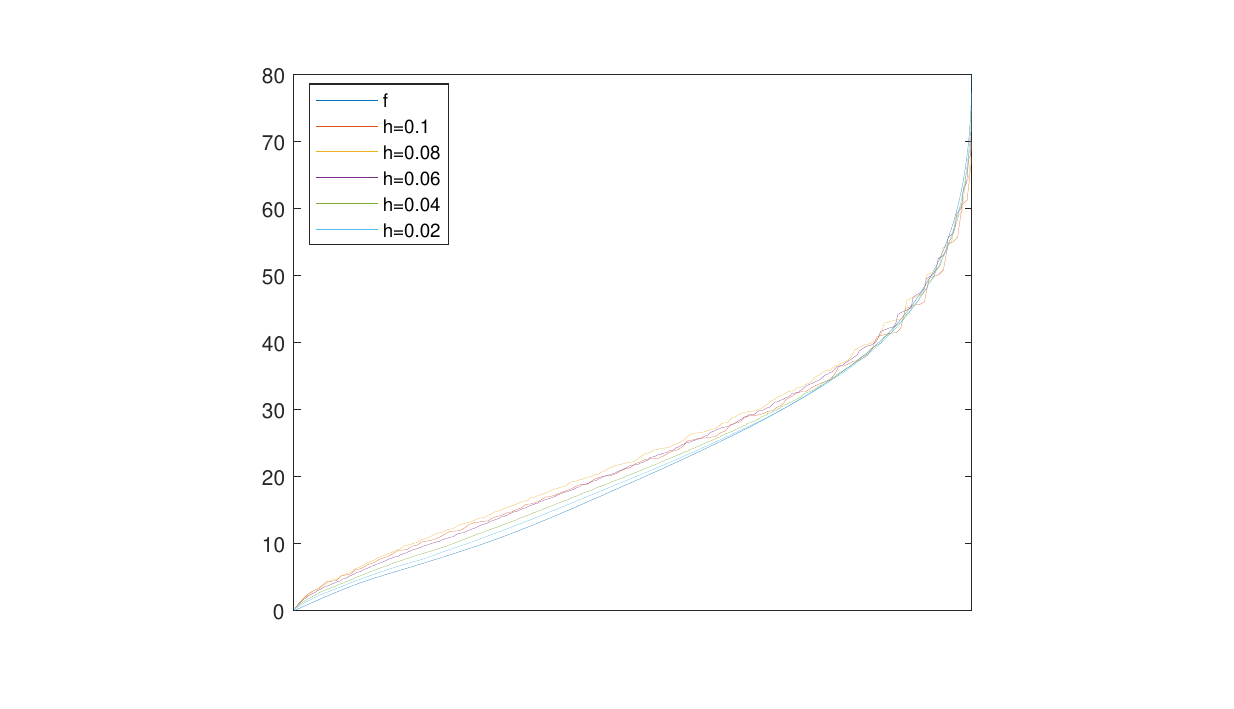} \vskip -0.5cm
  \caption{Eigenvalues distribution of $B_{\bfn,t}$ for different values of $h$  together with the sampling of $\mathrm{f}(x,y,\theta_1,\theta_2)=a(x,y)(4-2\cos \theta_1-2\cos \theta_2)$, $a(x,y)= (10 + x^2 + 2y^2+ \sin^2(x+y)) / ( 1 + x^2 + y^2)$ over $\Omega_t \times [-\pi,\pi]^2$, $t=8$.}
\label{FDcoeff_B_t8}
\end{figure}

\begin{figure}[p]
\centering
  \includegraphics[width=\textwidth]{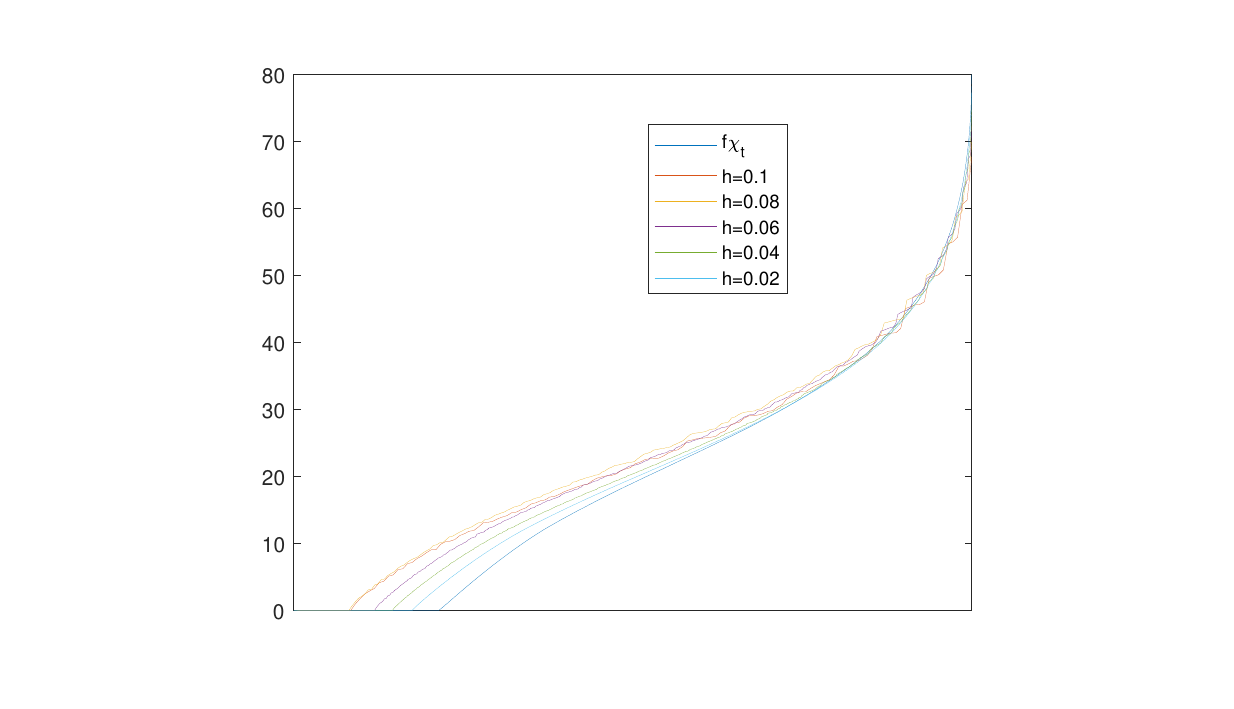} \vskip -0.5cm
  \caption{Eigenvalues distribution of $E_{\bfn,\Omega_t,\Omega}(B_{\bfn,t})$ for different values of $h$  together with the sampling of $\mathrm{f}(x,y,\theta_1,\theta_2)=a(x,y)(4-2\cos \theta_1-2\cos \theta_2)$, $a(x,y)= (10 + x^2 + 2y^2+ \sin^2(x+y)) / ( 1 + x^2 + y^2)$ over $\Omega \times [-\pi,\pi]^2$, $t=2$.}
\label{FDcoeff_Bbig_t2}
\end{figure}

\begin{figure}[p]
\centering
  \includegraphics[width=\textwidth]{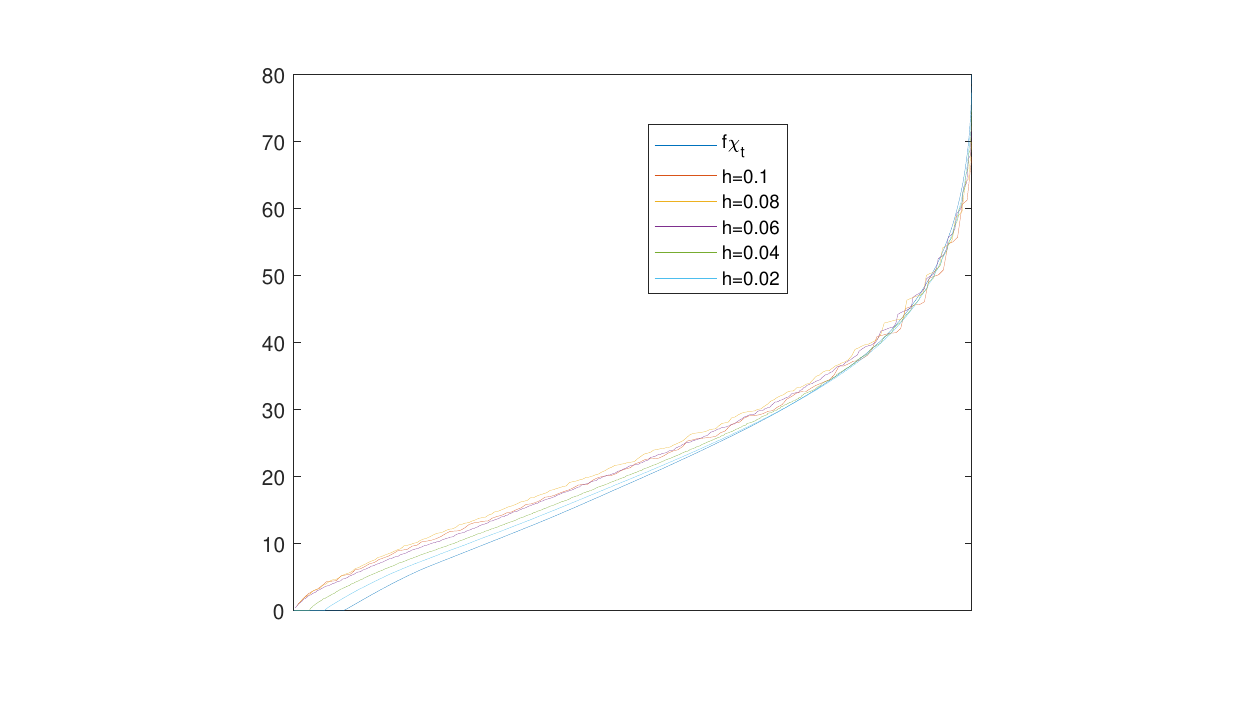} \vskip -0.5cm
  \caption{Eigenvalues distribution of $E_{\bfn,\Omega_t,\Omega}(B_{\bfn,t})$ for different values of $h$  together with the sampling of $\mathrm{f}(x,y,\theta_1,\theta_2)=a(x,y)(4-2\cos \theta_1-2\cos \theta_2)$, $a(x,y)= (10 + x^2 + 2y^2+ \sin^2(x+y)) / ( 1 + x^2 + y^2)$ over $\Omega \times [-\pi,\pi]^2$, $t=4$.}
\label{FDcoeff_Bbig_t4}
\end{figure}

\begin{figure}[p]
\centering
  \includegraphics[width=\textwidth]{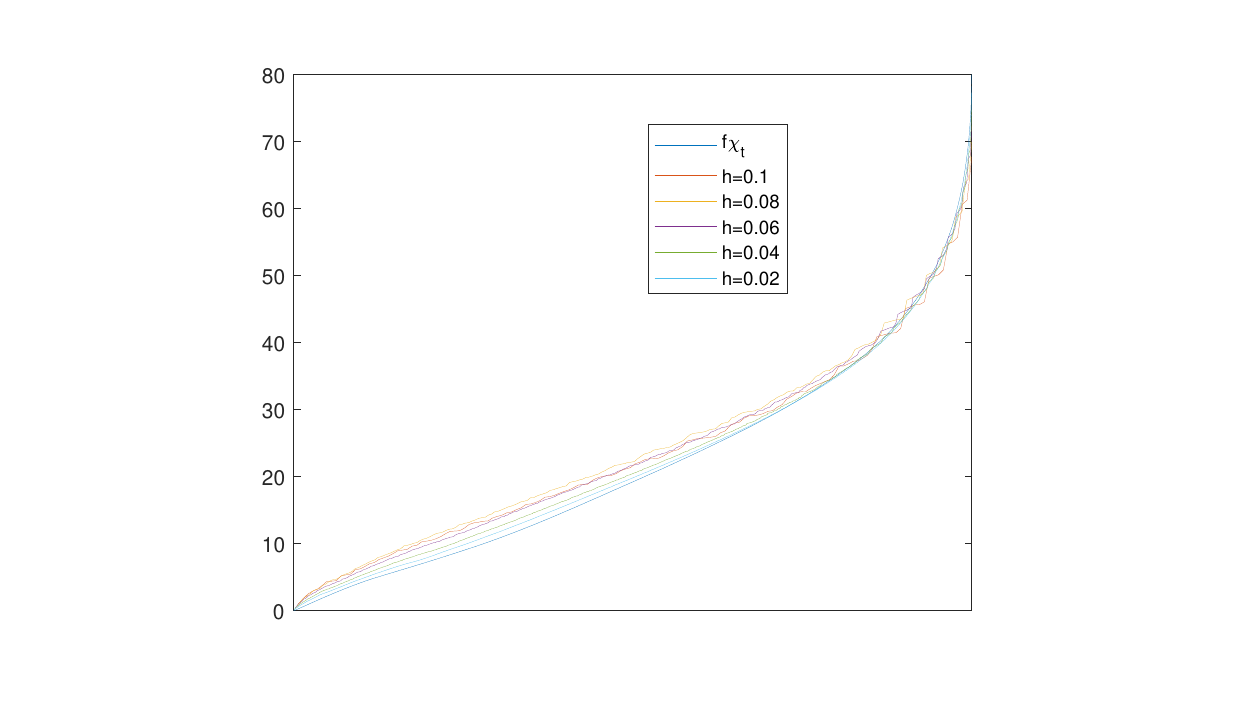} \vskip -0.5cm
  \caption{Eigenvalues distribution of $E_{\bfn,\Omega_t,\Omega}(B_{\bfn,t})$ for different values of $h$  together with the sampling of $\mathrm{f}(x,y,\theta_1,\theta_2)=a(x,y)(4-2\cos \theta_1-2\cos \theta_2)$, $a(x,y)= (10 + x^2 + 2y^2+ \sin^2(x+y)) / ( 1 + x^2 + y^2)$ over $\Omega \times [-\pi,\pi]^2$, $t=8$.}
\label{FDcoeff_Bbig_t8}
\end{figure}

\begin{figure}[p]
\centering
  \includegraphics[width=\textwidth]{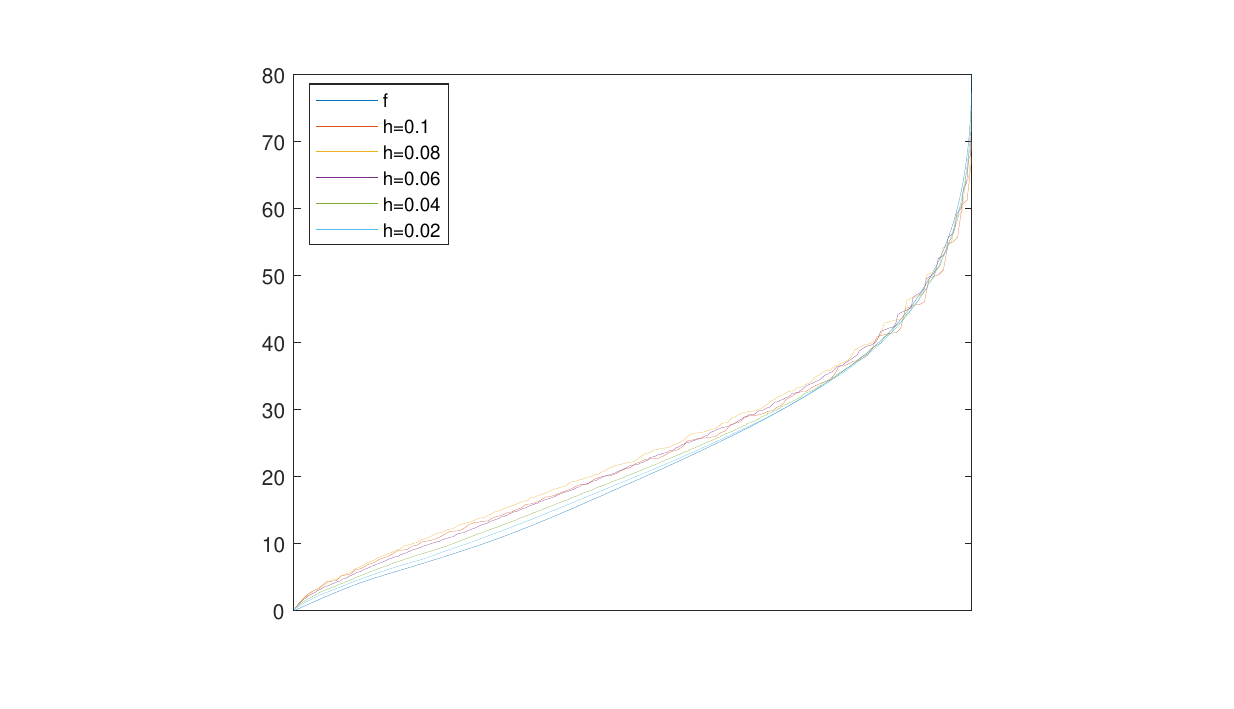} \vskip -0.5cm
  \caption{Eigenvalues distribution of $A_n$ for different values of $h$  together with the sampling of $\mathrm{f}(x,y,\theta_1,\theta_2)=a(x,y)(4-2\cos \theta_1-2\cos \theta_2)$, $a(x,y)= (10 + x^2 + 2y^2+ \sin^2(x+y)) / ( 1 + x^2 + y^2)$.}
\label{FDcoeff_A_50}
\end{figure}


\section{Conclusions and open problems}\label{sec-final}
\noindent

In this paper, we defined a wide class of matrix-sequences related to any unbounded domain $\Omega$ of finite measure satisfying $\Ld(\partial \Omega)=0$, to which we have associated a canonical symbol over $\Omega \times [-\pi,\pi]^d$. This symbol gives to the defined class of matrix-sequences a $*$-algebra structure over $\mathbb{C}$. We obtained this class with a limit procedure over reduced GLT sequences associated with regular exhaustions of bounded domains. A number of questions arise, both from the theoretical viewpoint and for the applications. Here, we briefly analyze some of the most important ones.

\begin{enumerate}
    \item[(a)] Originally, GLT sequences derive from the algebraic closure of the space generated by Toeplitz sequences, diagonal sampling sequences and zero-distributed sequences. The same should also be possible for unbounded GLT sequences. As shown in Section \ref{sec:appl}, it is possible to define sequences of Toeplitz matrices and diagonal sampling matrices over $\Omega$ unbounded. By defining trivially the unbounded zero-distributed sequences, we have all the tools needed for an explicit construction of the uGLT $*$-algebra as the algebraic closure of the space generated by the latter, together with unbounded Toeplitz and unbounded diagonal sampling sequences. As in the classical GLT case (see \cite{applications-axioms,glt-laa}), this should produce precisely the same class of matrix-sequences of the present work. However, in the classical case, the construction of GLT via algebraic closure is way more involved than other formulations which use a.c.s. convergence and, in fact, we preferred to follow this second approach, exploiting the g.a.c.s. machinery. Nevertheless, equivalent constructions would certainly be of interest and could highlight hidden properties.
    \item[(b)] In the present note we focused on the case of unbounded domains $\Omega$ with measure $\Ld(\Omega) < +\infty$. This is a crucial assumption that we think is not possible to remove as it is. In the case of infinite measure, the geometrically natural choice of the grid (and consequently of the sequence of dimensions) would produce linear operators acting on infinite dimensional spaces at each step $\bfn$, so that the discretization procedure would not simplify much the picture of the original problem. On the other hand, the assumption $\Ld(\partial\Omega)=0$ ensures that extending unbounded matrix-sequences to larger domains would produce sequences with comparable dimensions. Indeed, when working in a fixed open domain with finite measure with no need to extend the objects further outside of it, this assumption can be removed with no additional work. However, it may be explored the possibility of taking into account the infinite measure of the domain considering non-uniform grids which are more sparse and definitely vanishing as we approach infinity. This is quite common when dealing with differential equations over unbounded domains: we may expect that the solution has a relevant decay at infinity that allows us to choose the grid of the discretization accordingly (see, for example, \cite{ABKST26,dT14}).
    \item[(c)] The classical GLT theory is a powerful tool for analyzing the asymptotic spectral properties of discretized PDEs, but it generally lacks flexibility. The present work goes in the direction of developing more flexible objects, which can adapt to various situations and different settings. The concept of g.a.c.s., however, still shows some limitations, since it only allows to approximate a matrix-sequence with sequences of smaller size which have the same dimension only in the limit. We think that it is possible to remove this assumption and consider a more general concept of approximation for matrix-sequences.
    \item[(d)] Quite related, and in fact complementary, to the previous item would be a better understanding and a systematic study of the g.a.c.s. pseudo-metric, which would probably allow us to use the uGLT framework to study PDEs with moving domains with relative ease. For example, assume that we have a sequence of domains $\{\Omega_t\}_t$, such that $\mathds{1}_{\Omega_t}\to\mathds{1}_{\Omega}$ in $L^1(\mathbb{R}^d)$, and for each domain we have an uGLT $ \{A_{\bfn,t}\}_\bfn\sim_{\mathrm{GLT}}^{\Omega_t}f_t$ such that $f_t(\bfx,\bftheta)\mathds{1}_{\Omega_t}(\bfx)\to f(\bfx,\bftheta)\mathds{1}_{\Omega}(\bfx)$ in measure. In such a case, one may expect that this sequence of matrix-sequences converges to $\{A_\bfn\}_\bfn\sim_{\mathrm{GLT}}^{\Omega}f$. It would be interesting to specify and study this convergence, also motivated by the applications. Indeed, the authors expect that, after answering the previous questions, it should be possible to perform a precise spectral analysis of various discretizations of PDEs with moving domains. One of the most relevant examples in this direction is represented by the water waves equation (see \cite{L13}).
    \item[(e)] In the present work we studied various algebraic properties of unbounded GLT sequences, such as closure with respect to sum, product, conjugate (Theorem \ref{main_theorem}) and pseudo inverse (Theorem \ref{uGLT_isometry}). It would be interesting to extend this core set, which is still enough to conclude the isometry with measurable functions on $\Omega$ (Theorem \ref{thm_isom_final}), to additional properties satisfied by the GLT class. By way of example, it is known that if we consider a non-Hermitian perturbation $\{Y_{\bfn}\}_{\bfn}$ of a Hermitian GLT sequence $\{X_{\bfn}\}_{\bfn}\sim_{\text{GLT}}\kappa$, then the sequence $\{X_{\bfn}+Y_{\bfn}\}_{\bfn}\sim_{\lambda} \kappa$ as long as $\|Y_{\bfn}\|_{F} = o(\sqrt{N(\bfn)})$ (see \cite{non-herm}). It would certainly be of interest to extend this and other results to our more general setting. 
    \item[(f)] As last point of our discussion, and a more general one than the others since it involves also the original GLT theory and all the following developments, the equivalence between measurable functions and uGLT sequences proved in Theorem \ref{uGLT_isometry} still needs more in-depth analysis. Indeed, among measurable functions, there are some special classes, like $L^p$ and Sobolev and it would be interesting to study the corresponding matrix-sequences in the uGLT class. In the case of classical GLT, the subspaces corresponding to symbols in $L^p$ are studied in \cite{KRS22}, where they select specific representatives from the a.c.s. equivalence class. In the uGLT framework, the authors expect that a similar strategy may be applied, yielding precise quantitative estimates on the growth and on the distribution of the singular values of (privileged representatives of) $L^p$ and Sobolev classes.

\end{enumerate}

{}

\end{document}